\documentclass[twoside,final]{amsart}

\usepackage[utf8]{inputenc}
\usepackage[T1]{fontenc}
\usepackage[english]{babel}

\usepackage{mathpazo}
\usepackage{mathptmx}
\usepackage{amsmath}
\usepackage{amssymb}
\usepackage{mathrsfs}
\usepackage{amsthm}
\usepackage{amsfonts}
\usepackage{fancyhdr}
\usepackage[all]{xy}
\usepackage{cancel}
\usepackage{amscd,amsxtra}
\usepackage{mathtools,mathrsfs,dsfont,xparse}
\usepackage{graphicx,epstopdf}
\usepackage{multirow}
\usepackage{cases}
\usepackage{enumerate}
\usepackage{multicol,bbm}

\usepackage[notcite, notref]{showkeys}

\usepackage{mathtools}

\mathtoolsset{showonlyrefs}

\usepackage[margin=1 in]{geometry}

\date{}

\usepackage[colorlinks=true, pdfstartview=FitV, linkcolor=blue,
            citecolor=blue, urlcolor=blue]{hyperref}

\allowdisplaybreaks[4]


\pagestyle{myheadings}
\numberwithin{equation}{section}

\newtheorem{Thm}{Theorem}[section]

\newtheorem{Def}{Definition}[section]

\newtheorem*{Thm*}{Theorem}

\usepackage{cjhebrew}

\newcommand{\E}{\mathbb{E}}

\newcommand{\al}{\alpha}

\newcommand{\T}{T}

\usepackage[numbers,sort&compress]{natbib}

{\theoremstyle {definition} \newtheorem {defi} {Definition} [section] }
{\theoremstyle {plain}  \newtheorem {thm} [defi] {Theorem}}
{\theoremstyle {plain}  \newtheorem {cor} [defi]{Corollary}}
{\theoremstyle {plain} \newtheorem {prop} [defi]{Proposition}}
{\theoremstyle {plain} \newtheorem {lem}[defi] {Lemma}}
{\theoremstyle {plain} }
{\theoremstyle {definition} \newtheorem {rmq}[defi] {Remark}}
{\theoremstyle {definition} \newtheorem {claim}[defi] {Claim}}

{\theoremstyle {plain}  }
{\theoremstyle {plain}  }
{\theoremstyle {plain}  }
{\theoremstyle {plain} }
{\theoremstyle {plain} }
{\theoremstyle {plain} }
{\theoremstyle {plain} }

\def\E{{\Bbb{E}}}
\def\T{{\Bbb{T}}}
\def\P{{\Bbb{P}}}
\def\R{{\Bbb{R}}}
\def\C{{\Bbb{C}}}
\def\Z{{\Bbb{Z}^3}}
\def\N{{\Bbb{N}}}

\def\i{{\textbf{i}}}

\def\dt{{\partial_t}}

\def\lleq{{\ \lesssim\ \ }}

\usepackage{tikz}
\usetikzlibrary{matrix}

\DeclareMathOperator\supp{supp}
\DeclareMathOperator{\re}{Re}

\DeclareDocumentCommand{\abs}{s m}{
  \operatorname{}
  \IfBooleanTF{#1}{#2}{\left|#2\right|}}

\DeclareDocumentCommand{\norm}{s m}{
  \operatorname{}
  \IfBooleanTF{#1}{#2} {\left\| #2\right\|}}

\DeclareDocumentCommand{\inner}{s m}{
  \operatorname{}
  \IfBooleanTF{#1}{#2}{\left \langle#2\right \rangle}}

\DeclareDocumentCommand{\parenthese}{s m}{
  \operatorname{}
  \IfBooleanTF{#1}{#2}{\left(#2\right)}}

\DeclareDocumentCommand{\square}{s m}{
  \operatorname{}
  \IfBooleanTF{#1} {#2}{\left[#2\right]}}

\DeclareDocumentCommand{\bracket}{s m}{
  \operatorname{}
  \IfBooleanTF{#1}{#2}{\left\{#2\right\}}}

\begin{document}

\author[Sy and Yu]{Mouhamadou Sy$^1$ and Xueying Yu$^2$}

\address{Mouhamadou Sy
\newline 
\indent Department of Mathematics, Imperial College London \indent 
\newline \indent  Huxley Building, London SW7 2AZ, United Kingdom,\indent }
\email{m.sy@imperial.ac.uk}
\thanks{$^1$ The first author's current address is Department of Mathematics, Imperial College London, United Kingdom. This work was written at Department of Mathematics, University of Virginia, Charlottesville, VA.}

\address{Xueying  Yu
\newline \indent Department of Mathematics, University of Washington\indent 
\newline \indent  C138 Padelford Hall Box 354350, Seattle, WA 98195,\indent }
\email{xueyingy@uw.edu}
\thanks{$^2$  The second author's current address is Department of Mathematics, University of Washington, Seattle, WA. This work was written at Department of Mathematics, MIT, Cambridge, MA.}

\title[A.S. GWP for energy supercritical NLS on the unit ball]{Almost sure global well-posedness for the energy supercritical NLS on the unit ball of $\Bbb R^3$}

\begin{abstract}
In this paper, we present two almost sure global well-posedness (GWP)  results for the energy supercritical nonlinear Schr\"odinger equations (NLS) on the unit ball of $\Bbb R^3$ using two different approaches. First, for the NLS with algebraic nonlinearities with the subcritical initial data, we show the almost sure global well-posedness  and the invariance of the underlying measures, and establish controls on the growth of Sobolev norms of the solutions. This global result is based on a deterministic local theory and a probabilistic globalization. Second, for the NLS with generic power nonlinearities with critical and supercritical initial conditions, we prove the almost sure global well-posedness, and the invariance of the measure under the solution flows. This global result is built on a compactness argument and the Skorokhod representation theorem.

\noindent
\textbf{Keywords}: Supercritical NLS, almost sure GWP, invariant measure, fluctuation-dissipation, statistical ensemble.\\

\noindent
\textbf{Classification:} 28D05, 60H30, 35Q55, 35BXX, 37K05, 37L50.\\
\end{abstract}

\maketitle

\nocite{*}

\setcounter{tocdepth}{1}
\tableofcontents

\parindent = 10pt     
\parskip = 8pt

\section{Introduction}
We consider  the nonlinear Schr\"odinger initial value problem
\begin{align}\label{NLS}
\begin{cases}
\dt u=\i(\Delta u-|u|^{2q}u) , &  q> 0,\\
u(0,x) = u_0(x) , &
\end{cases} 
\end{align}
posed on the unit ball $\Theta = \{x \in \R^3 \, : \,  \abs{x} < 1\} $, where $u=u(t,x)$ is a complex-valued function in spacetime $\R \times \Theta$. We assume the radial symmetry on the initial datum $u_0$ and the Dirichlet boundary condition:
\begin{align*}
u\big|_{\partial \Theta}=0 .
\end{align*}

The solution of \eqref{NLS} conserves both the mass:
\begin{align}\label{eq Intro mass}
M(u(t)) : = \int_{\Theta} \abs{u(t,x)}^2 \, dx = M(u_0) ,
\end{align}
and the energy:
\begin{align}\label{eq Intro energy}
E(u(t)) :  = \int_{\Theta} \frac{1}{2} \abs{\nabla u(t,x)}^2 + \frac{1}{2q+2} \abs{u(t,x)}^{2q+2} \, dx = E(u_0) .
\end{align}
Conservation of mass and energy gives the control of the $L^2$ and $\dot{H}^1$ norms of the solutions, respectively.

To best frame this problem, let us start by recalling the scaling of NLS in the language of Euclidean spaces setting. First, the critical scaling exponent of NLS on $\R^d$
\begin{align}\label{pNLS}
\dt u=\i(\Delta u-|u|^{2q}u) , \quad q >0
\end{align}
is given by
\begin{align}\label{eq sc}
s_c:=\frac{d}{2} - \frac{1}{q}.
\end{align}
The problem \eqref{pNLS} can be classified as subcritical, critical or supercritical depending on whether the regularity of the initial data is below, equal or above the scaling $s_c$ of \eqref{pNLS}. Also we say that the NLS is with energy supercritical nonlinearities when $s_c >1$ (in dimension $d=3$, $s_c >1$ implies $q >2$).

We study the energy supercritical NLS \eqref{NLS}  ($s_c>1$) in this paper and our goals are 
\begin{enumerate}
\item
to establish the global well-posedness{\footnote{With {\it local/global well-posedness} we refer to local/global in time existence, uniqueness and continuous dependence of the data to solution map.}} from a probabilistic point of view
\item
to show the existence of an invariant measure under the global flow
\item
and to obtain controls on the growth of the solutions.
\end{enumerate}
Notice that in  the energy supercritical range, the authors do not know any  unconditional global regularity theory for the equation \eqref{NLS} on Sobolev spaces, and this issue is a long lasting open question. In this paper we address this question from a probabilistic point of view.

\subsection{History and related works}
The well-posedness problem of NLS \eqref{pNLS} has been extensively studied in recent years under a wide variety of settings.  Let us recall the known results in this subsection.

\subsubsection{Subcritical and critical regimes} 
On $\R^d$, the local well-posedness results are well known in both subcritical ($s> s_c$) and critical ($s= s_c$) regimes in \cite{cazweis,caz}. When the initial data are at the level of conservation laws (of energy and mass), an iterative argument gives the global well-posedness of the $H^1$-subcritical and the $L^2$-subcritical initial value problems. At the level of no conservation quantities ($ s \neq 0,1$), one needs more sophisticated methods to globalize the subcritical problems iteratively, such as the high-low method \cite{bourRef} and I-method \cite{ckst}. Let us point out that both methods heavily rely on the conservation of energy. In order to grab the energy, they smooth out the rough solutions into the energy space with the help of a proper  frequency truncation or a well-designed Fourier multiplier.

The critical global well-posedness problems are much more delicate than the subcritical ones. This is because the criticality nature of this problem brings an extra factor (the profile of the initial data) into the local theory, which effects on the length of local time existence and ruins the iterative process as one did in subcritical setting. As a result, at the level of $L^2$ and $H^1$, even though the conservation laws are still playing fundamental roles, they are not enough to globalize the local solutions. More ingredients were introduced to deal with the critical setting, such as the induction on energy method, Morawetz inequalities and concentration compactness/rigidity argument, see \cite{bourgNLS96, GrllakscrtNLS, CKSTTcrtNLS, kenigmerle} and also references for the compact settings \cite{htt, ionpaus,ptw}. It is worth mentioning that the proofs are not iterative, but based on a contradiction argument by ruling out the existence of the certain types of minimal blow-up objects. Moreover, in the case where there are no conservation quantities ($ s_c \neq 0,1$), when assuming that the critical Sobolev norm of the solution stays uniformly bounded on the entire time of existence (which serves a similar role of the conservation quantities as one has in the mass/energy critical settings), one can conclude the global well-posedness, see for instance \cite{km}. However, due to the criticality nature and missing of conservation laws, it is much more challenging to verify the universal bound on the critical Sobolev norm of the solution, or to obtain the critical global well-posedness without such {\it a priori} assumption. This is not our center in this work, so let us provide a numerical simulation result  \cite{css} and a recent work \cite{dod} on this  topic.

\subsubsection{Supercritical regime and probabilistic methods}

On the contrast, in the supercritical regime ($s < s_c$), the scaling is against the well-posedness. To the best of authors' knowledge, there are no (deterministic) well-posedness results in the supercritical scenario. Even worse, it is known in for example \cite{cctAsy} that certain known supercritical data would lead to ill-posednesss{\footnote{{\it Ill-posedness} means that problems are not well-posed, that is, problems that violate any of the three properties of well-posedness.}}.

For the deterministic setting there is no hope for a well-posedness theory, however this type of problem has been receiving a lot of attention from a probabilistic point of view. In fact, in some cases, ill-posedness can be circumvented by an appropriate probabilistic/stochastic method. With initial data lying in some well-constructed probability space, Bourgain \cite{bourgNLS96} proved the local well-posedness of the two dimensional cubic NLS with data below $L^2$. By using a different randomisation method, \cite{thomann2009random} showed an almost sure local well-posed result for the cubic NLS in the supercritical regularity. A recent work \cite{dny} established an almost-sure local theory for NLS that covers the full subcritical regime in the probabilistic scaling{\footnote{{\it The probabilistic scaling} is defined to be $-\frac{1}{2q}$, which is lower than the regular scaling $\frac{d}{2} -\frac{1}{q}$ defined in \eqref{eq sc}.}}.

\cite{bourgNLS96} also extended to a global solution with the invariant Gibbs measure constructed from a suitable renormalized Hamiltonian. The important point to note here is that in \cite{bourgNLS96}, the invariant measure replaces the role of the conservation laws in the deterministic setting, and such invariance property is very crucial in making an iterative argument possible. In fact, the Gibbs measures technique for dispersive PDEs goes back to Lebowitz-Rose-Speer \cite{LRS}, where they constructed the Gibbs measures for the one-dimensional NLS. More results on the probabilistic well-posedness and invariant Gibbs measures can be found in  \cite{bourg94,zhidwave,zhd,tzvNLS06,bt2007,tzvNLS, btrandom1,btrandom2,asds,colloh,MR3131480,pocovnicu2014,burktzvt_probwav,
bourbulW,btt18,nsnls,yue,dnygibbs} and references therein. It is worth pointing out that the regularity where the Gibbs measure lives is $H^{1-\frac{d}{2}-\varepsilon}$, so in high dimensions, for example $d=3$, it is very rough $H^{-\frac{1}{2}-\varepsilon} $. This negative regularity causes difficulties in constructing or even making sense of the Gibbs measure.

The fluctuation-dissipation approach is another method to construct invariant measures. It is introduced in Kuksin, Kuksin-Shirikyan\cite{kuk_eul_lim, KS04, KS12} where they obtained an invariant measure on the Sobolev space $H^2$  for the two dimensional Euler equation and the cubic NLS with dimensions $d \leq 4$. This method is based on the compact approximations of the equation. By considering a suitable modification  (introducing a dissipation term and a carefully normalized white noise in time) of the equation, one then obtains that the stationary measure converges to a non-trivial  invariant measure under the inviscid limit. See also \cite{kuksin_nondegeul,armen_nondegcgl,sybo,sykg,syNLS7,fsSQG,lat} for related works.

As we explained above, the globalization under the setting where there are no conservation quantities is less favorable. Both results in this paper in fact deal with this type of setup. Let us pause here and present our two main theorems.

\subsection{Main results}
\subsubsection{Statements of the main results}
\begin{thm}[Subcritical almost sure global well-posedness]\label{main thm1}
For any  $q \in \N^+$, any $\beta\in (s_c,\frac{3}{2}]$, any increasing, one-to-one, concave function $\xi:\R_+\to\R_+$, There is a measure $\mu=\mu_{q,\beta,\xi}$ concentrated on $H^{\beta}(\Theta)$ and a set $\Sigma=\Sigma_{q,\beta,\xi}\subset H^{\beta}$ such that
\begin{enumerate}
\item $\mu (\Sigma)=1;$ 
\item For any $u_0\in \Sigma$, there is a unique $u\in C(\R, H^{\beta})$ satisfying \eqref{NLS};
\item The flow $\phi_t$ induced by the existence and uniqueness property is continuous and leaves the set $\Sigma$ invariant, that is, $\phi_t\Sigma=\Sigma$; 
\item The measure $\mu$ is invariant under the flow $\phi_t$;
\item The following identity holds\label{Est}
\begin{align}
\int_{L^2}e^{\rho(\|u\|_{H^{\beta -}})}\left(\|u\|_{H^{\beta-1}}^2+\|u\|_{L^{2q+2}}^{2q+2}\right)\mu(du)=\frac{A_0}{2},\label{Id_intro}
\end{align}
where $A_0<\infty$ is given by the characteristics of the noise used in the approximation;

\item The set $\Sigma$ contains data of arbitrary large sizes, that is, for any $M>0$, we have that $\mu(\{u\ |\ \|u\|_{H^\beta}>M\})>0$;\label{EstCons}
\item For any $u_0\in \Sigma$, we have the slow growth control
\begin{align}
\|\phi^tu_0\|_{H^{\beta-}}\leq C_{\xi}(u_0)\xi(\ln(1+t)). \label{Est:Contr}
\end{align}
\end{enumerate}
\end{thm}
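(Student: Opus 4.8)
The overall strategy is the fluctuation-dissipation / Galerkin-approximation scheme combined with a probabilistic globalization. First I would set up the approximating problems: introduce a finite-dimensional projector $\mathsf{P}_N$ onto the first $N$ Dirichlet-Laplacian eigenfunctions on $\Theta$, and consider the stochastic ODE system $dt\, u = \i(\Delta u - \mathsf{P}_N(|u|^{2q}u))\,dt - \alpha A u\, dt + \sqrt{\alpha}\,\eta\,dW$ on $\mathsf{P}_N L^2$, where $\alpha>0$ is the small viscosity parameter, $A$ a suitable positive operator (a power of $-\Delta$), and $\eta\, dW$ a carefully normalized white-in-time noise whose spatial covariance is tuned so the formal stationary measure at finite $N$ is a Gibbs-type measure weighted by $e^{-\alpha^{-1}(\text{mass}+\text{energy})}$ times the Gaussian induced by the noise. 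Because the phase space is finite-dimensional and the drift is locally Lipschitz with the dissipation providing coercivity, one gets global existence of the approximate dynamics and existence of a stationary measure $\mu_{\alpha,N}$ by a Bogolyubov–Krylov argument; applying Itô to the conserved-type quantities gives the \emph{a priori} balance $\int e^{\rho(\|u\|_{H^{\beta-}})}(\|u\|_{H^{\beta-1}}^2 + \|u\|_{L^{2q+2}}^{2q+2})\,\mu_{\alpha,N}(du) = A_0/2$ uniformly in $\alpha$ and $N$, which is exactly \eqref{Id_intro} at the approximate level and is the source of all later bounds.

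Next I would pass to the limit. Uniform-in-$(\alpha,N)$ moment bounds in $H^\beta$ from the balance identity give tightness of $\{\mu_{\alpha,N}\}$ on $H^{\beta-}$; extract a subsequential limit $\mu$ as $N\to\infty$ then $\alpha\to 0$ (or jointly along a diagonal), using the Skorokhod representation theorem to realize the convergence almost surely on a common probability space and upgrade to convergence of the nonlinear term. The limit $\mu$ is then a stationary measure for the (limiting, undamped, noiseless) NLS flow in the sense of the Liouville/invariance equation, and the balance identity \eqref{Id_intro} survives in the limit by Fatou plus the matching upper bound. Items (5) and (6) come directly from \eqref{Id_intro}: finiteness of the exponential moment forces $\mu$ to charge arbitrarily large $H^\beta$-balls (otherwise the integral would be finite trivially but one still needs the support statement — this follows because the construction keeps the mass of large data from escaping, a consequence of the structure of the approximating measures inherited from $e^{-\alpha^{-1}E}$ having full support).

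For items (1)–(4), (7), I would combine a \emph{deterministic} local well-posedness theory for \eqref{NLS} in $H^\beta$ with $\beta>s_c$ (subcritical, so a standard contraction in Strichartz/$X^{s,b}$ spaces adapted to the ball, with local time depending only on $\|u_0\|_{H^\beta}$) with the invariant measure $\mu$ as a substitute for a conservation law. The globalization is Bourgain's scheme: the set $\Sigma$ is built as a $\mu$-full-measure set of data whose iterated local solutions never blow up, using the invariance of $\mu$ to control, via Chebyshev and the exponential moment in \eqref{Id_intro}, the measure of the bad set where $\|u(t)\|_{H^{\beta-}}$ exceeds a slowly growing threshold on dyadic time intervals; summing a convergent series of small measures gives $\mu(\Sigma)=1$, flow-invariance $\phi_t\Sigma=\Sigma$, and the quantitative bound $\|\phi^t u_0\|_{H^{\beta-}}\le C_\xi(u_0)\,\xi(\ln(1+t))$ in \eqref{Est:Contr} — the logarithm arising because on each good interval the norm can grow by a bounded factor and there are $\sim\log t$ such intervals, while the concavity and one-to-one-ness of $\xi$ let one absorb the constants. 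Invariance of $\mu$ under $\phi_t$ then transfers from the approximate stationarity through the limit.

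The main obstacle I anticipate is twofold. The delicate analytic point is the passage to the limit in the nonlinearity $|u|^{2q}u$ for large integer $q$ at regularity only slightly above $s_c$ (which is large, $>1$): one needs the uniform bounds to be strong enough — genuinely in $H^\beta$ with $\beta>s_c$, not merely in $H^{\beta-}$ — so that the $(2q+1)$-linear term is controlled and converges; this is where the precise form of the noise covariance and the exponent $\beta\le 3/2$ matter, and where the $L^{2q+2}$ term in \eqref{Id_intro} is used to close the energy-type estimate. The structural point is reconciling the fluctuation-dissipation construction (which naturally lives in $H^{\beta-}$ because of the Gaussian part of the noise) with the deterministic local theory (which needs the full $H^\beta$ regularity): the resolution is to prove the \emph{a priori} bound at the sharp regularity $\beta$ on the approximate measures and only lose the epsilon when taking limits in the globalization step, so that $\Sigma\subset H^\beta$ while the quantitative growth is stated in the slightly weaker $H^{\beta-}$ norm.
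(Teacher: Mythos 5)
Your overall architecture (Galerkin projection plus fluctuation--dissipation, stationary measures by Krylov--Bogoliubov, an It\^o balance identity, tightness, and a Bourgain-type globalization against a deterministic local theory) matches the paper, but three of your steps would fail as described. First, the local theory at regularity $\beta\in(s_c,\tfrac32]$ is \emph{not} a standard contraction: on the ball the Strichartz estimates lose derivatives (e.g.\ $\|S(t)e_n\|_{L^2_tL^6_x}\sim \|e_n\|_{H^{1/2}}$), so a naive fixed point closes only for $s>\tfrac32$, which misses the entire range $\beta\le \tfrac32$ claimed in the theorem. The essential deterministic input is the $m$-linear eigenfunction estimate (Proposition \ref{prop m-linear}, generalizing Anton's bilinear estimate via the counting Lemma \ref{lem counting} and an optimization over the products of the $e_{n_j}$), which yields the nonlinear estimate in $X^{s,\pm b'}$ for all $s>s_c$; this is also why the nonlinearity must be algebraic. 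Without it your contraction does not close at the regularities the theorem covers.

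Second, your description of the dissipation and noise is off in a way that breaks items (5) and (7). The stationary measures here are not Gibbs-type and the noise is not tuned to produce $e^{-\alpha^{-1}(M+E)}$ (the paper notes that the nonlinear energy is not even integrable against the relevant Gaussian when $q>2$). The damping is taken to be $\mathcal{L}(u)=e^{\rho(\|u\|_{H^{\beta-}})}[(-\Delta)^{\beta-1}u+\Pi^N|u|^{2q}u]$ with $\rho=3\xi^{-1}$: the nonlinear term is needed to dominate the cross term $\langle(-\Delta)^{\beta-1}u,|u|^{2q}u\rangle$ in the energy dissipation rate, and the exponential prefactor is exactly what makes the It\^o mass balance produce \eqref{Id_intro} and the exponential moment $\int e^{3\xi^{-1}(\|u\|_{H^{\beta-}})}\,d\mu^N\le C(\xi)$, which via Chebyshev gives $\mu^N(\|u\|_{H^s}\ge \xi(i+j))\lesssim e^{-3(i+j)}$ and hence the $\xi(\ln(1+t))$ growth in \eqref{Est:Contr}. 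Your plan, with a linear dissipation $Au$ given by a power of $-\Delta$, has no mechanism producing the arbitrary slow-growth function $\xi$. Third, the order of limits matters: the paper sends $\alpha\to 0$ first at fixed $N$ (everything finite-dimensional; Lemma \ref{LemConvUnifInvis}), obtaining invariant measures for the Galerkin NLS, and only then sends $N\to\infty$ through Bourgain's argument using the $N$-uniform local existence time; taking $N\to\infty$ at fixed $\alpha>0$, as you propose, would require solving an infinite-dimensional supercritical SPDE, which is not available. A minor point: your justification of item (6) via ``full support of $e^{-\alpha^{-1}E}$'' does not survive weak limits; the paper derives (6) from the identity (5) together with the freedom in assigning $A_0$.
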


\begin{rmq}\label{Rmk:Intro}
\begin{enumerate}
\item
In the theorem above, $A_0$ is an assigned `size' on the noise, then it is a choice. Therefore we can remark that the point \eqref{EstCons} follows from this fact combined with the point \eqref{Est}.

\item
Let us focus on the control \eqref{Est:Contr}. The fact that $\xi$ is our choice makes us able to consider very slow growing function; for instance any combination of finite number of logarithmic  functions can be considered. We can compare this with the control given by the Gibbs measures approach where $\xi$ is fixed to be the square root function, namely the Gibbs measure control is
\begin{align*}
\|\phi^tu_0\|_{H^{\frac{1}{2}-}}\leq C(u_0)\sqrt{\ln(1+t)}.
\end{align*}

\item
Notice that Theorem \ref{main thm1} also covers the cubic $q =1$ (energy subcritical) and quintic $q =2$  (energy critical)  cases. Since our method admits these powers, we just include them for completeness. However, the bounds \eqref{Est:Contr} are new to these cases. The main concern of this paper is still on the energy supercritical NLS.  
\end{enumerate}
\end{rmq}

\begin{thm}[Critical and supercritical almost sure global well-posedness]\label{main thm2} 
For any $q>2$, any $\beta \in (1,s_c]$, there is a process $\{ u^\omega(t) , \,  (\omega, t ) \in  \Omega \times \R \}$ belonging to the space
\begin{align*}
C(\R,H^{\beta-})\cap L^\infty(\R,L^{2q+2})\cap L^2_{loc}(\R,H^{\beta})\cap L^p_{loc}(\R,L^r) \quad \forall p,\ r\in [1,\infty),
\end{align*}
solving  \eqref{NLS}. The distribution $\mu$ of $u$ is invariant in time. Moreover for all $\omega \in\Omega$, the solution curve $u^\omega(\cdot)$ is unique, hence we obtain a global flow.

Furthermore, we have the following properties
\begin{enumerate}
\item The mass $M(u)$ in \eqref{eq Intro mass} and the energy $E(u)$ in \eqref{eq Intro energy} are conserved by the constructed flow;
\item We have the identity
\begin{align*}
\int_{L^2}\left(\|u\|_{H^{\beta-1}}^2+2e^{2\|u\|_{L^{4q+2}}^{4q+2}}\|u\|_{L^{2q+2}}^{2q+2}+\sum_{p\in\N}\frac{\|u\|_{L^{2p+2}}^{2p+2}}{p!}\right)\mu(du)=\frac{A_0}{2};
\end{align*}
\item The support of $\mu$ contains data of arbitrary large sizes, that is, for any $M>0$, we have that $\mu(\{u\ |\ \|u\|_{H^\beta}>M\})>0$.
\item The solution  continuously depends on the initial data  in any $H^s$, $s <1$ and $L^{p}$, $1 \leq p < 2q+2$ spaces,  in the sense that for two solutions $u$ and $v$ with initial data $u_0 , v_0$ respectively,  we have that 
\begin{align*}
\lim_{\|u_0-v_0\|_{H^1}+\|u_0-v_0\|_{L^{2q+2}}\to 0}\sup_{t\in [-T,T]}(\|u-v\|_{H^s}+\|u-v\|_{L^{p}})=0  .
\end{align*}
\end{enumerate}
\end{thm}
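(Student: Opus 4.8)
The plan is to implement a fluctuation--dissipation scheme adapted to the critical/supercritical regime. I would first introduce a family of approximate equations depending on a viscosity parameter $\alpha>0$ and a spectral truncation parameter $N$: replace \eqref{NLS} by
\begin{align*}
\dt u = \i(\Delta u - P_N(|u|^{2q}u)) - \alpha(-\Delta)u + \sqrt{\alpha}\,\eta,
\end{align*}
where $P_N$ is the projector onto the first $N$ Dirichlet eigenfunctions of $-\Delta$ on $\Theta$ and $\eta$ is a white-in-time, spatially smooth (and suitably normalized) noise. For fixed $\alpha,N$ this is a well-posed SPDE on a finite/low-dimensional phase space with a unique stationary measure $\mu_{\alpha,N}$; the noise normalization is chosen so that the It\^o formula applied to the functional whose density appears in item (2) — namely $\|u\|_{\dot H^1}^2$ together with the exponential-of-$L^{4q+2}$ and the $\sum_p \|u\|_{L^{2p+2}}^{2p+2}/p!$ terms — produces the stationary identity with right-hand side $A_0/2$ uniformly in $\alpha,N$. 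This uniform a priori bound is the engine of the whole argument: it gives tightness of $\{\mu_{\alpha,N}\}$ on $L^2$, and, upgraded to bounds on space-time norms via the equation, tightness of the laws of the stationary processes in the path space $C(\R,H^{\beta-})\cap L^2_{loc}(\R,H^\beta)\cap L^p_{loc}(\R,L^r)$.

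Next I would pass to the limit. Fix a sequence $(\alpha_k,N_k)\to(0,\infty)$; by Prokhorov the laws of the stationary solutions are tight on the path space above, so along a subsequence they converge weakly. Invoking the Skorokhod representation theorem, realize these laws on a common probability space by processes $u_k$ converging almost surely in the path topology to a limit $u$. The compactness built into the topology (Aubin--Lions type embeddings, using the $L^2_{loc}H^\beta\cap \cdots$ control together with the equation for time-equicontinuity) lets me pass to the limit in every term of the equation: the linear terms are immediate, the dissipation and noise terms vanish because they carry factors $\alpha_k,\sqrt{\alpha_k}\to 0$ (here one needs the noise's quadratic variation to stay $O(\alpha_k)$, which is exactly the normalization), and the nonlinearity $P_{N_k}(|u_k|^{2q}u_k)\to |u|^{2q}u$ by the strong $L^p_{loc}L^r$ convergence for $r$ large enough to dominate the $(2q+1)$-power. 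Stationarity is preserved under weak limits, so the time-marginal $\mu$ of $u$ is invariant; the a priori identity passes to the limit by Fatou/uniform integrability, giving item (2) and, as in Remark \ref{Rmk:Intro}, item (3). Items (1) — mass and energy conservation — follow because the limit solves the deterministic \eqref{NLS} pathwise and has enough regularity ($H^1\cap L^{2q+2}$ in space, locally $H^\beta$ in time) to justify the conservation-law computations.

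For uniqueness and continuous dependence (item (4)) I would run a direct energy estimate on the difference $w=u-v$ of two solutions at the $H^1$ level. Pairing the difference equation with $w$ in $L^2$ and with $\Delta w$ kills the linear Schr\"odinger part, and the nonlinear difference $|u|^{2q}u-|v|^{2q}v$ is estimated by $\lesssim (|u|^{2q}+|v|^{2q})|w|$; since $u,v\in L^\infty_t(H^1\cap L^{2q+2})$ and on $\Theta\subset\R^3$ one has $H^1\hookrightarrow L^6$, the exponent $2q$ is controlled provided the $L^{2q+2}$ (hence $L^{4q+2}$, via the exponential weight in item (2)) norms are finite — which is precisely what the stationary identity guarantees a.s. A Gr\"onwall argument then yields $\|w(t)\|_{H^1}\le e^{C(u_0,v_0,T)|t|}\|w(0)\|_{H^1}$ on $[-T,T]$, and interpolation with the uniform $L^{2q+2}$ bound upgrades this to the stated convergence in $H^s$, $s<1$, and $L^p$, $p<2q+2$. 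Uniqueness is the case $w(0)=0$, and together with the existence of the limit process it produces a well-defined global flow.

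The main obstacle I anticipate is the construction of the noise and the verification that It\^o's formula on the approximate system closes to give the \emph{uniform} stationary identity in item (2): one must choose the spatial covariance so that the It\^o correction terms are summable and bounded independently of $N$ (the $\sum_p \|u\|_{L^{2p+2}}^{2p+2}/p!$ and $e^{2\|u\|_{L^{4q+2}}^{4q+2}}$ structure strongly suggests the relevant Lyapunov functional is an exponential moment, designed so that its generator applied under the flow reproduces the left-hand side of (2)), and simultaneously so that the dissipation $-\alpha(-\Delta)u$ provides enough coercivity at the $\dot H^1$ level. A secondary technical point is securing \emph{strong enough} compactness to treat the supercritical nonlinearity in the limit: the $L^2_{loc}H^\beta$ gain with $\beta>1$ plus the $L^\infty L^{2q+2}$ bound must be interpolated carefully to land in a space where $u_k\mapsto|u_k|^{2q}u_k$ is continuous, and checking the time-regularity needed for Aubin--Lions when the equation's right-hand side is only bounded in a negative-order space requires some care.
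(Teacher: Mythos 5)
Your overall architecture (fluctuation--dissipation approximation, stationary measures, inviscid/Galerkin limits, Skorokhod representation, stationarity passing to the limit) is the same as the paper's, but there are two genuine gaps. First, the dissipation operator you choose, $-\alpha(-\Delta)u$, cannot produce the identity in item (2), and you misattribute the mechanism to the noise normalization. In this method one applies It\^o's formula to the \emph{mass}, obtaining at stationarity $\int \langle u,\mathcal{L}(u)\rangle\,\mu^N_\alpha(du)=A_0^N/2$; the noise only fixes the constant on the right, while the left-hand side is literally $M'(u,\mathcal{L}(u))$ and is therefore dictated by the dissipation. With $\mathcal{L}(u)=-\Delta u$ you would get $\int\|u\|_{\dot H^1}^2\,\mu(du)=A_0/2$ and nothing else: no exponential term $e^{2\|u\|_{L^{4q+2}}^{4q+2}}\|u\|_{L^{2q+2}}^{2q+2}$, no series $\sum_p\|u\|_{L^{2p+2}}^{2p+2}/p!$, hence none of the uniform-in-$N$ bounds (e.g.\ $\E\sum_p\|\,|u^N|^{p+1}\|_{H^1}^2/p!\le C$ and the resulting compactness of $|u^N|^{2q}u^N$ in $L^2_{t,x}$) that are exactly what lets one pass the supercritical nonlinearity to the limit, prove the reverse inequality $\ge A_0/2$, and place $u$ in $L^p_{loc}(\R,L^r)$ for all $p,r$. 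The paper's dissipation $\mathcal{L}(u)=(1-\Delta)^{\beta-1}u+2e^{2\|\Pi^N|u|^{2q}u\|_{L^2}^2}\Pi^N|u|^{2q}u+\Pi^N e^{|u|^2}u$ is reverse-engineered so that $M'(u,\mathcal{L}(u))$ \emph{is} the integrand of (2) (with $(1-\Delta)^{\beta-1}$ also supplying the $H^\beta$ coercivity of $\mathcal{E}$). You flag this only as an ``anticipated obstacle''; it is the central missing idea, not a technical afterthought.

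Second, your uniqueness and continuity argument does not close. For $q>2$ the term $\int(|u|^{2q}+|v|^{2q})|w|^2\,dx$ cannot be bounded by $C(u,v)\,\|w\|_{L^2}^2$ using only $u,v\in L^\infty_t(H^1\cap L^{2q+2})$ (or even $u,v\in L^p$ for every finite $p$): H\"older necessarily places $|w|^2$ in some $L^s$ with $s>1$, i.e.\ $w$ in $L^{2s}$ with $2s>2$, which is not propagated by the Gr\"onwall loop; closing it would require $u,v\in L^\infty_x$, which is unavailable at these regularities. Pairing with $\Delta w$ is worse, since $\langle|u|^{2q}u-|v|^{2q}v,\Delta w\rangle$ puts derivatives on high powers of $u,v$. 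The paper's resolution is the radial Sobolev embedding $\|\,|x|\,u\|_{L^\infty}\lesssim\|u\|_{H^1}$ (Lemma \ref{lem Radial Sobolev}): on $\{\epsilon\le|x|\le1\}$ one has $|u|\lesssim\epsilon^{-1}\|u\|_{H^1}$, the Gr\"onwall closes there with constant $e^{Ct\epsilon^{-4q}}$, and uniqueness follows by letting $\epsilon\to0$ via monotone convergence; continuity further requires choosing $\epsilon=\gamma_k\sim(-\ln\|w_0^k\|_{L^2})^{-1/(8q)}$ to balance the exponential blow-up against the size of the data and estimating the core $\{|x|\le\gamma_k\}$ separately by H\"older. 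Without this (or an equivalent substitute), the uniqueness claim and item (4) remain unproven.
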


It is traditional to ask about qualitative properties in the context of fluctuation-dissipation measures.  Without giving details of computation we refer to  \cite{armen_nondegcgl} and  Theorem $9.2$ and Corollary $9.3$ in \cite{syNLS7} as a justification of the following statement which is valid for all subcritical, critical and supercritical settings. 
\begin{thm}
The distributions via $\mu$ of the functionals $M(u)$ and $E(u)$ have densities with respect to the Lebesgue measure on $\Bbb R.$
\end{thm}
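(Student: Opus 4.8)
The plan is to establish the statement first for the stationary measures $\mu^{\eps}$ of the fluctuation--dissipation approximations of \eqref{NLS} entering the construction of $\mu$, and then to propagate it through the inviscid limit. Recall that $\mu^{\eps}$ is the law of a stationary solution $u^{\eps}$ of a damped stochastic NLS obtained by adding to \eqref{NLS} a dissipation term of order $\eps$ together with a nondegenerate white-in-time noise of amplitude of order $\sqrt{\eps}$ (normalized so that $A_{0}$ is prescribed), and that $\mu^{\eps}\rightharpoonup\mu$ along a subsequence. Since $M$ and $E$ are conserved by the Hamiltonian flow of \eqref{NLS}, It\^o's formula applied to $t\mapsto M(u^{\eps}(t))$ and $t\mapsto E(u^{\eps}(t))$ produces real-valued It\^o processes whose drifts receive nothing from the Schr\"odinger nonlinearity, only from the dissipation and the It\^o correction, and whose martingale parts $\mathcal M^{\eps}_{M}$, $\mathcal M^{\eps}_{E}$ satisfy
\[
d\langle\mathcal M^{\eps}_{M}\rangle_{t}\asymp \eps\,\langle Q u^{\eps}(t),u^{\eps}(t)\rangle\,dt,\qquad d\langle\mathcal M^{\eps}_{E}\rangle_{t}\asymp \eps\,\bigl\|Q^{1/2}\bigl(-\Delta u^{\eps}(t)+|u^{\eps}(t)|^{2q}u^{\eps}(t)\bigr)\bigr\|_{L^{2}}^{2}\,dt ,
\]
$Q$ being the (nondegenerate) covariance of the noise. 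The first of these vanishes only at $u^{\eps}(t)=0$ because $Q^{1/2}$ is injective; the second vanishes only there as well, since $-\Delta u+|u|^{2q}u=0$ forces $\|\nabla u\|_{L^{2}}^{2}+\|u\|_{L^{2q+2}}^{2q+2}=0$ (here the defocusing sign is used), i.e.\ $0$ is the only critical point of $E$. As $\mu^{\eps}$ is atomless, the stationary path $u^{\eps}$ spends a.s.\ zero time at $0$.

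The first step is to show that $\mu^{\eps}\circ M^{-1}$ and $\mu^{\eps}\circ E^{-1}$ are absolutely continuous with respect to Lebesgue measure on $\mathbb R$. Fix a Lebesgue-null set $N\subset\mathbb R$. The occupation times formula for the continuous semimartingale $X_{t}=M(u^{\eps}(t))$ gives, almost surely,
\[
\int_{0}^{1}\indFn{\{X_{t}\in N\}}\,d\langle\mathcal M^{\eps}_{M}\rangle_{t}=\int_{\mathbb R}\indFn{N}(a)\,L^{a}_{1}\,da=0 ,
\]
$L^{a}_{1}$ being the local time of $X$. Since the density of $d\langle\mathcal M^{\eps}_{M}\rangle_{t}$ in $t$ is a.s.\ strictly positive for a.e.\ $t$, it follows that $\indFn{\{X_{t}\in N\}}=0$ for a.e.\ $t$ almost surely; integrating in $t$, taking expectations, and using stationarity yields $\mu^{\eps}(\{u:M(u)\in N\})=\mathbb P(X_{0}\in N)=0$. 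The same argument with $\mathcal M^{\eps}_{E}$ handles $E$, so $\mu^{\eps}\circ M^{-1}$ and $\mu^{\eps}\circ E^{-1}$ have densities $\rho^{\eps}_{M},\rho^{\eps}_{E}\in L^{1}(\mathbb R)$.

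The remaining --- and genuinely delicate --- step is to transfer absolute continuity to the limit $\eps\to 0$, which is not automatic since it is unstable under weak convergence. For this I would use that, by stationarity, the one-dimensional marginal law of $M(u^{\eps})$ solves the stationary Fokker--Planck relation $\tfrac12\partial_{m}^{2}\bigl(\bar a^{\eps}\rho^{\eps}_{M}\bigr)=\partial_{m}\bigl(\bar b^{\eps}\rho^{\eps}_{M}\bigr)$ on $(0,\infty)$ with vanishing flux, where $\bar a^{\eps}(m)$, $\bar b^{\eps}(m)$ are the conditional expectations, given $M(u^{\eps})=m$, of the diffusion and drift coefficients above; one integration gives $\bar a^{\eps}(m)\rho^{\eps}_{M}(m)=c^{\eps}\exp\!\bigl(\int^{m}2\bar b^{\eps}/\bar a^{\eps}\bigr)$, and the point is that the prefactor $\eps$ cancels in the ratio $\bar b^{\eps}/\bar a^{\eps}$. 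Using this together with the uniform exponential moment bounds on $\mu^{\eps}$ inherited from the identities in Theorems \ref{main thm1} and \ref{main thm2} --- to control the normalization $c^{\eps}$, the tails as $m\to\infty$, and the region where $\bar a^{\eps}$ nearly degenerates --- one obtains $\sup_{\eps}\|\rho^{\eps}_{M}\|_{L^{p}_{loc}(\mathbb R)}<\infty$ for some $p>1$, and likewise for $\rho^{\eps}_{E}$. Along the subsequence realizing $\mu^{\eps}\rightharpoonup\mu$, the pushforwards $\mu^{\eps}\circ M^{-1}$, $\mu^{\eps}\circ E^{-1}$ converge weakly to $\mu\circ M^{-1}$, $\mu\circ E^{-1}$ (in the Skorokhod realization $M(u^{\eps}(t))$ and $E(u^{\eps}(t))$ converge for a.e.\ $t$, consistently with $E$ being conserved by the limiting flow), while the uniform $L^{p}_{loc}$ bound yields $L^{p}_{loc}$-weak compactness of the densities; the limits are densities of the laws of $M$ and $E$ under $\mu$. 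The main obstacle is exactly this $\eps$-uniform density estimate: the martingales driving $M$ and $E$ have amplitude of order $\eps$ and degenerate at $u=0$, so only the fluctuation--dissipation balance built into $\mu^{\eps}$ makes the powers of $\eps$ disappear. The computations --- essentially the same in the subcritical, critical and supercritical regimes --- are carried out in \cite{armen_nondegcgl} and in \cite[Theorem~9.2 and Corollary~9.3]{syNLS7}.
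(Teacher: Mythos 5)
First, note that the paper does not actually prove this theorem: it defers entirely to \cite{armen_nondegcgl} and to Theorem 9.2 and Corollary 9.3 of \cite{syNLS7}, so your proposal is being compared with the arguments of those references rather than with a proof in the text. Your first step is essentially the argument used there and is sound: apply It\^o's formula to $M(u^\al)$ and $E(u^\al)$ for the stationary solution of the fluctuation--dissipation equation, observe that the Hamiltonian part contributes nothing, and use the occupation-times formula together with the nondegeneracy of the diffusion coefficients away from $u=0$ (for $E$ this uses that $-\Delta u+|u|^{2q}u=0$ forces $u=0$ in the defocusing case, and it tacitly requires $a_n\neq 0$ for all $n$, an assumption the paper never states but which is needed for the theorem). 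This gives absolute continuity of $\mu^N_\al\circ M^{-1}$ and $\mu^N_\al\circ E^{-1}$ for each fixed $(\al,N)$.

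The genuine gap is in your limit passage. The ``stationary Fokker--Planck relation'' $\tfrac12\partial_m^2(\bar a^\al\rho^\al_M)=\partial_m(\bar b^\al\rho^\al_M)$ for the one-dimensional marginal is not available: the pushforward of an infinite-dimensional diffusion under a scalar functional does not satisfy a closed Kolmogorov equation; the coefficients $\bar a^\al,\bar b^\al$ are conditional expectations given $M(u)=m$, whose very definition and regularity presuppose the absolute continuity and smoothness of $\rho^\al_M$ that you are trying to establish, and the vanishing-flux boundary condition is unjustified. The route actually taken in the cited references is both simpler and uniform: Tanaka's formula plus stationarity give $\sup_a\E L^a_T\leq \al T\,\E|b_0|$, where $\E|b_0|$ is controlled by the stationary bounds on $\E\mathcal{M}(u)$ and $\E\mathcal{E}(u)$ uniformly in $(\al,N)$; combined with the occupation-times identity this yields $\E[\sigma_0^2\,g(X_0)]\leq C\,\|g\|_{L^1(\R)}$ with $C$ independent of $(\al,N)$, an estimate that passes directly to the weak limit and gives a bounded density for the $\sigma^2$-weighted law of $M(u)$ (resp.\ $E(u)$) under $\mu$. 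Two further points you leave unaddressed must then be handled: (i) removing the weight $\sigma_0^2$, which is delicate because $\sigma_{0,M}^2=\sum_n|a_n|^2\langle u,e_n\rangle^2$ can be small on a level set $\{M(u)=m\}$ when the mass concentrates at high frequencies where $a_n$ is small (this is where the uniform $H^\beta$ moment bounds enter); and (ii) ruling out an atom of $\mu\circ M^{-1}$ and $\mu\circ E^{-1}$ at $0$, i.e.\ showing $\mu(\{u=0\})=0$ for the limiting measure, which does not follow from the identity $\int\mathcal{M}(u)\,\mu(du)=A_0/2$ alone and requires a separate argument.
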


\subsubsection{Discussion on the setting}
In what follows, let us recall that `subcritical/supercritical' data means the initial condition being smoother/rougher than the scaling  $s_c$ of NLS, while `energy supercritical' NLS/nonlinearities refers to the scaling of the NLS being greater than the energy (when $d=3$, it is equivalent to saying that the nonlinear power $q$ being greater than the energy critical power $2$). 

Let us look more closely at our setting.

\paragraph{$\bullet$ \it Energy supercritical nonlinearities}

We are interested in the well-posedness theory for the energy supercritical models in this current paper. In such energy supercritical range ($s_c >1$), the energy conservation law is supercritical to the scaling, hence weaker to use when compared to the energy critical and energy subcritical scenarios. This is in fact a quite interesting and rich setting. As we mentioned above, there have been studies on the deterministic critical global well-posedness in the Euclidean spaces, under the assumption that  the critical Sobolev norm of the solution stays uniformly bounded, see for example \cite{kv}.  Also \cite{bul} showed that for radially  NLS with an energy supercritical nonlinearity in $\R^3$, with some additional regularity on the initial datum  the global well-posedness remains true  if a solution stays bounded in slightly subcritical Sobolev norms. On the probabilistic side, the first author of this paper was able to show almost sure global well-posedness and the  invariance of the  measure for the energy supercritical NLS on $\T^3$ with $H^s, s \geq 2$ initial data in \cite{syNLS7}. From the blow-up perspective,  recently a groundbreaking work \cite{mrrs} proved the existence of smooth and well localized spherically symmetric initial data such that the corresponding unique strong solution blows up in finite time  in dimensions $d \geq 5$.

However, as we mentioned earlier the question of the global existence for energy supercritical models in fact remains open in many contexts, subcritical, critical and supercritical. To the best knowledge of the authors, there are no known probabilistic global well-posedenss results in the energy supercritical setting with supercritical initial data and few studies with subcritical initial data. Both of our results focus on the energy supercritical contexts and especially the second result deals with this open problem in very rough regularities while the first result deals with all the subcritical regularities left open by \cite{syNLS7}.

The first result mainly concerns the globalization in the subcritical setting with algebraic energy supercritical nonlinearities (the scaling $s_c >1$ and initial data in $H^s, \,  s> s_c$). Our original interest was to study the general nonlinearities, however in order to obtain a satisfactory local theory with suitable control on the growth of the solution, we develop the multilinear estimates, which consequently restrict our attention to the odd integer powers. Such algebraic nonlinearities fall mainly into the energy supercritical regime (except when $q=1$, $s_c = \frac{1}{2}$ energy subcritical and  when $q=2$, $s_c = 1$  energy critical{\footnote{Since the proof sees no differences in these two cases, we include them for completeness.}}).

Theorem \ref{main thm2} gives an answer to the open question that we mentioned above. Roughly speaking, the second result investigates the good behavior of the solution with rough data while \cite{mrrs} studies the bad behavior with smooth data. Here we consider the NLS  with generic nonlinear power $q> 2$  (the  energy supercritical range) with critical and supercritical data (the scaling $s_c >1$  and  initial data  in $H^s,  s \leq   s_c$). The reason why we are not restricted only on algebraic nonlinearities here is that our globalization argument employed in this part does not rely on a local theory at all. On the other hand, due to the absence of the local theory,  the conclusion is less strong compared to the first result. That is,  in contrast to our first result we lose the individual control of the solution, and there is also lack of estimates on the growth of the solutions.

\paragraph{$\bullet$ \it Compact manifold}
Besides energy supercritical models, we are interested in the bounded manifold. In fact, for dispersive equations, the bounded manifold settings are less favorable due to the weaker dispersion. Mathematically we can see this  phenomenon (`loss of regularity')  in the Strichartz estimates on the bounded manifolds.  For example in \cite{bss} the loss of $\frac{1}{p}$ derivatives  was established for compact Riemannian manifold $\Omega$ with boundary
\begin{align}\label{eq loss of reg}
\norm{e^{\i t \Delta} f }_{L^P ([0,T] ; L^q (\Omega))} \leq C \norm{f}_{H^{s+ \frac{1}{p}} (\Omega)}
\end{align}
for  fixed finite $T$, $p > 2$, $q < \infty$ and $\frac{2}{p} + \frac{d}{q} = \frac{d}{2} -s$ (see Remark \ref{rmk Strichartz} for more detailed discussion). To beat the weaker dispersion, we assume the radial symmetry on the initial data in both of our main theorems. Actually under this assumption we can benefit from the properties of  eigenfunctions of the radial Laplace operator, which is extremely useful in the proof of the local theory in the first result. More precisely, it is known that the functions
\begin{align*}
e_n(r) = \frac{\sqrt{2} \sin (\pi n r)}{r} , \quad n \in \N^+,
\end{align*}
(where $r = \abs{x}$)  are the radial eigenfunctions of the Laplace operator $-\Delta$ with Dirichlet boundary conditions, associated to eigenvalues $\lambda_n = (\pi n)^2$.

With the differences and difficulties in our setting well explained, let us give the main ideas of the proofs.

\subsubsection{Outline of the proofs}

\paragraph{$\bullet$ \it Theorem \ref{main thm1}}

As we mentioned above, due to the `loss of regularity' in the Strichartz estimates, it is not easy to obtain the local theory for the full subcritical range (a direct Sobolev embedding gives the  local well-posedness in $H^{\frac{3}{2}+}$, which leaves a gap in the local theory $s > \frac{3}{2} > s_c$). To defeat this loss, we first develop the  multilinear Strichartz estimates based on the decay given by the eigenfunctions of the radial Dirichlet Laplacian. A substantial benefit from these multilinear estimates is that they close the regularity gap that we lose in linear Strichartz (see Remark \ref{rmk Strichartz} for detailed discussion). To best utilize the  multilinear estimates, we consider the algebraic nonlineaities in the first result.

Our multilinear estimates are generalized  from \cite{an}, where the author proved the bilinear Strichartz estimates in the same setting and obtained the local well-posedness for the cubic NLS in the unit ball of $\R^3$ with data in $H^s, \, \frac{1}{2} < s < 2$. It is worth mentioning that the improvement of the loss of regularity in the bilinear estimates of \cite{an} is due to a counting lemma of number theory. In our case, doing estimates with more functions naturally increases the difficulties of the analysis, which forces us to understand this  counting lemma in a better way. Let us also mention that there is an optimization argument employed in this proof to obtain a better upper bound in the multilinear estimates.   Another ingredient in the multilinear estimates is that we take advantage of the nice transfer principle{\footnote{With {\it  transfer principle} we refer to the property that once one proves a certain estimate for the linear solutions, it automatically holds for any general functions with suitable norms adjusted.}} on the  Fourier restriction spaces or Bourgain spaces, where the estimates are lying. Therefore  we are free to reduce the estimates to linear solutions and take advantage of the decay  in the eigenfunctions of the  Dirichlet  Laplacian.

In comparison with \cite{syNLS7}, thanks to the multilinear estimates, we are able to lower the  local well-posedness index all the way to the scaling $s_c$ of the NLS while \cite{syNLS7} considered the very subcritical data in $H^{\frac{d}{2}+}$. Let us also mention that \cite{bourbulNLS} established a probabilistic supercritical local theory for the cubic NLS by taking  the random initial data in  $H^s , s < \frac{1}{2}$ (and then they globalize by an invariance consideration). However according to Theorem 4 of \cite{atGRS} the nonlinear part of the energy is not integrable against the Gaussian measure if $q>2$ when the equation is posed on the unit ball of $\R^3$.  We believe that a similar supercritical local theory holds with general nonlinearity if one follows the argument in \cite{bourbulNLS} using a different type of measure. But we are not going to address this  in the present work. Here we employ a pure deterministic approach that will give a local theory that does not depend on the structure of the probability space that we will construct in the next step.

With this wider range in the local theory, we are now in a position to globalize the solution. We make no attempt to reach out to the Gibbs measure, since it is very hard to jump over the huge regularity gap between the initial data living in $s > s_c >1$ and the Gibbs measure supported on $H^{\frac{1}{2}-} $ (because of the radial assumption).  Also with our robust deterministic local theory, we are more interested in the fluctuation-dissipation method in spirit of the Gibbs measure manner, which was first used in \cite{syNLS7}.

We then consider a suitable fluctuation-dissipation modification of the finite dimensional approximation equation by adding a  dissipation term like $f( -\Delta , \norm{u}_{H^{\beta}}) u$ and a temporal white noise  into the equation,  
\begin{align}\label{eq Intro SNLS}
du = \i (\Delta u - \Pi^N \abs{u}^{2q} u) \, dt - \alpha f(-\Delta , \norm{u}_{H^{\beta}}) u \, dt +  \sqrt{\alpha} \, \text{ white noise} .
\end{align}
Here  $f (-\Delta, x) $ is a function satisfying certain monotonicity property in the second variable and $\alpha$ is the viscosity parameter. In particular, we choose $f (-\Delta, \|u\|_{H^{\beta-}})$ of the form $e^{\rho ( \|u\|_{H^{\beta-}})}[(-\Delta)^{\beta -1}u +\Pi^N|u|^{2q}u]$ (where $\beta$ is any subcritical regularity), which serves best as a dissipation term in  our setting (energy supercritical NLS with subcritical data). Actually, there are infinite ways of manipulating such dissipation term (we can vary both variables of $f$ or even the form of $f$), and we would like to appreciate  this great flexibility, which gives us the possibility to treat variety of settings, for example a different scaling or a different class of initial data. We will see another choice of $f$ in the second result.

Now the following proof falls into two parts, that is, taking the inviscid limit and  infinite dimensional limit. First on the stochastic Galerkin approximation, we construct stationary measures whose bounds are independent on both the viscosity parameter and  the dimension of the approximating equation. Here the well chosen dissipation term in \eqref{eq Intro SNLS} plays a significant role in giving certain strong uniform bounds. These bounds imply large deviations controls of the measures and play a crucial role in the so-called Bourgain invariant measure argument (see for example \cite{bourg94}), which is one key-ingredient in our proof. Accordingly, we are able to take the inviscid limit. Then with the Bourgain's argument, we use the invariance of the measure as a conserved quantity and take the  infinite dimensional limit assisted by the robust local theory and independence of  the dimension of the truncated system.

\paragraph{$\bullet$ \it Theorem \ref{main thm2}}

In the second result, since we have no good (deterministic) supercritical local theory, we would like to construct the global flow using a different method -- compactness argument. Again, we first consider \eqref{eq Intro SNLS} with $f$ delicately constructed  of the form $ (-\Delta)^{\beta -1}u +e^{\|\Pi^N|u|^{2q}u\|_{L^2}^2}\Pi^N|u|^{2q}u+\Pi^Ne^{|u|^2}u$  where $\beta\in (1,s_c]$. This is actually an evidence of the flexibility of choosing dissipation terms that we mentioned above. An important step in our strategy is the choice of a suitable dissipation. For instance, we could `prescribe' an $L^\infty$ norm of the solution in the dissipation in order to guarantee properties such as uniqueness, continuity or regularity for the limiting equation. However we also need an identity such as \eqref{Id_intro} to ensure the global existence for large data. In order to obtain this identity a compactness property is required, which completely prohibits the $L^\infty$ regularity in our setting. Therefore this choice of dissipation is a tricky process where we have to balance the freedom in the choices of $f$ in \eqref{eq Intro SNLS} and the identity constraint.  It turns out that the term $\Pi^Ne^{|u|^2}u$ serves well for both the identity and  regularity problems. In fact, by Taylor expanding it, we see that this new element in the dissipation  gives us the control of all the $L^p$ norms of the solution and the `regularization counterparts' of these $L^p$ norms providing the compactness. Also, this allows to gain  regularity properties in the supercritical regularity setting. 

The construction of the corresponding invariant measures for the finite dimensional approximating equation follows in a similar fashion as in the previous result. However, without a local theory, we can not perform the Bourgain's argument as before to pass to the infinite dimensional limit. Instead, we use a compactness argument with the Skorokhod representation theorem as an excellent substitute.

As we mentioned earlier, certain classes of solutions have been constructed to demonstrate lack of local well-posedness, in particular in the sense that the dependence of solutions upon initial data fails to be uniformly continuous. In fact, the uniqueness property and the continuity requirement in the well-posedness definition are generally unfavorable and very likely to fail. In our second result, we prove  the uniqueness via a nontraditional  way, that is, we utilize the radial Sobolev embedding with Gronwall's inequality,  then obtain the uniqueness by a limiting argument. As for the continuity property, we employ a limiting argument as well, in which the analysis at the origin of the ball (where the singularity may occur) depends on information on the data. We believe this new approach may be of an independent interest.

\subsection{Organization of the paper}
In Section \ref{sec Preliminaries}, we introduce the notations and the functional spaces with their properties needed in this paper. We prove Theorem \ref{main thm1} in Sections \ref{sec LWP}, \ref{Sect:Galerk} and \ref{Sect:Alg:pow}. In Section \ref{sec LWP}, we first present the local well-posedness result via multilinear estimates. In Section \ref{Sect:Galerk}, we work on the fluctuation-dissipation equations and the  inviscid limits. Then Section \ref{Sect:Alg:pow} is devoted to the proof of the almost sure global well-posedness and the invariance of the measures. We finally show Theorem \ref{main thm2} in Section \ref{sec supercritical} using a compactness argument.

\subsection*{Acknowledgement} 
X.Y. was funded in part by the Jarve Seed Fund and an AMS-Simons travel grant. Both authors are very grateful for the anonymous referees for valuable comments.

\section{Preliminaries}\label{sec Preliminaries}
In this section, we introduce  the notation and the functional spaces with their properties needed in this paper. 
\subsection{Notation}
We define on a time interval $I$
\begin{align*}
\norm{f}_{L_t^q L_x^r (I \times \Theta)} : = \square{\int_I \parenthese{\int_{\Theta} \abs{f(t,x)}^r \, dx}^{\frac{q}{r}} dt}^{\frac{1}{q}}.
\end{align*}

For $x\in \R$, we set $\inner{x} = (1 + \abs{x}^2)^{\frac{1}{2}}$. We adopt the usual notation that $A \lesssim  B$ or $B \gtrsim A$ to denote an estimate of the form $A \leq C B$ , for some constant $0 < C < \infty$ depending only on the {\it a priori} fixed constants of the problem. We write $A \sim B$ when both $A \lesssim  B $ and $B \lesssim A$.

For a Banach space $E$, we denote by $C_b(E)$ the space of bounded continuous functions $f:E\to\R$. $\mathcal{B}(E)$ is the Borel $\sigma-$ algebra, and $\mathfrak{p}(E)$ the set of Borel probability measures on $E$. We also denote by $B_R(E)$ the closed ball in $E$ of radius $R$ and centered at $0.$

In Section \ref{Sect:Galerk} we introduce some more notation that we will need in the proof.

\subsection{$H_{rad}^s$ spaces}
We denote $e_n (r)$ to be the eigenfunctions of the radial Laplace operator $\Delta$ with Dirichlet boundary condition $\Theta$, and $e_n$'s have the following explicit form
\begin{align}\label{eq e_n}
e_n(r) = \frac{ \sqrt{2} \sin (n \pi r)}{r}, \qquad n = 1,2,3,\cdots ,
\end{align}
where $r = \abs{x}$. The eigenvalues associated to $e_n$ are 
\begin{align}\label{eq z_n}
\lambda_n = z_n^2 = (\pi n)^2 .
\end{align}
With this notation, we have the following estimates  (Lemma 2.5 in \cite{AT}) on the norms of the eigenfunctions:
\begin{align}\label{eq e_n bdd}
\norm{e_n}_{L_x^p} \lesssim 
\begin{cases}
1 , & \text{ if } 1 \leq p < 3,\\
(\ln n)^{\frac{1}{3}} & \text{ if } p= 3,\\
n^{1-\frac{3}{p}} , & \text{ if } p> 3 
\end{cases} 
\end{align}
Moreover, $(e_n)_{n=1}^{\infty}$ form an orthonormal bases of the Hilbert space of $L^2$ radial functions on $\Theta$. That is, 
\begin{align*}
\int e_n^2 \, dL = 1
\end{align*}
where $dL = \frac{1}{4\pi} r^2 \, d\theta dr$ is the normalized Lebesgue measure on $\Theta$. 
Therefore, we have the expansion formula for a function $u \in L^2 (\Theta)$, 
\begin{align}\label{eq proj}
u=\sum_{n=1}^{\infty} (u , e_n) e_n .
\end{align}
For $\sigma \in \R$, we define the Sobolev space $H^{\sigma} (\Theta)$ on the closed unit ball $\Theta$ as 
\begin{align*}
H_{rad}^{\sigma} (\Theta) : = \bracket{ u = \sum_{n=1}^{\infty} c_n e_n, \, c_n \in \C : \norm{u}_{H^{\sigma} (\Theta)}^2 = \sum_{n=1}^{\infty} (n \pi)^{2\sigma} \abs{c_n}^2 < \infty } .
\end{align*}
We can equip $H_{rad}^{\sigma} (\Theta)$ with the natural complex Hilbert space structure. In particular, if $\sigma =0$, we denote $H_{rad}^{0} (\Theta)$ by $L_{rad}^2 (\Theta)$. For $\gamma \in \R$, we define the map $\sqrt{-\Delta}^{\gamma}$ acting as isometry from $H_{rad}^{\sigma} (\Theta)$ and $H_{rad}^{\sigma - \gamma} (\Theta)$ by
\begin{align*}
\sqrt{-\Delta}^{\gamma} (\sum_{n=1}^{\infty} c_n e_n) = \sum_{n=1}^{\infty} z_n^{\gamma} c_n e_n .
\end{align*}
We denote $S(t) = e^{\i t \Delta}$ the flow of the linear Schr\"odinger equation with Dirichlet boundary conditions on the unit ball $\Theta$, and it can be written into
\begin{align*}
S(t) (\sum_{n=1}^{\infty} c_n e_n) = \sum_{n=1}^{\infty} e^{-\i t z_n^2 } c_n e_n.
\end{align*}

\begin{lem}[Radial Sobolev embedding]\label{lem Radial Sobolev}
For $r = \abs{x}$ and any function $u$
\begin{align*}
\norm{r u}_{L_x^{\infty}} \lesssim \norm{u}_{H_x^1}.
\end{align*}
\end{lem}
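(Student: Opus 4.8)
The plan is to use radial symmetry to reduce the estimate to a one-dimensional weighted inequality on the interval $(0,1)$, and then to run a Hardy-type argument anchored at the endpoint $r=1$, where the Dirichlet condition built into $H^1_{rad}(\Theta)$ kills the boundary term. So the estimate is really a form of Strauss's radial lemma in dimension three.

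First I would reduce to $u$ smooth and radial by density, writing $u=u(r)$ with $u(1)=0$. With the normalized measure $dL=\frac{1}{4\pi}r^2\,d\theta\,dr$, integrating out the angular variable gives, for radial $u$, the identity $\|\nabla u\|_{L^2(\Theta)}^2=\int_0^1|u'(r)|^2 r^2\,dr$, and $\|\nabla u\|_{L^2(\Theta)}\le\|u\|_{H^1}$ (indeed, in the spectral convention $\|u\|_{H^1}=\|\sqrt{-\Delta}\,u\|_{L^2}=\|\nabla u\|_{L^2}$, using Dirichlet boundary conditions to integrate by parts).

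Then, for a fixed $r\in(0,1)$, I would write $u(r)=-\int_r^1 u'(s)\,ds$ and apply Cauchy--Schwarz after splitting off the weight $s$:
\begin{align*}
|u(r)|\le\int_r^1|u'(s)|\,s\cdot\frac1s\,ds\le\Bigl(\int_r^1|u'(s)|^2 s^2\,ds\Bigr)^{1/2}\Bigl(\int_r^1\frac{ds}{s^2}\Bigr)^{1/2}\le\|u\|_{H^1}\Bigl(\frac1r-1\Bigr)^{1/2}\le\|u\|_{H^1}\,r^{-1/2}.
\end{align*}
Multiplying by $r$ yields $r\,|u(r)|\le r^{1/2}\|u\|_{H^1}\le\|u\|_{H^1}$ for every $0<r<1$; since $ru$ is radial, taking the supremum over $\Theta$ gives $\|ru\|_{L^\infty_x}\lesssim\|u\|_{H^1}$. (The same bound shows $r\,u(r)\to0$ as $r\to0$, so the left-hand side is genuinely finite.)

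The only delicate point — and the step I would be most careful to write out — is the justification of the endpoint identity $u(r)=-\int_r^1 u'(s)\,ds$: away from the origin the weight $r^2$ is bounded below, so $u\in H^1((r_0,1))$ for every $r_0>0$, hence $u$ is absolutely continuous up to $r=1$ and has a trace there, which vanishes because $u\in H^1_{0,rad}(\Theta)$. If one instead wanted the estimate for a radial $H^1(\Theta)$ function carrying no boundary condition, one would keep the boundary contribution $u(1)^2$ in the fundamental theorem of calculus and control $|u(1)|\lesssim\|u\|_{H^1}$ by the one-dimensional Sobolev embedding $H^1((\tfrac12,1))\hookrightarrow L^\infty$; the rest of the argument is unchanged.
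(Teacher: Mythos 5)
Your proof is correct, and it follows the same skeleton as the paper's: reduce to a radial function of $r$, apply the fundamental theorem of calculus on $[r,1]$ using the Dirichlet condition at $r=1$, and conclude by Cauchy--Schwarz against the weight $s^2$. The only difference is in how Cauchy--Schwarz is deployed: the paper differentiates $|u|^2$ and pairs $u$ with $u'$, arriving at $|u(r)|^2\lesssim r^{-2}\norm{u}_{L^2}\norm{\nabla u}_{L^2}$ (the classical Strauss form), whereas you integrate $u'$ directly and absorb the weight into $\int_r^1 s^{-2}\,ds\le r^{-1}$, getting $|u(r)|\lesssim r^{-1/2}\norm{\nabla u}_{L^2}$ --- a bound that uses only the gradient and is in fact stronger near the origin, though of course both imply the stated estimate after multiplying by $r$. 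Your extra care about the trace at $r=1$ and the absolute continuity away from the origin is a legitimate tightening of the paper's one-line ``approximation argument''; nothing in your write-up is a gap.
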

\begin{proof}[Proof of Lemma \ref{lem Radial Sobolev}]
By an approximation argument, we can assume $u \in C_0^{\infty}$. Using the fundamental theorem of calculus with the boundary condition on $\Theta$,  we write
\begin{align*}
\abs{u}^2 =-  \int_r^1 2 u(R) u'(R) \, dR \lesssim \frac{1}{r^2} \parenthese{\int_r^1 u (R) R^2 \, dR }^{\frac{1}{2}} \parenthese{\int_r^1 u' (R) R^2 \, dR }^{\frac{1}{2}} \lesssim \frac{1}{r^2} \norm{u}_{H_x^1}^2 .
\end{align*}
Hence the proof of Lemma \ref{lem Radial Sobolev} is complete.
\end{proof}

\subsection{$X_{rad}^{s,b}$ spaces}
Recall the $L^2$ orthonormal basis of eigenfunctions $( e_n )_{n=1}^{\infty}$ of the Dirichlet Laplacian $-\Delta$ with eigenvalues $z_n^2$ on $\Theta$ defined in the previous subsection.  Then we define the $X^{s,b}$ spaces of functions on $\R \times \Theta$ which are radial with respect to the second argument.
\begin{defi}[$X_{rad}^{s,b}$ spaces]\label{defn Xsb}
For $s \geq 0$, $b \in \R$ and $u(t) = \sum_{n=1}^{\infty} c_n (t) e_n$, we define
\begin{align}\label{eq Xsb}
\norm{u}_{X_{rad}^{s,b} (\R \times \Theta)}^2 = \sum_{n=1}^{\infty} \norm{\inner{\tau + z_n^2}^b \inner{z_n}^{s} \widehat{c_n} (\tau) }_{L^2(\R_{\tau} ) }^2  ,
\end{align}
and 
\begin{align*}
X_{rad}^{s,b} (\R \times \Theta) = \{ u \in \mathcal{S}' (\R , L^2(\Theta)) : \norm{u}_{X_{rad}^{s,b} (\R \times \Theta)} < \infty \} .
\end{align*}
Moreover, for $u \in X_{rad}^{0, \infty} (\Theta) =  \cap_{b \in \R} X_{rad}^{0,b} (\Theta)$ we define, for $s \leq 0$ and $b \in \R$, the norm $\norm{u}_{X_{rad}^{s,b} (\R \times \Theta)}$ by \eqref{eq Xsb}.
\end{defi}
Equivalently, we can write the norm \eqref{eq Xsb} in the definition above into
\begin{align*}
\norm{u}_{X_{rad}^{s,b} (\R \times \Theta)} = \norm{S(-t) u}_{H_t^b H_x^s (\R \times \Theta)} .
\end{align*}

For $T > 0$, we define the restriction spaces $X_T^{s,b} (\Theta)$ equipped with the natural norm
\begin{align*}
\norm{u}_{X_T^{s,b} ( \Theta)} = \inf \{ \norm{\tilde{u}}_{X_{rad}^{s,b} (\R \times \Theta)} : \tilde{u}\big|_{(-T,T) \times \Theta} =u\} .
\end{align*}

\begin{lem}[Basic properties of $X_{rad}^{s,b}$ spaces]\label{lem X property1}
\begin{enumerate}
\item
We have the trivial nesting 
\begin{align*}
X_{rad}^{s,b} \subset  X_{rad}^{s' , b' } 
\end{align*}
whenever $s' \leq s$ and $b' \leq b$, and
\begin{align*}
X_{T}^{s,b} \subset  X_{T'}^{s,b} 
\end{align*}
whenever $T' \leq T$ .
\item
The $X_{rad}^{s,b}$ spaces interpolate nicely in the $s, b$ indices.
\item
For $b > \frac{1}{2}$, we have the following embedding
\begin{align*}
\norm{u}_{L_t^{\infty} H_x^{s} (\R \times \Theta) } \leq C \norm{u}_{X_{rad}^{s,b} (\R \times \Theta)}.
\end{align*}
\end{enumerate}
\end{lem}

\begin{lem}\label{lem X property2}
Let $b,s >0$ and $u_0 \in H_{rad}^s (\Theta)$. Then there exists $c >0$ such that for $0 < T \leq 1$,
\begin{align*}
\norm{S(t) u_0}_{X_{T}^{s,b} } \leq c \norm{u_0}_{H^s}.
\end{align*} 
\end{lem}

The proofs of Lemma \ref{lem X property1} and Lemma \ref{lem X property2}  can be found in \cite{an}.

\section{Local well-posedness with the subcritical regularities}\label{sec LWP}
Consider the NLS with odd-integer power nonlinearities
\begin{align}\label{eq PNLS}
\begin{cases}
\dt u=\i(\Delta u-  |u|^{2q}u) & \text{ on } \R \times \Theta\\
u(0,x) =u_0 & \text{ on } \Theta\\
u\big|_{\partial \Theta} (t,x) =0 ,
\end{cases}
\end{align}
where $q \in \N^+$.

The main result in this section is the following local theory.
\begin{thm}[Deterministic local well-posedness]\label{thm LWP}
Let $q \in \N^+$. For $u_0 \in H_{rad}^s (\Theta)$, $s > s_c = \frac{3}{2} - \frac{1}{q}$, \eqref{eq PNLS} is locally well-posed. More precisely, let us fix $s > s_c $, then there exist $b> \frac{1}{2}$, $\beta > 0$, $C >0$, $\widetilde{C} >0$ and $c \in (0,1]$ such that for every $R> 0$ if we set $T_R = c (R^{2q})^{-\beta}$, then for every $u_0 \in H_{rad}^s (\Theta)$ satisfying $\norm{u_0}_{H_{rad}^s (\Theta)} \leq R$, there exists a unique solution of \eqref{eq PNLS} in $X_{rad}^{s,b} ([-T, T] \times \Theta)$ with initial condition $u(0) = u_0$. Moreover,
\begin{align}
\norm{u}_{L_t^{\infty} H_x^{s} ([-T, T] \times \Theta) } \leq C \norm{u}_{X_{rad}^{s,b}([-T,T] \times \Theta)} \leq  \widetilde{C} \norm{u_0}_{H_{rad}^s (\Theta)} . \label{Local:increase}
\end{align}
\end{thm}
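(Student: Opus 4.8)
The plan is to set up a contraction-mapping (fixed-point) argument for the Duhamel formulation of \eqref{eq PNLS} in the Bourgain space $X_{rad}^{s,b}$ with $b>\frac12$ close to $\frac12$, exactly in the spirit of the cubic case in \cite{an} but using the multilinear Strichartz estimates that close the regularity gap down to $s>s_c$. Concretely, for a smooth cutoff $\eta$ supported near $[-1,1]$ and equal to $1$ on $[-\tfrac12,\tfrac12]$, I would study the map
\begin{align*}
\Phi(u)(t) = \eta(t/T) S(t) u_0 - \i\,\eta(t/T)\int_0^t S(t-t')\,|u|^{2q}u(t')\,dt',
\end{align*}
and show it is a contraction on the ball of radius $\widetilde C\|u_0\|_{H^s}$ in $X_{rad}^{s,b}([-T,T]\times\Theta)$ for $T=T_R$ suitably small.

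First I would record the three linear inputs. The homogeneous estimate $\|\eta(t/T)S(t)u_0\|_{X_T^{s,b}}\le c\|u_0\|_{H^s}$ is Lemma \ref{lem X property2}. The inhomogeneous (Duhamel) estimate in the $X^{s,b}$ scale gives $\|\eta(t/T)\int_0^t S(t-t')F(t')\,dt'\|_{X_T^{s,b}}\lesssim T^{\delta}\|F\|_{X_T^{s,b-1+\delta}}$ for some $\delta>0$ (the small power of $T$ comes from choosing $b-1+\delta<0<b$, and this is what produces the $T_R=c(R^{2q})^{-\beta}$ lifespan). The crucial nonlinear input is the multilinear estimate — proved earlier in this section/paper via the eigenfunction decay \eqref{eq e_n bdd} and the number-theoretic counting lemma, together with the transfer principle — of the form
\begin{align*}
\big\||u|^{2q}u\big\|_{X_T^{s,b-1+\delta}} \lesssim \|u\|_{X_T^{s,b}}^{2q+1},
\end{align*}
valid precisely for $s>s_c$ and $q\in\N^+$; the difference-version $\big\||u|^{2q}u-|v|^{2q}v\big\|_{X_T^{s,b-1+\delta}}\lesssim (\|u\|_{X_T^{s,b}}^{2q}+\|v\|_{X_T^{s,b}}^{2q})\|u-v\|_{X_T^{s,b}}$ follows by the same multilinear estimate applied to the telescoping identity, using that the power is an odd integer so $|u|^{2q}u$ is a genuine polynomial in $u,\bar u$.

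Combining these, for $u$ in the ball of radius $\Lambda=\widetilde C\|u_0\|_{H^s}\le \widetilde C R$ one gets $\|\Phi(u)\|_{X_T^{s,b}}\le c\|u_0\|_{H^s}+C\,T^{\delta}\Lambda^{2q+1}$ and $\|\Phi(u)-\Phi(v)\|_{X_T^{s,b}}\le C\,T^{\delta}\Lambda^{2q}\|u-v\|_{X_T^{s,b}}$; choosing $T^{\delta}\lesssim \Lambda^{-2q}\sim R^{-2q}$, i.e. $T_R=c(R^{2q})^{-\beta}$ with $\beta=1/\delta$ and $c$ small, makes $\Phi$ a contraction of the ball into itself, yielding existence and uniqueness of the fixed point in $X_{rad}^{s,b}([-T,T]\times\Theta)$. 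Uniqueness in the full space (not just the ball) follows by a standard continuity/bootstrap argument on subintervals. The bound \eqref{Local:increase} is then immediate: the first inequality is part (3) of Lemma \ref{lem X property1} ($X^{s,b}\hookrightarrow L^\infty_tH^s_x$ for $b>\tfrac12$), and the second is the fixed-point bound $\|u\|_{X_T^{s,b}}\le \widetilde C\|u_0\|_{H^s}$.

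The main obstacle is entirely in the multilinear Strichartz estimate feeding the nonlinear bound — everything else is the routine $X^{s,b}$ machinery. Establishing $\||u|^{2q}u\|_{X_T^{s,b-1+\delta}}\lesssim \|u\|_{X_T^{s,b}}^{2q+1}$ down to the scaling threshold $s>s_c$ requires generalizing An's bilinear estimate \cite{an} to $2q+1$ factors: after the transfer principle reduces matters to products of linear solutions $\sum_n e^{-\i t z_n^2}c_n e_n$, one must control $\|\prod_{j} e_{n_j}\|$-type quantities using the $L^p$ bounds \eqref{eq e_n bdd} on the eigenfunctions, and then handle the resulting sum over frequency tuples $(n_1,\dots,n_{2q+1})$ with a modulation/divisor-counting argument — the higher-order analogue of the number-theoretic counting lemma — closing by an optimization over how the derivatives $\langle z_{n_j}\rangle^s$ are distributed among high and low frequencies. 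This is where the hypothesis that $q$ is a positive integer (algebraic nonlinearity) is genuinely used, and it is the step that limits Theorem \ref{thm LWP} to such nonlinearities.
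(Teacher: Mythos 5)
Your proposal is correct and follows essentially the same route as the paper: a contraction argument for the Duhamel map in $X_{rad}^{s,b}$, using the homogeneous bound (Lemma \ref{lem X property2}), the inhomogeneous smoothing estimate with the $T^{1-b-b'}$ gain (Lemma \ref{lem Duhamel}, your $T^{\delta}$ with $\delta=1-b-b'$), and the nonlinear estimate $\||u|^{2q}u\|_{X^{s,-b'}}\lesssim\|u\|_{X^{s,b}}^{2q+1}$ from Proposition \ref{prop nonlinear est}, which is exactly where the multilinear Strichartz estimates and the algebraicity of the nonlinearity enter. The lifespan $T_R=c(R^{2q})^{-\beta}$ with $\beta=1/(1-b-b')$ and the bound \eqref{Local:increase} come out of the fixed-point scheme just as you describe.
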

Note that the the cubic case $q =1$ was studied in Theorem 1.1 in \cite{an}, and we include this case for completeness. From now on, for simplicity of notation, we write $H^s$ and $X^{s,b}$ for the spaces $H_{rad}^s$ and $X_{rad}^{s,b}$ defined in Section \ref{sec Preliminaries}.

\begin{rmq}
Notice that in Theorem \ref{thm LWP}, the local theory covers the full subcritical region. This is much stronger than the version that one gets with the embedding $H^{\frac{3}{2}+} (\R^3) \hookrightarrow L^{\infty} (\R^3)$ alone. That is, if we run the fixed point argument naively using the embedding itself (without any Strichartz estimates), then the solution map 
\begin{align*}
F (u)(t) : = e^{\i t\Delta}  u_0  - \i \int_0^t e^{\i (t-s) \Delta}  (\abs{u}^{2q} u) \, ds 
\end{align*}
will be bounded by
\begin{align*}
\norm{F u}_{H^{\gamma}}  \leq \norm{ u_0}_{H^{\gamma}} + \int_0^{T} \norm{\abs{u}^{2q} u}_{H^{\gamma}} \, ds   & \lesssim \norm{ u_0}_{H^{\gamma}} + T \norm{u}_{L_{t,x}^{\infty}([0,T] \times \Theta)}^{2q} \norm{u}_{L_t^{\infty} H_x^{\gamma} ([0,T] \times \Theta)}\\
& \lesssim \norm{ u_0}_{H^{\gamma}} + T \norm{u}_{L_t^{\infty} H_x^{\gamma} ([0,T] \times \Theta)}^{2q+1},
\end{align*}
for any $\gamma > \frac{3}{2}$. Accordingly, the local theory would be obtained for $\gamma > \frac{3}{2} > s_c$ instead of the full subcritical range $s > s_c$. Hence the multilinear Strichartz estimates play a fundamental role in improving the regularity in the local theory.
\end{rmq}

To prove the local well-posedness, we first present the following multilinear estimates, which are crucial to obtain the nonlinear estimates in Subsection \ref{subsec nonlinear}. 

\subsection{Multilinear estimates}
\begin{prop}[m-linear estimates]\label{prop m-linear}
For $j \in \{1,2,\cdots , m \}$ ($m \geq 3$), we assume that  $N_1 \geq N_2 \geq \cdots \geq N_m > 0$, and $u_j \in L_{rad}^2 (\Theta)$ satisfying 
\begin{align*}
\mathbb{1}_{\sqrt{-\Delta} \in [N_j , 2 N_j]} u_j = u_j , 
\end{align*}
we have the following m-linear estimates. 
\begin{enumerate}
\item
The m-linear estimate without derivatives.\\
For any $\varepsilon >0 $
\begin{align}\label{eq m-linear1}
\norm{\prod_{j=1}^m S(t) u_j}_{L_{t,x}^2 ((0,1) \times \Theta)} \lesssim N_2^{1- \frac{1}{2m-2}+\varepsilon} (N_3 \cdots  N_m)^{\frac{3}{2}-\frac{1}{2m-2}} \prod_{j=1}^m \norm{u_j}_{L_x^2 (\Theta)} .
\end{align}

\item
The m-linear estimate with derivatives.\\
Moreover, if $u_j \in H_0^1 (\Theta)$ and  for any $\varepsilon >0 $
\begin{align}\label{eq m-linear2}
\norm{ \nabla S(t) u_1 \prod_{j=2}^m S(t) u_j}_{L_{t,x}^2 ((0,1) \times \Theta)} \lesssim N_1 N_2^{1- \frac{1}{2m-2}+\varepsilon} (N_3 \cdots  N_m)^{\frac{3}{2}-\frac{1}{2m-2}} \prod_{j=1}^m \norm{u_j}_{L_x^2 (\Theta)}.
\end{align}
\end{enumerate}
\end{prop}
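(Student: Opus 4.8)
The plan is to reduce the $m$-linear estimate to an iterated application of a bilinear estimate, exploiting the multiplicativity of $L^2_{t,x}$-type bounds via Cauchy--Schwarz, and then to optimize how one pairs up the frequency-localized factors. First I would recall the bilinear Strichartz estimate on the ball from \cite{an}: for $N \geq M$ and $v, w$ frequency-localized at scales $N, M$ respectively, one has
\begin{align*}
\norm{S(t) v \, S(t) w}_{L^2_{t,x}((0,1) \times \Theta)} \lesssim M^{1 - \frac{1}{2} + \varepsilon} \norm{v}_{L^2_x} \norm{w}_{L^2_x}
\end{align*}
(or the analogous statement, possibly with an extra $M^{1/2}$ loss compared to the torus, reflecting the eigenfunction bounds \eqref{eq e_n bdd}), together with its derivative-loaded version obtained by noting that $\nabla S(t) v$ is, up to harmless factors, $S(t)$ applied to something of size $N \norm{v}_{L^2_x}$ since $v$ is localized at frequency $\sim N$. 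The transfer principle for $X^{s,b}$ spaces lets me work purely with free evolutions $S(t) u_j$, which is what the statement already does.

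The core mechanism I would use is: to bound $\norm{\prod_{j=1}^m S(t) u_j}_{L^2_{t,x}}$, write the product of $m$ free solutions as a product of $\lceil m/2 \rceil$ pairs (with one leftover factor handled in $L^\infty_{t,x}$ or absorbed when $m$ is odd), apply Hölder in $(t,x)$ to split into $L^2$-norms of pairwise products, and then apply the bilinear estimate to each pair. A cleaner and more robust route, which I expect is the one that actually gives the stated exponents, is an inductive one: peel off the top-frequency factor $S(t) u_1$, bound $\norm{S(t) u_1 \cdot G}_{L^2_{t,x}}$ where $G = \prod_{j=2}^m S(t) u_j$, by pairing $u_1$ with $u_2$ bilinearly (gaining $N_2^{1 - \frac{1}{2}+\varepsilon}$, say) while controlling $\prod_{j=3}^m S(t) u_j$ in $L^\infty_{t,x}$, using the crude bound $\norm{S(t) u_j}_{L^\infty_x} \lesssim \norm{S(t) u_j}_{H^{3/2+}_x} \lesssim N_j^{3/2+} \norm{u_j}_{L^2_x}$ from Sobolev embedding $H^{3/2+}(\R^3) \hookrightarrow L^\infty$. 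This already produces a bound of the shape $N_2^{1/2+\varepsilon}(N_3 \cdots N_m)^{3/2+\varepsilon} \prod \norm{u_j}_{L^2}$; the improvement to the claimed exponent $\frac{1}{2m-2}$ on each of $N_2, \dots, N_m$ comes from \emph{not} throwing all the lower factors into $L^\infty$ but instead distributing the bilinear gains: interpolating between the all-$L^\infty$ estimate and estimates where more pairs are taken bilinearly, and optimizing the interpolation parameter — this is the ``optimization argument'' alluded to in the introduction. The number-theoretic counting lemma underlying the bilinear estimate of \cite{an} (controlling the number of lattice configurations $z_{n_1}^2 \pm z_{n_2}^2 = $ const, i.e. gaps between squares) is what I would need to re-examine to make the gains on \emph{several} frequencies simultaneous rather than just on the pair; concretely, I would set up the resonance function $\Phi = \sum_j \pm z_{n_j}^2$ and count solutions with $\Phi$ in a unit window, tracking how the divisor-type bound degrades with $m$.

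For part (2), the estimate with a derivative on the first factor, I would simply observe that since $\mathbb{1}_{\sqrt{-\Delta} \in [N_1, 2N_1]} u_1 = u_1$, we have $\norm{\nabla S(t) u_1 \cdot \prod_{j=2}^m S(t) u_j}_{L^2_{t,x}} \lesssim N_1 \norm{S(t)\tilde u_1 \cdot \prod_{j=2}^m S(t) u_j}_{L^2_{t,x}}$ where $\tilde u_1 = \sqrt{-\Delta}\, u_1 / N_1$ still satisfies $\norm{\tilde u_1}_{L^2_x} \sim \norm{u_1}_{L^2_x}$ and the same frequency localization; then \eqref{eq m-linear2} follows from \eqref{eq m-linear1} applied to $\tilde u_1, u_2, \dots, u_m$. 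So the only real content is part (1), and within that the only real obstacle is the sharp frequency-counting that yields the exponent $\frac{3}{2} - \frac{1}{2m-2}$ on $N_3, \dots, N_m$ and $1 - \frac{1}{2m-2} + \varepsilon$ on $N_2$; everything else is Hölder, the transfer principle, and bookkeeping. I would expect to prove \eqref{eq m-linear1} first for $m = 3$ directly (where it should reduce fairly transparently to the bilinear estimate plus one $L^\infty$ factor, then improved by interpolation) and then induct on $m$, at each stage pairing the newly added lowest-frequency factor with an existing one and re-optimizing; the inductive step is where I anticipate the bulk of the technical care, particularly in verifying that the $\varepsilon$-losses do not compound into something worse than a single $\varepsilon$.
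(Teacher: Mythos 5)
Your route is genuinely different from the paper's, and the comparison is worth spelling out. The paper does not iterate the bilinear estimate at all: it expands each $u_j$ in the explicit radial eigenfunctions \eqref{eq e_n}, applies Parseval in the (periodized) time variable, and bounds the result by Cauchy--Schwarz in terms of two quantities: the counting bound $\#\Lambda_{N_1,\ldots,N_m}(\tau)=\mathcal{O}(N_2^{\varepsilon}N_3\cdots N_m)$, obtained by applying the two-variable Lemma \ref{lem counting} to $n_1^2+n_2^2=\tau-\sum_{j\geq 3}n_j^2$ exactly as in the bilinear case (the extra factor $N_3\cdots N_m$ merely counts the possible right-hand sides), and the explicit integral estimate $\|\prod_j e_{n_j}\|_{L^2}^2\lesssim (N_2\cdots N_m)^{2-\frac{1}{m-1}}$, proved by a change of variables and an optimization over a cutoff $a$ in $\int_0^\infty s^{-(2m-2)}\prod\sin^2(\cdot)\,ds$. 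So the exponent $\frac{1}{2m-2}$ comes from the $L^2$ norm of a product of radial eigenfunctions, not from the number theory; the part of your plan you expect to spend the most effort on (re-examining the counting lemma to make the gains ``simultaneous'') is aimed at the wrong target.

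More importantly, the ``crude'' bound you dismiss already proves the proposition, so the interpolation/induction scheme is unnecessary --- which is fortunate, because as described it is not a valid argument (two upper bounds for the same multilinear form in the same $L^2_{t,x}$ norm cannot be ``interpolated''; one can only take their minimum). Indeed, H\"older, the bilinear estimate \eqref{eq bilinear} applied to the pair $(u_1,u_2)$, and the Bernstein bound $\|S(t)u_j\|_{L^\infty_{t,x}}\lesssim N_j^{3/2}\|u_j\|_{L^2}$ for $j\geq 3$ give the bound $N_2^{\frac12+\varepsilon}(N_3\cdots N_m)^{\frac32}\prod_j\|u_j\|_{L^2}$, and since $N_j\leq N_2$ for $j\geq 3$,
\begin{align*}
N_2^{\frac{1}{2}}\prod_{j=3}^{m}N_j^{\frac{3}{2}}
= N_2^{1-\frac{1}{2m-2}}\prod_{j=3}^{m}N_j^{\frac{3}{2}-\frac{1}{2m-2}}\cdot\prod_{j=3}^{m}\Big(\frac{N_j}{N_2}\Big)^{\frac{1}{2m-2}}
\leq N_2^{1-\frac{1}{2m-2}}\prod_{j=3}^{m}N_j^{\frac{3}{2}-\frac{1}{2m-2}},
\end{align*}
which is exactly \eqref{eq m-linear1} (the $\varepsilon$-losses can all be moved onto $N_2$ for the same reason). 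The stated bound is saturated only when all frequencies are comparable; the paper's eigenfunction computation produces it in that uniform form directly, whereas your pairing argument yields a bound that is strictly stronger when the low frequencies separate.

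Finally, your reduction of \eqref{eq m-linear2} to \eqref{eq m-linear1} by writing $\nabla S(t)u_1$ as $N_1 S(t)\tilde u_1$ does not literally work: $\nabla e_n$ is not a multiple of $e_n$ (differentiating $\sin(n\pi r)/r$ produces cosine terms), so the gradient of a frequency-localized free solution is not the free evolution of a frequency-localized datum. The paper handles this by redoing the product computation with $\nabla e_{n_1}$ in place of $e_{n_1}$ (claim \eqref{eq Claim m-linear2}). In your scheme you should instead invoke the derivative-loaded bilinear estimate of \cite{an}, $\|\nabla S(t)u_1\,S(t)u_2\|_{L^2_{t,x}}\lesssim N_1N_2^{\frac12+\varepsilon}\|u_1\|_{L^2}\|u_2\|_{L^2}$, as a black box and run the same H\"older--Bernstein argument.
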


\begin{rmq}
When $m=2$, the bilinear estimates in \cite{an} read
\begin{align}\label{eq bilinear}
\norm{S(t) u_1 S(t) u_2}_{L_{t,x}^2 ((0,1) \times \Theta)} \lesssim N_2^{\frac{1}{2} +\varepsilon}  \norm{u_1}_{L_x^2 (\Theta)} \norm{u_2}_{L_x^2 (\Theta)} .
\end{align}
Moreover, if $u_1, u_2 \in H_0^1 (\Theta)$
\begin{align*}
\norm{ \nabla S(t) u_1 S(t) u_2}_{L_{t,x}^2 ((0,1) \times \Theta)} \lesssim N_1 \min \{ N_1,  N_2 \}^{\frac{1}{2} + \varepsilon} \norm{u_1}_{L_x^2 (\Theta)} \norm{u_2}_{L_x^2 (\Theta)} .
\end{align*}
\end{rmq}

\begin{rmq}\label{rmk Strichartz}
Notice that the m-linear estimates make up for the loss in derivatives in Strichartz estimates (although there is still a logarithmic loss in the multilinear estimates). In fact, such loss of  regularity appears  in the linear Strichartz estimates. More precisely, if we consider the two extreme end cases of the Strichartz estimates for radial data, we have
\begin{align*}
\norm{S(t) e_n}_{L_t^2 L_x^6 (I \times \Theta)} & = c \norm{\frac{\sin (n \pi r)}{r	}}_{L_x^6 (\Theta)} = c  n^{\frac{1}{2}} = c \norm{e_n}_{H^{\frac{1}{2}} (\Theta)} ,\\
\norm{S(t) e_n}_{L_t^{\infty} L_x^2 (I \times \Theta)} & = c  \norm{e_n}_{L_x^2 (\Theta)} . 
\end{align*}
There is a loss of $\frac{1}{2}$ derivative in the end point linear Strichartz estimate, although there is no loss in the trivial $L^2$ case. Then the loss of regularities of the cases in between can be described by interpolating the two cases above (for example, the $L_t^4 L_x^3$ pair has a loss of $\frac{1}{4}$ regularities). We can also observe the same loss of derivatives from \eqref{eq loss of reg} on general compact Riemannian manifolds with boundary. 

To have a better understanding of the regularity improvement in the m-linear estimates, we take $m=2$ (for simplicity) as an example, which is the bilinear estimate in \cite{an}. For $m \geq 3$,  we should  have very similar behaviors. When $m=2$, a naive estimate by H\"older inequality and Sobolev embedding is given by
\begin{align*}
\norm{ S(t) u_1 S(t) u_2}_{L_{t,x}^2 ((0,1) \times \Theta)} & \lesssim  \norm{ S(t) u_1}_{L_{t,x}^4 (\Theta)} \norm{ S(t)  u_2}_{L_{t,x}^4 (\Theta)} \lesssim \norm{ \abs{\nabla}^{\frac{1}{4}} S(t) u_1}_{L_{t}^4 L_x^{3} (\Theta)} \norm{ \abs{\nabla}^{\frac{1}{4}} S(t) u_2}_{L_{t}^4 L_x^{3} (\Theta)} \\
& \lesssim 
\begin{cases}
( N_1 N_2 )^{\frac{1}{2}} \norm{u_1}_{L_x^2 (\Theta)} \norm{u_2}_{L_x^2 (\Theta)} & \text{using the loss of regularities for $L_t^4 L_x^3$ case}\\
( N_1 N_2 )^{\frac{1}{4}} \norm{u_1}_{L_x^2 (\Theta)} \norm{u_2}_{L_x^2 (\Theta)} & \text{if one had the Euclidean Strichartz} .
\end{cases}
\end{align*}
On the other hand, the bilinear estimate without derivatives in \eqref{eq bilinear} reads
\begin{align*}
\norm{ S(t) u_1 S(t) u_2}_{L_{t,x}^2 ((0,1) \times \Theta)} \lesssim N_2^{\frac{1}{2}+ \varepsilon}  \norm{u_1}_{L_x^2 (\Theta)} \norm{u_2}_{L_x^2 (\Theta)} .
\end{align*}
This is not definitely as good as the bilinear estimate in $\R^d$, but it is much better than the one that we estimated naively using the loss of regularities for $L_t^4 L_x^3$ case ($( N_1 N_2 )^{\frac{1}{2}} $). Also it is almost equally as good as the case in which we pretend that we had the Euclidean Strichartz. This is the reason why we can prove the local theory upto its criticality. 
\end{rmq}

\begin{lem}[Transfer principle]\label{lem m-linear}
For any $b > \frac{1}{2}$ and for $j \in \{1,2,\cdots , m \}$ ($m \geq 3$), we assume $N_1 \geq N_2 \geq \cdots \geq N_m >0$ and $f_j \in X^{0,b} (\R \times \Theta)$ satisfying 
\begin{align*}
\mathbb{1}_{\sqrt{-\Delta} \in [N_j , 2 N_j]} f_j = f_j ,
\end{align*}
one has the following m-linear estimates. 
\begin{enumerate}
\item
The m-linear estimate without derivatives.

For any $\varepsilon >0 $
\begin{align}\label{eq m-linear1'}
\norm{\prod_{j=1}^m f_j}_{L_{t,x}^2 ((0,1) \times \Theta)} \lesssim N_2^{1- \frac{1}{2m-2}+\varepsilon} (N_3 \cdots  N_m)^{\frac{3}{2}-\frac{1}{2m-2}}  \prod_{j=1}^m \norm{f_j}_{X^{0,b} (\Theta)} .
\end{align}

\item
The m-linear estimate with derivatives.

Moreover, if $f_j \in H_0^1 (\Theta)$ and  for any $\varepsilon >0 $
\begin{align}\label{eq m-linear2'}
\norm{ \nabla f_1 \prod_{j=2}^m f_j}_{L_{t,x}^2 ((0,1) \times \Theta)} \lesssim N_1 N_2^{1- \frac{1}{2m-2}+\varepsilon} (N_3 \cdots  N_m)^{\frac{3}{2}-\frac{1}{2m-2}}  \prod_{j=1}^m \norm{f_j}_{X^{0,b} (\Theta)}.
\end{align}
\end{enumerate}
\end{lem}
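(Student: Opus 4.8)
The plan is to deduce the Lemma from Proposition \ref{prop m-linear} by the standard transfer principle argument, the same way one passes from linear Strichartz estimates to $X^{s,b}$-bounds. The key idea is that any $f \in X^{0,b}(\R\times\Theta)$ with $b > 1/2$ can be written, after taking the spacetime Fourier transform in $t$, as a superposition of modulated linear solutions: writing $\widehat{c_n}(\tau)$ for the temporal Fourier coefficients of $f(t) = \sum_n c_n(t) e_n$, one has
\begin{align*}
f(t,x) = \frac{1}{2\pi}\int_\R e^{\i t\sigma}\, S(t)\, g_\sigma(x)\, d\sigma, \qquad g_\sigma := \sum_n e^{\i t z_n^2}\big|_{t\mapsto 0}\, \widehat{c_n}(\sigma + z_n^2)\, e_n ,
\end{align*}
or more precisely $f(t) = \frac{1}{2\pi}\int_\R e^{\i t\sigma} S(t)\big(\sum_n \widehat{c_n}(\sigma+z_n^2) e_n\big)\, d\sigma$. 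Note that the frequency localization $\mathbb{1}_{\sqrt{-\Delta}\in[N_j,2N_j]} f_j = f_j$ is preserved by this decomposition, since it only touches the spatial eigenfunction index $n$, not $\sigma$; so each profile $g_\sigma^{(j)} := \sum_n \widehat{c_n^{(j)}}(\sigma+z_n^2) e_n$ is itself supported on $\sqrt{-\Delta}\in[N_j,2N_j]$.

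First I would set up this Fourier-in-time representation for each $f_j$, $j=1,\dots,m$. Then, plugging into the product $\prod_j f_j$, I would get an $m$-fold integral $\prod_j f_j(t,x) = (2\pi)^{-m}\int_{\R^m} e^{\i t(\sigma_1+\cdots+\sigma_m)}\prod_j S(t) g_{\sigma_j}^{(j)}(x)\, d\sigma_1\cdots d\sigma_m$. Taking the $L^2_{t,x}((0,1)\times\Theta)$ norm and using Minkowski's integral inequality to pull the norm inside the $\sigma$-integrals, the phase $e^{\i t(\sigma_1+\cdots+\sigma_m)}$ is harmless since $|e^{\i t(\cdots)}|=1$ on the $L^2_{t,x}$ norm, so I can bound it by $(2\pi)^{-m}\int_{\R^m}\big\|\prod_j S(t) g_{\sigma_j}^{(j)}\big\|_{L^2_{t,x}((0,1)\times\Theta)}\, d\sigma$. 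Now I apply Proposition \ref{prop m-linear} pointwise in $(\sigma_1,\dots,\sigma_m)$: each factor contributes $\|g_{\sigma_j}^{(j)}\|_{L^2_x(\Theta)}$, together with the combinatorial frequency factor $N_2^{1-\frac{1}{2m-2}+\varepsilon}(N_3\cdots N_m)^{\frac{3}{2}-\frac{1}{2m-2}}$, which does not depend on the $\sigma_j$'s and comes out of the integral. It remains to bound $\int_{\R^m}\prod_j \|g_{\sigma_j}^{(j)}\|_{L^2_x}\, d\sigma = \prod_j \int_\R \|g_{\sigma_j}^{(j)}\|_{L^2_x}\, d\sigma_j$, and for each $j$, by Cauchy–Schwarz in $\sigma$ with the weight $\inner{\sigma}^{b}$ (using $b>1/2$ so that $\inner{\sigma}^{-b}\in L^2_\sigma$), $\int_\R\|g_{\sigma}^{(j)}\|_{L^2_x}\, d\sigma \lesssim \big(\int_\R \inner{\sigma}^{2b}\|g_\sigma^{(j)}\|_{L^2_x}^2\, d\sigma\big)^{1/2} = c\,\|f_j\|_{X^{0,b}(\Theta)}$, by the very definition \eqref{eq Xsb} of the $X^{0,b}$ norm (with $s=0$, noting $\inner{z_n}^0 = 1$). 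Chaining these bounds gives \eqref{eq m-linear1'}. The derivative version \eqref{eq m-linear2'} follows identically, applying $\nabla$ only to the first factor and invoking \eqref{eq m-linear2} instead of \eqref{eq m-linear1} inside the integral; $\nabla$ commutes with the Fourier-in-time decomposition and with $S(t)$, so $\nabla f_1(t,x) = \frac{1}{2\pi}\int_\R e^{\i t\sigma} \nabla S(t) g_\sigma^{(1)}(x)\, d\sigma$, and the rest is unchanged.

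I do not expect a genuine obstacle here — this is a robust and well-known mechanism — but the one point requiring a little care is the interchange of the $L^2_{t,x}$ norm with the $\sigma$-integrals (Minkowski) and making the Fourier representation rigorous in the tempered-distribution sense, since $f_j \in \mathcal{S}'(\R, L^2(\Theta))$ a priori; one handles this by a density argument, first proving the estimate for Schwartz-class profiles (where all integrals converge absolutely) and then passing to the limit using that both sides are continuous in the $X^{0,b}$ topology. A secondary bookkeeping point is that Proposition \ref{prop m-linear} is stated for functions exactly localized to dyadic shells $[N_j,2N_j]$, which is compatible with the hypothesis on the $f_j$, so no extra Littlewood–Paley decomposition is needed at this stage. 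This is why I would present the argument as a short, essentially mechanical corollary of Proposition \ref{prop m-linear}, deferring the real analytic work to the proof of that Proposition.
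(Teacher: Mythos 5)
Your argument is correct and is essentially the paper's own proof: both express each $f_j$ as a Fourier superposition in time of modulated linear solutions, apply Proposition \ref{prop m-linear} inside the modulation integral, and close with Cauchy--Schwarz in the modulation variables using $b>\frac{1}{2}$ to recover the $X^{0,b}$ norms. The only discrepancies are cosmetic: with the $S(t)$ representation your profile should read $\widehat{c_n}(\sigma - z_n^2)$ rather than $\widehat{c_n}(\sigma + z_n^2)$ (the latter is the profile for the $S(-t)$ representation, which is what the paper actually uses), and where you invoke a density argument the paper instead first treats $f_j$ supported in $(0,1)$ and then sums over a partition of unity in time.
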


\begin{proof}[Proof of Lemma \ref{lem m-linear}]
This proof is generalized from Lemma 2.3 in \cite{bgtBil}. 

We first suppose that $f_j (t)$'s are supported in the time interval $(0,1)$ and write
\begin{align*}
f_j (t) =S(-t) S(t) f_j(t)  =:  S(-t) F_j(t)
\end{align*}
Then
\begin{align*}
f_j (t) = \frac{1}{2\pi} \int_{-\infty}^{\infty} e^{\i t \tau} S(-t) \widehat{F_j}(\tau) \, d \tau ,
\end{align*}
and hence
\begin{align*}
\prod_{j=1}^m f_j (t) = \frac{1}{(2\pi)^m} \int_{\R^m} e^{\i t \sum_{j=1}^m \tau_j} \prod_{j=1}^m S(-t) \widehat{F_j}(\tau_j) \, d \tau_1 \cdots d \tau_m.
\end{align*}
Ignoring the oscillating factor $e^{\i t \sum_{j=1}^m \tau_j} $, using Proposition \ref{prop m-linear} and the Cauchy-Schwarz inequality in $(\tau_1 , \cdots , \tau_m)$ (in this place we use that $b > \frac{1}{2}$ to get the needed integrability) yields
\begin{align*}
\norm{\prod_{j=1}^m f_j}_{L_{t,x}^2 ((0,1) \times \Theta)} & \lesssim N_2^{1- \frac{1}{2m-2}+\varepsilon} (N_3 \cdots  N_m)^{\frac{3}{2}-\frac{1}{2m-2}} \int_{\R^m} \prod_{j=1}^m \norm{\widehat{F_j} (\tau_j)}_{L_x^2 (\Theta)} \, d \tau_1 \cdots d \tau_m\\
& \leq N_2^{1- \frac{1}{2m-2}+\varepsilon} (N_3 \cdots  N_m)^{\frac{3}{2}-\frac{1}{2m-2}} \prod_{j=1}^m \norm{ \inner{\tau_j}^b \widehat{F_j} (\tau_j)}_{L_{\tau_j , x}^2 (\R \times \Theta)}\\
& = N_2^{1- \frac{1}{2m-2}+\varepsilon} (N_3 \cdots  N_m)^{\frac{3}{2}-\frac{1}{2m-2}} \prod_{j=1}^m \norm{f_j}_{X^{0,b} (\R \times \Theta)} .
\end{align*}
Finally, by decomposing $f_j(t) = \sum_{n =1}^{\infty} \Psi (t - \frac{n}{2}) f_j(t)$ with a suitable $\Psi \in C_0^{\infty} (\R)$ supported in $(0,1)$, the general case for $f_j (t)$'s follow from the considered particular case of $f_j (t)$'s supported in the time interval $(0,1)$.

The proof of \eqref{eq m-linear2'} is very similar to the proof of \eqref{eq m-linear1'}. In fact, since the functions $u_j$'s are all frequency localized, we can treat the derivative on $u_1$ on the left hand side of \eqref{eq m-linear2'} as its frequency $N_1$ (or its $N_1$-th eigenvalue). Other parts of the proof follows closely from the proof of \eqref{eq m-linear1'}.

Hence we finish the proof of Lemma \ref{lem m-linear}.
\end{proof}

Before proving Proposition \ref{prop m-linear}, let us first recall the following lemma. 
\begin{lem}[Lemma 3.2 in \cite{bgtBil}]\label{lem counting}
Let $M,N \in \N$, then for any $\varepsilon > 0$, there exists $C>0$ such that
\begin{align*}
\# \{ (k_1 , k_2) \in \N^2 : N \leq k_1 \leq 2N, k_1^2 + k_2^2 = M \} \leq C N^{\varepsilon} .
\end{align*}
\end{lem}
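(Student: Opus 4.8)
The plan is to combine the classical bound for the number of representations of an integer as a sum of two squares with a short geometric argument that handles the regime where $M$ is much larger than $N^{2}$.

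For the main input, observe that the quantity in the statement is at most $r_{2}(M):=\#\{(a,b)\in\mathbb{Z}^{2}:a^{2}+b^{2}=M\}$, obtained simply by dropping the restriction $N\le k_{1}\le 2N$. It is classical that $r_{2}(M)\le 4\,d(M)$, where $d$ denotes the divisor function, and that for every $\delta>0$ there is $C_{\delta}$ with $d(M)\le C_{\delta}M^{\delta}$; hence $r_{2}(M)\le 4C_{\delta}M^{\delta}$. Taking $\delta=\varepsilon/6$ shows that whenever $M\le 16N^{6}$ the count is at most $4C_{\varepsilon/6}(16N^{6})^{\varepsilon/6}=C'_{\varepsilon}N^{\varepsilon}$, which is already of the desired form. (In the application to Proposition~\ref{prop m-linear} one always has $M\lesssim N^{2}$, so this step alone suffices there; the remaining case is included only to match the stated generality.)

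It then remains to treat $M>16N^{6}$, and here I claim there are at most two admissible pairs. Suppose instead there were three distinct solutions, and regard them as three distinct lattice points $P_{1},P_{2},P_{3}$ on the circle of radius $R=\sqrt{M}$, each having first coordinate in $[N,2N]$. Since $M>16N^{6}\ge 16N^{2}$, a direct estimate shows that the second coordinates $\sqrt{M-k_{1}^{2}}$ of the three points all lie in an interval of length $<N$, so the three points are contained in a box that fits inside a square of side $N$, and in particular the three side lengths $a,b,c$ of the triangle $P_{1}P_{2}P_{3}$ all satisfy $a,b,c\le N\sqrt{2}<2N$. On the other hand, the $P_{j}$ are non-collinear, since a line meets a circle in at most two points, so they span a lattice triangle, whose area $K$ is therefore at least $\tfrac12$; combined with the circumradius identity $K=abc/(4R)$ this forces $abc=4RK\ge 2\sqrt{M}>2\sqrt{16N^{6}}=(2N)^{3}$, contradicting $a,b,c<2N$. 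Hence in this range the count is at most $2\le N^{\varepsilon}$.

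Combining the two cases gives the lemma with $C_{\varepsilon}=\max\{2,C'_{\varepsilon}\}$. The only external ingredient is the divisor bound $d(M)\le C_{\delta}M^{\delta}$, which is entirely standard, so there is no genuinely hard step; the one thing to watch is that the trivial representation bound $r_{2}(M)$ is useless once $M\gg N^{2}$, and that the large-$M$ regime is precisely what the elementary area argument above is designed to absorb.
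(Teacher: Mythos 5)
Your proof is correct, but it follows a genuinely different route from the paper, which does not prove Lemma \ref{lem counting} at all: it simply imports it as Lemma 3.2 of \cite{bgtBil}, where the argument is arithmetic, resting on factorization in the Gaussian integers and divisor-function bounds rather than on any geometry. Your version is self-contained and splits into two regimes: for $M\le 16N^{6}$ you use $r_{2}(M)\le 4d(M)\lesssim_{\delta}M^{\delta}$, and for $M>16N^{6}$ you show there are at most two solutions via the lattice-triangle area bound $K\ge\tfrac12$ against the circumradius identity $K=abc/(4R)$ with $R=\sqrt{M}$; both steps check out (the vertical spread of the admissible points is indeed $\le 3N^{2}/\sqrt{M-4N^{2}}<N$ in that regime, and distinct solutions have distinct first coordinates, hence give three non-collinear lattice points). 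What your approach buys is elementarity and transparency in the large-$M$ regime, at the cost of a slightly artificial threshold $N^{6}$; the Gaussian-integer argument of \cite{bgtBil} treats all $M$ uniformly. One side remark of yours is off, though it does not affect the proof: in the application to Proposition \ref{prop m-linear} it is \emph{not} true that $M\lesssim N^{2}$ — the lemma is invoked with $N=N_{2}$ while $M=\tau-\sum_{j\ge3}n_{j}^{2}\sim N_{1}^{2}$, and $N_{1}$ may be much larger than $N_{2}$, so the regime $M\gg N^{2}$ (which your geometric step, or BGT's arithmetic, is there to handle) is precisely the one that matters; also ``$2\le N^{\varepsilon}$'' only holds for $N\ge 2^{1/\varepsilon}$, but your final choice $C_{\varepsilon}=\max\{2,C'_{\varepsilon}\}$ already repairs this for all $N\ge1$.
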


Now we are ready to proof the  m-linear estimates.
\begin{proof}[Proof of Proposition \ref{prop m-linear}]
For $j \in \{1,2,\cdots , m \}$, using the expansion formula \eqref{eq proj} we write
\begin{align*}
u_j = \sum_{n_j \sim N_j} (u_j , e_{n_j})   e_{n_j}(r) ,
\end{align*}
Then 
\begin{align*}
S(t) u_j = \sum_{n_j \sim N_j} e^{-\i t n_j^2 \pi^2} (u_j , e_{n_j}) e_{n_j}(r) .
\end{align*}
Therefore, the m-linear objects that one needs to estimate are the $L_{t,x}^2$ norms of 
\begin{align*}
E_0(N_1, N_2, \cdots  , N_m) & = \sum_{n_1 \sim N_1} \sum_{n_2 \sim N_2}  \cdots \sum_{n_m \sim  N_m} e^{-\i t \sum_{j=1}^m n_j^2 \pi^2} \prod_{j=1}^m (u_j , e_{n_j}) \prod_{j=1}^m e_{n_j} (r) ,\\
E_1(N_1, N_2, \cdots  , N_m) & = \sum_{n_1 \sim N_1} \sum_{n_2 \sim  N_2}  \cdots \sum_{n_m \sim  N_m} e^{-\i t \sum_{j=1}^m n_j^2 \pi^2} \prod_{j=1}^m (u_j , e_{n_j}) (\nabla e_{n_1}) \prod_{j=2}^m e_{n_j} (r) .
\end{align*}
Let us focus on \eqref{eq m-linear1} first. Using Parseval's theorem in time
\begin{align*}
(\text{LHS  of } \eqref{eq m-linear1})^2 & = \norm{E_0(N_1, N_2, \cdots  , N_m) }_{L^2 ((0, \frac{2}{\pi}) \times \Theta)}^2 \\
& = \sum_{\tau \in \N} \norm{\sum_{(N_1, N_2, \cdots  , N_m) \in \Lambda_{N_1, N_2 , \cdots , N_m} (\tau)} \prod_{j=1}^m (u_j , e_{n_j}) \prod_{j=1}^m e_{n_j} (r)}_{L^2 (\Theta)}^2 \\
& \lesssim \sum_{\tau \in \N} \# \Lambda_{N_1, N_2 , \cdots , N_m} (\tau) \sum_{(N_1, N_2, \cdots  , N_m) \in \Lambda_{N_1, N_2 , \cdots , N_m} (\tau)} \abs{\prod_{j=1}^m (u_j , e_{n_j}) }^2 \norm{\prod_{j=1}^m e_{n_j} (r)}_{L^2 (\Theta)}^2  ,
\end{align*}
where
\begin{align*}
\Lambda_{N_1, N_2 , \cdots , N_m} (\tau) : = \{ (n_1, n_2 , \cdots , n_m) \in \N^m : n_j \sim  N_j , \sum_{j=1}^m n_j^2 = \tau\} .
\end{align*}
Here we periodized the time into $(0, \frac{2}{\pi})$, since in the definition of $\Lambda_{N_1, N_2 , \cdots , N_m} (\tau)$ we notice that $\sum_{j=1}^m n_j^2 = \tau$, which implies that the time frequencies are integers, hence allows us to periodize the time. 

We claim that
\begin{claim}\label{claim m-linear1}
\begin{enumerate}
\item
$\# \Lambda_{N_1, N_2 , \cdots , N_m} (\tau) = \mathcal{O}(N_2^{\varepsilon} N_3 N_4 \cdots N_m)$ ;
\item
$\norm{\prod_{j=1}^m e_{n_j} }_{L^2(\Theta)}^2 \lesssim (N_2 N_3 \cdots  N_m)^{2-\frac{1}{m-1}}  $ .
\end{enumerate}
\end{claim}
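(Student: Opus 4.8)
The plan is to treat the two parts of Claim \ref{claim m-linear1} separately. For part (1), I would reduce the $m$-variable lattice count to the two-variable count supplied by Lemma \ref{lem counting}. Freeze the indices $n_3 \sim N_3, \dots, n_m \sim N_m$, of which there are $\lesssim N_3 N_4 \cdots N_m$ choices; for each frozen tuple the surviving constraint is $n_1^2 + n_2^2 = \tau - \sum_{j=3}^m n_j^2$ with $n_1 \sim N_1$ and $n_2 \sim N_2$. The point is to apply Lemma \ref{lem counting} with the \emph{smaller} index $n_2$ playing the role of the constrained variable $k_1 \in [N_2, 2N_2]$ (and $n_1$ the free variable $k_2$), which bounds the number of admissible pairs by $C N_2^{\varepsilon}$ rather than the a priori weaker $C N_1^{\varepsilon}$. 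Summing over the frozen tuples yields $\#\Lambda_{N_1,\dots,N_m}(\tau) \lesssim N_2^{\varepsilon} N_3 N_4 \cdots N_m$.

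For part (2), using $e_n(r)^2 = 2\sin^2(n\pi r)/r^2$ and that a radial function $g$ satisfies $\|g\|_{L^2(\Theta)}^2 = \int_0^1 |g(r)|^2 r^2\,dr$, I would write
\[
  \Big\| \prod_{j=1}^m e_{n_j} \Big\|_{L^2(\Theta)}^2 = 2^m \int_0^1 \frac{\prod_{j=1}^m \sin^2(n_j\pi r)}{r^{2m-2}}\,dr ,
\]
then estimate $\sin^2(n_1\pi r) \le 1$ (simply discarding the top frequency) together with $\sin^2(n_j\pi r) \le \min(1, n_j^2\pi^2 r^2)$ for $j \ge 2$, and split $(0,1)$ at the dyadic scales comparable to $1/n_2 \ge \cdots \ge 1/n_m$. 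On the interval between consecutive scales $1/n_k$ and $1/n_{k+1}$ (with the convention $1/n_{m+1} = 1$, and the first interval reaching down to $0$), the factors with index $\le k$ contribute $1$ and those with index $> k$ contribute $n_j^2 r^2$, so the integrand is a constant multiple of $\big(\prod_{j>k} n_j^2\big)\,r^{2-2k}$; since $2-2k \le -2$ the integral is controlled by its lower endpoint, and the $k$-th piece is $\lesssim n_k^{2k-3}\prod_{j=k+1}^m n_j^2$.

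It then remains, using $n_j \lesssim N_j$ and the ordering $N_2 \ge \cdots \ge N_m$ (with all $N_j \ge 1$, as we may assume), to check $N_k^{2k-3}\prod_{j>k} N_j^2 \lesssim (N_2 \cdots N_m)^{2-\frac{1}{m-1}}$ for each $k$. Distributing the right-hand exponent, the factors $N_2, \dots, N_{k-1}$ (each $\ge N_k$) absorb the surplus coming from $N_k^{2k-3}$ together with the surplus from replacing $N_j^{2-\frac{1}{m-1}}$ by $N_j^2$ for $j > k$, and the exponents balance exactly because $(k-1) + (m-k) = m-1$ (equality holding in the worst case $N_2 = \cdots = N_m$). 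Summing the $O(m)$ pieces gives part (2).

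I expect the only genuine obstacle to be this last exponent count: producing a bound that is \emph{independent of} $N_1$ with \emph{exactly} the gain $\tfrac{1}{m-1}$ in the product $N_2 \cdots N_m$ and with no logarithmic loss. A plain H\"older argument $\|\prod_j e_{n_j}\|_{L^2} \le \prod_j \|e_{n_j}\|_{L^{p_j}}$ with $\sum_j 1/p_j = \tfrac12$, choosing $p_1$ close to $3$ in order to eliminate $N_1$, is forced against the $L^3$ endpoint of \eqref{eq e_n bdd} and only delivers the bound up to a logarithm of the top frequency; the scale-by-scale integration above is precisely what removes that loss. Everything else — the reduction in (1), the elementary sine bounds, and the piecewise integration — is routine.
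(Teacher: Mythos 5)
Your argument for part (1) is the same as the paper's: freeze $n_3,\dots,n_m$ (at most $\lesssim N_3\cdots N_m$ choices) and apply Lemma \ref{lem counting} with the smaller index $n_2$ in the role of the constrained variable $k_1$, which is exactly how the paper extracts $N_2^{\varepsilon}$ rather than $N_1^{\varepsilon}$. For part (2) you reach the same bound by a genuinely different organization of the integral. The paper rescales $r=s/(n_m\pi)$, drops $\sin^2(n_1 s/n_m)\le 1$, and splits the $s$-integral at a \emph{single} point $a$, bounding the far region by $a^{-(2m-3)}$ and the near region by $\prod_{j=2}^{m-1}(n_j/n_m)^2\,a$ via $\sin^2x\le x^2$; optimizing over $a$ yields $\prod_{j=2}^{m-1}(n_j/n_m)^{2-\frac{1}{m-1}}$ at once, and the Jacobian factor $n_m^{2m-3}$ converts this into $(n_2\cdots n_m)^{2-\frac{1}{m-1}}$ with no further case analysis. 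You instead decompose $(0,1)$ at \emph{every} scale $1/n_j$, compute each piece as $n_k^{2k-3}\prod_{j>k}n_j^2$, and then check termwise that each is $\lesssim (N_2\cdots N_m)^{2-\frac{1}{m-1}}$; your exponent count is correct — after using $n_j\le n_k$ for $j>k$ and $n_j\ge n_k$ for $j<k$, the slack of $1$ in the integer exponents is consumed exactly by the $m-1$ copies of $\frac{1}{m-1}$. The paper's one-parameter optimization buys a shorter verification; your multi-scale sum is in principle sharper when the frequencies are widely separated and makes it transparent which scale is responsible for the bound. Your closing observation that a pure H\"older argument is blocked by the $L^3$ endpoint of \eqref{eq e_n bdd} is apt; the scale-by-scale (or optimized-split) integration is indeed what avoids that logarithm.

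One bookkeeping slip: as written, your ``first interval reaching down to $0$'' keeps an integrand $\propto r^{2-2k}$ near the origin, which is not integrable there. The innermost region must be $[0,1/n_2]$, on which \emph{every} retained factor satisfies $\sin^2(n_j\pi r)\le n_j^2\pi^2 r^2$, so the integrand is bounded by a constant times $\prod_{j=2}^m n_j^2$ and this piece contributes $n_2^{-1}\prod_{j=2}^m n_j^2=n_2\prod_{j=3}^m n_j^2$, which coincides with your $k=2$ term; the final bound is unaffected.
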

Assuming Claim \ref{claim m-linear1}, one has
\begin{align*}
(\text{LHS  of } \eqref{eq m-linear1})^2 \lesssim N_2^{2-\frac{1}{m-1}+ \varepsilon} (N_3 N_4 \cdots  N_m)^{3-\frac{1}{m-1}}  \prod_{j=1}^m \norm{u_j}_{L^2 (\Theta)}^2 .
\end{align*}
Therefore, \eqref{eq m-linear1} follows.

Now we are left to prove Claim \ref{claim m-linear1}. 
\begin{proof}[Proof of Claim \ref{claim m-linear1}]
(1) By Lemma \ref{lem counting}, we have
\begin{align*}
\# \Lambda_{N_1, N_2 , \cdots , N_m} (\tau) & = \# \{ (n_1, n_2 , \cdots , n_m) \in \N^m : n_j \sim  N_j , \sum_{j=1}^m n_j^2 = \tau\} \\
& = \# \{ (n_1, n_2 , \cdots , n_m) \in \N^m : n_j \sim  N_j ,  n_1^2 + n_2^2 = \tau - \sum_{j=3}^m n_j^2\} \\
& \sim \mathcal{O} (N_2^{\varepsilon} N_3 N_4 \cdots N_m).
\end{align*}
Note that the extra factor $N_3 N_4 \cdots N_m$ is because we treat $\tau - \sum_{j=3}^m n_j^2$ as $M$ in Lemma \ref{lem counting} and there are $N_3 N_4 \cdots N_m$ possible $M$'s in our setting.

(2) To compute the $\norm{\prod_{j=1}^m e_{n_j} (r)}_{L^2 (\Theta)}^2$ term, we write it out explicitly using \eqref{eq e_n},  and by a change of variables $r = \frac{s}{n_m \pi}$ we have
\begin{align*}
\norm{\prod_{j=1}^m e_{n_j} (r)}_{L^2 (\Theta)}^2 & = c \int_0^1 \prod_{j=1}^m \frac{\sin^2 (n_j \pi r)}{r^2} r^2 \, dr = c n_m^{2m-3} \int_0^{n_m \pi} \frac{1}{s^{2m-2}} \prod_{j=1}^m \sin^2 (\frac{n_j}{n_m} s) \, ds\\
& \leq c n_m^{2m-3} \int_0^{\infty} \frac{1}{s^{2m-2}} \prod_{j=2}^m \sin^2 (\frac{n_j}{n_m} s) \, ds.
\end{align*}
Note that we dropped the term $\sin^2 (\frac{n_1}{n_m}s)$ in the integrand above, since $\abs{\sin^2 (\frac{n_1}{n_m}s)} \leq 1$.
Here we denote
\begin{align}\label{eq f}
f(s) : = \frac{1}{s^{2m-2}} \prod_{j=2}^m \sin^2 (\frac{n_j}{n_m} s) .
\end{align}
For a fixed (small) positive number $a$, the contribution of $s>a$ is obviously bounded by 
\begin{align}\label{eq op1}
\int_{a}^{\infty} f(s) \, ds \lesssim \int_{a}^{\infty} \frac{1}{s^{2m-2}} \, ds \sim a^{-(2m-3)}.
\end{align}
For $s \leq a$, since $\sin^2 x \leq x^2$ holds for all $x$, we write
\begin{align}\label{eq op2}
0 \leq f(s) \leq  \prod_{j=2}^{m-1} \parenthese{\frac{n_j}{n_m}}^2,
\end{align}
which leads to
\begin{align*}
\int_0^{a} f(s) \, ds \leq  \prod_{j=2}^{m-1} \parenthese{\frac{n_j}{n_m}}^2 a .
\end{align*}
Hence combining the two cases \eqref{eq op1} and \eqref{eq op2}, we have that for all $a$
\begin{align*}
\int_{0}^{\infty} f(s) \, ds \lesssim a^{-(2m-3)} + \prod_{j=2}^{m-1} \parenthese{\frac{n_j}{n_m}}^2 a .
\end{align*}
Then an optimization argument in $a$ gives
\begin{align*}
\int_{0}^{\infty} f(s) \, ds \lesssim  \prod_{j=2}^{m-1} \parenthese{\frac{n_j}{n_m}}^{2-\frac{1}{m-1}},
\end{align*}
where we choose $a \sim  \prod_{j=2}^{m-1} \parenthese{\frac{n_j}{n_m}}^{-\frac{1}{m-1}}$.
Then one has
\begin{align*}
\norm{\prod_{j=1}^m e_{n_j} (r)}_{L^2 (\Theta)}^2 \lesssim n_m^{2m-3} \prod_{j=2}^{m-1} \parenthese{\frac{n_j}{n_m}}^{2-\frac{1}{m-1}} =  (n_2 n_3 \cdots n_{m-1}  n_m)^{2-\frac{1}{m-1}}  .
\end{align*}
We finish the proof of Claim \ref{claim m-linear1}.

\end{proof}

We estimate \eqref{eq m-linear2} similarly. First write 
\begin{align*}
(\text{LHS  of } \eqref{eq m-linear2})^2 & = \norm{E_1 (N_1, N_2, \cdots  , N_m) }_{L^2 ((0, \frac{2}{\pi}) \times \Theta)}^2 \\
& \lesssim \sum_{\tau \in \N} \# \Lambda_{N_1, N_2 , \cdots , N_m} (\tau) \sum_{\Lambda_{N_1, N_2 , \cdots , N_m} (\tau)} \abs{\prod_{j=1}^m  (u_j , e_{n_j}) }^2 \norm{ \nabla e_{n_1} \prod_{j=2}^m e_{n_j} (r)}_{L^2 (\Theta)}^2 .
\end{align*}
Here we claim
\begin{align}\label{eq Claim m-linear2}
\norm{\nabla e_{n_1} \prod_{j=2}^m e_{n_j} }_{L^2(\Theta)}^2  \lesssim N_1^2 (N_2 N_3 \cdots N_{m-1} N_m)^{2-\frac{1}{m-1}}  .
\end{align}
With the claim above and Claim \ref{claim m-linear1}, we obtain
\begin{align*}
(\text{LHS  of } \eqref{eq m-linear2})^2 \lesssim N_1^2 N_2^{2- \frac{1}{m-1}+\varepsilon} (N_3 \cdots N_{m-1} N_m)^{3-\frac{1}{m-1}} \prod_{j=1}^m \norm{u_j}_{L^2 (\Theta)}^2 .
\end{align*}
This yields \eqref{eq m-linear2}.

Now let us work on the claim \eqref{eq Claim m-linear2}. Replacing $e_{n_j}$'s by their exact expression, we have
\begin{align*}
\norm{\nabla e_{n_1} \prod_{j=2}^m e_{n_j} }_{L^2(\Theta)}^2 & = c \int_0^1 \parenthese{\frac{\partial}{\partial r} \frac{\sin (n_1 \pi r)}{r}}^2 \prod_{j=2}^m \frac{\sin^2 (n_j \pi r)}{r^2} r^2 \, dr \\
& = c \int_0^1 \frac{n_1^2 \pi^2}{r^2} \parenthese{\cos(n_1 \pi r) - \frac{\sin (n_1 \pi r)}{n_1 \pi r}}^2 \prod_{j=2}^m \frac{\sin^2 (n_j \pi r)}{r^2} r^2 \, dr .
\end{align*}
Similarly, we perform a change of variables. Recall the assumption $ N_1 \geq N_2  \geq \cdots \geq N_m$, and we take $r = \frac{s}{n_m \pi}$.  
Then
\begin{align*}
\norm{\nabla e_{n_1} \prod_{j=2}^m e_{n_j} }_{L^2(\Theta)}^2  = c n_1^{2} n_m^{2m-3} \int_0^{n_m \pi} \parenthese{\cos (\frac{n_1}{n_m}s) - \frac{\sin (\frac{n_1}{n_m}s)}{\frac{n_1}{n_m}s}}^2 \frac{1}{s^{2m-2}} \prod_{j=2}^m \sin^2 (\frac{n_j}{n_m} s) \, ds .
\end{align*}
Define
\begin{align*}
g(s) : = \parenthese{\cos (\frac{n_1}{n_m}s) - \frac{\sin (\frac{n_1}{n_m}s)}{\frac{n_1}{n_m}s}}^2 \frac{1}{s^{2m-2}} \prod_{j=2}^m \sin^2 (\frac{n_j}{n_1} s) ,
\end{align*}
and by similar analysis as we did in \eqref{eq f}, we have
\begin{align*}
\int_0^{\infty} g(s) \, ds \lesssim \prod_{j=2}^{m-1} \parenthese{\frac{n_j}{n_m}}^{2-\frac{1}{m-1}}.
\end{align*}
Therefore, we proved \eqref{eq Claim m-linear2}
\begin{align*}
\norm{\nabla e_{n_1} \prod_{j=2}^m e_{n_j} }_{L^2(\Theta)}^2   \lesssim n_1^2 (n_2 n_3 \cdots n_{m-1} n_m)^{2-\frac{1}{m-1}} .
\end{align*}

The proof of  Proposition \ref{prop m-linear} is complete now.
\end{proof}

\subsection{Nonlinear estimates}\label{subsec nonlinear}
In the local theory, we need a nonlinear estimate of the following form
\begin{align*}
\int_{\R} \int_{\Theta} u_0 u_1 u_2 \cdots u_{2m+1} \, dx dt \leq c \norm{u_0}_{X^{-s,b'} (\R \times \Theta)} \prod_{j=1}^{2q+1} \norm{u_j}_{X^{s,b'} (\R \times \Theta)} ,
\end{align*}
where $u_j \in \{ u, \bar{u} \}$ for $j = 1 , 2, \cdots , m$. 

To this end, we first study the nonlinear behavior of all frequency localized $u_j$'s based on the multilinear estimates that we obtained, then sum over all frequencies. 

For $j \in \{ 0, 1,2, \cdots , 2q+1\}$, let $N_j = 2^k, k \in \N$. We denote 
\begin{align*}
u_{j, N_j} = \mathbb{1}_{\sqrt{-\Delta} \in [N_j , 2 N_j]} u_j .
\end{align*}
By the definition of $X^{s,b} (\R \times \Theta)$ spaces, we have
\begin{align*}
\norm{u_j}_{X^{s,b} (\R \times \Theta)}^2 \sim \sum_{N_j = 2^k, k\in \N}  \norm{u_{j,N_j}}_{X^{s,b} (\R \times \Theta)}^2 \sim \sum_{N_j = 2^k, k\in \N}  N_j^{2s}\norm{u_{j,N_j}}_{X^{0,b} (\R \times \Theta)}^2  .
\end{align*}
We denote by $\underline{N} = (N_0, N_1 , \cdots , N_{2q+1})$ the $(2q+2)$-tuple of $2^k$ numbers, $k \in \N$, and
\begin{align*}
I(\underline{N}) = \int_{\R \times \Theta} \prod_{j=0}^{2q+1} u_{j, N_j} \, dx dt .
\end{align*}

\begin{lem}[Localized nonlinear estimates]\label{lem nonlinear est}
Assume that $N_1 \geq N_2 \geq \cdots \geq N_{2q+1}$.  Then there exists $0 < b' < \frac{1}{2}$ such that one has
\begin{align}
\abs{I(\underline{N})} & \lesssim  (N_2 N_3)^{1- \frac{1}{2q} +} (N_4 N_5 \cdots N_{2q} N_{2q+1})^{\frac{3}{2} - \frac{1}{2q}+}  \prod_{j=0}^{2q+1} \norm{u_{j, N_j}}_{X^{0, b'}} , \label{eq nonlinear est1} \\
\abs{I(\underline{N})} & \lesssim (\frac{N_1}{N_0})^2 (N_2 N_3)^{1- \frac{1}{2q} +} (N_4 N_5 \cdots N_{2q} N_{2q+1})^{\frac{3}{2} - \frac{1}{2q}+}  \prod_{j=0}^{2q+1}  \norm{u_{j, N_j}}_{X^{0, b'}} . \label{eq nonlinear est2}
\end{align}
\end{lem}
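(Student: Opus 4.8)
\textbf{Proof strategy for Lemma \ref{lem nonlinear est}.}

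The plan is to reduce the estimates on $I(\underline N)$ to the $m$-linear Strichartz estimates of Lemma \ref{lem m-linear} via Cauchy--Schwarz and H\"older in spacetime. The integrand $\prod_{j=0}^{2q+1} u_{j,N_j}$ contains $2q+2$ factors; since the multilinear estimates in Lemma \ref{lem m-linear} apply to $L^2_{t,x}$ norms of products, I would split the $2q+2$ functions into two groups, pair them through Cauchy--Schwarz $\int_{\R\times\Theta} FG \le \|F\|_{L^2_{t,x}}\|G\|_{L^2_{t,x}}$, and then apply the transfer-principle estimate \eqref{eq m-linear1'} to each $L^2_{t,x}$ factor. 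The natural split is to take roughly $q+1$ functions in each group; for the product $F = \prod u_{j,N_j}$ over, say, $j$ in one half, estimate $\|F\|_{L^2_{t,x}}$ by \eqref{eq m-linear1'} applied to that $(q+1)$-linear product, which produces a gain in all but the two largest frequencies appearing in that group. The key bookkeeping point is to arrange the two groups so that, after multiplying the two bounds, the two globally largest frequencies $N_0$ and $N_1$ (recall $N_0$ is the output frequency with negative-regularity norm, and $N_1\ge N_2\ge\cdots$ are the inputs) are the ones that do \emph{not} receive a gain, while $N_2,N_3$ get the $1-\frac{1}{2q}+$ exponent and $N_4,\dots,N_{2q+1}$ get the $\frac32-\frac{1}{2q}+$ exponent. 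Since the group sizes are $q+1$, Lemma \ref{lem m-linear} with $m=q+1$ gives exponents $1-\frac{1}{2m-2}=1-\frac{1}{2q}$ and $\frac32-\frac{1}{2m-2}=\frac32-\frac1{2q}$, which match the claimed exponents exactly; this is why the split into two halves of size $q+1$ is forced.

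Concretely, for \eqref{eq nonlinear est1} I would, after the Cauchy--Schwarz split, place $u_{0,N_0}$ together with $u_{1,N_1}$ and then distribute the remaining $u_{2,N_2},\dots,u_{2q+1,N_{2q+1}}$ so that $N_2$ and $N_3$ end up being the respective ``second-largest'' indices in the two groups (hence only losing $1-\frac1{2q}+$) and the smaller ones $N_4,\dots,N_{2q+1}$ sit in the tail of one group or the other (losing $\frac32-\frac1{2q}+$ each). One must check that each group genuinely has its largest element at least as big as everything else assigned to it, which follows from the ordering $N_1\ge N_2\ge\cdots\ge N_{2q+1}$ and the freedom to send $N_0$ to whichever group keeps $N_0,N_1$ ``ungained''; here the trivial nesting $X^{s,b}\subset X^{s',b'}$ from Lemma \ref{lem X property1} and the freedom $b>\frac12$ let me pass from $X^{0,b'}$ with $b'<\frac12$ to $X^{0,b}$ with $b>\frac12$ by a cheap interpolation/duality argument (this is where $b'<\frac12$ must be chosen close enough to $\frac12$, absorbing a power of the time interval length). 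For \eqref{eq nonlinear est2}, the only change is that one of the two $L^2_{t,x}$ factors now carries a gradient $\nabla u_{1,N_1}$ (coming from the $(-\Delta)$-type weight implicit in comparing $H^{-s}$ output against inputs, or rather from placing the derivative on the top input), so I invoke the with-derivatives estimate \eqref{eq m-linear2'} instead of \eqref{eq m-linear1'} for that factor; this replaces a factor of $N_0$ in the denominator by $N_1$ in the numerator, yielding the extra $(N_1/N_0)^2$, and everything else is identical.

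The main obstacle I anticipate is purely combinatorial bookkeeping rather than analytic: verifying that no matter how the ordering $N_1\ge\cdots\ge N_{2q+1}$ interleaves with $N_0$, one can always choose the two-group partition so that exactly the pattern $(N_2N_3)^{1-\frac1{2q}+}(N_4\cdots N_{2q+1})^{\frac32-\frac1{2q}+}$ emerges and the output index $N_0$ appears in $I(\underline N)$ paired against the top input $N_1$ without generating any positive power of $N_0$ or $N_1$. A secondary technical point is the passage from the sharp $b>\frac12$ needed in Lemma \ref{lem m-linear} to the dual exponent $b'<\frac12$ demanded in the statement; this is standard (one uses that $\tfrac12 - b' > 0$ can be made small and absorbs into the $\varepsilon$'s and into a power of $T\le 1$), but it is the place where one must be careful that the implicit constants remain uniform. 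Once the partition is fixed and the $b\leftrightarrow b'$ transference is in place, the two displayed bounds follow by multiplying the two Strichartz factors and relabeling $\varepsilon$.
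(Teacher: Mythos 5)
Your treatment of \eqref{eq nonlinear est1} matches the paper's: Cauchy--Schwarz splits the $2q+2$ factors into two groups of $q+1$, Lemma \ref{lem m-linear} with $m=q+1$ is applied to each group (which is exactly why the exponents $1-\frac{1}{2q}$ and $\frac32-\frac{1}{2q}$ appear), and the two largest frequencies are arranged not to receive a gain. One caveat: the passage to $b'<\frac12$ cannot be done by the nesting $X^{0,b}\subset X^{0,b'}$ alone (that inequality goes the wrong way); the paper obtains it by interpolating the sharp bilinear-type bound (valid for $b_0>\frac12$) against a second, crude estimate proved directly at an exponent strictly below $\frac12$, namely the H\"older bound $\abs{I(\underline N)}\lesssim (N_2\cdots N_{2q+1})^{3/2}\prod_j\norm{u_{j,N_j}}_{X^{0,\frac{q}{2q+2}}}$ coming from Sobolev embedding in time. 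You flag this as standard but do not supply the second estimate; without it the interpolation has nothing to interpolate with.

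The genuine gap is in \eqref{eq nonlinear est2}. Invoking the with-derivative estimate \eqref{eq m-linear2'} in place of \eqref{eq m-linear1'} only multiplies the bound by $N_1$; it produces no negative power of $N_0$ whatsoever, so it cannot yield the factor $(N_1/N_0)^2$, and your accounting (``replaces a factor of $N_0$ in the denominator by $N_1$ in the numerator'') does not correspond to anything in Lemma \ref{lem m-linear}. The missing idea is an integration by parts: since $-\Delta e_{n_0}=z_{n_0}^2 e_{n_0}$, one writes $u_{0,N_0}=-N_0^{-2}\Delta(Tu_{0,N_0})$ for a bounded multiplier $T$, applies Green's theorem to move $\Delta$ onto $\prod_{j=1}^{2q+1}u_{j,N_j}$, and then estimates the two dominant terms $(\Delta u_{1,N_1})\prod_{j\geq2}u_{j,N_j}$ and $(\nabla u_{1,N_1})\cdot(\nabla u_{2,N_2})\prod_{j\geq3}u_{j,N_j}$, contributing $N_1^2/N_0^2$ and $N_1N_2/N_0^2\leq(N_1/N_0)^2$ respectively (the second via \eqref{eq m-linear2'} applied to each Cauchy--Schwarz factor). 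Without this device the summation in Case 2 of Proposition \ref{prop nonlinear est} ($N_0\geq cN_1$) fails, since the conversion of $\norm{u_{0,N_0}}_{X^{0,b'}}$ to $\norm{u_{0,N_0}}_{X^{-s,b'}}$ costs $N_0^{s}$ and there is nothing to absorb it.
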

\begin{rmq}
Lemma \ref{lem nonlinear est} will play an important role in the local theory. Moreover, the first estimate \eqref{eq nonlinear est1} will be used in the case $ N_0 \leq c N_1$ while the second one will be used in the case $N_0 \geq c N_1 $. 
\end{rmq}

\begin{proof}[Proof of Lemma \ref{lem nonlinear est}]
We start with \eqref{eq nonlinear est1}. By Sobolev embedding in the time variable, we have
\begin{align}\label{eq Embedding1}
\norm{f}_{L_t^{2q+2} L_x^2} = \norm{S(t) f}_{L_t^{2q+2} L_x^2} \leq \norm{S (t) f}_{H_t^{\frac{q}{2q+2}} L_x^2}  = \norm{f}_{X^{0, \frac{q}{2q+2}}} .
\end{align}
Using the definition of $X^{s,b} (\R \times \Theta)$ spaces, we have
\begin{align}\label{eq Embedding2}
\norm{u_N}_{L_t^{2q+2} L_x^{\infty}} \sim N^{\frac{3}{2}} \norm{u_N}_{L_t^{2q+2} L_x^2} \leq N^{\frac{3}{2}} \norm{u_N}_{X^{0, \frac{q}{2q+2}}} .
\end{align}

On one hand, by H\"older inequality, \eqref{eq Embedding1} and \eqref{eq Embedding2}
\begin{align*}
\abs{I(\underline{N})} & \lesssim \norm{u_{0, N_0}}_{L_t^{2q+2} L_x^2} \norm{u_{1, N_1}}_{L_t^{2q+2} L_x^2} \prod_{j=2}^{2q+1} \norm{u_{j, N_j}}_{L_t^{2q+2} L_x^{\infty}}  \lesssim (N_2 \cdots N_{2q+1})^{\frac{3}{2}} \prod_{j=0}^{2q+1} \norm{u_{j, N_j}}_{X^{0, \frac{q}{2q+2}}} .
\end{align*}
On the other hand, we can estimate $\abs{I(\underline{N})} $ using Lemma \ref{lem m-linear} with $m=q+1$. That is,
\begin{align*}
\abs{I(\underline{N})} & \lesssim \norm{u_{0, N_0}  u_{2, N_2} u_{4, N_4} \cdots u_{2q, N_{2q}}}_{L_{t,x}^2 } \norm{u_{1, N_1}  u_{3, N_3} u_{5, N_5} \cdots u_{2q+1, N_{2q+1}}}_{L_{t,x}^2 } \\
& \lesssim \square{N_2^{1- \frac{1}{2q} + \varepsilon} (N_4\cdots N_{2q})^{\frac{3}{2}-\frac{1}{2q}}  } \square{ (N_3^{1-\frac{1}{2q+2}+\varepsilon} (N_5 \cdots N_{2q+1})^{\frac{3}{2} -\frac{1}{2q}} } \prod_{j=0}^{2q+1} \norm{u_{j, N_j}}_{X^{0, b_0}} \\
& = (N_2 N_3)^{1- \frac{1}{2q} +\varepsilon} (N_4 N_5 \cdots N_{2q} N_{2q+1})^{\frac{3}{2} - \frac{1}{2q}}  \prod_{j=0}^{2q+1} \norm{u_{j, N_j}}_{X^{0, b_0}}  ,
\end{align*}
where $b_0 > \frac{1}{2}$.

Interpolation between the two estimates above implies
\begin{align*}
\abs{I(\underline{N})} & \lesssim  (N_2 N_3)^{1- \frac{1}{2q} +} (N_4 N_5 \cdots N_{2q} N_{2q+1})^{\frac{3}{2} - \frac{1}{2q}+}  \prod_{j=0}^{2q+1} \norm{u_{j, N_j}}_{X^{0, b'}} ,
\end{align*}
where $b' \in (0, \frac{1}{2})$ and  $\varepsilon' $ is a small positive power after interpolation. This finishes the computation of \eqref{eq nonlinear est1}.

Let us focus on \eqref{eq nonlinear est2} now. Recall the Green's theorem,
\begin{align*}
\int_{\Theta} \Delta f g - f \Delta g \, dx = \int_{\mathbb{S}^2} \frac{\partial f}{\partial v} g - f \frac{\partial g}{\partial v} \, d \sigma .
\end{align*}
Note that
\begin{align*}
-\Delta e_k = z_k^2 e_k ,
\end{align*}
where $ z_k^2$'s are the eigenvalues defined in \eqref{eq z_n}. Then we write
\begin{align*}
u_{0, N_0} = -\frac{\Delta}{N_0^2} \sum_{z_{n_0} \sim N_0} c_{n_0} (\frac{N_0}{z_{n_0}})^2 e_{n_0} .
\end{align*}
Define
\begin{align*}
T u_{0, N_0} & = \sum_{z_{n_0} \sim N_0} c_{n_0} (\frac{N_0}{z_{n_0}})^2 e_{n_0} , \qquad V u_{0, N_0}  = \sum_{z_{n_0} \sim N_0} c_{n_0} (\frac{z_{n_0}}{N_0})^2 e_{n_0} .
\end{align*}
It is easy to see that for all $s$
\begin{align*}
TV  u_{0, N_0} & = VT u_{0, N_0} = u_{0, N_0} ,\\
\norm{T u_{0, N_0} }_{H_x^s} & \sim \norm{ u_{0, N_0} }_{H_x^s} \sim \norm{V u_{0, N_0} }_{H_x^s} .
\end{align*} 
Using this notation, we write
\begin{align*}
u_{0, N_0} = -\frac{\Delta}{N_0^2} T u_{0, N_0} 
\end{align*}
and
\begin{align*}
I(\underline{N}) = \frac{1}{N_0^2} \int_{\R \times \Theta} T u_{0, N_0} \Delta (\prod_{j=1}^{2q+1} u_{j, N_j}) .
\end{align*}
By the product rule and the assumption that $N_1 \geq N_2 \geq \cdots \geq N_{2q+1}$, we only need to consider the two largest cases of $\Delta (\prod_{j=1}^{2q+1} u_{j, N_j}) $. They are
\begin{enumerate}
\item
$(\Delta u_{1, N_1}) \prod_{j=2}^{2q+1} u_{j, N_j}$
\item
$(\nabla u_{1, N_1}) \cdot (\nabla u_{2, N_2}) \prod_{j=3}^{2q+1} u_{j, N_j} $ .
\end{enumerate}
We denote
\begin{align*}
J_{11} (\underline{N}) & = \int_{\R \times \Theta} T u_{0, N_0} (\Delta u_{1, N_1}) \prod_{j=2}^{2q+1} u_{j, N_j} , \\
J_{12} (\underline{N}) & =\int_{\R \times \Theta} T u_{0, N_0} (\nabla u_{1, N_1}) \cdot (\nabla u_{2, N_2}) \prod_{j=3}^{2q+1} u_{j, N_j} .
\end{align*}
Using $\Delta u_N = -N^2 V u_N$, we obtain
\begin{align*}
\frac{1}{N_0^2} \abs{J_{11} (\underline{N})} \lesssim (\frac{N_1}{N_0})^2  (N_2 N_3)^{1- \frac{1}{2q} +} (N_4 N_5 \cdots N_{2q} N_{2q+1})^{\frac{3}{2} - \frac{1}{2q}+}  \prod_{j=0}^{2q+1} \norm{u_{j, N_j}}_{X^{0, b'}} .
\end{align*}
Now for $\abs{J_{12} (\underline{N})} $, we estimate it in a similar fashion that we did in \eqref{eq nonlinear est1}. On one hand, by H\"older inequality, \eqref{eq Embedding1} and \eqref{eq Embedding2}, we have
\begin{align*}
\abs{J_{12} (\underline{N})} & \lesssim N_1 N_2 (N_2 \cdots N_{2q+1})^{\frac{3}{2}} \prod_{j=0}^{2q+1} \norm{u_{j, N_j}}_{X^{0, \frac{q}{2q+2}}} .
\end{align*}
On the other hand, using Proposition \ref{prop m-linear}, we get
\begin{align*}
\abs{J_{12} (\underline{N})} & \lesssim \norm{\nabla u_{1, N_1}  u_{3, N_3} u_{5, N_5} \cdots u_{2q+1, N_{2q+1}}}_{L_{t,x}^2 } \norm{ \nabla u_{2, N_2} Tu_{0, N_0}  u_{4, N_4} \cdots u_{2q, N_{2q}}}_{L_{t,x}^2 } \\
& \lesssim N_1 N_2  (N_2 N_3)^{1- \frac{1}{2q} +} (N_4 N_5 \cdots N_{2q} N_{2q+1})^{\frac{3}{2} - \frac{1}{2q}+}  \prod_{j=0}^{2q+1} \norm{u_{j, N_j}}_{X^{0, b_0}} .
\end{align*}
Interpolation between the two estimates above implies
\begin{align*}
\frac{1}{N_0^2} \abs{J_{12} (\underline{N})} \lesssim  \frac{N_1 N_2}{N_0^2} (N_2 N_3)^{1- \frac{1}{2q} +} (N_4 N_5 \cdots N_{2q} N_{2q+1})^{\frac{3}{2} - \frac{1}{2q}+}   \prod_{j=0}^{2q+1} \norm{u_{j, N_j}}_{X^{0, b'}}. 
\end{align*}
Similarly  $\varepsilon'$ is a small positive power after interpolation.
The proof of Lemma \ref{lem nonlinear est} is complete.
\end{proof}

\begin{prop}[Nonlinear estimates]\label{prop nonlinear est}
For $s > s_c$, there exist $b, b' \in \R$ satisfying
\begin{align*}
0 < b' < \frac{1}{2} < b, \quad b + b' < 1 ,
\end{align*}
such that for every $(2q+2)$-tuple $(u_j)$ in $X^{s,b} (\R \times \Theta)$,
\begin{align*}
\norm{\prod_{j=1}^{2q+1} u_j}_{X^{s, -b'} (\R \times \Theta)} \lesssim  \prod_{j=1}^{2q+1} \norm{u_j}_{X^{s,b} (\R \times \Theta)} .
\end{align*} 
\end{prop}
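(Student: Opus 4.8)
\textbf{Proof proposal for Proposition \ref{prop nonlinear est}.}

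The plan is to reduce the multilinear $X^{s,-b'}$ estimate to the frequency-localized bounds of Lemma \ref{lem nonlinear est} by duality, and then to carry out the frequency summation. First I would use the duality characterization of the $X^{s,-b'}$ norm: writing $u_0$ for a generic test function with $\norm{u_0}_{X^{-s,b'}}\leq 1$, it suffices to bound
\begin{align*}
\left| \int_{\R\times\Theta} u_0 \prod_{j=1}^{2q+1} u_j \, dx\, dt \right| \lesssim \norm{u_0}_{X^{-s,b'}} \prod_{j=1}^{2q+1}\norm{u_j}_{X^{s,b}} ,
\end{align*}
where each $u_j\in\{u,\bar u\}$ (complex conjugation is irrelevant to all the estimates since only moduli of Fourier coefficients enter). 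Decomposing each factor into Littlewood--Paley pieces $u_{j,N_j}$ with $N_j=2^{k}$, the left side becomes $\sum_{\underline N} I(\underline N)$, and after relabeling we may assume $N_1\geq N_2\geq\cdots\geq N_{2q+1}$ (the ordering among $u_1,\dots,u_{2q+1}$; $N_0$ is unrestricted). So the task is to sum $|I(\underline N)|$ against the bounds of Lemma \ref{lem nonlinear est}.

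The dichotomy is exactly the one flagged in the remark after Lemma \ref{lem nonlinear est}: in the regime $N_0\leq cN_1$ I would use \eqref{eq nonlinear est1}, and in the regime $N_0\geq cN_1$ (where necessarily $N_0\sim N_1$ up to the usual high-high-to-low interaction, since the output frequency cannot exceed the sum of the inputs and $N_1$ is the largest input) I would use \eqref{eq nonlinear est2}, which carries the gain $(N_1/N_0)^2$. In the first regime, one converts the $X^{0,b'}$ norms in \eqref{eq nonlinear est1} to $X^{s,b}$ norms by pulling out $N_j^{-s}$ from $u_{j,N_j}$ and $N_0^{s}$ from $u_{0,N_0}$; since $N_0\lesssim N_1$, the factor $N_0^{s}$ is absorbed by $N_1^{s}$. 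One is then left to check that
\begin{align*}
N_1^{-s}\,(N_2N_3)^{1-\frac{1}{2q}+}\,(N_4\cdots N_{2q+1})^{\frac32-\frac{1}{2q}+}\,\prod_{j=1}^{2q+1}N_j^{-s}
\end{align*}
is summable over dyadic $N_1\geq\cdots\geq N_{2q+1}$ after also accounting for the $b$ vs $b'$ mismatch (which costs nothing once $b>b'$, $b+b'<1$, as a short interpolation/Cauchy--Schwarz in the modulation variable shows). The exponent bookkeeping is where the hypothesis $s>s_c=\frac32-\frac1q$ is used: the two largest frequencies $N_1,N_2$ each need regularity strictly above $1-\frac{1}{2q}$ minus what the other factors can donate, and the remaining $N_3,\dots,N_{2q+1}$ each need strictly more than $\frac32-\frac{1}{2q}$ split appropriately; summing the geometric series in each variable in turn (starting from the smallest) closes precisely when $s>s_c$, with the ``$+$'' epsilons and the interpolation losses all swallowed by the strict inequality. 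In the second regime $N_0\geq cN_1$, the analogous computation has the extra $(N_1/N_0)^2$, and since $N_0^{s}\cdot(N_1/N_0)^2 = N_0^{s-2}N_1^2 \lesssim N_1^{s}$ when $N_0\sim N_1$ (and decays when $N_0$ grows beyond $N_1$), the same summation goes through; the $N_0$-sum in the tail $N_0\gg N_1$ converges because of the $N_0^{-2}$.

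The main obstacle I expect is the frequency summation bookkeeping in the first regime, specifically arranging the available regularity $(2q+2)s$ (total, over all $2q+2$ factors) against the derivative losses $(N_2N_3)^{1-1/(2q)}(N_4\cdots N_{2q+1})^{3/2-1/(2q)}$ so that \emph{every} dyadic sum is a convergent geometric series with a uniformly bounded ratio --- this requires distributing the surplus $s-s_c>0$ carefully among the frequencies $N_1,\dots,N_{2q+1}$ (e.g. giving a tiny fixed fraction of it to each of the high frequencies to make its sum converge, and using the leftover to beat the logarithmic ``$+$'' losses). A clean way to organize this is to sum from the smallest frequency $N_{2q+1}$ upward: at each stage the constraint $N_{j}\leq N_{j-1}$ lets one sum $\sum_{N_j\leq N_{j-1}} N_j^{-\delta}\lesssim 1$ (or $\lesssim N_{j-1}^{\text{(small)}}$ if one wants to relocate a bit of decay), reducing to a problem with one fewer factor, until only the $N_1$ (and $N_0$) sum remains, which converges by the residual power $N_1^{s-s_c-}>0$. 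Once all sums are controlled uniformly, Cauchy--Schwarz over the dyadic scales against $\sum_{N_j}N_j^{2s}\norm{u_{j,N_j}}_{X^{0,b}}^2\sim\norm{u_j}_{X^{s,b}}^2$ (and the corresponding statement for $u_0$) produces the claimed product bound, completing the proof.
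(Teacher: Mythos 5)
Your proposal is correct and follows essentially the same route as the paper's proof: duality against a test function $u_0\in X^{-s,b'}$, Littlewood--Paley decomposition, the dichotomy $N_0\leq cN_1$ versus $N_0\geq cN_1$ matched to the two estimates of Lemma \ref{lem nonlinear est}, redistribution of the powers $N_j^{\pm s}$, and summation from the smallest dyadic frequency upward via Cauchy--Schwarz, closing precisely because $s>s_c$. The only cosmetic difference is that the paper disposes of the $b$ versus $b'$ mismatch by the trivial embedding $X^{s,b}\subset X^{s,b'}$ rather than by an interpolation in the modulation variable, which is what your remark amounts to anyway.
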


\begin{proof}[Proof of Proposition \ref{prop nonlinear est}]
Based on Lemma \ref{lem nonlinear est}, we only need to consider $I= \sum_{\underline{N}} I(\underline{N})$. By symmetry, we can reduce the sum into the following two cases:
\begin{enumerate}
\item
$ N_0 \leq c N_1$
\item
$ N_0 \geq c N_1$.
\end{enumerate}

Case 1: $ N_0 \leq c N_1$.

Using Lemma \ref{lem nonlinear est} and Cauchy–Schwarz inequality, we obtain
\begin{align*}
\abs{\sum I(\underline{N})} & \lesssim \sum_{ \substack{ N_1 \geq N_2 \geq N_3 \geq \cdots \geq N_{2q+1} \\ N_0 \leq c N_1}}   (N_2 N_3)^{1- \frac{1}{2q} +} (N_4 N_5 \cdots N_{2q} N_{2q+1})^{\frac{3}{2} - \frac{1}{2q}+} \prod_{j=0}^{2q+1} \norm{u_{j, N_j}}_{X^{0, b'}} \\
& \lesssim \sum_{ \substack{ N_1 \geq N_2 \geq N_3 \geq \cdots \geq N_{2q+1} \\ N_0 \leq c N_1}}   \frac{N_0^s}{N_1^s}  (N_2 N_3)^{1- \frac{1}{2q} -s +} (N_4 N_5 \cdots N_{2q} N_{2q+1})^{\frac{3}{2} - \frac{1}{2q}-s+}  \norm{u_{0,N_0}}_{X^{-s,b'}} \prod_{j=1}^{2q+1} \norm{u_{j, N_j}}_{X^{s, b'}} \\
& \lesssim \norm{u_0}_{X^{-s,b'}} \prod_{j=1}^{2q+1} \norm{u_j}_{X^{s, b'}}   ,
\end{align*}
where $s > s_c = \frac{3}{2} - \frac{1}{q}$. For the last inequality, we sum from the smallest index $N_{2q+1}$ to the largest one  using Cauchy–Schwarz inequality. Then by the embedding $X^{s,b} \subset  X^{s , b' }  $ and duality, we have the estimate in Case 1.

Case 2: $ N_0 \geq c N_1$.

Similarly, by Lemma \ref{lem nonlinear est} and Cauchy–Schwarz inequality again, we have
\begin{align*}
\abs{\sum I(\underline{N})} & \lesssim \sum_{ \substack{ N_1 \geq N_2 \geq N_3 \geq \cdots \geq N_{2q+1} \\ N_0 \geq c N_1}}    (\frac{N_1}{N_0})^2  (N_2 N_3)^{1- \frac{1}{2q} +} (N_4 N_5 \cdots N_{2q} N_{2q+1})^{\frac{3}{2} - \frac{1}{2q}+} \prod_{j=0}^{2q+1} \norm{u_{j, N_j}}_{X^{0, b'}} \\
& \lesssim \sum_{ \substack{ N_1 \geq N_2 \geq N_3 \geq \cdots \geq N_{2q+1} \\ N_0 \geq c N_1}}   (\frac{N_1}{N_0})^{2} \frac{N_0^s}{N_1^s}   (N_2 N_3)^{1- \frac{1}{2q} -s+} (N_4 N_5 \cdots N_{2q} N_{2q+1})^{\frac{3}{2} - \frac{1}{2q} -s +}  \norm{u_{0,N_0}}_{X^{-s,b'}} \prod_{j=1}^{2q+1} \norm{u_{j, N_j}}_{X^{s, b'}} \\
& \lesssim \norm{u_0}_{X^{-s,b'}} \prod_{j=1}^{2q+1} \norm{u_j}_{X^{s, b'}}  ,
\end{align*}
where $s > s_c = \frac{3}{2} - \frac{1}{q}$. Then by the embedding  $X^{s,b} \subset  X^{s , b' }  $  and duality, we have the estimate in Case 2.
The proof of Proposition \ref{prop nonlinear est} is complete.
\end{proof}

\subsection{Local well-posedness}
With the nonlinear estimates in hand, we are ready to prove Theorem \ref{thm LWP}. 

We first recall the following lemma in \cite{BourExp, gi}
\begin{lem}\label{lem Duhamel}
Let $0 < b' < \frac{1}{2}$ and $0 < b < 1-b'$. Then for all $f \in X_T^{s, -b'} (\Theta)$, we have the Duhamel term $w(t) = \int_0^t e^{\i (t-s) \Delta} f(\tau) \, ds \in X_T^{s,b} (\Theta)$ and moreover
\begin{align*}
\norm{w}_{X_T^{s,b} (\Theta)} \leq C T^{1-b-b'} \norm{f}_{X_T^{s,-b'} (\Theta)} .
\end{align*} 
\end{lem}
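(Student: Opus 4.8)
The plan is to reduce the estimate to a purely one-dimensional (time only) inequality by diagonalizing the Dirichlet Laplacian, and then to prove that scalar inequality by a Fourier-space splitting. We may assume $0<T\le 1$, which is the regime used. First I would pick an extension $\tilde f\in X^{s,-b'}(\R\times\Theta)$ of $f$ with $\norm{\tilde f}_{X^{s,-b'}(\R\times\Theta)}\le 2\norm{f}_{X_T^{s,-b'}(\Theta)}$, fix $\eta\in C_0^\infty(\R)$ with $\eta\equiv 1$ on $[-1,1]$, and set $\tilde w(t)=\eta(t/T)\int_0^t e^{\i(t-\tau)\Delta}\tilde f(\tau)\,d\tau$. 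Since $\eta(t/T)=1$ and $\tilde f=f$ at the values of $\tau$ occurring when $|t|<T$, one has $\tilde w=w$ on $(-T,T)\times\Theta$, so it suffices to bound $\norm{\tilde w}_{X^{s,b}(\R\times\Theta)}$.

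Writing $\tilde f(\tau)=\sum_n g_n(\tau)e_n$ and setting $G_n(\tau):=e^{\i z_n^2\tau}g_n(\tau)$, $W_n(t):=\eta(t/T)\int_0^t G_n(\tau)\,d\tau$, the $n$-th coefficient of $\tilde w$ is $e^{-\i z_n^2 t}W_n(t)$. Because multiplying a function by $e^{-\i z_n^2 t}$ shifts its time-Fourier transform by $z_n^2$ — converting the modulation weight $\inner{\tau+z_n^2}$ in Definition \ref{defn Xsb} into $\inner{\tau}$ — one gets
\begin{align*}
\norm{\tilde w}_{X^{s,b}(\R\times\Theta)}^2=\sum_n\inner{z_n}^{2s}\norm{W_n}_{H_t^b(\R)}^2,\qquad \norm{\tilde f}_{X^{s,-b'}(\R\times\Theta)}^2=\sum_n\inner{z_n}^{2s}\norm{G_n}_{H_t^{-b'}(\R)}^2.
\end{align*}
Thus, summand by summand (the weight $\inner{z_n}^{2s}$ playing no role), everything reduces to the scalar estimate: for $0\le b'<\tfrac12$, $0\le b<1-b'$, $0<T\le 1$ and $G\in H^{-b'}(\R)$,
\begin{align*}
\norm{\eta(t/T)\int_0^t G(\tau)\,d\tau}_{H^b(\R)}\le C\,T^{1-b-b'}\norm{G}_{H^{-b'}(\R)}.
\end{align*}

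To prove this — the technical core of the lemma — I would write $\int_0^t G(\tau)\,d\tau=\int_\R\frac{e^{\i t\xi}-1}{\i\xi}\widehat G(\xi)\,d\xi$ and split the frequency integral at $|\xi|=1/T$. On $|\xi|\le 1/T$ I Taylor-expand $e^{\i t\xi}-1=\sum_{k\ge1}\frac{(\i t\xi)^k}{k!}$, so the contribution becomes $\sum_{k\ge1}\frac{(\i t)^k}{k!}\int_{|\xi|\le 1/T}\xi^{k-1}\widehat G(\xi)\,d\xi$; each integral is $\lesssim T^{1/2-b'-k}\norm{G}_{H^{-b'}}$ by Cauchy--Schwarz (here $b'<\tfrac12$ is used), while $\norm{\eta(\cdot/T)(\cdot)^k}_{H^b}\lesssim T^{k+1/2-b}\norm{\eta(\cdot)(\cdot)^k}_{H^b}$ by the scaling relation $\norm{h(\cdot/T)}_{H^b}\lesssim T^{1/2-b}\norm{h}_{H^b}$ (valid for $T\le 1$), and since $\norm{\eta(\cdot)(\cdot)^k}_{H^b}$ grows at most geometrically in $k$ the series sums to $\lesssim T^{1-b-b'}\norm{G}_{H^{-b'}}$. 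On $|\xi|>1/T$ I write $\frac{e^{\i t\xi}-1}{\i\xi}=\frac{e^{\i t\xi}}{\i\xi}-\frac1{\i\xi}$: the second term is constant in $t$ and contributes $\norm{\eta(\cdot/T)}_{H^b}\,\bigl|\int_{|\xi|>1/T}\widehat G(\xi)/(\i\xi)\,d\xi\bigr|\lesssim T^{1/2-b}\cdot T^{1/2-b'}\norm{G}_{H^{-b'}}$, while the first term is $h_1:=\mathcal F^{-1}\bigl[\mathbf 1_{|\xi|>1/T}\widehat G(\xi)/(\i\xi)\bigr]$, for which $\norm{h_1}_{H^b}\lesssim\norm{\mathbf 1_{|\xi|>1/T}\inner{\xi}^{b-1}\widehat G}_{L^2}\le T^{1-b-b'}\norm{G}_{H^{-b'}}$ — the key point being $\inner{\xi}^{b-1+b'}\lesssim(1/T)^{b-1+b'}=T^{1-b-b'}$ on $|\xi|>1/T$, which is exactly where $b<1-b'$ enters — and $\norm{h_1}_{L^2}\lesssim T^{1-b'}\norm{G}_{H^{-b'}}$; the fractional Leibniz bound $\norm{\eta(\cdot/T)h_1}_{H^b}\lesssim\norm{h_1}_{H^b}+T^{-b}\norm{h_1}_{L^2}$ then finishes this piece.

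Putting the pieces together yields $\norm{w}_{X_T^{s,b}}\le\norm{\tilde w}_{X^{s,b}}\le C\,T^{1-b-b'}\norm{\tilde f}_{X^{s,-b'}}\le 2C\,T^{1-b-b'}\norm{f}_{X_T^{s,-b'}}$, as claimed. The main obstacle is the scalar estimate, and within it the interplay between the cutoff $\eta(t/T)$ and an $H^b$ norm with $b$ possibly above $\tfrac12$: one must use the refined fractional-Leibniz inequality rather than the lossy bound $\norm{\eta(t/T)g}_{H^b}\lesssim T^{-b}\norm{g}_{H^b}$, and must notice that the $k=1$ term of the low-frequency expansion dictates the sharp exponent $1-b-b'$. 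This is the classical argument of \cite{gi, BourExp}, adapted here to the Dirichlet-eigenfunction calculus of Section \ref{sec Preliminaries}.
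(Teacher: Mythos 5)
Your proof is correct. Note first that the paper does not actually prove Lemma \ref{lem Duhamel}: it is quoted from \cite{BourExp, gi}, so there is no internal argument to compare against, and what you wrote is essentially the classical proof from those references. Your reduction is sound: extending $f$, inserting the cutoff $\eta(t/T)$, and diagonalizing in the Dirichlet eigenbasis does turn the modulation weight $\inner{\tau+z_n^2}$ of Definition \ref{defn Xsb} into $\inner{\tau}$, so the lemma follows, uniformly in $z_n$, from the scalar bound $\norm{\eta(t/T)\int_0^t G(\tau)\,d\tau}_{H^b}\lesssim T^{1-b-b'}\norm{G}_{H^{-b'}}$, and your splitting at $|\xi|=1/T$ (Taylor expansion below, using $b'<\tfrac12$ for the boundary term and $b+b'<1$ for the high-frequency weight comparison) gives exactly the exponent $1-b-b'$. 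Two points are worth making explicit in a write-up: (i) the refined product estimate $\norm{\eta(\cdot/T)h}_{H^b}\lesssim\norm{h}_{H^b}+T^{-b}\norm{h}_{L^2}$ is what saves you from the lossy $T^{-b}\norm{h}_{H^b}$ bound, and it requires $0\le b\le 1$ (provable, e.g., via the Gagliardo seminorm characterization of $H^b$, splitting $u(x)-u(y)$ into a piece with the cutoff frozen and a piece with the increment of the cutoff); this is legitimate here precisely because $b<1-b'<1$; and (ii) the summability of the low-frequency series uses that $\norm{s^k\eta(s)}_{H^b}\lesssim (1+k)C^k$ by the compact support of $\eta$ and interpolation between $L^2$ and $H^1$, which indeed beats $k!$. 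With these two details filled in, your argument is complete and yields the stated estimate.
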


\begin{proof}[Proof of Theorem \ref{thm LWP}]
Let $u_0 \in H^{s}$. We first define a map
\begin{align*}
F (u)(t) : = e^{\i t\Delta}  u_0  - \i \int_0^t e^{\i (t-s) \Delta}  (\abs{u}^{2q} u) \, ds.
\end{align*}
We prove  this locally well-posedness theory using a standard fixed point argument. 
Let $R_0>0$ and $u_0 \in H^s (\Theta)$ with $\norm{u_0}_{H^s} \leq R_0$. We show that there exists $R>0 $ and $0 < T= T(R_0) <1 $ such that $F$ is a contraction mapping from $B(0, R) \subset X_T^{s,b} (\Theta)$ onto itself.

Define $R = 2 c_0 R_0$. 
\begin{enumerate}
\item $F$ is a self map from $B(0, R) \subset X_T^{s,b} (\Theta)$ onto itself. 

For $T< 1$, by Lemma \ref{lem Duhamel} and Proposition \ref{prop nonlinear est}
\begin{align*}
\norm{F (u)}_{X_T^{s,b} (\Theta)} & \leq c_0 \norm{u_0}_{H^s} + c_1 T^{1- b - b'} \norm{\abs{u}^{2q} u}_{X_T^{s, -b'}(\Theta)} \\
& \leq  c_0 \norm{u_0}_{H^s} + c_2 T^{1- b - b'} \norm{u}_{X_T^{s, b}(\Theta)}^{2q+1} \\
& \leq c_0 \norm{u_0}_{H^s} + c_2 T^{1- b - b'} R^{2q+1}.
\end{align*}
Taking $T_1 =  (\frac{c_0}{c_2 R^{2q}})^{\frac{1}{1-b-b'}} <1$ such that $ c_2 T_1^{1- b - b'} R^{2q+1} = c_0 R$, we see that $F$ is a self map.

\item $F$ is a contraction mapping.

Similarly, by Lemma \ref{lem Duhamel} and Proposition \ref{prop nonlinear est}
\begin{align}\label{eq LWP1}
\norm{F (u) - F (v)}_{X_T^{s,b}(\Theta)} \leq c_3 T^{1- b - b'} (\norm{u}_{X_T^{s, b}(\Theta)}^{2q} + \norm{v}_{X_T^{s, b}(\Theta)}^{2q}) \norm{u-v}_{X_T^{s, b}(\Theta)} \leq c_4 T^{1- b - b'} R^{2q} \norm{u-v}_{X_T^{s, b}(\Theta)} . 
\end{align}
Taking $T_2 = (\frac{1}{2c_4 R^{2q}})^{\frac{1}{1-b-b'}}$ such that $c_4 T_2^{1- b - b'} R^{2q} = \frac{1}{2}$, we can make $F$ a contraction mapping.

Now we choose 
\begin{align}\label{eq T_R}
T_R = \min\{T_1, T_2 \}.
\end{align}
As a consequence of the fixed point argument, we have
\begin{align*}
\norm{u}_{X_T^{s,b}(\Theta)} \leq 2 c_0 \norm{u_0}_{H^s} . 
\end{align*}

\item Stability.

If $u$ and $v$ are two solutions to \eqref{eq PNLS} with initial data $u(0) =u_0$ and $v(0) = v_0$ respectively. Then by \eqref{eq LWP1} and the choice of $T_R$ \eqref{eq T_R}
\begin{align*}
\norm{u-v}_{X_T^{s,b}(\Theta)} & \leq c_0 \norm{u_0 -  v_0}_{ H^s} + c_4 T^{1- b - b'} R^{2q}  \norm{u-v}_{X_T^{s, b}(\Theta)} ,
\end{align*}
which implies
\begin{align*}
\norm{u-v}_{X_T^{s, b}(\Theta)} \leq c \norm{u_0 -  v_0}_{H^s} . 
\end{align*}
\end{enumerate}
\end{proof}

\begin{rmq}
For finite $N$ let us denote $\Pi^N L^2$ by $E_N$ and the local flow on $E_N = \Pi^N L^2$ by $\phi_t^N$. We verify easily that the local existence time obtained in Theorem \ref{thm LWP} is valid for the Galerkin approximations for the same radius $R$ of the balls. We set $\phi_t$ to be the local flow constructed in Theorem \ref{thm LWP} in $X^{s,b}$. 
\end{rmq}

\begin{prop}[Convergence of finite dimensional projections]\label{Prop:CvgLocSol}
Let $s>s_c$, $u_0 \in H^{s}$ and $(u_0^N) $ be a sequence that converges to $u_0 $ in $H^{s}$, where $u_0^N \in E_N$. Then for any $ \sigma \in (s_c,s)$
\begin{align*}
\norm{\phi_t^N u_0^N - \phi_t u_0}_{X^{\sigma,b}_T} \to 0 \quad \text{ as } N \to \infty.
\end{align*} 
\end{prop}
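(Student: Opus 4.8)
The plan is to prove Proposition \ref{Prop:CvgLocSol} by a fixed-point/perturbation argument that compares the Duhamel iterates for the two flows on a common time interval, exploiting the uniformity of the local existence time established in Theorem \ref{thm LWP} (and noted in the Remark to hold for the Galerkin truncations with the same radius). First I would fix $\sigma \in (s_c,s)$, set $R$ to control $\sup_N \norm{u_0^N}_{H^\sigma}$ and $\norm{u_0}_{H^\sigma}$ (possible since $u_0^N \to u_0$ in $H^s \hookrightarrow H^\sigma$), and let $T = T_R$ be the corresponding common local time. On $[-T,T]$ both $\phi_t u_0$ and $\phi_t^N u_0^N$ live in balls of radius $\lesssim R$ in $X_T^{s,b}$, hence a fortiori in $X_T^{\sigma,b}$.

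The core estimate writes, with $u = \phi_t u_0$, $u^N = \phi_t^N u_0^N$, and $w^N = u^N - u$,
\begin{align*}
\norm{w^N}_{X_T^{\sigma,b}} \leq c_0 \norm{u_0^N - u_0}_{H^\sigma} + c\,T^{1-b-b'}\bigl(\norm{u}_{X_T^{s,b}}^{2q} + \norm{u^N}_{X_T^{s,b}}^{2q}\bigr)\norm{w^N}_{X_T^{\sigma,b}} + c\,T^{1-b-b'}\norm{(1-\Pi^N)(|u^N|^{2q}u^N)}_{X_T^{\sigma,-b'}},
\end{align*}
where the first and second terms come from Lemma \ref{lem Duhamel} and Proposition \ref{prop nonlinear est} applied at regularity $\sigma$ to the difference $|u^N|^{2q}u^N - |u|^{2q}u$ (a telescoping of multilinear differences, each factor controlled by $\norm{w^N}_{X_T^{\sigma,b}}$ times $2q$ high-regularity factors in $X_T^{s,b}$), and the last term is the Galerkin truncation error. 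Choosing $T$ slightly smaller if necessary so that $c\,T^{1-b-b'}R^{2q} \leq \tfrac12$ (again uniform in $N$), the second term is absorbed, giving
\begin{align*}
\norm{w^N}_{X_T^{\sigma,b}} \lesssim \norm{u_0^N - u_0}_{H^\sigma} + \norm{(1-\Pi^N)(|u^N|^{2q}u^N)}_{X_T^{\sigma,-b'}}.
\end{align*}
The first term on the right tends to $0$ by hypothesis. For the truncation error I would note that $|u^N|^{2q}u^N$ is bounded in $X_T^{s,-b'}$ uniformly in $N$ by Proposition \ref{prop nonlinear est} (since $\norm{u^N}_{X_T^{s,b}}\lesssim R$), and $(1-\Pi^N) \to 0$ strongly with a quantitative gain: $\norm{(1-\Pi^N)f}_{X_T^{\sigma,-b'}} \lesssim N^{-(s-\sigma)}\norm{f}_{X_T^{s,-b'}}$, because $1-\Pi^N$ projects onto frequencies $\gtrsim N$ where $\langle z_n\rangle^\sigma \leq N^{-(s-\sigma)}\langle z_n\rangle^{s}$. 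Hence this term is $O(N^{-(s-\sigma)}) \to 0$, and we conclude $\norm{w^N}_{X_T^{\sigma,b}} \to 0$.

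The main obstacle is two-fold and both pieces are essentially bookkeeping rather than deep: first, one must run the nonlinear difference estimate at the lower regularity $\sigma$ while the uniform-in-$N$ bounds on $u$ and $u^N$ are only available (cleanly) at regularity $s$ — this is fine because Proposition \ref{prop nonlinear est} and Lemma \ref{lem nonlinear est} are genuinely multilinear, so one can place a single difference factor in $X_T^{\sigma,b}$ and all remaining $2q$ factors in $X_T^{s,b}$, but it requires checking that the multilinear bound still closes with this asymmetric distribution of derivatives (it does, since moving derivatives onto fewer factors only helps). Second, one needs the quantitative operator-norm decay of $1-\Pi^N$ on the $X^{s,b}$ scale, which is immediate from the frequency-space definition \eqref{eq Xsb} of the norms. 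A minor point: the proposition asserts convergence on a fixed $[-T,T]$ that may be smaller than the maximal existence interval of $\phi_t u_0$; one obtains the stated local statement directly, and if convergence on larger intervals were wanted one would iterate using that $\phi_t^N u_0^N(\pm T) \to \phi_t u_0(\pm T)$ in $H^\sigma$ (not needed for the statement as phrased).
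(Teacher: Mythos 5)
Your proposal is correct and follows essentially the same route as the paper's proof: a Duhamel comparison on the common local interval $[-T_R,T_R]$, the nonlinear difference estimate from Proposition \ref{prop nonlinear est} with one factor at regularity $\sigma$ absorbed via the smallness of $T^{1-b-b'}R^{2q}$, and the quantitative decay $N^{-(s-\sigma)}$ of the Galerkin truncation error. The only (immaterial) difference is that the paper places $1-\Pi^N$ on the limit solution's nonlinearity $|u|^{2q}u$ rather than on $|u^N|^{2q}u^N$.
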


\begin{proof}[Proof of Proposition \ref{Prop:CvgLocSol}]
Since $u_0^N \to u_0$, there exists an $R$ such that $u_0^N \in B_R(H^{\gamma})$. Take the same $T_R$ as in \eqref{eq T_R}. Set $w_0^N = u_0^N - u_0$ and $w^N = \phi_t^N u_0^N - \phi_t u_0$, then we write
\begin{align*}
w^N & = e^{\i t\Delta} w_0^N - \i \int_0^t  e^{\i t \Delta} (\Pi^N \abs{\phi_t^N u_0^N}^{2q}\phi_t^N u_0^N - \abs{\phi_t u_0}^{2q}\phi_t u_0 ) \, ds\\
& = e^{\i t\Delta} w_0^N -  \i \int_0^t  e^{\i t \Delta} \Pi^N \parenthese{ \abs{\phi_t^N u_0^N}^{2q}\phi_t^N u_0^N  - \abs{\phi_t u_0}^{2q}\phi_t u_0} - (1-\Pi^N ) (\abs{\phi_t u_0}^{2q}\phi_t u_0 ) \, ds .
\end{align*}
Using H\"older inequality, Lemma \ref{lem Duhamel}, Proposition \ref{prop nonlinear est} and the choice of $T_R$ as in \eqref{eq T_R}, we have
\begin{align*}
\norm{w^N}_{X^{\sigma,b}_T} & \leq \norm{w_0^N}_{H^s} + C \norm{w^N}_{X^{\sigma,b}_T} (\norm{u^N}_{X^{\sigma,b}_T}^{2q} + \norm{u}_{X^{\sigma,b}_T}^{2q} ) +   \norm{(1-\Pi^N) \abs{u}^{2q} u}_{X^{\sigma,b}_T} \\
& \leq \norm{w_0^N}_{H^s} + C\norm{w^N}_{X^{\sigma,b}_T} (\norm{u^N}_{X^{\sigma,b}_T}^{2q} + \norm{u}_{X^{\sigma,b}_T}^{2q} ) + C_1  z_N^{\frac{\sigma-s}{2}} \norm{u}_{X^{s,b}_T}^{2q+1} .
\end{align*}
Therefore,
\begin{align*}
\norm{w^N}_{X^{\sigma,b}_T} & \leq \norm{w_0^N}_{H^s} + \frac{1}{2} \norm{w^N}_{X^{\sigma,b}_T} + C_2 z_N^{\frac{\sigma-s}{2}}\\
\norm{w^N}_{X^{\sigma,b}_T} & \leq 2  \norm{w_0^N}_{H^s} + 2C_2 z_N^{\frac{\sigma-s}{2}}.
\end{align*}
Recall that $z_N = (\pi N)^2$, then 
\begin{align*}
\norm{w^N}_{X^{\sigma,b}_T} \to 0 \quad \text{ as } N \to \infty.
\end{align*}
\end{proof}

\section{Invariant measure for Galerkin approximation}\label{Sect:Galerk}\label{Sect:Gal}
We start by introducing some settings and notations that will  be used from this section on. First, $(\Omega, \mathcal{F}, \P)$ is a complete probability space. If $E$ is a Banach space, we can define random variables $X:\Omega\to E$ as Bochner measurable functions with respect to $\mathcal{F}$ and $\mathcal{B}(E)$, where $\mathcal{B}(E)$ is the Borel $\sigma-$algebra of $E.$

For every positive integer $N$, we define the $N-$dimensional Brownian motion
\begin{align}
W^N (t,r) = \sum_{n=1}^N a_n e_n (r) \beta_n (t) ,\label{Noise}
\end{align}
where $(\beta_n (t))$ is a fixed sequence of independent one dimensional Brownian motions with filtration $(\mathcal{F}_t)_{t\geq 0}$. The numbers $(a_m)_{m\geq 1}$ are complex numbers such that $|a_m|$ decreases sufficiently fast to $0.$ More precisely we assume that
\begin{align}
A_r := \sum_{n \geq 1} z_n^{2r}|a_n|^2 < + \infty\quad\text{for $r\leq 1$,}\label{Noise:Cond}
\end{align}
where $(z_n^2)$ are the eigenvalues of $-\Delta$ associated to $(e_n)$. Set 
\begin{align}\label{eqA_r}
A_r^N = \sum_{n=1}^N z_n^{2r}|a_n|^2.
\end{align}

Throughout the sequel, we use $\langle \cdot,\cdot\rangle$ as the real dot product in $L^2(\Theta)$:
\begin{align*}
\langle u,g\rangle =\re\int_{\Theta}u\bar{v} \, dx, \quad \text{for all $u,v\in L^2(\Theta)$}.
\end{align*}
And for a smooth enough functional $F: E^N\to \R\ ;\ u\mapsto F(u)$, we denote the first derivative of $F$ by $F'(u,v)$ and the second derivative of $F$ by $F''(u;v,w)$.
 Occasionally this notation is used to write a duality bracket when the context is not confusing.

The rest of this section is organized as follows:
\begin{enumerate}
\item[Step 1] 
In Section \ref{ssec 4.1}, we introduce fluctuation-dissipation  equations \eqref{SNLS}. We obtain the formal estimates on the dissipation rate of the mass and the energy functionals $M(u)$ and $E(u)$ (given in \eqref{eq Intro mass} and \eqref{eq Intro energy}) where $u$ is the (formal) solution of \eqref{SNLS}. These dissipation rates play a crucial in the globalization argument. They, first, provide invariant measures with uniform large deviation estimates needed in the Bourgain's argument. They also provide information about the infinite-dimensional invariant measures, specially using $\mathcal{M}(u)$ we derive large size property of the data covered in the support of this measure.

\item[Step 2]
In Section \ref{ssec 4.2}, we present GWP result for \eqref{SNLS}.

\item[Step 3]
In Section \ref{ssec 4.3}, we define the Markov semi-group associated to the solutions, and in Section \ref{ssec 4.4}, we construct stationary measures for \eqref{SNLS} and derive uniform in $(\al,N)$ bounds for $\mathcal{M}(u)$ and $\mathcal{E}(u)$. 

\item[Step 4]
In Section \ref{ssec 4.5}, We then pass to the limit $\al\to 0$.
\end{enumerate}

\subsection{Introduction of stochastic equations and their estimates}\label{ssec 4.1}

Let us introduce the following fluctuation-dissipation of the Galerkin projections of \eqref{NLS}.
\begin{align}\label{SNLS}
du = \i (\Delta u - \Pi^N \abs{u}^{2q} u) \, dt - \alpha \mathcal{L}(u) \, dt +  \sqrt{\alpha} \, d W^N
\end{align}
where $\alpha \in (0,1)$, $\rho:\R_+\to\R_+$ is any increasing function and
\begin{align}
\mathcal{L}(u)=e^{\rho(\|u\|_{H^{\beta-}})}[(-\Delta)^{\beta-1}u+\Pi^N|u|^{2q}u], \quad \beta \in [1,\frac{3}{2}].
\end{align}
 
We have the following
\begin{align}\label{eqMcal}
M'(u,e^{\rho(\|u\|_{H^{\beta-}})}[(-\Delta)^{\beta-1}+\Pi^N|u|^{2q}]u)=e^{\rho(\|u\|_{H^{\beta -}})}\left(\|u\|_{H^{\beta-1}}^2+\|u\|_{L^{2q+2}}^{2q+2}\right)=:\mathcal{M}(u).
\end{align}
\begin{align}\label{eqEcal}
E'(u,e^{\rho(\|u\|_{H^{\beta-}})}[(-\Delta)^{\beta-1}+\Pi^N|u|^{2q}]u) &=  e^{\rho(\|u\|_{H^{\beta -}})}\left(\|u\|_{H^\beta}^2+\langle\Pi^N|u|^{2q}u,-\Delta u+|u|^{2q}u\rangle +\langle(-\Delta)^{\beta-1}u,|u|^{2q}u\rangle\right)=:\mathcal{E}(u).
\end{align}
Let us treat the term $\langle(-\Delta)^{\beta-1}u,|u|^{2q}u\rangle$: We have, using that $1\leq \beta\leq \frac{3}{2}$ and noting that $2\beta -2<\beta$, that
\begin{align*}
|\langle(-\Delta)^{\beta-1}u,\Pi^N|u|^{2q}u\rangle|\leq \frac{1}{2}\|u\|_{H^{2\beta -2}}^2+\frac{1}{2}\|\Pi^N|u|^{2q}u\|_{L^{2}}^{2}\leq \frac{1}{2}\|u\|_{H^{\beta}}^2+\frac{1}{2}\|\Pi^N|u|^{2q}u\|_{L^{2}}^{2}.
\end{align*}
Also, noticing that $\Pi^N\Delta u=\Delta u,$ we have
\begin{align*}
\langle\Pi^N|u|^{2q}u,-\Delta u+|u|^{2q}u\rangle =\|\Pi^N|u|^{2q}u\|_{L^{2}}^{2}+\langle|\nabla u|^2,|u|^{2q}\rangle+\langle u\nabla (u^{q}\bar{u}^{q}),\nabla u  \rangle.
\end{align*}
Let us remark that
\begin{align*}
\langle u\nabla (u^{q}\bar{u}^{q}),\nabla u  \rangle &= 2q \langle uu^{q-1}(\nabla u)\bar{u}^{q},\nabla u  \rangle+q \langle u\bar{u}^{q-1}(\nabla \bar{u})u^{q},\nabla u  \rangle\\
& \geq q \langle |u|^{2q}, |\nabla u|^2 \rangle - q \langle |u|^{2q}, |\nabla u|^2\rangle= 0.
\end{align*}
Hence we obtain
\begin{align*}
\langle\Pi^N|u|^{2q}u,-\Delta u+|u|^{2q}u\rangle \geq \frac{1}{2}\|\Pi^N|u|^{2q}u\|_{L^{2}}^{2}+\langle|\nabla u|^2,|u|^{2q}\rangle.
\end{align*}
Overall,
\begin{align*}
 \mathcal{E}(u) &\geq e^{\rho(\|u\|_{H^{\beta -}})}\left(\|u\|_{H^\beta}^2-\frac{1}{2}\|u\|_{H^{\beta}}^2-\frac{1}{2}\|\Pi^N|u|^{2q}u\|_{L^{2}}^{2}+\|\Pi^N|u|^{2q}u\|_{L^{2}}^{2}+\langle|\nabla u|^2,|u|^{2q}\rangle\right).
\end{align*}

We arrive at
\begin{align*}
 \mathcal{E}(u) &\geq e^{\rho(\|u\|_{H^{\beta -}})}\left(\frac{1}{2}\|u\|_{H^{\beta}}^2+\frac{1}{2}\|\Pi^N|u|^{2q}u\|_{L^{2}}^{2}+\langle|\nabla u|^2,|u|^{2q}\rangle\right).
\end{align*}
We remark that
\begin{align}
\|u\|_{L^2}^2\leq \mathcal{M}(u)\leq E(u)e^{\rho(\norm{u}_{H^{\beta -}})}\lleq \mathcal{E}(u), \quad \text{(since $\beta\leq 3/2$, we have $\beta-1\leq 1/2< 1.$)}\label{Mcal:leq:Ecal:Subcrit}
\end{align}
and
\begin{align}\label{Est_bound:on:E}
 \|u^{q+1}\|_{\dot{H}^1}^2+\|u\|_{H^\beta}^2\leq (\|u^{q+1}\|_{\dot{H}^1}^2+\|u\|_{H^\beta}^2)e^{\rho(\|u\|_{H^{\beta -}})}\leq 2\mathcal{E}(u).
\end{align}
\begin{rmq}
The role of the term $|u|^{4q}u$ in the damping allowed to manage the `interaction' between $(-\Delta)^{\beta-1}$ and the nonlinearity, as in the term $\langle(-\Delta)^{\beta-1}u,|u|^{2q}u\rangle$. On the other hand the term $e^{\rho(\|u\|_{\beta-})}$ allows to make use of a Bourgain globalization argument and provides a control on the resulting solutions (see Section \ref{Sect:Alg:pow}). 
\end{rmq}

\subsection{Global well-posedness for stochastic equations}\label{ssec 4.2}

Below we present a probabilistic global wellposedness result for \eqref{SNLS}.
\begin{prop}\label{prop u_alpha^N}
For every $\mathcal{F}_0$-measurable random variable $u_0$ in $E_N$, we have
\begin{enumerate}
\item
for $\mathbb{P}$-almost all $\omega \in \Omega$, \eqref{SNLS} with initial condition $u(0) = \Pi^N  u_0^{\omega}$ has a unique global solution $u_{\alpha}^N (t; u_0^{\omega})$;

\item
if $u_{0,n}^{\omega} \to u_0^{\omega}$ then $u_{\alpha}^N (\cdot; u_{0,n}^{\omega}) \to u_{\alpha}^N(\cdot; u_0^{\omega})$ in $C_t L^2$;

\item
the solution $u^N_\al$  is adapted to $(\mathcal{F}_t)$.
\end{enumerate}
\end{prop}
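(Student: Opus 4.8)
The plan is to read \eqref{SNLS} as a finite-dimensional Itô system on $E_N=\Pi^NL^2$. Since $(-\Delta)^{\beta-1}$ and $\Pi^N$ are bounded linear operators on this finite-dimensional space, $z\mapsto|z|^{2q}z=z^{q+1}\bar z^{q}$ is a polynomial (here $q\in\N^+$ is used), and $u\mapsto\|u\|_{H^{\beta-}}$ is locally Lipschitz on $E_N$, the drift $u\mapsto\i(\Delta u-\Pi^N|u|^{2q}u)-\al\mathcal{L}(u)$ is locally Lipschitz (for the factor $e^{\rho(\|u\|_{H^{\beta-}})}$ one assumes, as one may, that $\rho$ is locally Lipschitz), while the diffusion coefficient is the constant $\sqrt{\al}$ times the fixed vector field $W^N$ of \eqref{Noise}. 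The classical theory of SDEs with locally Lipschitz coefficients then provides, for $\Prb$-almost every $\omega$, a unique maximal strong solution $u_\al^N(\cdot\,;u_0^\omega)$ on a random interval $[0,\tau^\omega)$; being a strong solution it is automatically $(\mathcal{F}_t)$-adapted, which is point (3), and it only remains to prove $\tau^\omega=+\infty$ a.s.

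For non-explosion I would use the mass functional $M(u)=\|u\|_{L^2}^2$. The Hamiltonian part $\i(\Delta u-\Pi^N|u|^{2q}u)$ conserves $M$ because $\Pi^N$ is the orthogonal projection commuting with $\Delta$; the damping only dissipates it, since by \eqref{eqMcal} one has $M'(u,\mathcal{L}(u))=\mathcal{M}(u)\ge0$; and the Itô correction produced by the additive noise is the \emph{constant} $\al A_0^N=\al\sum_{n=1}^N|a_n|^2\le\al A_0<\infty$ (with $A_r^N$ as in \eqref{eqA_r} and $A_0<\infty$ by \eqref{Noise:Cond}). Hence Itô's formula gives
\begin{align*}
M(u_\al^N(t))=M(u_0)-\al\int_0^t\mathcal{M}(u_\al^N(s))\,ds+\al A_0^N t+\mathfrak{M}_t,
\end{align*}
with $\mathfrak{M}$ a local martingale. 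Stopping at $\tau_R=\inf\{t:\|u_\al^N(t)\|_{L^2}\ge R\}$, taking expectations and discarding the nonnegative $\mathcal{M}$-integral yields $\E\,M(u_\al^N(t\wedge\tau_R))\le M(u_0)+\al A_0 t$, uniformly in $R$. By Chebyshev, $\Prb(\tau_R\le t)\le(M(u_0)+\al A_0 t)/R^2$; since on the finite-dimensional space $E_N$ the $L^2$-norm dominates every norm we have $\tau^\omega=\lim_{R\to\infty}\tau_R$, and letting $R\to\infty$ gives $\Prb(\tau^\omega\le t)=0$ for every $t$, i.e. $\tau^\omega=+\infty$ a.s. This proves (1).

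For the stability claim (2), the decisive point is that the noise is additive and driven by the same $W^N$ for every initial datum, so the difference $w=u_\al^N(\cdot\,;u_{0,n}^\omega)-u_\al^N(\cdot\,;u_0^\omega)$ of two solutions solves a \emph{random ODE} with no stochastic term, whose right-hand side is locally Lipschitz in $(u_\al^N(u_{0,n}),u_\al^N(u_0))$. On the a.s. event $\{\sup_{[0,T]}\|u_\al^N(t;u_0^\omega)\|_{L^2}<\rho\}$ a standard continuity/Gronwall bootstrap applies: as long as $\|u_\al^N(t;u_{0,n}^\omega)\|_{L^2}<2\rho$ one has $\sup_{[0,T]}\|w(t)\|_{L^2}\le\|u_{0,n}^\omega-u_0^\omega\|_{L^2}\,e^{L(2\rho)T}$, which for $n$ large is $<\rho$, so this a priori bound propagates; hence $\sup_{[0,T]}\|w\|_{L^2}\to0$. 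Letting $\rho\to\infty$ (the probability of the event tends to $1$, as $u_\al^N(\cdot;u_0^\omega)$ is continuous hence bounded on $[0,T]$) gives convergence in $C([0,T];L^2)$ for every $T$, which is (2).

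The proposition is essentially a statement about a textbook SDE on $\R^{2N}$, so I do not expect a deep obstacle; the only things to be careful about are the mild regularity needed on $\rho$ to make the drift locally Lipschitz, and the localization bookkeeping (working with $\tau_R$) that turns the a priori mass bound for the stopped process into genuine global existence. The single conceptual ingredient — the sign $\mathcal{M}(u)\ge0$ together with the fact that the noise contributes only the constant $\al A_0^N$ to the mass balance — has already been isolated in Section \ref{ssec 4.1}.
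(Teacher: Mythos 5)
Your proof is correct, but for part (1) it takes a genuinely different route from the paper. You treat \eqref{SNLS} directly as a finite-dimensional SDE with locally Lipschitz drift and additive noise, get a maximal strong solution from the classical theory, and rule out explosion by a Lyapunov/Khasminskii argument: Itô's formula for the mass, the sign $\mathcal{M}(u)\ge 0$, stopping at $\tau_R$ and Chebyshev. The paper instead splits $u=v+z$ into the stochastic convolution $z$ solving \eqref{Equ:z} (controlled via Doob's inequality, estimate \eqref{Est:z}) and a pathwise random ODE \eqref{Equ:v} for $v$, and globalizes $v$ by a deterministic dichotomy on $\|v\|_{H^{\beta-1}}$ for each fixed $\omega$. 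Your argument is shorter and more standard for a finite-dimensional system, and it directly exploits the computation already isolated in Section \ref{ssec 4.1}; its only costs are the ones you flag (local Lipschitz continuity of $\rho$, which the paper also tacitly assumes when it calls the vector field smooth) plus one you gloss over: the statement allows an arbitrary $\mathcal{F}_0$-measurable $u_0$, possibly with $\E M(u_0)=\infty$, so the expectation step should be run conditionally on $\mathcal{F}_0$ (or after restricting to $\{\|u_0\|_{L^2}\le K\}$ and letting $K\to\infty$); this is routine. The paper's splitting, by contrast, is entirely pathwise and — more importantly — is not just a stylistic choice: the decomposition $u_k=v_k+z_k$ and the bound \eqref{Est:z} are reused verbatim in the inviscid limit (Lemma \ref{LemConvUnifInvis}), so the paper gets that machinery for free here. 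Your treatments of (2) and (3) coincide in substance with the paper's (Gronwall on the noise-free difference equation, adaptedness from strong solvability).
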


\begin{proof}[Proof of Proposition \ref{prop u_alpha^N}]
\begin{enumerate}
\item
The proof is rather classical. We can follow for instance \cite{syNLS7}. It consists in splitting the equation \eqref{SNLS} into the following two complementary problems:
\begin{align}\label{Equ:z}
\begin{cases}
dz =\i\Delta z \, dt+\sqrt{\alpha} \, d W^N,\\
z(0)=0;
\end{cases}
\end{align}
and
\begin{align}\label{Equ:v}
\begin{cases}
\dt v =\i(\Delta v-|v+z|^{2q}(v+z))-\alpha e^{\rho(\|v+z\|_{H^{\beta -}})}\left[(-\Delta)^{\beta-1}(v+z)+\Pi^N|v+z|^{2q}(v+z)\right],\\
v(0)= u_0.
\end{cases}
\end{align}
The problem \eqref{Equ:z} is solved by the stochastic convolution
\begin{align*}
z(t)=\sqrt{\alpha}\int_0^te^{\i(t-s)\Delta} \, dW^N(s).
\end{align*}
Using the Doob's maximal inequality (see Theorem \ref{Thm:Doob}) and the It\^o formula, we see that $z$ satisfies the estimate
\begin{align}
\E\left(\sup_{t\in [0,T]}\|z(t)\|_{L^2}\right)^m\lleq C_m\alpha\label{Est:z}\quad \text{for any $T>0$},
\end{align}
for any $m>1.$

The equation \eqref{Equ:v} can be solved locally almost surely by a fixed point argument, clearly the vector field involved in the equation is smooth. In order to show that the local solution $v$ is global almost surely, we first claim the following almost sure type control on $v$:
\begin{align}
\sup_{t\in[0,T]}\|v\|_{L^2}^2\leq \|u_0\|_{L^2}^2+C^3_\alpha(\omega,T,N)\quad\text{for any $T>0$}. \label{Est:v:point}
\end{align}
To prove \eqref{Est:v:point}, let us compute the derivative of $\frac{\|v\|_{L^2}^2}{2}$ and use the equation \eqref{Equ:v}, we obtain, by adding $-z+z$,
\begin{align*}
\frac{d}{dt}\left[\frac{\|v\|_{L^{2}}^{2}}{2}\right] &\leq |\langle z,|v+z|^{2q}(v+z)\rangle|-\al e^{\rho(\|v+z\|_{H^{\beta -}})}\left(\|v\|_{H^{\beta -1}}^2+\|v+z\|_{L^{2q+2}}^{2q+2}\right)\\
& \quad +\al e^{\rho(\|v+z\|_{H^{\beta -}})}\left(\langle|v+z|^{2q}(v+z),z\rangle-\langle (-\Delta)^{\beta-1}v,z\rangle \right)\\
&\leq \frac{\|z\|_{L^{2q+2}}^{2q+2}}{2\al}+\frac{\al\|v+z\|_{L^{2q+2}}^{2q+2}}{2}-\al e^{\rho(\|v+z\|_{H^{\beta -}})}\left(\|v\|_{H^{\beta -1}}^2+\|v+z\|_{L^{2q+2}}^{2q+2}\right)\\
& \quad +\al e^{\rho(\|v+z\|_{H^{\beta -}})}\left(\frac{1}{2}\|v\|_{H^{\beta -1}}^2+\frac{1}{2}\|v+z\|_{L^{2q+2}}^{4q+2}+\frac{1}{2}\|z\|_{L^{2q+2}}^{4q+2}+\frac{1}{2}\|z\|_{H^{\beta-1}}^2\right)\\
&\leq \frac{\|z\|_{L^{2q+2}}^{2q+2}}{2\al}+C\alpha\|z\|_{L^{2q+2}}^{2q+2}+2{\al} e^{\rho(\|v+z\|_{H^{\beta -}})}\left(\|z\|_{L^{2q+2}}^{2q+2}+\|z\|_{H^{\beta-1}}^2-\|v\|_{H^{\beta -1}}^2\right)\\
&\leq C(\al,N)\|z\|_{L^{2}}^{2q+2}+2{\al} e^{\rho(\|v+z\|_{H^{\beta -}})}\left(c(N)(\|z\|_{L^2}^{2q+2}+\|z\|_{L^2}^2)-\|v\|_{H^{\beta -1}}^2\right) .
 \end{align*} 
Now we have that for $\P-$almost all $\omega\in\Omega$ for all $T$, there is a constant $C_\al(\omega,T)$ such that 
\begin{align}
\sup_{t\in[0,T]}\|z(\omega,t)\|_{L^2}\leq C_\al(\omega,T).\label{Controlonz}
\end{align}
Now, for a fixed $\omega$ such that \eqref{Controlonz} holds, fix any $T>0$. Let $t\in[0,T]$, we have the following two complementary scenarios:
\begin{enumerate}
\item either $\|v(\omega,t)\|_{H^{\beta-1}}^2 \leq c(N)[C_\al(\omega,T)^{2q+2}+C_\al(\omega,T)^2]$; in this case there is $C^0_\al(\omega,T,N)$ such that
\begin{align*}
\frac{d}{dt}\left[\frac{\|v\|_{L^2}^2}{2}\right]\leq C^0_\al(\omega,T,N),
\end{align*}
\item or $\|v(\omega,t)\|_{H^{\beta-1}}^2 > c(N)[C_\al(\omega,T)^{2q+2}+C_\al(\omega,T)^2]$. In this case $$e^{\rho(\|v+z\|_{H^{\beta -}})}\left(c(N)(\|z\|_{L^2}^{2q+2}+\|z\|_{L^2}^2)-\|v\|_{H^{\beta -1}}^2\right)<0$$ and
\begin{align*}
\frac{d}{dt}\left[\frac{\|v\|_{L^2}^2}{2}\right]\leq C(\al,N)\|z\|_{L^{2}}^{2q+2}\leq C^1_\al(\omega,T,N).
\end{align*}
\end{enumerate}
Therefore, taking $C^2_\alpha=\max(C^0_\alpha,C^1_\alpha)$, we obtain
\begin{align*}
\frac{d}{dt}\left[\frac{\|v\|_{L^2}^2}{2}\right]\leq C^2_\alpha(\omega,T,N)\quad\text{for any $t\in[0,T]$}.
\end{align*}
We then obtain the claimed estimate \eqref{Est:v:point} 

This gives the globalization for \eqref{SNLS}.

\item
For the continuity with respect to the initial data, let us set 
\begin{align*}
F(u)=\i(\Delta u-|u|^{p-1}u)-\alpha e^{\rho(\|u\|_{H^{\beta-}})}[(-\Delta)^{\beta-1}+\Pi^N|u|^{2q}]u .
\end{align*}
Using the mean value theorem, we have
\begin{align*}
\dt(u-v)=(u-v)\int_0^1F'(ru+(1-r)v) \,dr.
\end{align*}
Now, multiplying by $u-v$ and integrating in $x$, we can use the Gronwall inequality to obtain
\begin{align*}
\|u(t)-v(t)\|_{L^2}\leq \|u(0)-v(0)\|_{L^2}e^{\sup_{x\in\Theta}\int_0^1|F'(ru+(1-r)v)| \,dr}.
\end{align*}
Hence the desired property.

\item The adaptation property comes from the iteration hidden in the fixed point argument.
\end{enumerate}
\end{proof}

\subsection{The Markov semi-group}\label{ssec 4.3}
Let us introduce the transition probability $T_{t,\alpha} (v, \Gamma) = \mathbb{P} (u_{\alpha}^N (t ; v) \in \Gamma)$, where $v \in L^2$, $\Gamma \in \mathcal{B}(L^2)$, and define the Markov operators:
\begin{align*}
P_{t,\alpha}^N f (v) & = \int_{L^2} f (u) T_{t,\alpha}^N (v ; du) : L^{\infty} (L^2 ; \R) \to L^{\infty} (L^2 ; \R)\\
P_{t,\alpha}^{N*} \lambda (\Gamma) & = \int_{L^2} \lambda (du) T_{t,\alpha}^N (u,\Gamma) : \mathfrak{p} (L^2) \to \mathfrak{p} (L^2) .
\end{align*}
From the continuity of the solution with respect to initial data, we obtain the Feller property
\begin{align*}
P_{t, \alpha}^N C_b (L^2)  \subset  C_b (L^2) .
\end{align*}
Also,  the following duality relation holds
\begin{align*}
\langle P_{t,\alpha}^N f , \lambda\rangle = \langle f, P_{t, \alpha}^{N *} \lambda\rangle
\end{align*}
where
\begin{align*}
\langle f,\lambda\rangle : = \int_{L^2} f(u) \lambda (du)\quad \text{where $f\in C_b(L^2)$ and $\mu \in \mathfrak{p}(L^2)$}.
\end{align*}

\subsection{Stationary measures}\label{ssec 4.4}
Let us start with recalling classical results that are essential in the arguments of the stationary measures existence. We will  present the proofs of  Lemma \ref{Lem:KBarg} and Lemma \ref{Lem:MeanMeas} in Appendix \ref{Apx B}.
\begin{lem}[Krylov-Bogoliubov argument]\label{Lem:KBarg}
Let $(P_t)_{t \geq 0}$ be a Feller semi-group (a Markov semi-group satisfying the Feller property) on Banach space $X$ if there exists $t_n \to \infty$ and $\mu \in \mathfrak{p} (X)$ such that $\frac{1}{t_n} \int_0^{t_n} P_{t}^* \delta_0 \, dt \rightharpoonup \mu $ in $X$, then $P_t^* \mu = \mu$ for all $t \geq 0$. $\delta_0$ is the Dirac measure at 0.
\end{lem}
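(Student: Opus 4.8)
\section*{Proof proposal}

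The plan is to exploit the defining relation $\mu_n := \frac{1}{t_n}\int_0^{t_n} P_t^*\delta_0\,dt \rightharpoonup \mu$ together with the semigroup law $P_{t}P_{s}=P_{t+s}$ and the duality $\langle P_t f,\lambda\rangle = \langle f,P_t^*\lambda\rangle$ used throughout Section \ref{ssec 4.3}. By definition of the adjoint action on measures it suffices to verify that $\langle P_s f,\mu\rangle = \langle f,\mu\rangle$ for every $s\ge 0$ and every $f\in C_b(X)$, since this identity characterizes $P_s^*\mu=\mu$.

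Fix $f\in C_b(X)$ and $s\ge 0$. First I would compute, using duality and the semigroup property,
\begin{align*}
\langle P_s f,\mu_n\rangle &= \frac{1}{t_n}\int_0^{t_n}\langle P_s f, P_t^*\delta_0\rangle\,dt = \frac{1}{t_n}\int_0^{t_n}\langle P_{t+s} f,\delta_0\rangle\,dt\\
&= \frac{1}{t_n}\int_s^{t_n+s}\langle P_t f,\delta_0\rangle\,dt.
\end{align*}
Comparing with $\langle f,\mu_n\rangle = \frac{1}{t_n}\int_0^{t_n}\langle P_t f,\delta_0\rangle\,dt$, the difference of the two equals
\begin{align*}
\frac{1}{t_n}\left(\int_{t_n}^{t_n+s}\langle P_t f,\delta_0\rangle\,dt - \int_0^{s}\langle P_t f,\delta_0\rangle\,dt\right),
\end{align*}
which is bounded in absolute value by $\tfrac{2s}{t_n}\|f\|_{L^\infty(X)}$ and hence tends to $0$ as $n\to\infty$. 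Here the integrand $t\mapsto \langle P_t f,\delta_0\rangle = (P_tf)(0)$ is bounded and, by the continuity in time of the solution flow established in Proposition \ref{prop u_alpha^N}, continuous, so all the time-integrals above are well defined.

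Now let $n\to\infty$. On the one hand $P_s f\in C_b(X)$ by the Feller property, so the weak convergence $\mu_n\rightharpoonup\mu$ gives $\langle P_s f,\mu_n\rangle\to\langle P_s f,\mu\rangle$; on the other hand $\langle f,\mu_n\rangle\to\langle f,\mu\rangle$ for the same reason. Combining this with the vanishing of the difference computed above yields $\langle P_s f,\mu\rangle = \langle f,\mu\rangle$. Since $f\in C_b(X)$ and $s\ge0$ were arbitrary, we conclude $P_t^*\mu=\mu$ for all $t\ge 0$. There is essentially no hard step: the only points requiring care are invoking the Feller property exactly where $P_s f$ must serve as a legitimate test function against $\mu_n$ and $\mu$, and the measurability/continuity in $t$ needed for the Cesàro integrals; the real content of the Krylov--Bogoliubov method lies not in this lemma but in producing a subsequence $t_n$ along which $\mu_n$ actually converges weakly to a probability measure, i.e.\ the tightness input supplied by the uniform bounds of Section \ref{ssec 4.4}.
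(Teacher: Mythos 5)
Your proof is correct and follows essentially the same route as the paper's: duality plus the semigroup law turn the Cesàro average into a shifted average, the boundary terms of length $s$ vanish after dividing by $t_n$, and the Feller property legitimizes $P_sf$ as a test function for the weak convergence. The only cosmetic difference is that you estimate the difference $\langle P_sf,\mu_n\rangle-\langle f,\mu_n\rangle$ before passing to the limit, whereas the paper passes to the limit first and then discards the two tail integrals; the content is identical.
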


\begin{lem}\label{Lem:MeanMeas}
Set 
\begin{align*}
\nu_n = \frac{1}{t_n} \int_0^{t_n} P_t^* \delta_0 \, dt.
\end{align*}
Assume that $X$ is compactly embedded in $X_0$ and 
\begin{align*}
\int_{X} \norm{u}_{X}^r \nu_n (du) \leq C
\end{align*}
for some $r>0$, where $C$ does not depend on $n$. Then there exists $\mu \in \mathfrak{p} (X)$ such that $\nu_n \rightharpoonup \mu$ weakly on $X_0$.
\end{lem}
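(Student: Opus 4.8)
The plan is the classical tightness-plus-Prokhorov argument. First I would observe that, since $X$ is compactly embedded in $X_0$, every closed ball $B_R(X)$ is a compact --- hence closed --- subset of $X_0$. The Chebyshev inequality together with the hypothesis then yields, uniformly in $n$,
\begin{align*}
\nu_n\bigl(X_0\setminus B_R(X)\bigr)=\nu_n\bigl(\{u:\|u\|_X>R\}\bigr)\leq \frac{1}{R^r}\int_X\|u\|_X^r\,\nu_n(du)\leq \frac{C}{R^r},
\end{align*}
and the right-hand side tends to $0$ as $R\to\infty$; this is precisely tightness of the family $\{\nu_n\}_n$ on $X_0$.

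Next I would invoke Prokhorov's theorem to extract a subsequence (which I relabel $\nu_n$) and a measure $\mu\in\mathfrak{p}(X_0)$ with $\nu_n\rightharpoonup\mu$ weakly on $X_0$. To upgrade $\mu$ to a probability measure carried by $X$, I would apply the Portmanteau theorem to the closed set $B_R(X)$:
\begin{align*}
\mu\bigl(B_R(X)\bigr)\geq \limsup_{n\to\infty}\nu_n\bigl(B_R(X)\bigr)\geq 1-\frac{C}{R^r}.
\end{align*}
Letting $R\to\infty$ forces $\mu(X)=\mu\bigl(\bigcup_{R>0}B_R(X)\bigr)=1$, that is $\mu\in\mathfrak{p}(X)$. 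The uniform moment bound $\int_X\|u\|_X^r\,\mu(du)\leq C$, which may be convenient later, then follows by monotone passage to the limit in $R$ together with the lower semicontinuity of $u\mapsto\|u\|_X$ (extended by $+\infty$ off $X$) with respect to $X_0$-convergence.

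The main --- in fact the only nontrivial --- point is this last step: transferring the uniform moment bound from the averaged measures $\nu_n$ to the weak limit $\mu$ and ruling out escape of mass to $X_0\setminus X$. This is exactly where compactness of the embedding $X\hookrightarrow X_0$ is essential, since it is what makes the sublevel sets $\{\|u\|_X\leq R\}$ simultaneously closed in $X_0$ and of $\nu_n$-measure close to $1$, so that Portmanteau applies. I would also note that convergence is obtained only along a subsequence, which is all that is needed for the subsequent application of Lemma \ref{Lem:KBarg}.
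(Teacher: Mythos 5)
Your proposal is correct. The tightness step (Chebyshev plus compactness of the embedding) and the appeal to Prokhorov are exactly what the paper does. Where you diverge is in how $\mu(X)=1$ is established: you apply the Portmanteau theorem directly to the closed (indeed compact in $X_0$) sets $B_R(X)$, getting $\mu(B_R(X))\geq 1-C R^{-r}$ and letting $R\to\infty$; the paper instead first transfers the moment bound to the limit, writing $\int \|u\|_X^r\,\chi_R(\|u\|_{X_0})\,\nu_n(du)\leq C$ for a smooth cutoff $\chi_R$, passing to the limit in $n$ and then in $R$ via Fatou, and only afterwards deduces $\mu(X_0\setminus B_R)\to 0$ by Chebyshev applied to $\mu$. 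The two routes are logically equivalent here and both rest on the same underlying fact, namely the lower semicontinuity of $u\mapsto\|u\|_X$ (extended by $+\infty$ off $X$) with respect to $X_0$-convergence — you invoke it explicitly for the closedness of $B_R(X)$ and for the limiting moment bound, whereas the paper invokes it implicitly when it treats $\|u\|_X^r\chi_R(\|u\|_{X_0})$ as a test function and appeals to Fatou. Your version is slightly more direct for the conclusion as stated; the paper's version makes the uniform moment estimate on $\mu$ the primary object, which is the form actually reused later (e.g.\ in the proofs of \eqref{Est:M:N} and \eqref{Est:E}), but as you note that estimate also follows from your argument.
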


In order to  obtain an stationary measure, we also need the following proposition.
\begin{prop}\label{prop Est:M:General}
Let $u_0$ be an $\mathcal{F}_0-$ measurable, $E_N-$valued random variable such that $\mathbb{E} M (u_0) < + \infty$. Then
\begin{align}
\mathbb{E} M(u) + \alpha \int_0^t \mathbb{E} \mathcal{M}(u) \, ds = \mathbb{E} M(u_0) + \frac{\alpha}{2} A_0^N t\label{Est:M:General}
\end{align}
where $\mathcal{M}$ is defined as in \eqref{eqMcal}.
\end{prop}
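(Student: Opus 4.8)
The plan is to apply It\^o's formula to the functional $M(u) = \|u\|_{L^2}^2$ along the solution $u = u_\al^N(t; u_0)$ of the finite-dimensional SDE \eqref{SNLS}, which is a genuine (non-degenerate in the noise directions) system of SDEs on the finite-dimensional space $E_N$, so all the stochastic calculus is classical and rigorous. First I would write \eqref{SNLS} in the form $du = F(u)\,dt + \sqrt{\al}\,dW^N$ with $F(u) = \i(\Delta u - \Pi^N|u|^{2q}u) - \al \mathcal{L}(u)$, and recall that $M'(u,v) = 2\langle u, v\rangle$ and $M''(u; v, w) = 2\langle v, w\rangle$.

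The key computation is the It\^o expansion
\begin{align*}
dM(u) = M'(u, F(u))\,dt + \sqrt{\al}\, M'(u, dW^N) + \frac{\al}{2} \sum_{n=1}^N |a_n|^2 M''(u; e_n, e_n)\,dt.
\end{align*}
For the drift part I would split $M'(u, F(u)) = 2\langle u, \i(\Delta u - \Pi^N|u|^{2q}u)\rangle - 2\al\langle u, \mathcal{L}(u)\rangle$. The first term vanishes because $\langle u, \i \Delta u\rangle = 0$ (self-adjointness of $\Delta$ with Dirichlet conditions makes $\langle u, \i\Delta u\rangle$ purely imaginary, hence its real part is zero) and similarly $\langle u, \i\Pi^N|u|^{2q}u\rangle = \langle u, \i|u|^{2q}u\rangle$ has zero real part since $u_0 \in E_N$ implies $u \in E_N$. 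The second term is $-2\al\langle u, \mathcal{L}(u)\rangle = -2\al\, \mathcal{M}(u)/2$ wait — more precisely, by the identity \eqref{eqMcal}, $M'(u, \mathcal{L}(u)) = \mathcal{M}(u)$, so the dissipation contributes $-\al\,\mathcal{M}(u)\,dt$. For the It\^o correction term, since $M''(u; e_n, e_n) = 2\|e_n\|_{L^2}^2 = 2$ and $\sum_{n=1}^N |a_n|^2 = A_0^N$ (the $r=0$ case of \eqref{eqA_r}), this term equals $\al A_0^N\,dt$.

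Collecting, $dM(u) = -\al\,\mathcal{M}(u)\,dt + \al A_0^N\,dt + \sqrt{\al}\,M'(u, dW^N)$. Integrating from $0$ to $t$ and taking expectations, the martingale term $\int_0^t \sqrt{\al}\, M'(u(s), dW^N(s))$ has zero expectation provided it is a true martingale; this requires a localization/stopping-time argument using the a priori pathwise bound \eqref{Est:v:point} (together with \eqref{Est:z}) which guarantees $\E\sup_{[0,t]}\|u(s)\|_{L^2}^2 < \infty$, so by dominated convergence one passes to the limit in the stopping times and recovers $\E\int_0^t M'(u, dW^N) = 0$. This yields exactly \eqref{Est:M:General}. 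The main obstacle is the justification that the stochastic integral is a martingale rather than merely a local martingale — i.e. the integrability of $\sup_{[0,t]}\|u\|_{L^2}$, which is precisely where the damping structure and the estimates \eqref{Est:z}, \eqref{Est:v:point} established in Proposition \ref{prop u_alpha^N} are used; everything else is a direct It\^o computation exploiting the cancellations $\langle u, \i\Delta u\rangle = 0$ and $\langle u, \i|u|^{2q}u\rangle = 0$ in real inner product, plus the already-proved identity \eqref{eqMcal}.
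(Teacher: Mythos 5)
Your proposal is correct and follows essentially the same route as the paper: apply the finite-dimensional It\^o formula to the mass, use that the Hamiltonian part of the drift is orthogonal to $u$ in the real inner product, identify the dissipation contribution with $\mathcal{M}(u)$ via \eqref{eqMcal}, and take expectations (the paper simply asserts the vanishing of the stochastic integral, whereas you add the localization justification). The only thing to tidy up is the normalization: the constant $\tfrac{\alpha}{2}A_0^N$ in \eqref{Est:M:General} corresponds to $M=\tfrac12\|u\|_{L^2}^2$ (so $M''(u;e_n,e_n)=1$), while your choice $M=\|u\|_{L^2}^2$ produces $\alpha A_0^N$ and a correspondingly doubled $\mathcal{M}$ — a harmless factor of two once the conventions of \eqref{eqMcal} are fixed consistently.
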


\begin{proof}[Proof of Proposition \ref{prop Est:M:General}]
Let us recall the equation
\begin{align*}
du = \square{ \i (\Delta u - \Pi^N \abs{u}^{2q} u) - \alpha e^{\rho(\|u\|_{H^{\beta -}})}\left((-\Delta)^{\beta -1}u+\Pi^N|u|^{2q}u\right)} \,dt + \sqrt{\alpha} \, dW^N .
\end{align*}
Applying the It\^o formula (see Theorem \ref{Thm:Ito}), we write
\begin{align*}
dF = F' (u, du) + \frac{\alpha}{2} \sum_{n=1}^N |a_n|^2 F'' (u; e_n , e_n ) \, dt. 
\end{align*}
Taking $F=M =\frac{1}{2} \norm{u}_{L^2}^2$, we obtain
\begin{align*}
dM(u) = \langle u , -\alpha e^{\rho(\|u\|_{H^{\beta -}})}\left((-\Delta)^{\beta -1}u+\Pi^N|u|^{2q}u\right)\rangle \, dt + \frac{\alpha }{2} \sum_{n=1}^N |a_n|^2 \, dt + \sqrt{ \alpha} \langle u,dW^N\rangle.
\end{align*}
That is,
\begin{align*}
M(u) + \alpha \int_0^t \langle u, e^{\rho(\|u\|_{H^{\beta -}})}\left((-\Delta)^{\beta -1}u+\Pi^N|u|^{2q}u\right)\rangle \, ds = M(u_0) + \frac{\alpha }{2} A_0^N t + \int_0^t \sqrt{\alpha } \langle u, dW^N\rangle.
\end{align*}
Therefore,
\begin{align*}
\mathbb{E} M(u) + \alpha \int_0^t \mathbb{E} \mathcal{M}(u) \, ds = \mathbb{E} M(u_0) + \frac{\alpha}{2} A_0^N t.
\end{align*}
Hence we finish the proof of Proposition \ref{prop Est:M:General}.
\end{proof}

\begin{cor}\label{cor Est:M:General}
Assume $\mathcal{D} (u_0) =\delta_0$ and set $\nu_t = \frac{1}{t} \int_0^t P_{s, \alpha}^{N*} \delta_0 \, dt$, then 
\begin{align*}
\int_{L^{2}} \mathcal{M}(u) \nu_t (dv) \leq \frac{1}{2} A_0^N.
\end{align*}
\end{cor}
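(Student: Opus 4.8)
The plan is to derive Corollary \ref{cor Est:M:General} directly from Proposition \ref{prop Est:M:General} by averaging in time and invoking the stationarity structure encoded in the measures $\nu_t$. First I would apply Proposition \ref{prop Est:M:General} with the initial datum $u_0$ distributed according to $\delta_0$, i.e. $u_0 = 0$ almost surely; this is legitimate since $\mathbb{E} M(u_0) = 0 < \infty$. The identity \eqref{Est:M:General} then reads
\begin{align*}
\mathbb{E} M(u_\alpha^N(s;0)) + \alpha \int_0^s \mathbb{E} \mathcal{M}(u_\alpha^N(\sigma;0)) \, d\sigma = \frac{\alpha}{2} A_0^N s,
\end{align*}
for every $s \geq 0$. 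Since $M(u) = \tfrac12\|u\|_{L^2}^2 \geq 0$ and $\mathcal{M}(u) \geq 0$ (it is $e^{\rho(\|u\|_{H^{\beta-}})}(\|u\|_{H^{\beta-1}}^2 + \|u\|_{L^{2q+2}}^{2q+2})$, manifestly nonnegative, cf.\ \eqref{eqMcal}), we immediately get the a priori bound
\begin{align*}
\alpha \int_0^s \mathbb{E} \mathcal{M}(u_\alpha^N(\sigma;0)) \, d\sigma \leq \frac{\alpha}{2} A_0^N s, \qquad \text{i.e.}\qquad \frac{1}{s}\int_0^s \mathbb{E}\mathcal{M}(u_\alpha^N(\sigma;0))\,d\sigma \leq \frac{1}{2} A_0^N.
\end{align*}

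Next I would rewrite the left-hand side in terms of the averaged measure $\nu_t$. By definition of the transition semigroup, $\mathbb{E}\,\mathcal{M}(u_\alpha^N(\sigma;0)) = \int_{L^2}\mathcal{M}(u)\, (P_{\sigma,\alpha}^{N*}\delta_0)(du)$, so that
\begin{align*}
\int_{L^2} \mathcal{M}(u)\, \nu_t(dv) = \frac{1}{t}\int_0^t \int_{L^2}\mathcal{M}(u)\,(P_{\sigma,\alpha}^{N*}\delta_0)(du)\,d\sigma = \frac{1}{t}\int_0^t \mathbb{E}\,\mathcal{M}(u_\alpha^N(\sigma;0))\,d\sigma \leq \frac{1}{2}A_0^N,
\end{align*}
which is exactly the claimed inequality. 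The interchange of the $\sigma$-integral and the $u$-integral is justified by Tonelli's theorem since $\mathcal{M} \geq 0$ and $\sigma \mapsto P_{\sigma,\alpha}^{N*}\delta_0$ is measurable (Feller property); one should note that $\mathcal M$ is only lower semicontinuous a priori, but this causes no difficulty for a Tonelli argument with a nonnegative integrand, and in the finite-dimensional space $E_N$ all the relevant norms are continuous anyway.

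The only genuine subtlety — and the step I would flag as the main obstacle — is the application of the It\^o formula underlying Proposition \ref{prop Est:M:General}: one must check that the local martingale $\int_0^t \sqrt{\alpha}\langle u, dW^N\rangle$ has zero expectation, i.e.\ is a true martingale, which requires an a priori moment bound on $\sup_{s\le t}\|u_\alpha^N(s;0)\|_{L^2}$ before one can take expectations in \eqref{Est:M:General}. This is handled by the pointwise-in-$\omega$ control \eqref{Est:v:point} from Proposition \ref{prop u_alpha^N} together with the stochastic-convolution estimate \eqref{Est:z}, or alternatively by a standard localization/stopping-time argument followed by Fatou; since this has effectively been carried out in the proof of Proposition \ref{prop Est:M:General} itself, at the level of the corollary there is nothing left to do beyond the time-averaging and the measurable change of variables described above. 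Thus the corollary follows.
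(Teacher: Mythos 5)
Your proposal is correct and follows exactly the paper's own argument: apply Proposition \ref{prop Est:M:General} with $u_0=0$, drop the nonnegative term $\mathbb{E}M(u)$, divide by $\alpha t$, and identify the time average $\frac{1}{t}\int_0^t \mathbb{E}\,\mathcal{M}(u)\,ds$ with $\int_{L^2}\mathcal{M}(u)\,\nu_t(dv)$. The extra remarks on Tonelli and on the martingale property of the stochastic integral are sensible elaborations but do not change the route.
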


\begin{proof}[Proof of Corollary \ref{cor Est:M:General}]
Using \eqref{Est:M:General} with $u_0=0$, we have
\begin{align*}
\mathbb{E} \norm{u}_{L^2}^2 + \alpha \int_0^t \mathbb{E} \mathcal{M}(u) \, ds = \frac{\alpha}{2} A_0^N t.
\end{align*}
Let us write
\begin{align*}
\frac{1}{t} \int_0^t \mathbb{E} \mathcal{M}(u) \, ds  = \frac{1}{t} \int_0^t \int_{L^{2}} \mathcal{M}(u) P_{s, \alpha}^{N*} \delta_0 (dv) \, ds= \int_{L^{2}} \mathcal{M}(u) \frac{1}{t} \int_0^t P_{s,\alpha}^{N*} \delta_0 \, ds (dv) = \int_{L^{2}} \mathcal{M}(u) \nu_t (dv).  
\end{align*}
Therefore,
\begin{align*}
\int_{L^{2}} \mathcal{M}(u) \nu_t (dv) \leq \frac{1}{2} A_0^N.
\end{align*}
\end{proof}

As a direct consequence of Lemmas \ref{Lem:KBarg}, \ref{Lem:MeanMeas} and Corollary \ref{cor Est:M:General}, we arrive at the following statement:
\begin{cor}
For any $\alpha \in (0,1)$ and $N \in \N $, \eqref{SNLS} has an stationary measure $\mu_{\alpha}^N$. 
\end{cor}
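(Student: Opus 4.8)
The plan is to obtain $\mu_\alpha^N$ via the Krylov--Bogoliubov procedure for the Feller semi-group $(P_{t,\alpha}^N)_{t\geq 0}$ associated with \eqref{SNLS}, the compactness input being supplied by the dissipation bound of Corollary \ref{cor Est:M:General}. Fix $\alpha\in(0,1)$ and $N\in\N$ throughout. Recall from Section \ref{ssec 4.3} that $P_{t,\alpha}^N$ has the Feller property, which follows from the continuous dependence on the initial datum in $C_tL^2$ established in Proposition \ref{prop u_alpha^N}; moreover, since $\delta_0$ is carried by $E_N$ and the Galerkin-truncated flow keeps the solution in $E_N$, every measure appearing below is supported on the fixed finite-dimensional space $E_N$.

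First I would set $\nu_t=\frac1t\int_0^tP_{s,\alpha}^{N*}\delta_0\,ds$ for $t>0$. By Corollary \ref{cor Est:M:General}, $\int_{L^2}\mathcal{M}(u)\,\nu_t(du)\leq\frac12A_0^N$ uniformly in $t$, and since $\|u\|_{L^2}^2\leq\mathcal{M}(u)$ by \eqref{Mcal:leq:Ecal:Subcrit}, this gives the uniform moment bound $\int_{E_N}\|u\|_{L^2}^2\,\nu_t(du)\leq\frac12A_0^N$. Because $E_N$ is finite-dimensional, it is compactly embedded in $L^2$, so Lemma \ref{Lem:MeanMeas} (applied with $X=E_N$, $X_0=L^2$ and $r=2$) produces a sequence $t_n\to\infty$ and a measure $\mu=\mu_\alpha^N\in\mathfrak{p}(L^2)$, supported on $E_N$, with $\nu_{t_n}\rightharpoonup\mu$ weakly on $L^2$.

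Finally, I would invoke the Krylov--Bogoliubov lemma (Lemma \ref{Lem:KBarg}): the Feller property together with $\frac1{t_n}\int_0^{t_n}P_{s,\alpha}^{N*}\delta_0\,ds\rightharpoonup\mu$ forces $P_{t,\alpha}^{N*}\mu=\mu$ for every $t\geq0$, i.e.\ $\mu_\alpha^N$ is a stationary measure for \eqref{SNLS}. I do not expect any genuine obstacle at this stage: the substantial inputs --- the It\^o identity \eqref{Est:M:General} behind Corollary \ref{cor Est:M:General}, and the Feller property --- are already in place, and the only point deserving a line of comment is verifying the hypothesis of Lemma \ref{Lem:MeanMeas}, which in this finite-dimensional setting is an immediate consequence of the $L^2$ moment bound and Chebyshev's inequality.
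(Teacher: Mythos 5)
Your proposal is correct and follows exactly the route the paper intends: the paper derives this corollary as a direct consequence of Lemma \ref{Lem:KBarg}, Lemma \ref{Lem:MeanMeas} and Corollary \ref{cor Est:M:General}, which is precisely the Krylov--Bogoliubov argument you carry out, with the uniform $L^2$ moment bound from $\mathcal{M}(u)\geq\|u\|_{L^2}^2$ supplying tightness on the finite-dimensional space $E_N$.
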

Now let us focus on statistical estimates on the stationary measures $\mu_{\alpha}^N$.
\begin{prop}\label{prop Est:M:N,al}
We have that
\begin{align}
\int_{L^{2}} \mathcal{M}(u) \mu_{\alpha}^N (dv) = \frac{A_0^N}{2}. \label{Est:M:N,al}
\end{align}
\end{prop}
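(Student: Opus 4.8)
The plan is to upgrade the time-averaged inequality from Corollary~\ref{cor Est:M:General} into an \emph{exact} identity at the level of the stationary measure $\mu_\alpha^N$. The starting point is Proposition~\ref{prop Est:M:General}: for an $\mathcal{F}_0$-measurable $E_N$-valued $u_0$ with $\mathbb{E} M(u_0) < \infty$,
\begin{align*}
\mathbb{E} M(u(t)) + \alpha \int_0^t \mathbb{E}\mathcal{M}(u(s))\,ds = \mathbb{E} M(u_0) + \frac{\alpha}{2} A_0^N t .
\end{align*}
First I would initialize the equation \eqref{SNLS} with $u_0$ distributed according to $\mu_\alpha^N$ itself; since $\mu_\alpha^N$ is supported in the finite-dimensional space $E_N$ and $\int_{L^2}\mathcal{M}(u)\,\mu_\alpha^N(dv) < \infty$ by Corollary~\ref{cor Est:M:General} (hence in particular $\mathbb{E} M(u_0) \le \int \mathcal{M}\,\mu_\alpha^N < \infty$ using \eqref{Mcal:leq:Ecal:Subcrit}, which gives $\|u\|_{L^2}^2 \le \mathcal{M}(u)$), this is a legitimate choice. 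By stationarity, the law of $u(t)$ equals $\mu_\alpha^N$ for every $t \ge 0$, so $\mathbb{E} M(u(t)) = \mathbb{E} M(u_0) = \int_{L^2} M(u)\,\mu_\alpha^N(dv)$, and likewise $\mathbb{E}\mathcal{M}(u(s)) = \int_{L^2}\mathcal{M}(u)\,\mu_\alpha^N(dv)$ for each $s$ (this last uses that $\mathcal{M} \ge 0$ is measurable and integrable against $\mu_\alpha^N$, so Fubini/Tonelli applies to the time integral).

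Plugging these into the identity, the $\mathbb{E} M(u(t))$ and $\mathbb{E} M(u_0)$ terms cancel exactly, leaving
\begin{align*}
\alpha \, t \int_{L^2}\mathcal{M}(u)\,\mu_\alpha^N(dv) = \frac{\alpha}{2} A_0^N t ,
\end{align*}
and dividing by $\alpha t > 0$ yields \eqref{Est:M:N,al}. The only real point requiring care is the justification that Proposition~\ref{prop Est:M:General} applies with this random (rather than deterministic or $\delta_0$-distributed) initial datum and that all the expectations are finite; this follows because everything lives in the finite-dimensional $E_N$ where $\mathcal{M}$ controls $\|u\|_{L^2}^2$, the It\^o computation in the proof of Proposition~\ref{prop Est:M:General} is unaffected by the law of $u_0$ as long as $\mathbb{E} M(u_0)<\infty$, and the martingale term $\int_0^t \sqrt\alpha\langle u, dW^N\rangle$ has zero expectation (its quadratic variation is $\alpha A_0^N \int_0^t \|u\|_{L^2}^2\,ds$ in expectation, finite by stationarity and $\|u\|_{L^2}^2 \le \mathcal{M}(u)$, integrable against $\mu_\alpha^N$).

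The main obstacle — really the only one — is the integrability bookkeeping needed to legitimately start the stochastic flow from $\mu_\alpha^N$ and to take expectations: one must know a priori that $\int_{L^2}\mathcal{M}(u)\,\mu_\alpha^N(dv) < \infty$, which is exactly the content of Corollary~\ref{cor Est:M:General} (the bound $\le \tfrac12 A_0^N$ passes to the weak limit $\mu_\alpha^N$ by lower semicontinuity of $u \mapsto \int\mathcal{M}\,d\mu$, since $\mathcal{M}$ is nonnegative and lower semicontinuous on $E_N$). Thus the inequality ``$\le$'' direction of \eqref{Est:M:N,al} is already in hand, and the new content of Proposition~\ref{prop Est:M:N,al} is the reverse inequality, obtained precisely from the cancellation above; no additional hard estimate is needed beyond invoking Proposition~\ref{prop Est:M:General} with the stationary initial law and reading off the resulting equality.
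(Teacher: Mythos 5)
Your proposal is correct and follows essentially the same route as the paper: the paper's proof likewise reduces the identity to checking the a priori integrability $\int_{L^2}\norm{v}_{L^2}^2\,\mu_\alpha^N(dv)<\infty$ (obtained by truncating with $\chi_R$, using the bound on the Krylov--Bogoliubov averages from Corollary \ref{cor Est:M:General}, and applying Fatou's lemma), and then invokes \eqref{Est:M:General} together with stationarity so that the mass terms cancel. Your lower-semicontinuity argument for passing the bound on $\mathcal{M}$ to the weak limit is just a mild repackaging of that same truncation-plus-Fatou step.
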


\begin{proof}[Proof of Proposition \ref{prop Est:M:N,al}]
It suffices to show that 
\begin{align*}
\int_{L^{2}} \norm{v}_{L^2}^2 \mu_{\alpha}^N (dv) < + \infty,
\end{align*}
since then by  \eqref{Est:M:General} and invariance, we obtain the identity.

Now, let $\chi$ be a smooth bump function having value $1$ on $[0,1]$ and $0$ on $[2,\infty)$ and set $\chi_R(x)=\chi(\frac{x}{R})$.
Then we have by definition of $\rho$
\begin{align*}
\int_{L^{2}} \chi_R (v) \norm{v}_{L^2}^2 \nu_t (dv) \leq \int_{L^{2}} \norm{v}_{L^2}^2e^{\rho(\norm{v}_{H^{\beta -}})} \nu_t (dv)\leq\int_{L^{2}} \mathcal{M}(v) \nu_t (dv) \leq \frac{A_0^N}{2}.
\end{align*}
Go to the limit along subsequence $t_n \to \infty$, 
\begin{align*}
\int_{L^{2}} \norm{v}_{L^2}^2 \chi_R (v) \mu_{\alpha}^N (dv) \leq \frac{A_0^N}{2}.
\end{align*}
Take $R \to \infty$, using Fatou's lemma
\begin{align*}
\int_{L^{2}} \norm{v}_{L^2}^2 \mu_{\alpha}^N (dv) \leq \frac{A_0^N}{2} < + \infty.
\end{align*}
Now we finish the proof of Proposition \ref{prop Est:M:N,al}.
\end{proof}

We have the following estimates for the stationary measures:
\begin{prop}\label{prop StatMeas}
We have that
\begin{align}
\int_{L^2}\mathcal{E}(u)\mu^N_\alpha(du) &\leq C_1,\label{Est:E:N,al}\\
\int_{L^2}E(u)\mathcal{E}(u)\mu^N_\alpha(du) &\leq C_2,\label{Est:EEcal:N,al}
\end{align}
where $C_i,\ i=1,2$ are independent of $(N,\alpha)$ and $\mathcal{E}(u)$ is defined as in \eqref{eqEcal}.
\end{prop}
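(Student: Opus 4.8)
The plan is to reproduce, with the energy $E$ in place of the mass $M$, the It\^o-formula argument of Proposition \ref{prop Est:M:General} and Corollary \ref{cor Est:M:General}: for \eqref{Est:E:N,al} I would apply the It\^o formula (Theorem \ref{Thm:Ito}) to $F=E$ along the solution of \eqref{SNLS} issued from $0$, and for \eqref{Est:EEcal:N,al} to $F=\tfrac12 E^2$. In both cases one takes expectations, time-averages over $[0,t]$, divides by $\alpha t$, absorbs the small terms, and passes to the limit $t_n\to\infty$ with $\nu_{t_n}\rightharpoonup\mu_\alpha^N$ using a $\chi_R$-truncation followed by a lower semicontinuity / Fatou argument, exactly as in the proof of Proposition \ref{prop Est:M:N,al}. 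The only genuinely new point is the control of the second-order (It\^o correction) term, and that is where the noise condition \eqref{Noise:Cond} and the coercivity bounds \eqref{Mcal:leq:Ecal:Subcrit}--\eqref{Est_bound:on:E} come in.

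For \eqref{Est:E:N,al}: along \eqref{SNLS} the Hamiltonian part $\i(\Delta u-\Pi^N|u|^{2q}u)$ contributes nothing to $dE$ (writing $h:=-\Delta u+|u|^{2q}u$ and using $u\in E_N$, we get $E'(u,\i(\Delta u-\Pi^N|u|^{2q}u))=-\langle h,\i\Pi^N h\rangle=-\langle\Pi^N h,\i\Pi^N h\rangle=0$), the dissipation contributes $-\alpha\mathcal{E}(u)\,dt$ by \eqref{eqEcal}, the stochastic integral is a mean-zero martingale, and the correction term is $\tfrac\alpha2\sum_{n=1}^N|a_n|^2 E''(u;e_n,e_n)\,dt$ with $E''(u;e_n,e_n)=\|\nabla e_n\|_{L^2}^2+\Phi''(u;e_n,e_n)$, $\Phi$ being the potential part of $E$. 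Here $\|\nabla e_n\|_{L^2}^2=z_n^2$, so $\tfrac\alpha2\sum_n|a_n|^2\|\nabla e_n\|_{L^2}^2\le\tfrac\alpha2 A_1<\infty$ by \eqref{Noise:Cond}, while $0\le\Phi''(u;e_n,e_n)\lesssim\|u\|_{L^{2q+2}}^{2q}\|e_n\|_{L^{2q+2}}^2$ by H\"older; since \eqref{eq e_n bdd} gives $\|e_n\|_{L^{2q+2}}^2\lesssim n^{2-3/(q+1)}$ and $\sum_n n^2|a_n|^2<\infty$, one has $\sum_n|a_n|^2\|e_n\|_{L^{2q+2}}^2<\infty$, so Young's inequality together with $\|u\|_{L^{2q+2}}^{2q+2}\lesssim\mathcal{E}(u)$ (from \eqref{Mcal:leq:Ecal:Subcrit}) yields $\tfrac\alpha2\sum_n|a_n|^2\Phi''(u;e_n,e_n)\le\varepsilon\alpha\,\mathcal{E}(u)+C_\varepsilon\alpha$. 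Taking expectations in the It\^o identity for the solution from $0$ (whose finite-time moments are finite), using $E(u(t))\ge0$, $E(0)=0$, dividing by $\alpha t$, and absorbing the $\varepsilon\mathcal{E}(u)$-term on the left gives $\int\mathcal{E}(u)\,\nu_t(du)\le C_1$ uniformly in $(N,\alpha,t)$; the passage to $\mu_\alpha^N$ follows as in Proposition \ref{prop Est:M:N,al}.

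For \eqref{Est:EEcal:N,al}: with $F=\tfrac12 E^2$ one has $F'(u,\cdot)=E(u)E'(u,\cdot)$, so the Hamiltonian part again drops, the dissipation gives $-\alpha E(u)\mathcal{E}(u)\,dt$, and the correction term is $\tfrac\alpha2\sum_n|a_n|^2\bigl(E'(u,e_n)^2+E(u)E''(u;e_n,e_n)\bigr)\,dt$. For the second summand, $E(u)E''(u;e_n,e_n)\lesssim E(u)\bigl(z_n^2+\|u\|_{L^{2q+2}}^{2q}\|e_n\|_{L^{2q+2}}^2\bigr)$, which after summation and Young's inequality (and $E(u)\lesssim\mathcal{E}(u)$ from \eqref{Mcal:leq:Ecal:Subcrit}) is $\le\varepsilon\,E(u)\mathcal{E}(u)+C_\varepsilon\,\mathcal{E}(u)$. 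For the first summand, writing $u=\sum_k c_k e_k$ gives $|E'(u,e_n)|^2\lesssim z_n^4|c_n|^2+\|\Pi^N|u|^{2q}u\|_{L^2}^2$, so summing against $|a_n|^2$ and using $\sup_n z_n^2|a_n|^2\le A_1$ and $\sum_n|a_n|^2=A_0$ bounds this by $\lesssim\|u\|_{\dot{H}^1}^2+\|\Pi^N|u|^{2q}u\|_{L^2}^2\lesssim\mathcal{E}(u)$ (the last step from \eqref{Est_bound:on:E} and the coercivity computations preceding \eqref{Mcal:leq:Ecal:Subcrit}). Hence, as before, $\int E(u)\mathcal{E}(u)\,\nu_t(du)\lesssim 1+\int\mathcal{E}(u)\,\nu_t(du)\lesssim 1$ by the already-proven \eqref{Est:E:N,al}, and passing to the limit gives \eqref{Est:EEcal:N,al}, with constants independent of $(N,\alpha)$.

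The main obstacle is the bookkeeping of the It\^o correction term: one must check that, once weighted by $|a_n|^2$ and summed, every piece it produces is of one of three harmless types — an absolute constant, a bounded multiple of $\mathcal{E}(u)$ (controlled by the first estimate), or a small multiple of the quantity being estimated (to be absorbed on the left) — and it is precisely here that the regularity of the noise enters essentially, through $\|\nabla e_n\|_{L^2}^2=z_n^2$ (so that $\sum_n|a_n|^2\|\nabla e_n\|_{L^2}^2\le A_1$) and through $\sum_n|a_n|^2\|e_n\|_{L^{2q+2}}^2<\infty$ via \eqref{eq e_n bdd}. A secondary, routine point — already handled this way for the mass — is the rigorous transfer from the time averages $\nu_t$ to $\mu_\alpha^N$ by truncation and Fatou.
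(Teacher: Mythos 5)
Your proposal is correct and follows essentially the same route as the paper: apply the It\^o formula to $E$ and to $E^2$, observe that the Hamiltonian part drops out, and control the It\^o correction term by splitting it into an absolute constant (via $A_1$), a multiple of $\mathcal{E}(u)$, and a small multiple of $E(u)\mathcal{E}(u)$ to be absorbed into the dissipation. The only cosmetic differences are that the paper invokes stationarity of $\mu_\alpha^N$ directly rather than passing through the time averages $\nu_t$ from zero data, and bounds $\langle|u|^{2q},e_m^2\rangle$ via $\|e_m\|_{L^\infty}$ rather than $\|e_m\|_{L^{2q+2}}$; neither affects the substance.
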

\begin{proof}[Proof of Proposition \ref{prop StatMeas}]
For technical reasons, the estimate \eqref{Est:E:N,al} is simpler than \eqref{Est:EEcal:N,al}. We need just prove the second inequality. Let us apply the It\^o formula to $E^2$:
\begin{align*}
dE^2 &=2E \, dE(u)+\frac{\al}{2}\sum_{m=0}^N(2|a_m|^2E(u)E''(u,e_m,e_m)+2|E'(u,e_m)|^2) \, dt\\
&=-2E(u)\mathcal{E}(u) \,dt+\frac{\al}{2}\sum_{n=0}^N(2|a_m|^2E(u)E''(u,e_m,e_m)+2|E'(u,e_m)|^2) \, dt+2\sqrt{\al}\sum_{m=0}^N E(u)E'(u,a_me_m) \, d\beta_m.
\end{align*}
Now,
\begin{align*}
EE''(u,e_m,e_m)&= \langle-\Delta e_m,e_m\rangle E+\langle|u|^{2q},e_m^2\rangle E\leq (z_m^2 + \|u\|_{L^{2q}}^{2q}\|e_m\|_{L^\infty}^2)E,
\end{align*}
Now, recall that $\|e_m\|_{L^\infty}^2\lleq z_m,$ where $z_m^2$ are the eigenvalues of radial $-\Delta$. We then have
\begin{align*}
EE''(u,e_m,e_m)&\leq \epsilon z_m\|u\|_{L^{2q+2}}^{2q+2}E+(z_m^2+C(\epsilon) z_m)E,
\end{align*}
for any $\epsilon>0$, and $C(\epsilon)$ is a corresponding number. Then
\begin{align*}
\sum_{m=0}^N|a_m|^2E(u)E''(u,e_m,e_m)&\leq \epsilon A_{\frac{1}{2}}^NE(u)\mathcal{E}(u)+ (C(\epsilon) A_{\frac{1}{2}}^N+A_1^N)E(u),
\end{align*}
where $A_{r}^N$ is defined as in \eqref{eqA_r}. 
Now take $\epsilon=(A_{\frac{1}{2}}^N)^{-1}$ and obtain
\begin{align*}
\sum_{m=0}^N|a_m|^2E(u)E''(u,e_m,e_m)&\leq E(u)\mathcal{E}(u)+ (C A_{\frac{1}{2}}^N+A_1^N)E(u) .
\end{align*}
On the other hand,
\begin{align*}
|E'(u,e_m)|^2=|\langle-\Delta u+|u|^{2q}u,e_m\rangle| &\leq 2|\langle-\Delta u,e_m\rangle|^2+2|\langle|u|^{2q}u,e_m\rangle|^2 \leq 2\|u\|_{\dot{H}^1}^2z_m^2+2\|\Pi^N |u|^{2q}u\|_{L^2}^2,
\end{align*}
therefore
\begin{align*}
\sum_{m=0}^N|a_m|^2|E'(u,e_m)|^2\leq A_1^NE(u)+A_0^N\|\Pi^N |u|^{2q}u\|_{L^2}^2\leq (C A_{\frac{1}{2}}^N+A_1^N)E(u)+A_0^N\mathcal{E}(u).
\end{align*}
It remains to integrate in $t$, to take the expectation, to use the stationarity and the estimate \eqref{Est:E:N,al}.
\end{proof}

Now, let $\chi$ be a smooth bump function having value $1$ on $[0,1]$ and $0$ on $[2,\infty).$ Clearly the function $\chi$ and its derivative are bounded, we can take a universal constant that bounds $\chi$ and its first two derivative. Set 
\begin{align}\label{eq chi}
\chi_R(x)=\chi(\frac{x}{R}).
\end{align}
We then have the following:
\begin{align*}
|\chi_R^{(m)}(x)| &\leq CR^{-m}.
\end{align*}

Using these functions, we establish a useful estimate on the `tail' of the measures: 
\begin{prop}\label{prop Est:MR:N,al}
For any $R>0$, the following estimate holds
\begin{align}\label{Est:MR:N,al}
\int_{L^2}\mathcal{M}(u)(1-\chi_R(\|u\|_{L^2}^2))\mu_\al^N(du)\leq CR^{-1},
\end{align}
where $C$ is independent of $(\al,N)$.
\end{prop}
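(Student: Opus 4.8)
The plan is to bound the integrand pointwise by a constant multiple of $R^{-1}E(u)\mathcal{E}(u)$ and then integrate against $\mu_\al^N$, using the uniform bound \eqref{Est:EEcal:N,al} of Proposition \ref{prop StatMeas}.

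First I would record the elementary cutoff inequality: since $\chi_R\equiv 1$ on $[0,R]$ and $0\le\chi_R\le 1$, the function $1-\chi_R$ vanishes on $[0,R]$ and never exceeds $1$, so $1-\chi_R(x)\le x/R$ for every $x\ge 0$. Taking $x=\|u\|_{L^2}^2$ gives $\mathcal{M}(u)\bigl(1-\chi_R(\|u\|_{L^2}^2)\bigr)\le R^{-1}\|u\|_{L^2}^2\,\mathcal{M}(u)$. Next I would turn $\|u\|_{L^2}^2$ into energy: by the Poincar\'e inequality on $\Theta$ with Dirichlet boundary conditions, $\|u\|_{L^2}^2\lesssim\|\nabla u\|_{L^2}^2\le 2E(u)$, and then the comparison $\mathcal{M}(u)\lesssim\mathcal{E}(u)$ from \eqref{Mcal:leq:Ecal:Subcrit} gives $\|u\|_{L^2}^2\,\mathcal{M}(u)\lesssim E(u)\,\mathcal{E}(u)$. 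Combining these, $\mathcal{M}(u)\bigl(1-\chi_R(\|u\|_{L^2}^2)\bigr)\lesssim R^{-1}E(u)\mathcal{E}(u)$.

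Finally, integrating this bound against $\mu_\al^N$ and invoking \eqref{Est:EEcal:N,al} yields $\int_{L^2}\mathcal{M}(u)(1-\chi_R(\|u\|_{L^2}^2))\,\mu_\al^N(du)\le C\,C_2/R$, with $C_2$---hence the final constant---independent of $(\al,N)$; this is exactly \eqref{Est:MR:N,al}. I do not anticipate a real obstacle: the only ingredients beyond bookkeeping are the pointwise cutoff bound, the Dirichlet Poincar\'e inequality, and the already-established uniform integrability of $E(u)\mathcal{E}(u)$. A slightly heavier alternative would be to apply the It\^o formula to $g(\|u\|_{L^2}^2)$ for a primitive $g$ of $1-\chi_R$ and use stationarity directly; that also works but leaves a second-order remainder $\int g''(\|u\|_{L^2}^2)\sum_n|a_n|^2|\langle u,e_n\rangle|^2\,\mu_\al^N(du)$ supported on $\{R\le\|u\|_{L^2}^2\le 2R\}$, which is only $O(1)$ on its own and would need to be reabsorbed using the smallness of $\mu_\al^N(\{\|u\|_{L^2}^2\ge R\})$, so the direct comparison argument above is cleaner and is the one I would use.
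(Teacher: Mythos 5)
Your proof is correct, and it takes a genuinely different route from the paper's. The paper applies It\^o's formula to $F_R(u)=\|u\|_{L^2}^2\bigl(1-\chi_R(\|u\|_{L^2}^2)\bigr)$, uses stationarity to kill the drift in expectation, and controls the resulting terms with the bounds $|\chi_R'|\lesssim R^{-1}$, $|\chi_R''|\lesssim R^{-2}$, the identity \eqref{Est:M:N,al}, and a Markov-inequality estimate of $\mu_\al^N(\{\|u\|_{L^2}^2\geq R\})$ --- essentially the ``heavier alternative'' you sketched at the end, with the $O(1)$ remainder reabsorbed exactly as you anticipated. Your argument instead is purely deterministic at this stage: the pointwise bound $1-\chi_R(x)\le x/R$ (valid since $\chi_R\equiv 1$ on $[0,R]$ and $0\le\chi_R\le 1$), the Dirichlet Poincar\'e inequality $\|u\|_{L^2}^2\lesssim\|\nabla u\|_{L^2}^2\le 2E(u)$ (immediate on $E_N$ since $z_n^2\ge\pi^2$), and the comparison $\mathcal{M}(u)\lesssim\mathcal{E}(u)$ from \eqref{Mcal:leq:Ecal:Subcrit} give $\mathcal{M}(u)(1-\chi_R(\|u\|_{L^2}^2))\lesssim R^{-1}E(u)\mathcal{E}(u)$, and \eqref{Est:EEcal:N,al} finishes. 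The trade-off: your proof is shorter and avoids stochastic calculus, but it leans on the stronger product-moment bound \eqref{Est:EEcal:N,al} of Proposition \ref{prop StatMeas}, whereas the paper's proof needs only the first-moment identity \eqref{Est:M:N,al}. Since Proposition \ref{prop StatMeas} is established before this statement, there is no circularity, and your tail-splitting device is in fact the same one the paper uses later (e.g.\ for the term $\mathfrak{b}$ in the proof of \eqref{Est:M}), so it fits naturally into the paper's toolbox.
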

\begin{proof}[Proof of Proposition \ref{prop Est:MR:N,al}]
Let $F_R(u)=\|u\|_{L^2}^2(1-\chi_R)(\|u\|_{L^2}^2)$. Applying It\^o's formula we see the following
\begin{align*}
dF_R +2\alpha \mathcal{M}(u)(1-\chi_R(\|u\|_{L^2}^2)) & =-2\alpha\mathcal{M}(u)\chi'_R(\|u\|_{L^2}^2) +\frac{\al}{2}\left((1-\chi_R(\|u\|_{L^2}^2))A_0^N+2\chi_R'(\|u\|_{L^2}^2)\sum_{|m|=0}^N|a_m|^2\langle u,e_m\rangle^2\right)\\
& \quad +\frac{\al}{2}\|u\|_{L^2}^2\left[A_0^N\chi_R'(\|u\|_{L^2}^2)+\chi_R''(\|u\|_{L^2}^2)\sum_{|m|=0}^N|a_m|^2\langle u,e_m\rangle^2\right].
\end{align*}
Let us make use of the invariance and \eqref{Est:M:N,al}, we obtain
\begin{align*}
\E\mathcal{M}(u)(1-\chi_R(\|u\|_{L^2}^2))\leq A_0^N\E(1-\chi_R(\|u\|_{L^2}^2))+\frac{C(A_0)}{R},
\end{align*}
where $C(A_0)>0$ is a constant depending only on $A_0$.

Now we use the Markov inequality and \eqref{Est:M:N,al} to obtain
\begin{align*}
\int_{L^2}(1-\chi_R(\|u\|_{L^2}^2))\mu_\al^N\leq C R^{-1} 
\end{align*}
where $C$ is independent of $(\al, N)$. We arrive at
\begin{align*}
\int_{L^2}\mathcal{M}(u)(1-\chi_R(\|u\|_{L^2}^2))\mu_\al^N\leq C R^{-1},
\end{align*}
hence the proof of Proposition \ref{prop Est:MR:N,al}] is complete.
\end{proof}

\subsection{Inviscid limit}\label{ssec 4.5} In this subsection we show that the fluctuation-dissipation equation \eqref{SNLS} converges as $\al\to 0$ to the following  $N-$Galerkin projection of the equation \eqref{NLS}:
\begin{align}
\dt u = \i (\Delta u - \Pi^N \abs{u}^{2q} u), \quad u(0)= \Pi^N u_0\label{NLS-projN}\quad N<\infty.
\end{align}
We obtain invariant measures for \eqref{NLS-projN} satisfying some crucial bounds that will be central in the passage to the limit $N\to\infty$.

We use the conservation of the mass to see that \eqref{NLS-projN} is globally well-posed on  $L^2$.  Uniqueness and continuity follow through usual methods. Since we are in finite dimensions the nonlinearity is regular, we therefore obtain global well-posedness. This being set, let us define the associated global flow by 
\begin{align*}
\phi^{N}_t:E_N\to E_N,\ \ u_0\mapsto \phi^N_tu_0
\end{align*}
where $\phi^N_t(u_0)=:u(t,u_0)$ represents the solution to \eqref{NLS-projN} starting at $u_0$. Let us set the corresponding Markov groups 
\begin{align*}
P^N_tf(v) &=f(\phi^{N}_t(\Pi^Nv));\quad C_b(L^2)\to C_b(L^2),\\
P^{N*}_{t}\lambda(\Gamma) &=\lambda({\phi^N_{-t}}(\Gamma)); \quad \mathfrak{p}(L^2)\to \mathfrak{p}(L^2).
\end{align*} 
From the estimate \eqref{Est:M:N,al}, we have the weak compactness of $(\mu_{\al_k}^N)$ with respect to the topology of $H^{\beta -\epsilon},$ therefore there exists a subsequence $(\mu_{\al_{k}}^N)=:(\mu_{k}^N),$ converging to a measure $\mu^N$ on $L^2$.

\begin{prop}\label{prop mu^N}
For any $N$, as $\alpha \to 0$, there exists a subsequence $( \alpha_k) \to 0$ and $\mu^N \in \mathfrak{p}(L^2)$ such that
\begin{enumerate}
\item
$\mu_{k}^N \rightharpoonup \mu^N$ on $H^{\beta -}$

\item
$\mu^N$ is invariant for $\phi_t^N$

\item We have the estimates
\begin{align}
&\int_{L^2} \mathcal{M}(v) \mu^N (dv) = \frac{ A_0^N}{2} ; \label{Est:M:N}\\
& \int_{L^2}\mathcal{M}(u)(1-\chi_R(\|u\|_{L^2}^2))\mu^N(du) \leq CR^{-1};\label{Est:MR:N}\\
& \int_{L^2}\mathcal{E}(u)\mu^N(du) \leq C_1,\label{Est:E:N}\\
&\int_{L^2}E(u)\mathcal{E}(u)\mu^N(du) \leq C_2,\label{Est:EEcal:N}
\end{align}
where $C_i,\ i=1,2$ are independent of $N.$
\end{enumerate}
\end{prop}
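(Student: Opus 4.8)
The plan is to pass to the limit $\al_k\to 0$ along the sequence of stationary measures $\mu_{\al_k}^N$ for fixed $N$, exploiting that in finite dimensions the spaces $H^{\sigma}(E_N)$ are all comparable and the flows $u_\al^N(\cdot;v)\to\phi^N_t(v)$ as $\al\to 0$. First I would establish the existence of the limit measure: by Proposition \ref{prop Est:M:N,al} we have $\int_{L^2}\mathcal{M}(u)\,\mu_{\al}^N(du)=\tfrac{A_0^N}{2}$ uniformly in $\al$, and since $\mathcal{M}(u)\geq\|u\|_{H^{\beta-1}}^2$ while $H^{\beta-\eps}(E_N)$ embeds compactly into $L^2(E_N)$ (here $\beta-\eps>\beta-1$ once $\eps<1$; actually on $E_N$ every embedding is compact), Prokhorov's theorem gives a subsequence $(\al_k)$ and a Borel probability measure $\mu^N$ on $L^2$ with $\mu_k^N\rightharpoonup\mu^N$ on $H^{\beta-}$. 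This proves item (1).

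Next I would prove invariance, item (2). The Markov semigroups $P^N_{t,\al_k}$ converge to $P^N_t$ in the sense that for fixed $f\in C_b(L^2)$ and $t\geq 0$, $P^N_{t,\al_k}f(v)\to P^N_t f(v)$ pointwise in $v$ (and boundedly), using part (2) of Proposition \ref{prop u_alpha^N} together with the fact that the stochastic forcing and dissipation in \eqref{SNLS} are multiplied by $\sqrt\al$ and $\al$, so $u^N_{\al_k}(t;v)\to\phi^N_t(v)$ in $C_tL^2$ almost surely (this is the standard inviscid-limit argument, cf.\ \cite{syNLS7}). Since each $\mu_k^N$ satisfies $\langle P^N_{t,\al_k}f,\mu_k^N\rangle=\langle f,\mu_k^N\rangle$, I would pass to the limit in both sides: the right side converges because $\mu_k^N\rightharpoonup\mu^N$ weakly, and for the left side I would combine the pointwise convergence $P^N_{t,\al_k}f\to P^N_tf$ with a uniform-integrability/equicontinuity argument (or an intermediate truncation of $f$) to get $\langle P^N_{t,\al_k}f,\mu_k^N\rangle\to\langle P^N_tf,\mu^N\rangle$. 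This yields $P^{N*}_t\mu^N=\mu^N$.

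Finally, item (3): the four bounds \eqref{Est:M:N}--\eqref{Est:EEcal:N} are inherited from the $\al$-uniform bounds \eqref{Est:M:N,al}, \eqref{Est:MR:N,al}, \eqref{Est:E:N,al}, \eqref{Est:EEcal:N,al} by weak lower semicontinuity. The functionals $\mathcal{M}$, $\mathcal{E}$, $E\mathcal{E}$ are nonnegative and lower semicontinuous with respect to the $H^{\beta-}$ topology (each is a sum of powers of Sobolev/Lebesgue norms times a continuous increasing factor $e^{\rho(\cdot)}$), so for each fixed truncation one passes the inequalities through the weak limit and then removes the truncation by monotone convergence / Fatou, exactly as in the proof of Proposition \ref{prop Est:M:N,al}; for the equality \eqref{Est:M:N} one argues as there that $\int\|v\|_{L^2}^2\mu^N(dv)<\infty$ and then uses invariance under $\phi^N_t$ together with the conservation identity for \eqref{NLS-projN} — indeed applying the mass balance to the (deterministic) flow $\phi^N_t$ shows the $\mathcal{M}$-term must vanish, forcing $\int\mathcal{M}\,d\mu^N=\tfrac{A_0^N}{2}$... here one must be slightly careful: \eqref{NLS-projN} conserves mass exactly, so in fact the correct derivation of \eqref{Est:M:N} is to pass to the limit in \eqref{Est:M:N,al} using lower semicontinuity for ``$\leq$'' and then use \eqref{Est:MR:N} plus conservation to upgrade to equality, or simply note that the inequality $\int\mathcal{M}\,d\mu^N\leq\tfrac{A_0^N}{2}$ together with the reverse inequality coming from testing the balance law against the invariant measure gives equality. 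I expect the main obstacle to be the passage to the limit in the invariance identity, specifically justifying $\langle P^N_{t,\al_k}f,\mu_k^N\rangle\to\langle P^N_tf,\mu^N\rangle$ when only weak convergence of the measures and pointwise (not uniform) convergence of the test functions is available; this is handled by an Egorov-type argument on a ball of $H^{\beta-}$ of large $\mu_k^N$-measure (uniform in $k$ by \eqref{Est:M:N,al} and Chebyshev), on which the convergence $P^N_{t,\al_k}f\to P^N_tf$ is uniform by the continuous dependence of $\phi^N_t$ on data and the inviscid limit.
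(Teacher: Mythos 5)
Your proposal follows essentially the same route as the paper: Prokhorov compactness from the uniform bound \eqref{Est:M:N,al}, lower semicontinuity for the inequalities in item (3), the uniform tail estimate \eqref{Est:MR:N,al} to upgrade the ``$\leq$'' in \eqref{Est:M:N} to an equality, and, for invariance, restriction to a ball $B_R(L^2)$ of uniformly large $\mu_k^N$-measure on which the inviscid convergence $u_{\al_k}^N(t;\cdot)\to\phi_t^N$ is made uniform (this is the content of the paper's Lemma \ref{LemConvUnifInvis}, which in addition cuts to an event $S_r$ on which the stochastic integrals are controlled, with $\P(S_r^c)\lesssim r^{-2}$ by the It\^o isometry). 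One caveat: your suggestion that the reverse inequality in \eqref{Est:M:N} could come from ``testing the balance law against the invariant measure'' of the \emph{deterministic} flow is a dead end --- \eqref{NLS-projN} has no noise or dissipation, so no such balance law exists; the correct mechanism is the alternative you also state, namely that the uniform tail bound \eqref{Est:MR:N,al} prevents the mass of $\mathcal{M}\,d\mu_k^N$ from escaping to infinity, which is exactly how the paper argues.
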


\begin{proof}[Proof of Proposition \ref{prop mu^N}]
This proof will consist the following two parts.
\begin{enumerate}
\item {\bfseries Estimates.}
The estimates \eqref{Est:E:N}, \eqref{Est:EEcal:N,al} and \eqref{Est:MR:N} follow respectively from \eqref{Est:E:N,al}, \eqref{Est:EEcal:N} and  \eqref{Est:MR:N,al}  and the lower semicontinuity of $E(u)$, $\mathcal{E}(u)$ and $\mathcal{M}(u)$. 

Now let us focus on the proof of \eqref{Est:M:N}, it suffices to show the middle equality: for $\chi_R$ a $C^\infty$ function on $\R$ having the value $1$ on $[0,1]$ and the value $0$ on $[2,\infty),$ we write
\begin{align*}
\frac{A_{0}^N}{2}-\int_{L^2}(1-\chi_R(\|u\|_{L^2}^2))\mathcal{M}(u)\mu_k^N(du)\leq \int_{L^2}\chi_R(\|u\|_{L^2}^2)\mathcal{M}(u)\mu_k^N(du)\leq \frac{A_0^N}{2}.
\end{align*}
Using \eqref{Est:MR:N,al},
\begin{align*}
\frac{A_{0}^N}{2}-C_2R^{-1}\leq \int_{L^2}\chi_R(\|u\|_{L^2}^2)\mathcal{M}(u)\mu_k^N(du)\leq \frac{A_0^N}{2}.
\end{align*}
We pass to the limits $k\to\infty$, then $R\to\infty$, and we obtain the claim \eqref{Est:M:N}. 
\item {\bfseries Invariance.} It suffices to show the invariance under $\phi_N^t,\ t>0$. Indeed
For $t<0,$ we have, using the invariance for positive times, that
\begin{align*}
\mu^N(\Gamma)=\mu^N(\phi^N_{t}\Gamma)=\mu^N(\phi^N_{2t}\phi^N_{-t}\Gamma)=\mu^N(\phi^N_{-t}\Gamma),
\end{align*}
as wished. Now the proof of the invariance for positive times is summarized in the following diagram.
$$
\hspace{10mm}
\xymatrix{
  P_{t,k}^{N*}\mu_k^N \ar@{=}[r]^{(I)} \ar[d]^{(III)} & \mu_k^N \ar[d]^{(II)} \\
    P^{N*}_{t}\mu^N \ar@{=}[r]^{(IV)} & \mu^N
  }
$$
\end{enumerate}
The equality $(I)$ represents the stationarity  of $\mu_k^N$ under $P_{t,k}^N$, $(II)$ is the weak convergence of $\mu_k^N$ towards $\mu^N$. The equality $(IV)$ represents the (claimed) invariance of $\mu^N$ under $\phi^N$, that follows once we prove the convergence $(III)$ in the weak topology of $L^2.$ To this end, let $f: L^2\to\R$ be a Lipschitz function that is also bounded by $1.$ We have
\begin{align*}
\langle P_{t,k}^{N*}\mu_k^N,f\rangle-\langle P^{N*}_{t}\mu^N,f\rangle &=\langle\mu_k^N,P_{t,k}^Nf\rangle-\langle\mu^N,P^N_tf\rangle\\
&=\langle\mu_k^N,P_{t,k}^Nf-P^N_tf\rangle-\langle\mu^N-\mu_k^N,P^N_tf\rangle\\
&=\mathfrak{a}-\mathfrak{b}.
\end{align*}
Since $P^N_t$ is Feller, we have that $\mathfrak{b} \to 0$ as $k\to\infty.$ Now, using the boundedness property of $f$, we have
\begin{align*}
|\mathfrak{a}|\leq \int_{B_R(L^2)}|P^N_tf(u)-P_{t,k}^Nf(u)|\mu_k^N(du)+2\mu_k^N(L^2\backslash B_R(L^2))=:\mathfrak{a}_1+\mathfrak{a}_2.
\end{align*}
Here $C_f$ is the Lipschitz constant of $f$ and $u_k(t,\Pi^Nu)$ is the solution to \eqref{NLS-projN} starting from $\Pi^Nu.$ Now from \eqref{Est:M:N,al}, we have
\begin{align*}
\mathfrak{a}_2\leq \frac{C}{R^2}.
\end{align*}
For the term $\mathfrak{a}_1$, define the set 
\begin{align*}
S_r=\left\{\omega\in\Omega|\ \ \max\left(\left|\sqrt{\al_k}\sum_{|m|\leq N}a_m\int_0^t\langle u,e_m\rangle \,  d\beta_m\right|, \|z_k\|_{L^2}\right)\leq r\sqrt{\al_k}t\right\}\quad r>0,
\end{align*}
we have the following:
\begin{lem}\label{LemConvUnifInvis}
For any $R>0,$ any $r> 0,$
\begin{align*}
\sup_{u_0\in B_R(L^2)}\E(\|\phi^N_t\Pi^Nu_0- u_k(t,\Pi^Nu_0)\|_{L^2}\Bbb 1_{S_r})\to 0,\quad as\ k\to\infty.
\end{align*}
\end{lem}

Assuming Lemma \ref{LemConvUnifInvis}, we continue the computation. 
Now let us use the Lispschitz and boundedness properties of $f$,
\begin{align*}
\mathfrak{a}_1\leq C_f\int_{B_R}\E\|\phi^N_t\Pi^Nu_0-u_k(t,\Pi^Nu_0)\|_{L^2}\Bbb 1_{S_r}\mu_k^N(du_0)+2\int_{B_R}\E(\Bbb 1_{S_r^c})\mu_k^N(du_0) =:\mathfrak{a}_{1,1}+\mathfrak{a}_{1,2}.
\end{align*}
It follows from the Lemma \ref{LemConvUnifInvis} that for any fixed $R>0$ and any $r>0,$ $\lim_{k\to\infty}\mathfrak{a}_{1,1}=0.$

Using the  It\^o isometry and \eqref{Est:M:N,al}, we have
\begin{align*}
\E\left|\sqrt{\al_k}\sum_{|m|\leq N}a_m\int_0^t\langle u,e_m\rangle \, d\beta_m\right|^2=\al_k\sum_{|m|\leq N}|a_m|^2\int_0^t\langle u,e_m\rangle^2 \, dt\leq \al_k A_0\E\|u\|_{L^2}^2t\leq C\al_kt,
\end{align*}
where $C$ does not depend on $k.$ Also, from \eqref{Est:z},
\begin{align*}
\E\|z_k\|_{L^2}^2\leq C\al_k t,
\end{align*}
where $C$ is independent of $k.$ Therefore, using the Chebyshev inequality, we have
\begin{align*}
\E(\Bbb 1_{S^c_r})=\P\left\{\omega\ | \  \max\left(\left|\sqrt{\al_k}\sum_{|m|\leq N}a_m\int_0^t\langle u,e_m\rangle \,  d\beta_m\right|, \|z_k\|_{L^2}\right)\geq r\sqrt{\al_k}t \right\}\leq \frac{C\al_k}{r^2\al_k}=\frac{C}{r^2}.
\end{align*}
We pass to the limits $k\to\infty,\ R\to\infty,\ r\to\infty$ (respecting this order), we obtain $(III)$, and hence $(IV).$

Hence the proof of Proposition \ref{prop mu^N} is finished.
\end{proof}

Now we are left to prove Lemma \ref{LemConvUnifInvis}. 
\begin{proof}[Proof of Lemma \ref{LemConvUnifInvis}]
Set $w_k=u-v_k:=\phi^N_t\Pi^Nu_0-v_k(t,\Pi^Nu_0),$ where $v_k(t,\Pi^Nu_0)$ is the solution of \eqref{Equ:v}, with $\alpha =\al_k$ and that starts from $\Pi^Nu_0.$ We recall that $u_k=v_k+z_k,$ where $z_k$ solves the problem \eqref{Equ:z} with $\al=\al_k.$ Now, thanks to \eqref{Est:z}, we have that $\E\|z_k\|_{L^2}^2\to 0$ as $k\to\infty.$
Therefore, it suffices to show the following to complete the proof of the Lemma \ref{LemConvUnifInvis}.
\begin{align*}
\sup_{u_0\in B_R(L^2)}\E(\|w_k\|_{L^2}\Bbb 1_{S_r})\to 0,\quad as\ k\to\infty,
\end{align*}
Let us take the difference between the equations \eqref{NLS-projN} and \eqref{Equ:v}:
\begin{align}\label{eq wk}
\begin{aligned}
\dt w_k&= \i[\Delta w_k-\Pi^N(w_kf_{q}(u,v_k))]+\i \Pi^N(g_{q}(u,v_k,z_k)z_k)\\
& \quad -\al_k e^{\rho(\|v_k+z_k\|_{H^{\beta -}})}\left((-\Delta)^{\beta -1}(v_k+z_k)+\Pi^N|v_k+z_k|^{2q}(v_k+z_k)\right),
\end{aligned}
\end{align}
where $f_{q}$ and $g_{q}$ are polynomials of degree $2q$ in the given variables. We observe that 
\begin{align*}
|v_k+z_k|^{2q}(v_k+z_k)-|u|^{2q}u=|v_k|^{2q}v_k-|u|^{2q}u+z_kg_{q}(v_k,z_k)=w_kf_{q}(u,v_k)+z_kg_{q}(v_k,z_k).
\end{align*}
Taking the inner product with $w_k$ on both sides of \eqref{eq wk}, we see that
\begin{align*}
\dt\|w_k\|_{L^2}^2 &\leq 2\|w_k\|_{L^2}^2(1+\lambda_N^2+\|f_{q}(u,v_k)\|_{L^\infty_{t,x}})+2\|z_k\|_{L^2}^2\|g_{q}(v_k,z_k)\|^2_{L^\infty_{t,x}}\\
& \quad +\al_kC_0(N)\|w_k\|e^{\rho(c(N)(\|v_k+z_k\|_{L^2}))}\left[\|v_k\|_{L^2}+\|z_k\|_{L^2}+\|v_k\|_{L^2}^{2q+1}+\|z_k\|_{L^2}^{2q+1}\right]\\
&\leq C_1(N)\|w_k\|_{L^2}^2(1+\lambda_N^2+\|u\|_{L^\infty_{t}L^2_x}^{2q}+\|v_k\|^{2q}_{L^\infty_tL^2_x})+C_2(N)\|z_k\|_{L^2}^2\left(\|v_k\|^{4q+2}_{L^\infty_tL^2_x}+\|z_k\|^{4q+2}_{L^\infty_tL^2_x}\right)\\
& \quad +\alpha_kC_3(N)e^{2\rho(c(N)(\|v_k+z_k\|_{L^2}))}\left[\|v_k\|_{L^2}^2+\|z_k\|_{L^2}^2+\|v_k\|_{L^2}^{4q+2}+\|z_k\|_{L^2}^{4q+2}\right],
\end{align*} 
where $C_i(N)$ are some constant depending on $N$ (and in particular not on $\al_k$).
Using the Gronwall lemma and the fact that $w_k(0)=0,$  we arrive at 
\begin{align}
& \quad \|w_k(t)\|_{L^2}^2 \\
& \leq C_4(N)e^{C_1(N)\int_0^t(1+\lambda_N^2+\|u\|_{L^\infty_{t}L^2_x}^{2q}+\|v_k\|^{2q}_{L^\infty_tL^2_x}) \, d\tau}\left(\int_0^t\|z_k\|_{L^2}^2 \, d\tau+\al_kt\right)\left[1+e^{3\rho(c(N)(\|v_k\|_{L^\infty_t L^2_x}+\|z_k\|_{L^\infty_t L^2_x}))}\right]\label{ConvGENERALEst}
\end{align}
and the estimate \eqref{Est:z}, we have that, up to a subsequence, 
\begin{align*}
\lim_{k\to\infty}\sup_{t\in[0,T]}\|w_k\|_{L^2}=0,\ \ \P-almost\ surely.
\end{align*}
Now, writing the It\^o formula for $\|u_k\|_{L^2}^2$, we have
\begin{align*}
\|u_k\|_{L^2}^2+2\al_k\int_0^t\mathcal{M}(u_k) \, d\tau =\|\Pi^Nu_0\|_{L^2}^2+\al_k \frac{A_0^N}{2}t+2\sqrt{\al_k}\sum_{|m|\leq N}a_m\int_0^t\langle u_k,e_m\rangle \, d\beta_m.
\end{align*}
Therefore, recalling that $\al_k\leq 1,$ we have that, on the set $S_r,$
\begin{align*}
\|u_k\|_{L^2}^2\leq \|\Pi^Nu_0\|_{L^2}^2+C(r,N)t,
\end{align*}
where $C(r,N)$ does not depend on $k.$ Hence we see that, on $S_r$,
\begin{align*}
\|w_k\|_{L^2}\leq \|v_k\|_{L^2}+\|z_k\|_{L^2}\leq \|u_k\|_{L^2}+2\|z_k\|_{L^2}\leq \|u_0\|_{L^2}+3C(r,N)t.
\end{align*}
In particular, we have the following two estimates:
\begin{align}
\sup_{u_0\in B_R}\|w_k\|_{L^\infty_tL^2_x}\Bbb 1_{S_r}\leq R+3C(r,N)T,\label{ControlTronqueeStoch}
\end{align}
and
\begin{align}
\sup_{k\geq 1}\sup_{u_0\in B_R}\|w_k\|_{L^\infty_tL^2_x}\Bbb 1_{S_r}\leq R+3C(r,N)T.\label{CONVUnifin_k_bound}
\end{align}
Hence coming back to \eqref{ConvGENERALEst} and using the (deterministic) conservation $\|u(t)\|_{L^2}=\|P_N u_0\|_{L^2}$ and the estimate \eqref{ControlTronqueeStoch}, we obtain
\begin{align*}
\sup_{u_0\in B_R}\|w_k\|_{L^\infty_tL^2_x}^2\Bbb 1_{S_r}\leq A(R,N,r,T)\|z_k\|_{L^1_tL^2_x}.
\end{align*}
Therefore, using again the bound \eqref{Est:z}, we obtain the almost sure convergence $\|z_k\|_{L^2}\to 0$ (as $k\to\infty$, up to a subsequence), we obtain then the almost sure convergence
\begin{align*}
\lim_{k\to\infty}\sup_{u_0\in B_R}\|w_k\|_{L^\infty_tL^2_x}^2\Bbb 1_{S_r}=0.
\end{align*}
Now, taking into account the bound \eqref{CONVUnifin_k_bound}, we can then use the Lebesgue dominated convergence theorem to obtain
\begin{align*}
\E\sup_{u_0\in B_R}\|w_k\|_{L^\infty_tL^2_x}\Bbb 1_{S_r}\to 0,\quad as\ k\to\infty.
\end{align*}

Now, for $u_0\in B_R,$ we have
\begin{align*}
\|w_k(t,\Pi^Nu_0)\|_{L^2}\Bbb 1_{S_r}\leq \sup_{u_0\in B_R}\|w_k(t,\Pi^Nu_0)\|_{L^2}\Bbb 1_{S_r},
\end{align*}
then
\begin{align*}
\E\|w_k(t,\Pi^Nu_0)\|_{L^2}\Bbb 1_{S_r}\leq \E \sup_{u_0\in B_R}\|w_k(t,\Pi^Nu_0)\|_{L^2}\Bbb 1_{S_r},
\end{align*}
and finally,
\begin{align*}
\sup_{u_0\in B_R}\E\|w_k(t,\Pi^Nu_0)\|_{L^2}\Bbb 1_{S_r}\leq \E \sup_{u_0\in B_R}\|w_k(t,\Pi^Nu_0)\|_{L^2}\Bbb 1_{S_r}.
\end{align*}
The proof of Lemma \ref{LemConvUnifInvis} is finished. 
\end{proof}
Now we finished discussion on the Galerkin approximation, and in the next section we will take the dimension $N \to \infty$.

\section{Almost sure GWP: All algebraic powers with subcritical regularities}\label{Sect:Alg:pow}
This section is devoted to the proof of the content of Theorem \ref{main thm1}. In the previous section we obtain invariant measures and strong bounds for the dynamics \eqref{NLS-projN}. This is going to be combined with the local well-posedness result obtained in Theorem \ref{thm LWP} and a globalization argument of Bourgain type (see the seminal work \cite{bourg94}). Our construction here has some important differences coming from the fact that our measures are not of Gibbs type, see also \cite{syNLS7} and \cite{lat}. Here $q \in \Z^+$ so that Theorem \ref{thm LWP} holds. 

Let $\xi:\R_+\to\R_+$ be a one-to-one increasing concave function, let $\xi^{-1}$ be its inverse. We take $\rho=3\xi^{-1}.$ We claim that for some constant $C(\xi)>0$ not depending on $N$, the following bound holds:
\begin{align}
\int_{L^2} e^{3\xi^{-1}(\norm{v}_{H^{\beta -}})} \mu^N (dv) \leq C(\xi). \label{Est:Exp:N}
\end{align}
Indeed, using \eqref{Est_bound:on:E} and \eqref{Est:E:N}, we have
\begin{align*}
\int_{L^2} e^{3\xi^{-1}(\norm{v}_{H^{\beta -}})} \mu^N (dv) &\leq \int_{\|v\|_{H^{\beta-}}\leq 1} e^{3\xi^{-1}(\norm{v}_{H^{\beta -}})} \mu^N (dv) +\int_{\|v\|_{H^{\beta-}}\geq 1} e^{3\xi^{-1}(\norm{v}_{H^{\beta -}})}\|v\|_{H^{\beta-}}^2 \mu^N (dv) \\
&\leq e^{3\xi^{-1}(1)}+\int_{L^2} \mathcal{E}(v) \mu^N (dv)\leq C_1(\xi)+C=:C(\xi),
\end{align*}
where $C$ is the constant $C_1$ in \eqref{Est:E:N}.

\subsection{The GWP result.}
In what follows the constant $\tilde{C}$ in \eqref{Local:increase} will be taken to be $2$ for simplicity. 

\begin{prop}\label{prop growth}
Let $s_c< s \leq \beta -$, $N , i \in \N^+$, there exists a set $\Sigma_{s,i}^N$ such that
\begin{enumerate}
\item
$\mu^N (E_N \setminus \Sigma_{s,i}^N) \leq C e^{-2i}$ for some universal constants $C> 0$ and $\delta_0 > 0$;

\item
For any $u_0 \in \Sigma_{s,i}^N$
\begin{align}
\norm{\phi_t^N u_0}_{H^s} \leq 2 \xi(1+i+\ln(1+|t|))\label{Est:Bourg-type}
\end{align}
for all $t \in \R$.
\end{enumerate}
\end{prop}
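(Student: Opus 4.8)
\textbf{Proof plan for Proposition \ref{prop growth}.}

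The plan is to run a Bourgain-type invariant measure argument, using the exponential-moment bound \eqref{Est:Exp:N} as the substitute for the conservation of energy that is unavailable here. First I would fix $s$ with $s_c < s \leq \beta-$ and pick, via Theorem \ref{thm LWP}, the local time $T_R = c(R^{2q})^{-\beta}$ associated to balls of radius $R$ in $H^s$, together with the doubling bound $\norm{\phi_t^N u_0}_{H^s} \leq 2\norm{u_0}_{H^s}$ valid on $[-T_R, T_R]$ (uniformly in $N$, by the Remark following the proof of Theorem \ref{thm LWP}). The key point is that \eqref{Est:Exp:N} gives, via the Chebyshev/Markov inequality, a uniform-in-$N$ large deviation estimate: for every $\lambda > 0$,
\begin{align*}
\mu^N\big(\{u_0 \in E_N : \norm{u_0}_{H^{\beta-}} > \lambda\}\big) \leq C(\xi)\, e^{-3\xi^{-1}(\lambda)}.
\end{align*}
Choosing $\lambda = \xi(j)$ for integers $j$ turns this into $\mu^N(\{\norm{u_0}_{H^{\beta-}} > \xi(j)\}) \leq C(\xi) e^{-3j}$, which is the input we need. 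The slow-growth function $\xi$ enters precisely here: the faster $\xi^{-1}$ grows (equivalently, the slower $\xi$ grows), the better the tail.

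Next I would build the set $\Sigma_{s,i}^N$ as an intersection over dyadic-in-time pieces of flowed-out sublevel sets, exploiting the invariance of $\mu^N$ under $\phi_t^N$. Concretely, on each time interval of length comparable to $T_{\xi(j)}$ the flow at most doubles the $H^s$-norm; iterating, after $n$ such steps the norm is at most $2^n$ times the initial one, and the total elapsed time is of order $\sum T_{\xi(j)}$, which (because $T_R \sim R^{-2q\beta}$ and $\xi$ grows) relates $n$ to $\ln(1+|t|)$. For a fixed level index $j = i + k$ I would set
\begin{align*}
B_{j}^N = \{u_0 \in E_N : \norm{u_0}_{H^{\beta-}} \leq \xi(j)\}, \qquad
\Sigma_{s,i}^N = \bigcap_{k \geq 0} \bigcap_{|m| \leq M_k} \phi_{-m T_{\xi(i+k)}}^N\, B_{i+k}^N,
\end{align*}
where $M_k$ is chosen so that $m T_{\xi(i+k)}$ ranges over the time window where $2^k$-fold doubling is allowed, i.e. $|t| \lesssim e^{k}$-ish after matching constants. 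By invariance, $\mu^N(E_N \setminus \phi_{-mT}^N B_{i+k}^N) = \mu^N(E_N \setminus B_{i+k}^N) \leq C(\xi) e^{-3(i+k)}$, and the number of translates $m$ at level $k$ grows only like $e^{2q\beta\,\xi(i+k)}$ times a polynomial — but since $\xi$ is our choice, we may assume $\xi$ is sublinear enough (e.g. $\xi(x) \le x$ after scaling, or just absorb into constants; concavity of $\xi$ already forces $\xi(x) = O(x)$) so that the product of the count with $e^{-3(i+k)}$ is summable and the whole complement has measure $\le C e^{-2i}$. This yields item (1). For item (2), any $u_0 \in \Sigma_{s,i}^N$ satisfies $\norm{u_0}_{H^s} \le \norm{u_0}_{H^{\beta-}} \le \xi(i)$, and for $|t|$ in the window governed by level $k$ one iterates the doubling bound $k$ times across the intermediate data (each of which still lies in the appropriate $B_{i+k'}^N$ by construction), giving $\norm{\phi_t^N u_0}_{H^s} \le 2^{k}\xi(i) \le 2\,\xi(1 + i + k) \le 2\,\xi(1 + i + \ln(1+|t|))$ after calibrating $k \sim \ln(1+|t|)$ and using concavity/monotonicity of $\xi$ to absorb the $2^k$ factor.

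The main obstacle, as in all Bourgain-type arguments, is the bookkeeping that makes the two competing exponentials balance: the number of time-translates needed at level $k$ is dictated by the local existence time $T_{\xi(i+k)} \sim \xi(i+k)^{-2q\beta}$, which \emph{shrinks} as $k$ grows, so more and more translates are required; meanwhile the per-translate failure probability $e^{-3(i+k)}$ must still dominate. Verifying that $\sum_k (\#\text{translates at level }k)\cdot e^{-3(i+k)} \lesssim e^{-2i}$ — and simultaneously that the doubling iteration really does cover all $t$ with the claimed logarithmic budget — is where the concavity of $\xi$ and the precise form of $T_R$ from Theorem \ref{thm LWP} must be used carefully. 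A secondary subtlety is ensuring every intermediate flowed-out datum used in the doubling chain genuinely lies in the sublevel set dictating the next step; this is arranged by indexing the intersection defining $\Sigma_{s,i}^N$ so that level-$k$ constraints are imposed at all the relevant intermediate times, not just the endpoints. The uniformity of all constants in $N$ is inherited directly from the uniform-in-$N$ local theory and from \eqref{Est:Exp:N}.
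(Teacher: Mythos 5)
Your overall architecture is the paper's: the exponential moment bound \eqref{Est:Exp:N} plus Chebyshev gives the uniform-in-$N$ tail $\mu^N(\norm{u}_{H^{\beta-}}>\xi(j))\leq C(\xi)e^{-3j}$, and $\Sigma_{s,i}^N$ is built as an intersection of pre-images $\phi^N_{-mT_R}(B)$ of sublevel sets over a time grid of spacing $T_R$, with the measure of the complement controlled by invariance. However, your derivation of item (2) has a genuine gap. You iterate the doubling bound $k$ times to get $\norm{\phi^N_t u_0}_{H^s}\leq 2^k\xi(i)$ and then claim $2^k\xi(i)\leq 2\xi(1+i+k)$ "by concavity". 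This inequality is false for concave $\xi$ — indeed for the logarithmic choices of $\xi$ that are the whole point of the theorem, the left side grows exponentially in $k$ while the right side grows like $\ln k$. The correct argument, which your own construction already supports, never accumulates factors of $2$: since $u_0\in\phi^N_{-mT_R}(B^N_{s,i,j})$ for \emph{every} grid time $mT_R$ in the window, the solution re-enters the sublevel set $\{\norm{\cdot}_{H^s}\leq\xi(i+j)\}$ at each grid time, and the local-theory doubling is applied exactly once on each subinterval $[mT_R,mT_R+\tau]$, $\tau\leq T_R$, giving $\norm{\phi^N_t u_0}_{H^s}\leq 2\xi(i+j)$ uniformly for $t\leq e^j$; one then converts $j$ into $1+\ln(1+t)$. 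Your parenthetical remark that the intermediate data lie in the sublevel sets is exactly the observation that makes the $2^k$ disappear — but your displayed chain of inequalities does not use it and would not close.

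A secondary error: the number of translates at level $j$ is $e^j/T_{\xi(i+j)}\sim e^j\,\xi(i+j)^{2q\beta}$, i.e.\ $e^j$ times a quantity \emph{polynomial} in $\xi(i+j)$ (hence polynomial in $i+j$ since concavity gives $\xi(x)\lesssim 1+x$), not $e^{2q\beta\,\xi(i+j)}$ as you wrote. With the correct polynomial count, $\sum_j e^j(1+i+j)^{2q\beta}e^{-3(i+j)}\lesssim e^{-2i}$ with no further conditions; with your exponential count, the "absorb into constants" step would require a smallness condition of the form $2q\beta\,\xi(1)<3$ that is not available. Both issues are repairable within your framework, but as written the proof of \eqref{Est:Bourg-type} does not go through.
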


\begin{proof}[Proof of Proposition \ref{prop growth}]
\begin{enumerate}
\item
Without loss of generality, we can consider $t\geq 0$.
Set 
\begin{align*}
B_{s,i,j}^{N} := \{ u \in E_N \big| \norm{u}_{H^s} \leq \xi(i+j) \}
\end{align*}
Note that
\begin{align*}
\phi_t^N B_{s,i,j}^N \subset \{ u \in E_N \big|   \norm{u}_{H^s} \leq 2 \xi(i+j) \}
\end{align*}
for $t \leq T_R$, where $T_R$ is the local time of existence on $B_{s,i,j}^N$ (see Theorem \ref{thm LWP}). Recall that $T_R \gtrsim R^{-2q\beta} = [\xi(i+j)]^{-2q\beta}$.

Set 
\begin{align*}
\Sigma_{s,i,j}^N : = \cap_{k=0}^{ \square{\frac{e^j}{T_R}}} \phi_{-kT_R}^N (B_{s, i,j}^N).
\end{align*}
We obtain, using permutation between inverse mapping and complementary operation,
\begin{align*}
\mu^N (E_N \setminus \Sigma_{s,i,j}^N) = \mu^N (\cup_{k=0}^{ \square{\frac{e^j}{T_R}}} \phi_{-kT_R}^N (E_N \setminus B_{s, i,j}^N)) \leq \sum_{k=0}^{ \square{\frac{e^j}{T_R}}} \mu ( \phi_{-kT_R}^N (E_N \setminus B_{s, i,j}^N). 
\end{align*}
We use the invariance of $\mu^N$ under $\phi_t^N$ and the estimate \eqref{Est:Exp:N} and Chebyshev's inequality, we obtain
\begin{align*}
\mu^N (E_N \setminus \Sigma_{s,i,j}^N) & \leq \left(\square{\frac{e^j}{T_R}}+1\right) \mu^N (E_N \setminus B_{s, i,j}^N)  \leq C  \square{\frac{e^j}{T_R}} \mu^N \left(\norm{u}_{H^s} \geq \xi(i+j)\right) \\
& \leq   2 \square{\frac{e^j}{T_R}} e^{-3\xi^{-1}(\xi(i+j))} C(\xi)  \leq 2C(\xi) e^j(i+j)^{-2q\beta} e^{-3(i+j)}\\
& \lleq e^{-2i}e^{-j}.
\end{align*}
Let us define
\begin{align*}
\Sigma_{s,i}^N = \cap_{j \geq 1} \Sigma_{s,i,j}^N,
\end{align*}
 then we obtain
\begin{align*}
\mu^N (E_N \setminus \Sigma_{s,i}^N) \leq C \sum_{j \geq 0} e^{-2i} e^{-j} \leq C e^{-2i}. 
\end{align*}

\item
Now, since $u_0 \in \Sigma_{s,i,j}^N$, then we get
\begin{align}
\norm{\phi_t^N u_0}_{H^s} \leq 2\xi(i+j) \quad \text{for all $t \leq e^j$}.\label{Est:Sigma}
\end{align}
 Indeed we write $t = k T_R + \tau$ where $k$ is an integer in $[0, \frac{e^j}{T_R}]$ and $\tau \in [0, T_R].$
 We have
\begin{align*}
\phi_t^N u_0 = \phi_{\tau+kT_R}^N u_0 = \phi_{\tau}^N (\phi_{kT_R}^N u_0)
\end{align*}
But since $u_0 \in \phi_{-kT_R}^N (B_{s,i,j}^N)$, then $u_0 = \phi_{-kT_R}^N w$, for some $w \in B_{s,i,j}^N$, then
$\phi_t^N u_0 = \phi_{\tau}^N w$ and we obtain the claim by invoking the local existence time bound (Theorem \ref{thm LWP}).

Now, remark that for any $t>0$, there exists $j$ such that
\begin{align*}
e^{j-1} \leq 1 +t \leq e^{j},
\end{align*}
hence
\begin{align*}
j\leq1+\ln(1+t),
\end{align*}
and using \eqref{Est:Sigma}, we arrive at
\begin{align*}
\norm{\phi_t^N u_0}_{H^s} \leq 2\xi(1+i+\ln(1+t)).
\end{align*}
This finishes the proof of Proposition \ref{prop growth}.
\end{enumerate}
\end{proof}

Now, for any $s<\beta$, we introduce the sets
\begin{align*}
\Sigma_{s,i} : = \left\{ u \in H^s\ \big|\ \exists (u^{N_k}) , u^{N_k} \to u, \text{ where } u^{N_k} \in \Sigma_{s,i}^{N_k}  \right\},
\end{align*}
we define the following conditional measures
\begin{align*}
\mu_{s,i}^N(\Gamma) = \mu^N (\Gamma \big| \Sigma_{s,i}^N) = \frac{\mu^N (\Gamma \cap \Sigma_{s,i}^N)}{\mu^N (\Sigma_{s,i}^N)}.
\end{align*}

Now we are ready to take the dimension $N \to \infty$.
\begin{prop}\label{prop mu}
We have the following
\begin{enumerate}
\item
As $N \to \infty$, there exists $N_k \to \infty$ and $\mu \in \mathfrak{p}(H^{s})$, such that $\mu^{N_k} \rightharpoonup \mu$ on $H^{s}$;

\item

For any $i,s$, as $N \to \infty$, there exists $N_k \to \infty$ and $\mu_{s,i} \in \mathfrak{p}(H^{s})$, such that $\mu_{s,i}^{N_k} \rightharpoonup \mu_{s,i}$ on $H^{s}$.
\item We have the estimates
\begin{align}
\int_{L^2} \mathcal{M}(v) \mu (dv) =\frac{ A_0}{2}; \label{Est:M}\\
\int_{L^2}\|u\|_{H^\beta}^2\mu(du) \leq C.\label{Est:E}
\end{align}
Consequently,  from \eqref{Est:E}, we obtain that $\mu$ is concentrated on $H^\beta$.
\end{enumerate}
\end{prop}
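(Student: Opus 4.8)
## Proof proposal for Proposition \ref{prop mu}

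The plan is to obtain all three conclusions from the uniform-in-$N$ bounds already established in Proposition \ref{prop mu^N} (namely \eqref{Est:M:N}, \eqref{Est:MR:N}, \eqref{Est:E:N}, \eqref{Est:EEcal:N}) together with the estimate \eqref{Est:Exp:N}, and from the invariance of each $\mu^N$ under $\phi^N_t$. The key mechanism for item (1) is a tightness argument: the bound $\int_{L^2}\|u\|_{H^\beta}^2\,\mu^N(du)\le C$ (which follows from \eqref{Est_bound:on:E} and \eqref{Est:E:N}) is uniform in $N$, and since $H^\beta$ embeds compactly into $H^s$ for $s<\beta$, the sequence $(\mu^N)$ is tight on $H^s$ by Chebyshev. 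Prokhorov's theorem then gives a subsequence $N_k\to\infty$ and $\mu\in\mathfrak{p}(H^s)$ with $\mu^{N_k}\rightharpoonup\mu$ weakly on $H^s$. For item (2), the measures $\mu^N_{s,i}$ are obtained from $\mu^N$ by conditioning on $\Sigma^N_{s,i}$; since $\mu^N(\Sigma^N_{s,i})\ge 1-Ce^{-2i}$ is bounded below uniformly in $N$ (by Proposition \ref{prop growth}(1)), the conditional measures inherit the uniform $H^\beta$-moment bound, hence are also tight on $H^s$, and one extracts a further subsequence along which $\mu^{N_k}_{s,i}\rightharpoonup\mu_{s,i}$. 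A diagonal argument over $i$ lets one use a single sequence $N_k$ if desired.

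For item (3) one must pass the estimates \eqref{Est:M:N} and \eqref{Est:E} to the limit. The bound $\int_{L^2}\|u\|_{H^\beta}^2\,\mu(du)\le C$ is the easy direction: $u\mapsto\|u\|_{H^\beta}^2$ is lower semicontinuous (indeed it is a supremum of continuous functions, being a sum of squares of frequency components) on $H^s$, possibly valued $+\infty$, so by the portmanteau theorem $\int\|u\|_{H^\beta}^2\,\mu(du)\le\liminf_k\int\|u\|_{H^\beta}^2\,\mu^{N_k}(du)\le C$; this forces $\mu$ to be concentrated on $H^\beta$. For the identity \eqref{Est:M}, note first that $A_0^N\to A_0$ as $N\to\infty$ by \eqref{Noise:Cond}. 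The functional $\mathcal{M}(u)=e^{\rho(\|u\|_{H^{\beta-}})}(\|u\|_{H^{\beta-1}}^2+\|u\|_{L^{2q+2}}^{2q+2})$ is again lower semicontinuous on $H^s$ for $s$ close to $\beta$ (one may need $s>\beta-1$ and to handle the $L^{2q+2}$ term via Sobolev embedding and Fatou), giving the inequality $\int\mathcal{M}(u)\,\mu(du)\le\frac{A_0}{2}$ directly. For the reverse inequality one truncates: using the cutoff $\chi_R$ from \eqref{eq chi} and the tail bound \eqref{Est:MR:N}, one has
\begin{align*}
\frac{A_0^{N_k}}{2}-CR^{-1}\le\int_{L^2}\chi_R(\|u\|_{L^2}^2)\mathcal{M}(u)\,\mu^{N_k}(du)\le\frac{A_0^{N_k}}{2};
\end{align*}
the middle integrand $\chi_R(\|u\|_{L^2}^2)\mathcal{M}(u)$ is still lower semicontinuous but now the constraint $\|u\|_{L^2}^2\le 2R$ together with the $H^\beta$ bound is enough to also get an upper bound on the integral in the limit — more carefully, one approximates $\mathcal{M}$ from below by bounded continuous functions $\mathcal{M}\wedge m$, passes to the limit in $k$, then sends $m\to\infty$ by monotone convergence, then $R\to\infty$, recovering $\int\mathcal{M}(u)\,\mu(du)\ge\frac{A_0}{2}$. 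Combining the two inequalities yields \eqref{Est:M}.

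The main obstacle I expect is the lower semicontinuity bookkeeping for $\mathcal{M}$ on the rough space $H^s$ with $s<\beta$: the exponential weight $e^{\rho(\|u\|_{H^{\beta-}})}$ and the $H^{\beta-1}$ and $L^{2q+2}$ norms inside must all be shown to be lower semicontinuous (or at least that their product is) under $H^s$-weak convergence of measures, and one has to be careful that $\beta-$ denotes "any regularity slightly below $\beta$", so the working value of $s$ should be taken in the range $(\max(s_c,\beta-1),\beta)$ and one invokes the compact embedding $H^\beta\hookrightarrow H^s$ plus Fatou applied to the frequency expansion. Once the semicontinuity is set up cleanly, the truncation argument with $\chi_R$ is exactly the one already used in the proof of Proposition \ref{prop mu^N} for the identity \eqref{Est:M:N}, so it transfers essentially verbatim, and the invariance of $\mu$ under the limiting flow — needed implicitly for the statement to be meaningful but proved separately in the next stage — is not required here.
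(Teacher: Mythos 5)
Items (1), (2) and the bound \eqref{Est:E} are handled essentially as in the paper (tightness from the uniform $H^\beta$ moment, Prokhorov, conditioning, and lower semicontinuity/truncation for \eqref{Est:E}), so that part is fine. The gap is in your argument for the hard half of \eqref{Est:M}, namely $\int\mathcal{M}\,d\mu\ge\frac{A_0}{2}$. Your scheme — approximate $\mathcal{M}$ from below by $\mathcal{M}\wedge m$, pass to the limit in $k$, then let $m\to\infty$ by monotone convergence — only produces the lower semicontinuity inequality $\liminf_k\int\chi_R\mathcal{M}\,d\mu^{N_k}\ge\int\chi_R\mathcal{M}\,d\mu$, which is the wrong direction: combined with $\int\chi_R\mathcal{M}\,d\mu^{N_k}\le\frac{A_0^{N_k}}{2}$ it re-derives the upper bound. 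To transfer the lower bound $\frac{A_0^{N_k}}{2}-CR^{-1}\le\int\chi_R\mathcal{M}\,d\mu^{N_k}$ to the limit you need $\limsup_k\int\chi_R\mathcal{M}\,d\mu^{N_k}\le\int\mathcal{M}\,d\mu$, i.e.\ no mass of $\mathcal{M}$ may escape in the limit. The cutoff $\chi_R(\|u\|_{L^2}^2)$ does not make $\mathcal{M}$ bounded ($\mathcal{M}$ involves $e^{\rho(\|u\|_{H^{\beta-}})}$, $\|u\|_{H^{\beta-1}}^2$ and $\|u\|_{L^{2q+2}}^{2q+2}$, all of which can be large while $\|u\|_{L^2}\le 2R$), and the tail bound \eqref{Est:MR:N} only controls the region where $\|u\|_{L^2}$ is large. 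What is actually needed is a uniform-in-$N$ integrability statement for $\mathcal{M}(u^N)$ on the set where $\mathcal{M}$ itself is large; the paper obtains this from the second-moment bound \eqref{Est:EEcal:N} by splitting on $\{E(u^N)+\|u^N\|_{H^{\beta-}}^2\gtrless R\}$ and estimating the bad piece by $\frac{1}{R}\E[\mathcal{M}(u^N)(E(u^N)+\|u^N\|_{H^{\beta-}}^2)]\lesssim\frac{C}{R}$ via \eqref{Mcal:leq:Ecal:Subcrit}. Your outline never invokes \eqref{Est:EEcal:N}, and without it the limit cannot be closed.

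A second, related omission: on the good set one needs genuine a.s.\ convergence of $\mathcal{M}(u^N)$ (so that dominated convergence applies), not semicontinuity. The $H^{\beta-1}$ and exponential factors converge under $H^{\beta-}$ convergence, but the $\|u^N\|_{L^{2q+2}}^{2q+2}$ term does not follow from $H^{\beta-}$ convergence alone when $q\le 2$ (the embedding $H^{\beta-}\hookrightarrow L^{2q+2}$ can fail for $\beta$ near $s_c$), which is why the paper upgrades the weak convergence to an almost sure one via the Skorokhod representation theorem and the compactness of $|u^N|^qu^N$ in $L^{6-}$ furnished by $\E\||u^N|^qu^N\|_{\dot H^1}^2\le C$. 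If you repair the uniform integrability step with \eqref{Est:EEcal:N} and add this convergence step for the $L^{2q+2}$ part, your truncation scheme becomes the paper's proof.
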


\begin{proof}[Proof of Proposition \ref{prop mu}]
\begin{enumerate}
\item
First, combining \eqref{Est:E:N} and \eqref{Est_bound:on:E} we derive
\begin{align}
\int_{L^2} \norm{u}_{H^\beta}^{2} \mu^N (dv) \leq C.\label{Est:H2:N}
\end{align}
Since the constant $ C$ does not depend on $N$, we have Prokhorov theorem that $(\mu^N)_{N \geq 1}$ is tight on $H^{\beta-}$. Therefore, there exists $\mu$ such that $\mu^{N_k}  \rightharpoonup \mu$ on $H^{\beta -}$ for some subsequence $N_k \to \infty$. 

\item
Second, we write
\begin{align*}
\mu_{s,i}^N (dv) \leq \frac{\mu (dv)}{\mu^N (\Sigma_{s,i}^N)} \leq \frac{1}{1- Ce^{-2 i}} \mu^N (dv).
\end{align*}
Hence
\begin{align*}
\int_{L^2} \norm{u}_{H^{\beta}}^{2} \mu_{s,i}^N (dv) \leq \frac{1}{1-Ce^{-2i}} \int_{L^2} \norm{u}_{H^{\beta}}^{2} \mu^N (dv) \leq \frac{A_0}{2 (1- C e^{- 2 i})}.
\end{align*}
Then we proceed as above.
\item
Let us prove the estimate \eqref{Est:E} first. Let $M, R>0$  and $\chi_R$ be a smooth cut-off function on $[0,R]$ defined as in \eqref{eq chi}. We use \eqref{Est:H2:N} and write
\begin{align}
\int_{L^2}\|\Pi^Mu\|_{H^2}^2\chi_R(\|u\|_{L^2})\mu^N(du)\leq C.
\end{align}
Recall that $C$ does not depend on $N$. Thanks to the cut-off on the frequency and on the $L^2-$norm,   we have that $u\mapsto\|\Pi^Mu\|_{H^2}^2\chi_R(\|u\|_{L^2})$ is bounded continuous on $L^2$. Recall that the weak convergence of $(\mu^N)$ to $\mu$ holds in particular on $L^2$. Therefore we pass to the limit $N\to\infty$ and obtain
\begin{align}
\int_{L^2}\|\Pi^Mu\|_{H^2}^2\chi_R(\|u\|_{L^2})\mu(du)\leq C.
\end{align}
It remains to pass to the limits $R\to\infty$ and $M\to\infty$ with the use of Fatou's lemma to obtain \eqref{Est:E}.

The proof of \eqref{Est:M} is more difficult.
From \eqref{Est_bound:on:E} and \eqref{Est:E:N}, we have that $\E\langle|u^N|^{2q},|\nabla u^N|^2\rangle \leq C$, where $C$ does not depend on $N.$ Hence $\E\||u^N|^{q}u^N\|_{\dot{H}^1}^2\leq C$. Since $\dot{H}^1$ is compactly embedded in $L^{6-}$, we have that the random variables $|u^N|^{q}u^N$ converges weakly in $L^{6-}$ (up to a subsequence that we write without changing the index). Using the Skorokhod theorem (see Theorem \ref{Thm:Skorokhod}) this convergence is an almost sure one up to a modification of the probability space (that we denote by $(\Omega, \P)$ again, and we use the same names for the involved random variables as well). This almost sure convergence being strong, i.e. in the $L^{6-}$ norm, we have by writing $u$ the limiting random variable that
\begin{align}
\left|\||u^N|^{q}u^N\|_{L^{2}}-\||u|^{q}u\|_{L^{2}}\right|\leq \||u^N|^{q}u^N-|u|^{q}u\|_{L^{2}}\leq C \||u^N|^{q}u^N-|u|^{q}u\|_{L^{6-}}\to 0\quad \text{as $N\to\infty$}.\label{Conv:u^q}
\end{align} 
Hence we arrive at 
\begin{align}
\lim_{N\to\infty}\|u^N\|_{L^{2q+2}}=\|u\|_{L^{2q+2}}.\label{Conv:L4q+2norm}
\end{align}
Now let us write, from the identity \eqref{Est:E:N}, 
\begin{align*}
\frac{A_0^N}{2}\leq \E [\mathcal{M}(u^N)1_{\{E(u^N)+\|u^N\|_{H^{\beta-}}^2\leq R\}}]+\E [\mathcal{M}(u^N)1_{\{E(u^N)+\|u^N\|_{H^{\beta-}}^2\geq R\}}]=: \mathfrak{a}+\mathfrak{b}.
\end{align*}
We use the Lebesgue dominated convergence theorem, \eqref{Conv:L4q+2norm} and the convergence in $H^{\beta-}$ to see that 
\begin{align*}
\lim_{N\to\infty} \mathfrak{a}=\E [\mathcal{M}(u)1_{\{E(u^N)+\|u^N\|_{H^{\beta-}}^2\leq R\}}].
\end{align*}
Using \eqref{Mcal:leq:Ecal:Subcrit} and \eqref{Est:EEcal:N}, we have
\begin{align*}
\mathfrak{b} &=E \left[\mathcal{M}(u^N)\frac{E(u^N)+\|u^N\|_{H^{\beta-}}^2}{E(u^N)+\|u^N\|_{H^{\beta-}}^2}1_{\{E(u^N)+\|u^N\|_{H^{\beta-}}^2\geq R\}}\right] \leq \frac{1}{R}\E [\mathcal{M}(u^N)(E(u^N)+\|u^N\|_{H^{\beta-}}^2)1_{\{E(u^N)+\|u^N\|_{H^{\beta-}}^2\geq R\}}]\\
& \leq \frac{2}{R}E [\mathcal{E}(u^N)E(u^N)]\leq \frac{2C_1}{R}.
\end{align*}
Therefore
\begin{align*}
\frac{A_0}{2}\leq \E [\mathcal{M}(u)1_{\{E(u)+\|u^N\|_{H^{\beta-}}^2\leq R\}}] + \frac{2C_1}{R}.
\end{align*}
Letting $R\to\infty$, and then observing the fact that $\E [\mathcal{M}(u)]\leq \frac{A_0}{2}$, we arrive at \eqref{Est:M}.
\end{enumerate}
Then the proof of Proposition \ref{prop mu} is complete.
\end{proof}

Set 
\begin{align*}
\Sigma_s : = \cup_{i \geq 1} \overline{\Sigma_{s,i}}.
\end{align*}
We see that
\begin{prop}\label{prop mu bdd}
For any $s_c<s \leq \beta -$, we have that
\begin{enumerate}
\item
$\mu (\overline{\Sigma_{s,i}}) \geq 1 - C e^{-2 i }$;

\item
$\mu(\Sigma_s) =1$.
\end{enumerate}
\end{prop}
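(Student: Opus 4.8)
The plan is to transport the uniform tail bound on $\mu^N(E_N\setminus\Sigma^N_{s,i})$ established in Proposition \ref{prop growth} through the weak convergence $\mu^{N_k}\rightharpoonup\mu$ of Proposition \ref{prop mu}, using the fact that $\overline{\Sigma_{s,i}}$ is (essentially by construction) a closed set containing the "limit points" of the sets $\Sigma^{N_k}_{s,i}$. First I would prove (1). The key observation is that if $u\in H^s$ is a weak-$H^s$ (hence, after passing to a further subsequence, strong-$H^{\sigma}$ for $\sigma<s$) limit of a sequence $u^{N_k}\in\Sigma^{N_k}_{s,i}$, then $u\in\Sigma_{s,i}$ by definition; so the complement of $\overline{\Sigma_{s,i}}$ in $H^s$ is an open set whose points are "far" from all but finitely many of the $\Sigma^{N_k}_{s,i}$. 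More precisely, for the portmanteau theorem I want an open set $G\supseteq H^s\setminus\overline{\Sigma_{s,i}}$ with $\liminf_k\mu^{N_k}(\text{a neighborhood of }\Sigma^{N_k}_{s,i})$ controlled; the clean way is: since $\Sigma_{s,i}$ is defined as the set of strong limits of elements of $\Sigma^{N_k}_{s,i}$, its closure $\overline{\Sigma_{s,i}}$ is closed, and for any $\varepsilon>0$ the open $\varepsilon$-neighborhood $(\overline{\Sigma_{s,i}})_\varepsilon$ eventually contains $\Sigma^{N_k}_{s,i}$ — indeed if not, a diagonal subsequence of points in $\Sigma^{N_k}_{s,i}$ staying $\varepsilon$-away from $\overline{\Sigma_{s,i}}$ would, by the $H^\beta$-bound \eqref{Est:H2:N} and compactness, have a strong-$H^s$ limit, which by definition lies in $\Sigma_{s,i}\subseteq\overline{\Sigma_{s,i}}$, a contradiction. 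Hence by the portmanteau theorem applied to the closed set $H^s\setminus(\overline{\Sigma_{s,i}})_\varepsilon$,
\begin{align*}
\mu\big(H^s\setminus(\overline{\Sigma_{s,i}})_\varepsilon\big)\leq\liminf_{k\to\infty}\mu^{N_k}\big(H^s\setminus(\overline{\Sigma_{s,i}})_\varepsilon\big)\leq\liminf_{k\to\infty}\mu^{N_k}\big(E_{N_k}\setminus\Sigma^{N_k}_{s,i}\big)\leq Ce^{-2i},
\end{align*}
and letting $\varepsilon\downarrow 0$ gives $\mu(H^s\setminus\overline{\Sigma_{s,i}})\leq Ce^{-2i}$, i.e. $\mu(\overline{\Sigma_{s,i}})\geq 1-Ce^{-2i}$.

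For (2), since $\Sigma_s=\cup_{i\geq 1}\overline{\Sigma_{s,i}}$ and the sets $\overline{\Sigma_{s,i}}$ are increasing in $i$ (this should be checked: $\Sigma^N_{s,i}\subseteq\Sigma^N_{s,i'}$ for $i\leq i'$ because the defining balls $B^N_{s,i,j}$ grow with $i$, so the same monotonicity passes to the closures of the limit sets), we have
\begin{align*}
\mu(\Sigma_s)=\lim_{i\to\infty}\mu(\overline{\Sigma_{s,i}})\geq\lim_{i\to\infty}\big(1-Ce^{-2i}\big)=1,
\end{align*}
hence $\mu(\Sigma_s)=1$.

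The main obstacle I anticipate is the topological bookkeeping around the neighborhoods: making rigorous that $\Sigma^{N_k}_{s,i}$ is eventually swallowed by every open neighborhood of $\overline{\Sigma_{s,i}}$, which is where the uniform-in-$N$ bound \eqref{Est:H2:N} and the compact embedding $H^\beta\hookrightarrow\hookrightarrow H^s$ (valid since $s\leq\beta-$) do the real work — without that compactness one cannot extract the strong limit point that places the candidate element into $\Sigma_{s,i}$. A secondary subtlety is that the subsequence $(N_k)$ along which $\mu^{N_k}\rightharpoonup\mu$ on $H^s$ may need to be refined further (diagonally in $i$), but since there are only countably many $i$ this is harmless, and the final bound $\mu(\overline{\Sigma_{s,i}})\geq 1-Ce^{-2i}$ is subsequence-independent once established for one subsequence because $\mu$ itself is already fixed by Proposition \ref{prop mu}.
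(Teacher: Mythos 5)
Your overall strategy --- pushing the uniform bound $\mu^{N_k}(E_{N_k}\setminus\Sigma^{N_k}_{s,i})\leq Ce^{-2i}$ of Proposition \ref{prop growth} through the weak convergence $\mu^{N_k}\rightharpoonup\mu$ via the portmanteau theorem, then using monotonicity in $i$ for the second point --- is the paper's strategy, and your treatment of point (2) matches the paper's. But your proof of point (1) has two genuine problems. First, the compactness step that you yourself flag as doing ``the real work'' does not go through: to extract a strong $H^s$ limit from a sequence $u^{N_k}\in\Sigma^{N_k}_{s,i}$ you invoke \eqref{Est:H2:N}, but that is a moment bound $\int\|u\|_{H^\beta}^2\,\mu^N(du)\leq C$ on the \emph{measures}, not a pointwise bound on the \emph{elements} of $\Sigma^{N}_{s,i}$. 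The sets $\Sigma^{N}_{s,i}$ are carved out by $H^s$-norm conditions along the flow (yielding only $\|u\|_{H^s}\leq 2\xi(1+i)$ at $t=0$ via \eqref{Est:Bourg-type}), and a bounded sequence in $H^s$ need not have a strongly convergent subsequence in $H^s$. So the claim that every open neighborhood of $\overline{\Sigma_{s,i}}$ eventually swallows $\Sigma^{N_k}_{s,i}$ is not justified. Second, you apply the portmanteau theorem in the wrong direction: for the closed set $F=H^s\setminus(\overline{\Sigma_{s,i}})_\varepsilon$ the theorem gives $\limsup_k\mu^{N_k}(F)\leq\mu(F)$, which is the opposite of the inequality $\mu(F)\leq\liminf_k\mu^{N_k}(F)$ that you write.

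Both difficulties disappear once you notice the observation on which the paper's proof rests: $\Sigma^{N}_{s,i}\subset\Sigma_{s,i}$ for every $N$, simply because any $u\in\Sigma^{N}_{s,i}$ is the limit of the constant sequence $u^{N_k}=u$. Hence $\Sigma^{N_k}_{s,i}\subset\overline{\Sigma_{s,i}}$ already, no $\varepsilon$-neighborhood or compactness is needed, and the closed-set portmanteau inequality applied directly to $\overline{\Sigma_{s,i}}$ gives $\mu(\overline{\Sigma_{s,i}})\geq\limsup_k\mu^{N_k}(\overline{\Sigma_{s,i}})\geq\limsup_k\mu^{N_k}(\Sigma^{N_k}_{s,i})\geq 1-Ce^{-2i}$. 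Replace your neighborhood argument with this inclusion; your part (2) can then stand essentially as written.
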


\begin{proof}[Proof of Proposition \ref{prop mu bdd}]
\begin{enumerate}
\item
For the first point, let us observe that
\begin{align*}
\Sigma_{s,i}^N \subset \Sigma_{s,i}
\end{align*}
because for any $u \in \Sigma_{s,i}^N $ the constant sequence $u_N = u$ converges to $u$ and $u \in \Sigma_{s,i}$.

By Portmanteau theorem (see Theorem \ref{Thm:Portm})
\begin{align*}
\mu(\overline{\Sigma_{s,i}} ) \geq \lim_{N_k \to \infty} \mu^{N_k} (\overline{\Sigma_{s,i}}) \geq \lim_{N_k \to \infty} \mu^{N_k} (\overline{\Sigma^{N_k}_{s,i}}) \geq \lim_{N_k \to \infty} \mu^{N_k} (\Sigma_{s,i}^{N_k}) \geq \lim_{N_k \to \infty} (1 - C e^{-2 i})  = 1 - C e^{-2 i}.
\end{align*}

\item
Next, for the second statement, we first claim that $(\overline{\Sigma_{s,i}})_{i \geq 1}$ is non-decreasing. Indeed, let us remark that $(\Sigma_{s,i,j}^N)_{j \geq 1}$ is non-decreasing because $\Sigma_{s,i,j}^N = \cup_{k=0}^{\square{\frac{e^j}{T_R}}} \phi_{-kT_R}^N (B_{s,i,j}^N)$ and $(\Sigma_{s,i,j}^N)_{j \geq 1}$ is non-decreasing for any fixed $i$. Now  the following implications gives the claim:
\begin{align*}
\text{$(\Sigma_{s,i}^N)_{i \geq 1}$ is non-decreasing} & \implies \text{$(\Sigma_{s,i})_{i \geq 1}$ is non-decreasing} \implies \text{$(\overline{\Sigma_{s,i}})_{i \geq 1}$ is non-decreasing}.
\end{align*}
In particular, 
\begin{align*}
\mu (\cup_{i \geq 1} \overline{\Sigma_{s,i}}) = \lim_{i \to \infty} \mu (\overline{\Sigma_{s,i}}).
\end{align*}
Therefore
\begin{align*}
\mu (\cup_{i \geq 1} \overline{\Sigma_{s,i}}) = \lim_{i \to \infty} \mu (\overline{\Sigma_{s,i}}) \geq \lim_{i \to \infty} (1 - C e^{-2 i})  =1.
\end{align*}
Since $\mu$ is a probability measure, we obtain
\begin{align*}
1 \geq \mu(\Sigma_s ) \geq 1,
\end{align*}
which finishes the proof of Proposition \ref{prop mu bdd}.
\end{enumerate}
\end{proof}

\begin{prop}\label{prop phi control}
For any $s_c<s\leq \beta -$, for every $u_0 \in \Sigma_s \cap \supp (\mu) $ then the solution $\phi_t u_0$ of \eqref{NLS} given in Theorem \ref{thm LWP} is global and we have  for $t \in \R$
\begin{align}
\norm{\phi_t u_0}_{H^s} \leq 2 \xi(1+i+\ln(1+|t|))\label{Control:phi}
\end{align}
for some $i$ depending on $\|u_0\|_{H^s}$.
\end{prop}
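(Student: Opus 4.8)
The plan is to transfer the finite-dimensional bound \eqref{Est:Bourg-type} to the limit via the approximation results already in hand. Fix $s_c < s \le \beta -$ and take $u_0 \in \Sigma_s \cap \supp(\mu)$. By definition of $\Sigma_s = \cup_{i\ge 1}\overline{\Sigma_{s,i}}$, there is some $i$ with $u_0 \in \overline{\Sigma_{s,i}}$, and by definition of $\Sigma_{s,i}$ there is a sequence $u^{N_k}_0 \to u_0$ in $H^s$ with $u^{N_k}_0 \in \Sigma_{s,i}^{N_k}$ (taking closures only enlarges the set, so after a diagonal argument we may still extract such an approximating sequence converging in $H^s$; more carefully, $u_0\in\overline{\Sigma_{s,i}}$ gives $v_j\to u_0$ with $v_j\in\Sigma_{s,i}$, and each $v_j$ is a limit of elements of $\Sigma_{s,i}^{N}$, so a diagonal sequence does the job). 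Since $u^{N_k}_0 \to u_0$ in $H^s$, the sequence is bounded, say $\|u^{N_k}_0\|_{H^s}\le R_0$, and we may fix $i$ so that $\|u_0\|_{H^s}$ and all the approximants are controlled at level $i$ in the sense used in Proposition \ref{prop growth} (concretely, choosing $i$ with $\xi(i)\ge R_0$ suffices, possibly after relabeling $i$).

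Next, I would invoke Proposition \ref{prop growth}(2): for each $k$ and every $t\in\R$,
\begin{align*}
\norm{\phi_t^{N_k} u^{N_k}_0}_{H^s} \leq 2\,\xi(1+i+\ln(1+|t|)).
\end{align*}
Now I want to pass to the limit $k\to\infty$ on the left-hand side. Here I use Proposition \ref{Prop:CvgLocSol}: for any fixed $T>0$ and any $\sigma\in(s_c,s)$, since the local existence time $T_R$ from Theorem \ref{thm LWP} depends only on the radius and is uniform in $N$, one can iterate the local convergence statement over a partition of $[-T,T]$ into intervals of length comparable to $T_R$ (the number of steps is finite and fixed once $T$ and $R$ are fixed, the relevant radius being the uniform bound $2\xi(1+i+\ln(1+T))$ provided by the displayed Bourgain-type estimate on each subinterval), to conclude that $\phi_t^{N_k} u^{N_k}_0 \to \phi_t u_0$ in $X^{\sigma,b}_T$, hence in $C([-T,T],H^{\sigma})$; in particular $\phi_t^{N_k}u^{N_k}_0(t)\to\phi_tu_0(t)$ in $H^{\sigma}$ for every fixed $t$. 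This simultaneously shows that $\phi_t u_0$ extends to all of $[-T,T]$, and since $T$ is arbitrary, the solution is global. For the norm bound at the target regularity $s$: the uniform-in-$k$ bound in $H^s$ together with $H^s$-boundedness gives, along a further subsequence, weak convergence $\phi_t^{N_k}u^{N_k}_0(t)\rightharpoonup w$ in $H^s$; but the strong $H^\sigma$ convergence identifies $w=\phi_tu_0(t)$, and weak lower semicontinuity of the $H^s$ norm yields
\begin{align*}
\norm{\phi_t u_0}_{H^s} \leq \liminf_{k\to\infty}\norm{\phi_t^{N_k}u^{N_k}_0}_{H^s} \leq 2\,\xi(1+i+\ln(1+|t|)),
\end{align*}
which is \eqref{Control:phi}.

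The main obstacle is the interplay between the two convergence mechanisms at two different regularities: Proposition \ref{Prop:CvgLocSol} only gives convergence in $H^\sigma$ with $\sigma<s$ and only on a single local interval, whereas the Bourgain-type bound lives at regularity $s$ and on all of $\R$. Bridging these requires (a) iterating the local convergence across a bounded time interval, which is legitimate precisely because the uniform bound $2\xi(\cdots)$ keeps the radius — and hence $T_R$ — controlled on every subinterval, so the number of iterations needed to cover $[-T,T]$ stays finite and uniform in $k$; and (b) upgrading from the $H^\sigma$ convergence to the $H^s$ bound via weak compactness and lower semicontinuity rather than strong convergence, since we do not claim (and do not need) convergence at the endpoint regularity $s$. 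One should also check the harmless bookkeeping that the index $i$ can be chosen once and for all depending only on $\|u_0\|_{H^s}$: this is immediate since $\xi$ is increasing and one-to-one, so $i:=\lceil \xi^{-1}(\|u_0\|_{H^s})\rceil$ (or any larger integer accommodating the approximants) works.
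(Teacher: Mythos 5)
Your proof follows essentially the same route as the paper's: approximate $u_0$ by elements of $\Sigma_{s,i}^{N_k}$, invoke the uniform bound \eqref{Est:Bourg-type}, iterate the local convergence of Proposition \ref{Prop:CvgLocSol} over subintervals of $[-T,T]$ whose length is controlled by that uniform bound, and pass to the limit. Two of your variations are genuine refinements rather than a different method: you absorb the closure $\overline{\Sigma_{s,i}}$ into a single diagonal extraction at the level of initial data (the paper instead first treats $u_0\in\Sigma_{s,i}$ and then handles $\partial\Sigma_{s,i}$ separately via continuity of the flow), and you obtain the endpoint bound \eqref{Control:phi} by weak compactness and lower semicontinuity of the $H^s$ norm, which is more careful than the paper's assertion of strong $L_t^{\infty}H_x^{s}$ convergence, since Proposition \ref{Prop:CvgLocSol} as stated only yields convergence in $H^{\sigma}$ for $\sigma<s$ (and, when iterated, one must also drop the regularity by a finite chain $s>\sigma_1>\cdots>\sigma_n>s_c$, which your weak-limit step tolerates). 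One side remark is incorrect, though inessential to your argument: membership in $\Sigma_{s,i}$ is not determined by $\|u_0\|_{H^s}$ alone, because the sets $\Sigma_{s,i,j}^N$ constrain the entire orbit and not just the datum, so $i:=\lceil\xi^{-1}(\|u_0\|_{H^s})\rceil$ need not work; what your proof actually uses, and all that is needed, is that some $i$ exists with $u_0\in\overline{\Sigma_{s,i}}$.
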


\begin{proof}[Proof of Proposition \ref{prop phi control}]
For $u_0 \in \Sigma_s \cap \supp (\mu)$\footnote{Recall that $\mu(\Sigma_s)=1$.} we have
\begin{enumerate}
\item
$u_0 \in H^{\beta}$ since $\supp (\mu)\subset H^\beta$;
\item
there exists $i \geq 1$ such that $u_0 \in \overline{\Sigma_{s,i}}$.
\end{enumerate}
First, assume $u_0 \in \Sigma_{s,i}$. By construction, there exists $N_k \to \infty$ such that 
\begin{align*}
u_0 = \lim_{N_k \to \infty} u^{N_k}, \quad u^{N_k} \in \Sigma_{s,i}^{N_k}.
\end{align*}
For these $u^{N_k}$ we have by \eqref{Est:Bourg-type} that for all $t$
\begin{align*}
\norm{\phi_t^N u^{N_k}}_{H^s} \leq 2 \xi(1+i+\ln(1+|t|)) .
\end{align*}
Then at $t=0$
\begin{align*}
\norm{u^{N_k}}_{H^s} \leq 2 \xi(1+i).
\end{align*}
Passing to the limit $N^k\to \infty$, we obtain
\begin{align*}
\norm{u_0}_{H^s} \leq 2 \xi(1+i).
\end{align*}
Now let $T > 0$, $b = 2 \xi(1+i+\ln(1+|T|))$ and $R = b+1$, then $u^{N_k}, u_0 \in B_R (H^s)$. Let $T_R$ be the local time of existence of $B_R$. $\phi_t u_0$ and $\phi_t^{N_k} u_0^{N_k}$ exist for $\abs{t} \leq T_R$. As a consequence of Proposition \ref{Prop:CvgLocSol}, for $s_c<s \leq \beta -$, we have 
\begin{align*}
\norm{\phi_t u_0 - \phi_t^{N_k} u_0^{N_k}}_{L_t^{\infty} H_x^s} \to 0  \text{ as } k\to\infty.
\end{align*}
Next, let us write
\begin{align*}
\norm{\phi_t u_0}_{H^s} & = \norm{\phi_t u_0 - \phi_t^{N_k} u^{N_k} + \phi_t^{N_k} u^{N_k}}_{H^s}  \leq  \norm{\phi_t u_0 - \phi_t^{N_k} u^{N_k} }_{H^s} + \norm{ \phi_t^{N_k} u^{N_k}}_{H^s} \leq o(1) + b.
\end{align*}
Passing to the limit $N_k \to \infty$
we obtain
\begin{align*}
\norm{\phi_t u_0}_{H^s} \leq b \leq R,
\end{align*}
since $\phi_t$ stays in $B_R$ after time $T_R$. We can then iterate on intervals $[n T_R, (n+1) T_R]$ until `consuming' $T$. But $T$ is arbitrary, the we obtain global existence of $\phi_tu_0.$

In order to obtain the control \eqref{Control:phi}, let us use the estimate \eqref{Est:Bourg-type}:
$\norm{\phi_t^{N_k} u^{N_k}}_{H^s}  \leq 2\xi(1+i+\ln(1 + \abs{t}))$ for any $t$. Since $\phi_t u_0$ is well defined and $\phi_t^{N_k} u^{N_k} \to \phi_t u_0$ in $H^s$, we obtain \eqref{Control:phi} for any $u_0\in\Sigma_{s,i}.$

 Now, let $u_0 \in \partial  \Sigma_{s,i}$, there exists $u_0^k \in \Sigma_{s,i}$, $u_0^k \to u_0$ in $H^s$. We know that there is an $R$, such that $u_0,\ u_0^k \in B_R (H^s)$. Let $T_R$ to be the time existence for $B_R$. We have already showed that for $\abs{t} \leq T_R$
\begin{align*}
\norm{\phi_t u_0^k}_{H^s} \leq 2\xi(1+i+\ln(1 + \abs{t})) .
\end{align*} 
Passing to the limit $k \to \infty$, we arrive at
\begin{align*}
\norm{\phi_t u_0}_{H^s} \leq 2\xi(1+i+\ln(1 + \abs{t})).
\end{align*}
Then we use a standard iteration to finish the proof of Proposition \ref{prop phi control}.
\end{proof}
\begin{rmq}
From the proof of Proposition \ref{prop phi control} above, we have that for any $i\geq 1,$ any $u_0\in \Sigma_r^i,$ any $t\in \R$,
\begin{align}\label{Conv_phit-phitN}
\lim_{N\to\infty}\|\phi^tu_0-\phi_N^{t}u_{0,N}\|_{H^r}=0,
\end{align}
where $(u_{0,N})$ is a sequence in $\Sigma_{r,i}^N$ that converges to $u_0$ in $H^r.$
\end{rmq}

Now let us finish the construction of the statistical ensemble. Let $l=(l_k)$ be a sequence such that $\frac{1}{2} < l_k \nearrow \beta -$, and set
\begin{align*}
\Sigma = \cap_{k \geq 0} (\Sigma_{l_k} \cap \supp (\mu)).
\end{align*}

\subsection{Invariance}
We show the invariance of the statistical ensemble $\Sigma$ and the measure $\mu$.
\begin{rmq}
We have that $\phi_t$ is global on $\Sigma$ as a consequence of our analysis above.
\end{rmq}

\begin{prop}\label{Prop:Inv:Sig}
We have the following
\begin{enumerate}
\item
$\mu(\Sigma) =1$.
\item
For all $t$, $\phi_t \Sigma = \Sigma$.
\end{enumerate}
\end{prop}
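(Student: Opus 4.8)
The plan is to establish the two assertions of Proposition \ref{Prop:Inv:Sig} in the order stated, exploiting the work already done. For part (1), recall $\Sigma=\cap_{k\geq 0}(\Sigma_{l_k}\cap\supp(\mu))$ where $\frac12<l_k\nearrow\beta-$. By Proposition \ref{prop mu bdd}(2) we have $\mu(\Sigma_{l_k})=1$ for each $k$, and $\supp(\mu)$ is by definition a full-measure set, so each set in the countable intersection has full measure; hence $\mu(\Sigma)=1$ by countable subadditivity of the complement. This part is immediate.

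For part (2), the strategy is to show $\phi_t\Sigma\subset\Sigma$ for every $t\in\R$; applying this to $-t$ and using that $\phi_t$ is a bijection (it is a flow, global on $\Sigma$ by the remark preceding the proposition) then yields $\phi_t\Sigma=\Sigma$. So fix $u_0\in\Sigma$ and $t\in\R$. First, $u_0\in\supp(\mu)$ and, since $\mu$ is invariant under $\phi_t$ (this invariance of $\mu$ is part of what the theorem claims, but for $\phi_t^N$ it is Proposition \ref{prop mu^N}(2) and the limiting invariance of $\mu$ should be recorded here or is available from the construction), $\phi_t u_0\in\supp(\mu)$ as well: invariance of the measure implies invariance of its support under a homeomorphism. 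It remains to show $\phi_tu_0\in\Sigma_{l_k}$ for each $k$. Since $u_0\in\Sigma_{l_k}\cap\supp(\mu)$, Proposition \ref{prop phi control} gives that $\phi_\tau u_0$ is global with $\norm{\phi_\tau u_0}_{H^{l_k}}\leq 2\xi(1+i_k+\ln(1+|\tau|))$ for all $\tau$, where $i_k$ depends on $\norm{u_0}_{H^{l_k}}$. The key point is then to realize $\phi_t u_0$ as a limit of points in $\Sigma_{l_k}^{N}$.

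The heart of the matter — and the step I expect to be the main obstacle — is producing, for $v_0:=\phi_t u_0$, a sequence $v_0^{N}\in\Sigma_{l_k}^{N}$ with $v_0^{N}\to v_0$ in $H^{l_k}$, which is exactly what membership in $\Sigma_{l_k}$ requires. By definition of $\Sigma_{l_k}$, since $u_0\in\Sigma_{l_k}$ we have $u_0^{N_j}\to u_0$ with $u_0^{N_j}\in\Sigma_{l_k}^{N_j}$. Now the finite-dimensional sets $\Sigma_{s,i}^N$ are built (in Proposition \ref{prop growth}) as intersections of preimages $\phi_{-kT_R}^N(B_{s,i,j}^N)$ of $\phi^N$, and one checks from that construction that $\phi_{T_R}^N\Sigma_{s,i,j}^N\subset\Sigma_{s,i,j-1}^N$ (shifting the chain of preimages), so $\phi^N$ maps the $\Sigma$-type sets essentially into themselves up to the index bookkeeping; more carefully, for $t$ with $|t|$ bounded one stays inside a set of the form $\Sigma_{l_k,i'}^{N}$ for a slightly larger $i'$. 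Thus $\phi_t^{N_j} u_0^{N_j}\in\Sigma_{l_k,i'}^{N_j}$ for $j$ large, and this sequence is the desired approximating sequence provided it converges to $\phi_t u_0$ in $H^{l_k}$. That convergence is supplied by \eqref{Conv_phit-phitN} in the remark following Proposition \ref{prop phi control} (or directly by Proposition \ref{Prop:CvgLocSol} iterated over a partition of $[0,t]$ into intervals of length $T_R$, each handled by the local theory since the $H^{l_k}$-norms stay bounded along the flow by the control just quoted). Hence $\phi_t u_0\in\Sigma_{l_k}$ for every $k$, so $\phi_t u_0\in\Sigma$, giving $\phi_t\Sigma\subset\Sigma$ and, by the bijection argument, $\phi_t\Sigma=\Sigma$. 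The delicate bookkeeping is tracking how the index $i$ in $\Sigma_{s,i}^N$ degrades under the flow and ensuring it stays finite on any bounded time interval, together with checking that the convergence \eqref{Conv_phit-phitN} can be iterated past a single local-existence window — both are routine given Theorem \ref{thm LWP} and Proposition \ref{Prop:CvgLocSol} but require care.
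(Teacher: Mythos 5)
Part (1) is fine and matches the paper. The problem is in part (2), at the step you yourself flag as "routine bookkeeping": the claim that for bounded $t$ the flow maps $\Sigma_{l_k,i}^{N}$ into a set of the form $\Sigma_{l_k,i'}^{N}$ \emph{at the same regularity} $l_k$ with a slightly larger index $i'$. This is exactly the non-routine point, and as stated it fails. Membership in $\Sigma_{s,i,j}^N$ requires $\|\phi^N_{kT_R}u_0\|_{H^s}\leq \xi(i+j)$ at the discrete times $kT_R$, whereas the only available control along the flow is $\|\phi^N_{\tau}u_0\|_{H^s}\leq 2\xi(i+j)$, with the factor $2$ coming from the local theory bound \eqref{Local:increase}. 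To re-enter a set $\Sigma_{s,i',j}^N$ you would need $2\xi(i+i_1+j)\leq\xi(i'+j)$ uniformly in $j$, which is impossible for a concave increasing $\xi$ (take $\xi=\ln$: $2\ln(i+i_1+j)\leq\ln(i'+j)$ fails for large $j$ no matter how large $i'$ is). Your auxiliary claim $\phi^N_{T_R}\Sigma_{s,i,j}^N\subset\Sigma_{s,i,j-1}^N$ is also false for the same reason: shifting the chain of preimages leaves you in balls of radius $\xi(i+j)$, not $\xi(i+j-1)$.

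The paper's Lemma \ref{Lem:Inv} is designed precisely to get around this: it shows $\phi^N_t\Sigma_{s,i}^N\subset\Sigma_{s_1,i+i_1}^N$ only for a \emph{strictly smaller} regularity $s_1\in(s_c,s)$, by interpolating between the conserved $L^2$ norm (which does not grow) and the $H^s$ control; the interpolation exponent $\theta<1$ is what absorbs the factor $2^\theta<2$ once $i_1$ is large. This loss of regularity under the flow is the reason $\Sigma$ is defined as an intersection over a sequence $l_k\nearrow\beta-$: one concludes $\phi_t u_0\in\Sigma_{l_{k'}}$ for every $l_{k'}<l_k$, and invariance of the full intersection follows because every $l_{k'}$ is exceeded by some $l_k$. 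So the architecture of your argument (show $\phi_t\Sigma\subset\Sigma$, then use the flow property for equality) is the right one, but the central lemma you invoke needs to be replaced by the regularity-losing version. A secondary omission: $\Sigma_{l_k}=\cup_i\overline{\Sigma_{l_k,i}}$ involves closures, so after treating $u_0\in\Sigma_{l_k,i}$ via approximating sequences you must handle $u_0\in\partial\Sigma_{l_k,i}$ separately by continuity of $\phi_t$, as the paper does.
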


Before proving Proposition \ref{Prop:Inv:Sig}, let us first prepare the following lemma.
\begin{lem}\label{Lem:Inv}
Let $s_c < s \leq \beta -$, for any $s_1 \in (s_c , s)$ and any $t$, there exists $i_1$, such that for any $i$, if $u_0 \in \Sigma_{s,i}^N$, then $\phi_t^N u_0 \in \Sigma_{s_1 , i+i_1}^N$.
\end{lem}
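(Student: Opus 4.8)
The plan is to transfer the Bourgain-type control \eqref{Est:Bourg-type} from the time-$0$ set $\Sigma_{s,i}^N$ to its image under $\phi_t^N$, exploiting the fact that this control is at a \emph{higher} regularity $s$ than the target regularity $s_1$, which gives us room to absorb a finite shift in the index $i$. First I would fix $t$ and, without loss of generality (by the time-reversal already used in Proposition \ref{prop growth}), take $t\ge 0$. Write $t=kT_R+\tau$ with $k\in\N$, $\tau\in[0,T_R]$, where $T_R$ is the local existence time on balls of radius $R\sim\xi(i+j)$ from Theorem \ref{thm LWP}. The point is that $\phi_t^N u_0$ for $u_0\in\Sigma_{s,i}^N$ already satisfies $\|\phi_\sigma^N u_0\|_{H^s}\le 2\xi(1+i+\ln(1+|\sigma|))$ for all $\sigma$ by the already-established part (2) of Proposition \ref{prop growth}; in particular it stays in a bounded subset of $H^s\hookrightarrow H^{s_1}$.

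The key step is then to re-run the construction of the sets $\Sigma_{s_1,\cdot,j}^N$ but starting the dynamics at the point $v_0:=\phi_t^N u_0$ rather than at a generic point of the support of $\mu^N$. Concretely, I would show that $v_0$ lies in $\cap_{k'=0}^{[e^j/T'_R]}\phi_{-k'T'_R}^N(B_{s_1,i+i_1,j}^N)$ for an appropriate shift $i_1=i_1(t,s,s_1)$ and every $j$, which is exactly the definition of $\Sigma_{s_1,i+i_1,j}^N$, hence of $\Sigma_{s_1,i+i_1}^N$ after intersecting over $j$. The shift $i_1$ is determined as follows: for $\sigma$ ranging over $[0,e^j]$ we have $\|\phi_\sigma^N v_0\|_{H^{s_1}}\le \|\phi_{t+\sigma}^N u_0\|_{H^{s}}\le 2\xi(1+i+\ln(1+t+\sigma))\le 2\xi(1+i+\ln(1+t)+\ln(1+\sigma))$, and using the concavity/subadditivity properties of $\xi$ together with the freedom in choosing $i_1$ so that $\xi(1+i+\ln(1+t)+j)\le \xi(i+i_1+j)$ for all $j\ge1$ — which holds as soon as $i_1\ge 1+\ln(1+t)$ up to a harmless additive constant coming from monotonicity of $\xi$ — one gets $\|\phi_\sigma^N v_0\|_{H^{s_1}}\le 2\xi(i+i_1+j)$ for $\sigma\le e^j$. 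Since $T'_R\ge c R^{-2q\beta}$ with $R\sim\xi(i+i_1+j)$, the point $v_0$ survives long enough at each scale, so the inclusion $\phi_t^N u_0\in\Sigma_{s_1,i+i_1}^N$ follows exactly as in the proof of Proposition \ref{prop growth}(2).

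The main obstacle, and the only place requiring genuine care, is the bookkeeping with $\xi$: one must verify that a single finite $i_1$, depending on $t$ but \emph{not} on $i$ or $j$, suffices to absorb the $\ln(1+t)$ time-shift uniformly in $j$. This is where concavity of $\xi$ is essential — it guarantees $\xi(a+b)\le \xi(a)+\xi(b)$ type control is not needed, but rather the weaker monotone comparison $\xi(x+c)\le \xi(x)+(\text{const})$ is false in general, so instead I would absorb the shift \emph{inside} the argument: choose $i_1$ with $1+i+\ln(1+t)+j \le i+i_1+j$ for all $j$, i.e. simply $i_1\ge 1+\ln(1+t)$, and then monotonicity of $\xi$ does the rest with no loss. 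One also needs that the local time $T'_R$ at regularity $s_1$ is comparable (up to constants absorbed in $c$) to the one at regularity $s$, which is part of Theorem \ref{thm LWP} applied at level $s_1$. Everything else is a verbatim repetition of the argument already carried out for Proposition \ref{prop growth}, so I would phrase the proof as ``following the proof of Proposition \ref{prop growth} with the starting point $\phi_t^N u_0$ in place of a generic point'' and only spell out the index arithmetic.
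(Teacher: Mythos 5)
Your proposal correctly identifies the overall shape of the argument (restart the grid construction at $v_0=\phi_t^N u_0$, absorb the time shift $\ln(1+t)$ into the index via $i_1\gtrsim \ln(1+t)$, and use that the local existence time at level $s_1$ has the same polynomial form), and those parts match the paper. But there is a genuine gap at the decisive step. Membership in $\Sigma_{s_1,i+i_1,j}^N=\cap_{k'}\phi^N_{-k'T_{R'}}(B^N_{s_1,i+i_1,j})$ requires $\|\phi^N_{k'T_{R'}}v_0\|_{H^{s_1}}\leq \xi(i+i_1+j)$ at the new grid times — with no factor $2$, since $B^N_{s_1,i+i_1,j}=\{\|u\|_{H^{s_1}}\leq\xi(i+i_1+j)\}$. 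The new grid times $k'T_{R'}+t$ are generic (non-grid) times for the original flow, so the only bound you inherit from $u_0\in\Sigma_{s,i}^N$ is the one of \eqref{Est:Sigma}, which carries the local-theory doubling constant $\widetilde C=2$: you get $\|\phi^N_\sigma v_0\|_{H^{s}}\leq 2\xi(i+i_1+j)$, hence via $H^s\hookrightarrow H^{s_1}$ only $2\xi(i+i_1+j)$, which does not place $\phi^N_{k'T_{R'}}v_0$ in $B^N_{s_1,i+i_1,j}$. Nor can this be repaired by enlarging $i_1$: since $\xi$ is concave and increasing, $\xi(x+i_1)\leq\xi(x)+\xi(i_1)$, so $\xi(x+i_1)/\xi(x)\to 1$ as $x\to\infty$ and no fixed index shift turns $2\xi(\cdot)$ into $\xi(\cdot)$ uniformly in $j$.

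The paper closes exactly this gap by a step your proposal omits entirely: it uses the $L^2$ conservation law, $\|\phi^N_{t+t_1}u_0\|_{L^2}\leq 2\xi(1+i)$ uniformly in $t_1$ and $j$, and interpolates
\begin{align}
\|\phi^N_{t+t_1}u_0\|_{H^{s_1}}\leq \|\phi^N_{t+t_1}u_0\|_{L^2}^{1-\theta}\,\|\phi^N_{t+t_1}u_0\|_{H^{s}}^{\theta}\leq 2\,[\xi(1+i)]^{1-\theta}\,[\xi(i+j+i_1)]^{\theta},
\end{align}
so that the offending constant is attached to the $j$-independent factor $[\xi(1+i)]^{1-\theta}$ and can be beaten by $[\xi(i+j+i_1)]^{1-\theta}$ for $i_1$ large. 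This is the only place where the strict inequality $s_1<s$ (giving $\theta<1$) is used; the fact that your argument would apply verbatim with $s_1=s$, whereas the lemma deliberately restricts to $s_1\in(s_c,s)$ open, is the telltale sign that the interpolation ingredient is missing.
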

\begin{proof}[Proof of Lemma \ref{Lem:Inv}]
Assume $t \geq 0$. If $u_0 \in \Sigma_{s, i}^N$, then for any $j \geq 1$ and any $t_1 \leq e^j$
\begin{align*}
\norm{\phi_{t_1}^N u_0}_{H^s} \leq 2\xi(i+j).
\end{align*}
Now, take $i_1 (t)$ such that $e^j + t \leq e^{j+ i_1}$ for any $j$ and any $t_1+t \leq e^{j+i_1}$ 
\begin{align*}
\norm{\phi_{t_1 + t}^N u_0}_{H^s} \leq 2\xi(i + j +i_1) .
\end{align*}
Now if $t_1 + t \leq e^{j +i_1}$, then $t_1 \leq e^{j + i_1} -t$. On the other hand, by definition of $i_1$, $e^j + t \leq e^{j+ i_1}$, therefore $e^{j+ i_1} - t \geq e^j$. In particular, for any $t_1 \leq e^j$
\begin{align*}
\norm{\phi_{t_1 + t}^N u_0}_{H^s} \leq 2 \xi(i +j + i_1) .
\end{align*}

For $u_0 \in \Sigma_{s,i}^N$ we use \eqref{Est:Bourg-type} to obtain
\begin{align*}
\norm{\phi_t^N u_0}_{H^s} & \leq 2\xi(1+i+\ln (1 + \abs{t}))\\
\norm{u_0}_{H^s} & \leq 2\xi(1+i).
\end{align*}
In particular
\begin{align*}
\norm{u_0}_{L^2} \leq 2\xi(1+i).
\end{align*}
By the $L^2$ conservation law, we obtain
\begin{align*}
\norm{\phi_{t+ t_1} ^N u_0}_{L^2} \leq 2\xi(1+i).
\end{align*}
For $s_1 \in (s_c , s)$, there exists $\theta\in (0,1)$ such that
\begin{align*}
\norm{\phi^N_{t+ t_1} u_0}_{H^{s_1}} \leq \norm{\phi^N_{t+ t_1} u_0}_{L^2}^{1-\theta} \norm{\phi^N_{t+ t_1} u_0}_{H^s}^{\theta} \leq 2^{1-\theta} [\xi(1+i)]^{(1-\theta)} 2^{\theta}[\xi(i + j + i_1)]^{\theta}.
\end{align*}
Since $\xi$ is increasing, let us take $i_1$ is big enough and then  obtain $\norm{\phi^N_{t+ t_1} u_0}_{H^{s_1}} \leq \xi(i+j+i_1)$ for all $t_1 \leq e^j.$
Thus $\phi^N_{t+ t_1} u_0 \in B_{s, i+i_1,j}$ for all $t_1 \leq e^j$, for all $j\geq 1$. 
Therefore $\phi^N_t u_0 \in \Sigma_{s , i+i_1}^N$. Hence we prove Lemma \ref{Lem:Inv}.
\end{proof}

Now we are ready to prove Proposition \ref{Prop:Inv:Sig}.
\begin{proof}[Proof of Proposition \ref{Prop:Inv:Sig}]
\begin{enumerate}
\item 
Since any $\Sigma_r$ is of full $\mu-$measure and the intersection is countable, we obtain the first statement.

\item
To prove the second statement, let us take $u_0\in \Sigma$, then $u_0$ belong to each $\Sigma_r$, $r\in l.$ First, consider $u_0\in \Sigma_{i_r}$. There is $i\geq 1$ such that $u_0\in \Sigma_{i_r}$, therefore $u_0$ is the limit of a sequence $(u_{0}^N)$ such that $u_{0}^N\in \Sigma_{i,r}^N$ for every $N$. Now thanks to Lemma \ref{Lem:Inv}, there is $i_1:=i_1(t)$ such that $\phi^N_t(u_{0}^N)\in \Sigma_{i+i_1,r_1}^{N}$.
Using the convergence \eqref{Conv_phit-phitN}, we see that $\phi^t(u_0)\in \Sigma_{i+i_1,r_1}$. Now if $u_0\in \partial\Sigma_{i,r}$, there is $(u_0^k)_k\subset\Sigma_{i,r}$ that converges to $u_0$ in $H^r$.  Since we showed that $\phi_t\Sigma_{i,r}\subset\Sigma_{i+i_1,r_1}$ and $\phi_t(\cdot) $ is continuous, we see that
\begin{align*}
\phi_t(u_0)=\lim_{k}\phi_t(u_0^k)\in \overline{\Sigma_{i+i_1,r_1}}.
\end{align*} 
We conclude that 
\begin{align*}
\phi_t\overline{\Sigma_{i,r}}\subset\overline{\Sigma_{i+i_1,r_1}}\subset \Sigma_{r_1}.
\end{align*}
It follows that $\phi_t\Sigma\subset\Sigma.$

Now, let $u$ be in $\Sigma$. Since $\phi_t$ is well-defined on $\Sigma$ we can set  $u_0=\phi_{-t}u$, we then have $u=\phi_tu_0$, hence $\Sigma\subset\phi_{t}\Sigma.$ 
 \end{enumerate}
 That finishes the proof of Proposition \ref{Prop:Inv:Sig}.
\end{proof}

Now we are left to verify the following property.
\begin{thm}\label{thm mu invariant}
The measure $\mu$ is invariant under $\phi_t$.
\end{thm}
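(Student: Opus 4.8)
The plan is to transfer invariance from the finite‑dimensional flows $\phi^N_t$ (which we know preserve $\mu^N$) to the limiting flow $\phi_t$ on $\Sigma$, using the weak convergence $\mu^{N_k}\rightharpoonup\mu$ on $H^s$ together with the strong convergence $\phi^{N_k}_t u^{N_k}_0\to\phi_t u_0$ established in Proposition \ref{Prop:CvgLocSol} and in \eqref{Conv_phit-phitN}. Concretely, it suffices to show that for every bounded Lipschitz $f:H^s\to\R$ and every $t$,
\begin{align*}
\int_{H^s} f(\phi_t u_0)\,\mu(du_0)=\int_{H^s} f(u_0)\,\mu(du_0),
\end{align*}
since bounded Lipschitz functions determine a Borel probability measure on the Polish space $H^s$. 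The right‑hand side equals $\lim_k\int f\,d\mu^{N_k}$ by weak convergence, and $\int f\,d\mu^{N_k}=\int f(\phi^{N_k}_t u_0)\,\mu^{N_k}(du_0)$ by invariance of $\mu^{N_k}$ under $\phi^{N_k}_t$. So the whole matter reduces to proving
\begin{align*}
\lim_{k\to\infty}\int_{H^s} f(\phi^{N_k}_t u_0)\,\mu^{N_k}(du_0)=\int_{H^s} f(\phi_t u_0)\,\mu(du_0).
\end{align*}

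To handle this last limit I would localize on the sets $\Sigma^{N}_{s,i}$, whose complements have $\mu^N$‑mass $\le Ce^{-2i}$ uniformly in $N$ by Proposition \ref{prop growth}, and on large balls $B_R(H^s)$, whose complements have small $\mu^N$‑mass uniformly in $N$ by \eqref{Est:M:N} and Chebyshev. On the good set $\Sigma^{N}_{s,i}\cap B_R(H^s)$ the local time of existence $T_R$ is bounded below uniformly, and the map $u_0\mapsto \phi^{N}_t u_0$ enjoys the uniform convergence toward $\phi_t u_0$ supplied by Proposition \ref{Prop:CvgLocSol} (iterated over finitely many time steps of length $T_R$ to reach the fixed time $t$). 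Thus $u_0\mapsto f(\phi^{N_k}_t u_0)$ converges uniformly on $\Sigma^{N_k}_{s,i}\cap B_R$ to $u_0\mapsto f(\phi_t u_0)$, and since the latter is a bounded (Lipschitz, by the continuous dependence in Proposition \ref{prop phi control} together with $T_R$ iteration) function on $B_R\cap\Sigma_{s,i}$, one more application of weak convergence $\mu^{N_k}\rightharpoonup\mu$ gives convergence of the localized integrals. The errors from discarding $\Sigma^{N}_{s,i}{}^c$ and $B_R^c$ are $O(e^{-2i})$ and $O(R^{-2})$ respectively, uniformly in $N$ and hence in the limit, so letting $i\to\infty$ and $R\to\infty$ after $k\to\infty$ closes the argument.

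A couple of technical points need care. First, $f(\phi_t\,\cdot)$ is only defined and continuous on $\cup_i\overline{\Sigma_{s,i}}=\Sigma_s$, which is a set of full $\mu$‑measure but not all of $H^s$; since we only integrate against $\mu$ and $\mu^{N_k}$, and the latter concentrate (up to $O(e^{-2i})$) on $\Sigma^{N_k}_{s,i}\subset\Sigma_{s,i}$, this causes no difficulty provided we always work inside the localizing sets before passing to the limit. Second, to apply weak convergence on $H^s$ one must know $u_0\mapsto f(\phi_t u_0)\,\mathbb 1_{B_R}$ is (a bounded function which is) continuous $\mu$‑a.e. and that $\mu(\partial B_R)=0$ for a.e. $R$; the former follows from Proposition \ref{prop phi control} and the latter from absolute continuity of $\|u\|_{L^2}^2$ under $\mu$ (or simply choosing $R$ outside a null set). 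The main obstacle, and the reason the localization is unavoidable, is precisely that the global flow $\phi_t$ has no uniform (in the data) modulus of continuity or local existence time on all of $H^s$; the convergence $\phi^{N_k}_t u^{N_k}_0\to\phi_t u_0$ is only uniform on balls once we also know a priori bounds on the iterates, which is exactly what the Bourgain‑type sets $\Sigma^{N}_{s,i}$ provide. Once the localization is in place, everything reduces to the soft measure‑theoretic facts (weak convergence, Portmanteau, dominated convergence) already used repeatedly above.
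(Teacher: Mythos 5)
Your proof is correct and rests on the same two pillars as the paper's: the invariance of $\mu^{N_k}$ under $\phi^{N_k}_t$ and the convergence $\phi^{N_k}_t u_0^{N_k}\to\phi_t u_0$ from Proposition \ref{Prop:CvgLocSol}, combined with $\mu^{N_k}\rightharpoonup\mu$. The execution differs in a way worth noting. The paper first reduces to a small time interval $[-\tau,\tau]$ and then argues at the level of laws: it realizes the weak convergence as an almost sure convergence of random variables, deduces $\mathcal{L}(\phi^{N_k}_t u_0^{N_k})\to\mathcal{L}(\phi_t u_0)$, and identifies both sides with $\mu^{N_k}$ and $\mu$. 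This is very compact but leaves implicit exactly the points you make explicit: that $\phi_t$ is only defined and continuous on the full-measure set $\Sigma_s$, and that the flow convergence is uniform only on the localizing sets $\Sigma^{N}_{s,i}\cap B_R$ where the a priori bounds \eqref{Est:Bourg-type} permit iteration of the local existence time. Your test-function formulation with the $O(e^{-2i})+O(R^{-2})$ error budget is essentially the same splitting the authors use for the inviscid-limit invariance in Proposition \ref{prop mu^N} (the terms $\mathfrak{a}_1,\mathfrak{a}_2,\mathfrak{b}$ there), so your route is arguably the more self-contained one; what it costs is the extra bookkeeping to ensure that the indicator of the $k$-dependent good set does not spoil the passage to the limit, which you correctly flag and which is handled by discarding the bad sets on both sides before comparing. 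One small point to keep in mind: the uniform convergence on $B_R\cap\Sigma^{N_k}_{s,i}$ holds in $H^{\sigma}$ for $\sigma<s$ (Proposition \ref{Prop:CvgLocSol} loses an $\varepsilon$ of regularity), so the invariance is obtained for $\mu$ viewed on $H^{\sigma}$, $\sigma<\beta$ — exactly as in the paper.
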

\begin{proof}[Proof of Theorem \ref{thm mu invariant}]
We claim that it also suffices to show the invariance only on a fixed interval $[-\tau,\tau],$ where $\tau>0$ can be as small as we want. Indeed for $\tau\leq t\leq 2\tau$, one has 
\begin{align*}
\mu(\phi_{-t}S)=\mu(\phi_{-\tau}\phi_{t-\tau}S)=\mu(\phi_{t-\tau}S)=\mu(S)
\end{align*}
(using that $0\leq t-\tau\leq\tau$), and for greater values of $t$ we can iterate. A same argument works for negative values of $t$.

Now, it was shown in Proposition \ref{Prop:CvgLocSol} that for small times we have that 
\begin{align*}
\phi_tu_0=\lim_{k\to\infty}\phi_t^{N_k}u_0^{N_k} \quad \text{in $H^{\beta-}$}.
\end{align*}
Looking at $u_0$ and $u_0^{N_k}$ as random variables in $H^{\beta-}$, we have that the convergence above is then of course an almost sure one. Therefore 
\begin{align*}
\lim_{k\to\infty}\mathcal{L}(\phi_t^{N_k}u_0^{N_k})=\mathcal{L}(\phi_tu_0),
\end{align*}
where $\mathcal{L}(X)$ stands for the law of $X$. Now,  since $\mu^{N_k}=\mathcal{L}(\phi_t^{N_k}u_0^{N_k})$ and $\mu^{N_k}\rightharpoonup\mu$, we obtain that $\mu =\mathcal{L}(\phi_tu_0)$. Since $t$ is arbitrary (in some interval $[-\tau,\tau]$), we obtain the invariance. 

Hence we finish the proof of Theorem \ref{thm mu invariant}.
\end{proof}

\section{Almost sure GWP: All supercritical nonlinearities with critical and supercritical regularities}\label{sec supercritical}
In this section we consider all supercritical powers of the nonlinearity (without assuming $q \in \N^+$) and consider the data in critical and supercritical ranges of regularity. Notice that no deterministic GWP is known in the context of this range of powers even for smooth data. Recall the critical power for \eqref{NLS} is $s_c=\frac{3}{2}-\frac{1}{q}$ in \eqref{eq sc}. 

The rest of this section is dedicated to the proof of Theorem \ref{main thm2}, which is based on a compactness argument (see e.g. \cite{KS04,KS12}).

\subsection{Galerkin approximation and fluctuation-dissipation solutions}
We first  take in \eqref{SNLS} a dissipation defined as follows:
\begin{align}
\mathcal{L}(u)=\left[(1-\Delta)^{\beta-1}u+2e^{2\|\Pi^N|u|^{2q}u\|_{L^2}^2}\Pi^N|u|^{2q}u+\Pi^Ne^{|u|^2}u\right],\label{Newdiss}
\end{align}
where $\beta\in (1,s_c]$.
The fluctuation-dissipation equation considered here is hence
\begin{align}
du =\i(\Delta u-\Pi^Nu^{2q}u)-\al\mathcal{L}(u)+\sqrt{\al} \,dW^N. \label{Eq:Newdiss}
\end{align}
where $W^N$ is the Brownian motion defined in \eqref{Noise} with the assumption \eqref{Noise:Cond}.

 We still denote by $\mathcal{M}(u)$ and $\mathcal{E}(u)$ the dissipation rates of the mass and the energy under \eqref{Eq:Newdiss}. Let us state few properties satisfied by $\mathcal{M}(u)$ and $\mathcal{E}(u)$. First, we have the formula
\begin{align}
\mathcal{M}(u)=\|u\|_{H^{\beta-1}}^2+2e^{2\|\Pi^N|u|^{2q}u\|_{L^2}^2}\|u\|_{L^{2q+2}}^{2q+2}+\sum_{p\in\N}\frac{\|u\|_{L^{2p+2}}^{2p+2}}{p!}.
\end{align}
Second, the following lower bound holds for $\mathcal{E}(u)$
\begin{align*}
\mathcal{E}(u) &\geq\|u\|_{H^\beta}^2+2e^{2\|\Pi^N|u|^{2q}u\|_{L^2}^2}\langle |\nabla u|^2,|u|^{2q}\rangle+\sum_{p\in\N}\frac{\langle |\nabla u|^2,|u|^{2p}\rangle}{p!}\\
& \quad -\|u\|_{H^{2\beta-2}}\|\Pi^N|u|^{2q}u\|_{L^2}+2e^{2\|\Pi^N|u|^{2q}u\|_{L^2}^2}\|\Pi^N|u|^{2q}u\|_{L^2}^2-\sum_{p\in\N}\frac{\|\Pi^N|u|^{2q}u\|_{L^{2}}\|u\|_{L^{4p+2}}^{2p+1}}{p!}.
\end{align*}
In order to obtain a non-negative lower bound, we write the following estimates:
\begin{align*}
\|u\|_{H^{2\beta-2}}\|\Pi^N|u|^{2q}u\|_{L^2} &\leq \frac{1}{2}\|u\|_{H^{\beta}}^2+\frac{1}{2}\|\Pi^N|u|^{2q}u\|_{L^2}^2 \leq \frac{1}{2}\|u\|_{H^{\beta}}^2+\frac{1}{2}e^{2\|\Pi^N|u|^{2q}u\|_{L^2}^2}\|\Pi^N|u|^{2q}u\|_{L^2}^2,
\end{align*}
and
\begin{align*}
\sum_{p\in\N}\frac{\|\Pi^N|u|^{2q}u\|_{L^2}\|u\|_{L^{4p+2}}^{2p+1}}{p!} &\leq \sum_{p\in\N}\frac{\frac{2^{2p+2}}{2p+2}\|\Pi^N|u|^{2q}u\|_{L^2}^{2p+2}+\frac{1}{2^{\frac{2p+2}{2p+1}}}\frac{2p+1}{2p+2}\|u\|_{L^{4p+2}}^{2p+2}}{p!}\\
&\leq \|\Pi^N|u|^{2q}u\|_{L^{2}}^2e^{2\|\Pi^N|u|^{2q}u\|_{L^2}^2}+\frac{1}{2}\sum_{p\in\N}\frac{\langle |\nabla u|^2,|u|^{2p} \rangle}{p!}.
\end{align*}
Gathering all this, we arrive at the desired lower bound:
\begin{align*}
\mathcal{E}(u) &\geq\frac{1}{2}\left[\|u\|_{H^\beta}^2+3e^{2\|\Pi^N|u|^{2q}u\|_{L^2}^2}\langle|\nabla u|^2,|u|^{2q}\rangle+e^{2\|\Pi^N|u|^{2q}u\|_{L^2}^2}\|\Pi^N|u|^{2q}u\|_{L^2}^2+\sum_{p\in\N}\frac{\langle |\nabla u|^2,|u|^{2p}\rangle}{p!}\right].
\end{align*}
Let us make the following useful observations:
\begin{align*}
\sum_{p\in\N}\frac{\|u^{p+1}\|_{L^2}^2}{p!}&\leq \mathcal{M}(u);\\
\sum_{p\in\N}\frac{\|u^{p+1}\|_{\dot{H}^1}^2}{p!}=\sum_{p\in\N}\frac{\langle |\nabla u|^2,|u|^{2p}\rangle}{p!} &\leq 2\mathcal{E}(u).
\end{align*}
From these inequalities, we readily have the following
 \begin{align}
 \sum_{p\in\N}\frac{\|u^{p+1}\|_{{H}^1}^2}{p!} &\leq 2\mathcal{E}(u)+\mathcal{M}(u).\label{u^p:H^1}
 \end{align}
 Let us also notice the inequality
 \begin{align}
 \mathcal{M}(u)\lleq \mathcal{E}(u).\label{Mcal:leq:Ecal}
 \end{align}

We can perform similar computations and obtain the results of Section \ref{Sect:Galerk} where \eqref{SNLS} is replaced by \eqref{Eq:Newdiss}. Namely, after the inviscid limit $\al\to 0$, we obtain  a sequence of invariant measures $(\mu^N)$ associated to \eqref{NLS-projN} and satisfying
\begin{align}
\int_{L^2}\mathcal{M}(u)\mu^N(du) &=\frac{A_0^N}{2}, \notag\\
\int_{L^2}\mathcal{E}(u)\mu^N(du) &\leq C,\label{Est:E:Super}
\end{align}
where $C>0$ does not depend on $N$.
Then from \eqref{u^p:H^1}, we obtain
\begin{align}
 \int_{L^2}\sum_{p\in\N}\frac{\|u^{p+1}\|_{{H}^1}^2}{p!}\mu^N(du) &\leq C_1,\label{Exp:u^p:H^1}
 \end{align}
where $C_1$ does not depend on $N.$

Furthermore, we have the estimate
\begin{align}
\int_{L^2}E(u)\mathcal{E}(u)\mu^N(du)\leq C_2,\label{EEcal:Est}
\end{align}
where $C_2$ does not depend on $N.$ To see \eqref{EEcal:Est}, let us apply the It\^o formula to $E^2(u)$, where $u$ is the stationary solution to $\eqref{Eq:Newdiss}$: 
\begin{align*}
dE^2 &=2EE'(u)+\frac{\al}{2}\sum_{m=0}^N(2|a_m|^2E(u)E''(u,e_m,e_m)+2|E'(u,e_m)|^2)\\
&=-2E(u)\mathcal{E}(u)+\frac{\al}{2}\sum_{n=0}^N(2|a_m|^2E(u)E''(u,e_m,e_m)+2|E'(u,e_m)|^2)+2\sqrt{\al}\sum_{m=0}^N E(u)E'(u,a_me_m) \, d\beta_m.
\end{align*}
Now, we use the fact that $\|e_m\|_{L^p}$ is bounded for $p<3$ in \eqref{eq e_n bdd} and obtain
\begin{align*}
EE''(u,e_m,e_m)&= \langle-\Delta e_m,e_m\rangle E+\langle|u|^{2q},e_m^2\rangle E\leq (z_m^2 + \|u\|_{L^{6q+}}^{4q}+C)E,
\end{align*}
We then have
\begin{align*}
\sum_{m=0}^N|a_m|^2E(u)E''(u,e_m,e_m)&\leq (CA_0^N+A_1^N)E(u) +E(u)\mathcal{E}(u).
\end{align*}
Also
\begin{align*}
|E'(u,e_m)|^2=|\langle-\Delta u+|u|^{2q}u,e_m\rangle| &\leq 2|\langle-\Delta u,e_m\rangle|^2+2|\langle|u|^{2q}u,e_m\rangle|^2 \leq 2\|u\|_{\dot{H}^1}^2z_m^2+2\|\Pi^N |u|^{2q}u\|_{L^2}^2,
\end{align*}
therefore
\begin{align*}
\sum_{m=0}^N|a_m|^2|E'(u,e_m)|^2\leq A_1^NE(u)+A_0^N\|\Pi^N |u|^{2q}u\|_{L^2}^2\leq A_1^NE(u)+A_0^N\mathcal{E}(u).
\end{align*}
Overall, recalling that $A_0^N$ and $A_1^N$ are bounded in $N$, we arrive at
\begin{align*}
dE^2(u) +E(u)\mathcal{E}(u)\leq C(E(u)+\mathcal{E}(u))+2\sqrt{\al}\sum_{m=0}^N E(u)E'(u,a_me_m) \, d\beta_m,
\end{align*}
where $C$ does not depend on $(N,\al)$. We now integrate in $t$ and take the expectation with the use of invariance of the measure $\mu_{\alpha}^N$, we then obtain
\begin{align*}
\int_{L^2}E(u)\mathcal{E}(u)\mu_\al^N(du)\leq C_2,
\end{align*}
where $\mu^N_\al$ is the underlying stationary measure for \eqref{Eq:Newdiss} and $C_1$ does not depend on $(N,\al)$. Now passing to the limit $\al\to 0$, we arrive at \eqref{EEcal:Est}.
\subsection{Existence of global solutions}
For an arbitrary positive integer $k$, set the spaces
\begin{align*}
X_k &=L^2([0,k],H^\beta)\cap (H^1([0,k],H^{\beta-2})+H^{1}([0,k],L^2))=X_k^1+X_k^2;\\
Y_k &= L^2([0,k],H^{\beta-})\cap C([0,k],H^{(\beta-2)-}). 
\end{align*}
By classical Sobolev embedding we see that $H^1([0,k],H^{\beta-2})\subset C^{\frac{1}{2}}([0,k],H^{\beta-2})$ continuously, hence $X_k^1\subset C^{\frac{1}{2}}([0,k],H^{\beta-2}) $ continuously. We see also that $X_k^2\subset C^{\frac{1}{2}}([0,k],L^{2})$ continuously. Overall we have that $X_k^1$ and $X_k^2$ are compactly embedded in $C([0,k],H^{(\beta-2)-})$. Now  since $X_k^1$ and $X_k^2$ are compactly embedded in $L^2([0,k],H^{\beta-})$, we obtain that $X_k$ is compactly embedded in $Y_k$.

Denote by $\nu^N_k$ the distributions of the processes $(u^N(t))_{t\in [0,k)}$, and these are seen as random variables valued in $C([0,k],H^{(\beta-2)-})$. We see, using the invariance, the relation between $\mu^N$ and $\nu^N_k$: 
\begin{align}
\mu^N ={{\nu^N_k}_|}_{t=t_0}\quad \text{for any $t_0\in [0,k]$}.\label{Ms:Rest:ppt:N}
\end{align}

\begin{prop}\label{Prop:Tight}
We have that
\begin{align*}
\int_{X_k}\|u\|_{X_k}^2\nu_k^N(du)\leq C,
\end{align*}
where $C$ does not depend on $N$.
\end{prop}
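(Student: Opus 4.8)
The plan is to bound $\|u^N\|_{X_k}^2$ in expectation by decomposing the Duhamel formula for \eqref{NLS-projN} according to the natural splitting $X_k = X_k^1 + X_k^2$, and then to translate time-averaged expectations into integrals against the invariant measure $\mu^N$ via the relation \eqref{Ms:Rest:ppt:N}. Concretely, I would first write, for the stationary solution $u^N$ of \eqref{NLS-projN}, the equation $\partial_t u^N = \i\Delta u^N - \i\Pi^N|u^N|^{2q}u^N$. The term $\i\Delta u^N$ maps into $H^1([0,k],H^{\beta-2})$ with norm controlled by $\|u^N\|_{L^2([0,k],H^\beta)}$, so it lands in $X_k^1$; the nonlinear term $\Pi^N|u^N|^{2q}u^N$ I would place into $H^1([0,k],L^2)$, i.e.\ $X_k^2$, after controlling $\|\,|u^N|^{2q}u^N\|_{L^2}$ and its time derivative. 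This gives a pointwise-in-$\omega$ bound $\|u^N\|_{X_k}^2 \lesssim \|u^N\|_{L^2([0,k],H^\beta)}^2 + (\text{nonlinear contributions})$, and everything that appears is a time integral of a functional of $u^N(t)$.

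Next I would invoke stationarity: for any functional $G$, $\mathbb{E}\int_0^k G(u^N(t))\,dt = k\int_{L^2} G(v)\,\mu^N(dv)$ by \eqref{Ms:Rest:ppt:N}. Thus $\mathbb{E}\|u^N\|_{L^2([0,k],H^\beta)}^2 = k\int \|v\|_{H^\beta}^2\,\mu^N(dv)$, which is bounded uniformly in $N$ by \eqref{Est_bound:on:E} together with \eqref{Est:E:Super} (since $\|v\|_{H^\beta}^2 \le 2\mathcal{E}(v)$ in the present setting, up to the $e^{\rho}$ factor which here equals the exponential weights but is $\ge 1$). The nonlinear contributions reduce to bounding quantities like $\|\,|v|^{2q}v\|_{L^2}^2 = \|v\|_{L^{4q+2}}^{4q+2}$ and, for the time derivative of the nonlinearity, expressions of the form $\|\,|v|^{2q}\partial_t v\|_{L^2}^2$; using $\partial_t u^N = \i\Delta u^N - \i\Pi^N|u^N|^{2q}u^N$ these are dominated by combinations of $\|v\|_{H^\beta}^2$, powers $\|v\|_{L^{r}}^{r}$ and mixed terms $\langle |\nabla v|^2,|v|^{2p}\rangle$ — precisely the quantities controlled by $\mathcal{E}(v)$ and by the series estimate \eqref{u^p:H^1}, \eqref{Exp:u^p:H^1}. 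In each case I would use the exponential-weight structure of $\mathcal{M}$ and $\mathcal{E}$ (in particular the factor $e^{2\|\Pi^N|u|^{2q}u\|_{L^2}^2}$ and the Taylor series $\sum_p \|u\|_{L^{2p+2}}^{2p+2}/p!$) to absorb the super-algebraic growth: for instance $\|v\|_{L^{4q+2}}^{4q+2}$ and higher powers are controlled because the dissipation was designed so that $\mathcal{M}(v)$, hence $\mathcal{E}(v)$, dominates an exponential of the nonlinearity, and $E(v)\mathcal{E}(v)$ is integrable uniformly in $N$ by \eqref{EEcal:Est}.

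I would carry out the steps in this order: (i) write the Duhamel/derivative identity for the stationary process and split the right-hand side into the piece going into $X_k^1$ and the piece going into $X_k^2$; (ii) estimate each piece pointwise in $\omega$ by a finite sum of functionals $G_\ell(u^N(t))$; (iii) take $\mathbb{E}\int_0^k$, apply stationarity to replace time integrals by $k\int(\cdot)\,d\mu^N$; (iv) bound each $\int G_\ell\,d\mu^N$ using \eqref{Est:E:Super}, \eqref{Exp:u^p:H^1}, \eqref{EEcal:Est} and the elementary inequalities $\mathcal{M}(u)\lesssim\mathcal{E}(u)$ and $\|u^{q+1}\|_{H^1}^2 \lesssim \mathcal{E}(u)+\mathcal{M}(u)$, noting all constants are $N$-independent; (v) conclude $\int_{X_k}\|u\|_{X_k}^2\,\nu_k^N(du) = \mathbb{E}\|u^N\|_{X_k}^2 \le C$ with $C=C(k)$ independent of $N$.

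The main obstacle I expect is step (ii)–(iv) for the time-derivative of the nonlinear term placed in $X_k^2$: one needs $\partial_t(\Pi^N|u^N|^{2q}u^N)\in L^2([0,k],L^2)$ with a bound that, after integration against $\mu^N$, stays uniform in $N$. Expanding $\partial_t(|u|^{2q}u)$ by the chain rule produces terms $\sim |u|^{2q}\partial_t u$, and substituting $\partial_t u = \i\Delta u - \i\Pi^N|u|^{2q}u$ gives one contribution $\sim |u|^{2q}\Delta u$ (whose $L^2$ norm involves $\|\,|u|^{q}u\|_{\dot H^1}$-type quantities after an integration by parts / product-rule rearrangement, i.e.\ exactly $\langle|\nabla u|^2,|u|^{2q}\rangle$ which is $\lesssim\mathcal{E}(u)$) and another $\sim |u|^{4q}u$ whose $L^2$ norm is a high power of an $L^r$ norm — this is where the exponential weight $e^{2\|\Pi^N|u|^{2q}u\|_{L^2}^2}$ in $\mathcal{M}$ and the integrability \eqref{EEcal:Est} of $E(u)\mathcal{E}(u)$ must be exploited carefully, perhaps with a Young-type inequality to trade the high power against the exponential, to close the estimate with an $N$-independent constant. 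Everything else is routine Sobolev/Hölder bookkeeping in finite dimensions.
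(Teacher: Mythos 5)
Your overall strategy (the Duhamel splitting along $X_k = X_k^1 + X_k^2$, stationarity to convert time averages into integrals against $\mu^N$, and the uniform bound \eqref{Est:E:Super}) is the paper's, but you have misidentified which object must lie in $H^1$ in time, and the detour this forces on you --- the part you flag as ``the main obstacle'' --- is both unnecessary and unclosable. The correct decomposition is $u^N = u_1 + u_2$ with $u_1(t) = u^N(0) + \i\int_0^t \Delta u^N\,ds$ and $u_2(t) = -\i\int_0^t \Pi^N|u^N|^{2q}u^N\,ds$. The time derivative of $u_2$ is the nonlinearity itself, so
\begin{align*}
\|u_2\|_{H^1([0,k],L^2)}^2 \lesssim \int_0^k \|\Pi^N|u^N|^{2q}u^N\|_{L^2}^2\,dt ,
\end{align*}
and since $\|\Pi^N|u|^{2q}u\|_{L^2}^2 \leq e^{2\|\Pi^N|u|^{2q}u\|_{L^2}^2}\|\Pi^N|u|^{2q}u\|_{L^2}^2 \lesssim \mathcal{E}(u)$ by the lower bound on $\mathcal{E}$ established just before \eqref{u^p:H^1}, stationarity and \eqref{Est:E:Super} finish the proof. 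At no point does one differentiate $|u^N|^{2q}u^N$ in time; the whole point of defining $X_k$ through $H^1$-in-time norms of the Duhamel integrals is to avoid exactly that.

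Your plan instead asks for $\partial_t(\Pi^N|u^N|^{2q}u^N)\in L^2([0,k],L^2)$. Besides being unnecessary, this cannot be closed with the available estimates: the chain rule and the equation produce a term $|u|^{2q}\Delta u$, and $\||u|^{2q}\Delta u\|_{L^2}^2 = \int |u|^{4q}|\Delta u|^2$ is a genuinely second-order quantity. It is not reducible by integration by parts to $\langle|\nabla u|^2,|u|^{2q}\rangle$ as you suggest --- that rearrangement works for the pairing $\langle |u|^{2q}u, -\Delta u\rangle$, not for the $L^2$ norm of the product --- and nothing in $\mathcal{M}$, $\mathcal{E}$ or \eqref{EEcal:Est} controls two derivatives of $u$ (recall $\beta\le s_c<\tfrac{3}{2}<2$). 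So the obstacle you describe is an artifact of splitting the wrong object, and the correct splitting removes it entirely.
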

\begin{rmq}
As a consequence of Proposition \ref{Prop:Tight}, we have, thanks to the Prokhorov's theorem, that there is a subsequence of $(\nu_k^N)$ that we denote again by $(\nu_k^N)$ converging to a mesure $\nu_k\in\mathfrak{p}(X_k)$.
\end{rmq}
\begin{proof}[Proof of Proposition \ref{Prop:Tight}]
From \eqref{Est:E:Super}, we derive that
\begin{align*}
\E\|u^N\|_{L^2([0,k],H^\beta)}^2=\E\int_0^k\|u^N(t)\|^2_{H^\beta} \, dt\leq Ck.
\end{align*}
Now let us write the equation in the integral formulation:
\begin{align}
u^N_k(t) = u^{N}_k(0)+\i\int_0^t\Delta u^N_k(s) \, ds-\i\int_0^t\Pi^N|u^N_k(s)|^{2q}u^N_k(s) \, ds.\label{Eq:N:int}
\end{align}
 We remark the following simple inequalities
\begin{align*}
& \E\|\int_0^t\Delta u^N_k \, ds\|_{H^1([0,k],H^{\beta-2})}^2\leq \E\int_0^k\|u^N_k\|_{H^{\beta}}^2 \, dt, \\
& \E\|\int_0^t\Pi^N|u^N_k|^{2q} u^N_k \, ds\|_{H^{1}([0,k],L^2)}^2\leq \E\int_0^k\|\Pi^N|u^N_k|^{2q} u^N_k\|_{L^2}^2 \, dt\leq C\E\int_0^k\mathcal{E}(u^N_k(t)) \, dt.
\end{align*}
We then invoke \eqref{Est:E:Super} to obtain 
\begin{align*}
\E\|u^N_k\|_{X_k}^2\leq Ck.
\end{align*}
We finish the proof of Proposition \ref{Prop:Tight}.
\end{proof}
Let us give the rest of the existence proof. Thanks to the Skorokhod representation theorem (see Theorem \ref{Thm:Skorokhod}), there are random variables $\tilde{u}_k^N$ and $\tilde{u}_k$, all defined on a same probability space that we denote again by $(\Omega,\P)$, such that
\begin{enumerate}
\item $\tilde{u}_k^N\to \tilde{u_k}$ weakly on $X_k$,
\item $\tilde{u}_k^N$ and $\tilde{u}_k$ are distributed by $\nu_k^N$ and $\nu_k$ respectively. 
\end{enumerate} 
Since $\tilde{u}_k^N$ are distributed as $u_k^N$ in $C([0,k],H^{(\beta-2)-})$, we have that $\tilde{u}_k^N$ solves the equation \eqref{Eq:N:int} almost surely.
Passing to the limit $N\to\infty$ in \eqref{Eq:N:int} and using a diagonal argument we find a limiting process $u\in C(\R_+,H^{(\beta-2)-})$ (which is embedded in $L^1(\R_+,H^{-2})$, since $\beta>0$) satisfying $\P-$almost surely the equation
\begin{align}
u(t) = u(0)+\i\int_0^t\Delta u(s)ds-\i\int_0^t|u(s)|^{2q}u(s) \, ds \label{Eq:int}.
\end{align}
The convergence above  is viewed in $L^1(\R_+,H^{-2})$, where we justify the almost sure convergence of $|u^N_k|^{2q}u^N_k$ to $|u|^{2q}u$ as follows:
\begin{enumerate}
\item Thanks to the construction above $\tilde{u}^N_k$ converges to $u$ in  $L^2_tH^1_x$, up to a subsequence denoted by $\tilde{u}^N_k$ as well;
\item Using the lower semicontinuity of $\mathcal{M}(\cdot)$, we see that $\E\mathcal{M}(u(t))<\infty$. In particular $\|u\|_{L^p}^p$ is finite for any $p>1$ almost surely. 
\item Combining the Prokhorov and Skorokhod theorems with the estimate \eqref{Exp:u^p:H^1}, we see that $|\tilde{u}^N_k|^{p+1}$ is compact in $L^2_{t,x}$ for any $p\geq 1$ almost surely. In particular $\|\tilde{u}^N_k\|_{L^{2p+2}_{t,x}}^{2p+2}$ is bounded in $N$ for any $p,$ almost surely. 
\end{enumerate}
Gathering these observations, we write for $\P-$almost all $\omega\in \Omega$
\begin{align*}
\||\tilde{u}^N_k|^{2q}\tilde{u}^N_k-|u|^{2q}u\|_{L^1_{t,x}}\leq C_q\||\tilde{u}^N_k-u|(|\tilde{u}^N_k|^{2q}+|u|^{2q})\|_{L^1_{t,x}}&\leq C_q\|\tilde{u}^N_k-u\|_{L^2_{t,x}}(\|\tilde{u}^N_k\|^{2q}_{L^{4q}_{t,x}}+\|u\|^{2q}_{L^{4q}_{t,x}})\\
&\leq C_{q,\omega}\|\tilde{u}^N_k-u\|_{L^2_{t,x}}
\end{align*}
We can now pass to the limit $N\to\infty$ and use a diagonal procedure to obtain the desired almost sure convergence.

\subsection{Conservation laws and Estimates}
For the conservation of $M(u)$ and $E(u)$, let us remark that from Prokhorov and Skorokhod theorems, the almost sure convergence of $\tilde{u}^N_k$ towards $u$ holds in any $H^s$ for $s<\beta$ (so in $H^1$) and also $|u^N|^{q+1}$ is compact in $L^{2}$ because of the boundedness of $\E\|u^{q+1}\|_{{H}^1}^2$ (see \eqref{Exp:u^p:H^1}) and the Rellich-Kondrachov theorem (Theorem \ref{thm RK}). All this ensures the pointwise convergence of the energy, and a less difficult procedure provides the convergence of the mass. Therefore for any integer $k$, any $t \in [0,k]$
\begin{align*}
&\lim_{N\to\infty}E(\tilde{u}^N_k(t))=E(u(t)),
\quad\lim_{N\to\infty}M(\tilde{u}^N_k(t))=M(u(t)).
\end{align*} 
Now we recall that $E(\tilde{u}^N_k(t))$ and $M(\tilde{u}^N_k(t))$ are constants in time as conservation laws for finite-dimensional projections of \eqref{NLS}, this gives the claimed conservation. From the conservation we obtain that $u\in L^\infty_t H^1_x\cap L^{2q+2}_x.$

Let us now present the following statistical estimates on $u(t)$.
\begin{thm}\label{thm StatEst}
The process $u(t)$ satisfies the following
\begin{align}
\E \mathcal{M}(u(t)) &=\frac{A_0}{2};\label{Id}\\
\E \|u(t)\|_{H^\beta}^2 &\leq C. \notag
\end{align}
\end{thm}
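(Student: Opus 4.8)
\textbf{Proof plan for Theorem \ref{thm StatEst}.}
The plan is to transfer the finite-dimensional identities and bounds to the limit $u(t)$ using the convergence properties established just above, together with the statistical estimates \eqref{Est:E:Super}, \eqref{Exp:u^p:H^1}, and \eqref{EEcal:Est}. Concretely, for each fixed time $t$ the measure $\mu^N$ is the law of $\tilde u^N_k(t)$ (this is the restriction property \eqref{Ms:Rest:ppt:N}), and we have already recorded that $\tilde u^N_k(t)\to u(t)$ almost surely in $H^s$ for every $s<\beta$, hence in $H^{\beta-1}$ and in $L^p$ for every finite $p$ (using in addition the compactness of $|\tilde u^N_k|^{p+1}$ in $L^2_{t,x}$ for all $p\ge 1$, which passes to an almost sure $L^p$ bound at a.e. time, and then at every time via the conservation/continuity). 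So the bulk of the argument is a controlled passage to the limit $N\to\infty$ in
\begin{align*}
\int_{L^2}\mathcal{M}(v)\,\mu^N(dv)=\frac{A_0^N}{2},\qquad \int_{L^2}\|v\|_{H^\beta}^2\,\mu^N(dv)\le C,
\end{align*}
rewritten as $\E\,\mathcal{M}(\tilde u^N_k(t))=A_0^N/2$ and $\E\,\|\tilde u^N_k(t)\|_{H^\beta}^2\le C$.

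The second estimate $\E\|u(t)\|_{H^\beta}^2\le C$ is the easy one: $\|\cdot\|_{H^\beta}^2$ is lower semicontinuous with respect to the $H^{\beta-}$-convergence (it is an increasing limit of the continuous functionals $\|\Pi^M\cdot\|_{H^\beta}^2$), so Fatou's lemma applied to the a.s.\ convergence $\tilde u^N_k(t)\to u(t)$ gives $\E\|u(t)\|_{H^\beta}^2\le \liminf_N \E\|\tilde u^N_k(t)\|_{H^\beta}^2\le C$; one should also note $A_0^N\to A_0$ and $A_1^N\to A_1$ by \eqref{Noise:Cond}. For the identity \eqref{Id}, first by lower semicontinuity of $\mathcal{M}(\cdot)$ (again an increasing series of continuous terms, using \eqref{Conv:L4q+2norm}-type strong convergence for the $L^{2q+2}$ and $L^{2p+2}$ pieces and the $H^{\beta-1}$ convergence) and Fatou we get $\E\,\mathcal{M}(u(t))\le \liminf_N A_0^N/2=A_0/2$. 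The reverse inequality requires a uniform integrability / tightness argument exactly mirroring the one used to prove \eqref{Est:M} in the proof of Proposition \ref{prop mu}: split
\begin{align*}
\frac{A_0^N}{2}=\E\big[\mathcal{M}(\tilde u^N_k(t))\,\mathbf{1}_{\{E(\tilde u^N_k(t))+\|\tilde u^N_k(t)\|_{H^{\beta-}}^2\le R\}}\big]+\E\big[\mathcal{M}(\tilde u^N_k(t))\,\mathbf{1}_{\{E(\tilde u^N_k(t))+\|\tilde u^N_k(t)\|_{H^{\beta-}}^2\ge R\}}\big]=:\mathfrak a+\mathfrak b,
\end{align*}
bound $\mathfrak b\le \frac{1}{R}\,\E[\mathcal{M}(\tilde u^N_k(t))(E(\tilde u^N_k(t))+\|\tilde u^N_k(t)\|_{H^{\beta-}}^2)]\le \frac{C}{R}$ using \eqref{Mcal:leq:Ecal}, \eqref{EEcal:Est} and \eqref{Est:E:Super} (all uniform in $N$), and pass to the limit in $\mathfrak a$ by dominated convergence on the truncated set (where $\mathcal{M}$ is now controlled) using the a.s.\ strong convergence of all the relevant norms. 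Letting $N\to\infty$ and then $R\to\infty$ yields $A_0/2\le \E\,\mathcal{M}(u(t))$, and combined with the Fatou bound this gives \eqref{Id}.

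The main obstacle is the reverse (lower) bound in \eqref{Id}: one must rule out any loss of mass-dissipation-rate in the limit, i.e.\ establish uniform integrability of $\mathcal{M}(\tilde u^N_k(t))$ in $N$. This is precisely where the carefully chosen dissipation \eqref{Newdiss} pays off — the extra weights $e^{2\|\Pi^N|u|^{2q}u\|_{L^2}^2}$ and $\Pi^N e^{|u|^2}u$ produce the super-quadratic control that makes $\mathcal{M}(u)\lesssim \mathcal{E}(u)$ and gives the product estimate \eqref{EEcal:Est}, so that the tail term $\mathfrak b$ is genuinely $O(1/R)$ uniformly in $N$. A secondary technical point is justifying that the a.s.\ convergence of $\tilde u^N_k(\cdot)$, which is produced in $L^2_tH^1_x$ and $C_tH^{(\beta-2)-}_x$ function spaces, upgrades to a.s.\ convergence \emph{at the fixed time $t$} in the norms $H^{\beta-1}$, $L^{2q+2}$, and all $L^{2p+2}$; this follows from interpolation between the $H^{\beta-}$ time-integrated bounds and the $C_tH^{(\beta-2)-}$ continuity together with the conservation of $M(u)$ and $E(u)$ already proved, but it should be spelled out. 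Once these two points are in place, the estimate $\E\|u(t)\|_{H^\beta}^2\le C$ and the identity \eqref{Id} both follow, and since the right-hand sides are independent of $t$ and $k$ the statement holds for all $t\in\R_+$ (and for all $t\in\R$ by time-reversal of \eqref{Eq:int}).
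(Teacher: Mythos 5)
Your overall architecture (Fatou for the upper bound and for $\E\|u(t)\|_{H^\beta}^2\le C$; a uniform-integrability argument via \eqref{EEcal:Est} for the reverse inequality) matches the paper's, but the execution of the reverse inequality has a genuine gap: you transplant the subcritical splitting from the proof of \eqref{Est:M} in Proposition \ref{prop mu}, and that splitting does not survive the change of dissipation. Two concrete problems. First, your tail bound requires $\E\bigl[\mathcal{M}(u^N)\,\|u^N\|_{H^{\beta-}}^2\bigr]\le C$; in the subcritical case this follows because $\mathcal{M}(u)\le E(u)e^{\rho(\|u\|_{H^{\beta-}})}$ and $e^{\rho(\|u\|_{H^{\beta-}})}\|u\|_{H^{\beta-}}^2\lesssim\mathcal{E}(u)$, so the product is $\lesssim E\,\mathcal{E}$, which \eqref{EEcal:Est} controls. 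Here $\mathcal{M}\lesssim\mathcal{E}$ and $\|u\|_{H^{\beta-}}^2\lesssim\mathcal{E}$, so the product is only $\lesssim\mathcal{E}^2$, a moment nobody has estimated. (This part is repairable by truncating on $E(u^N)\le R$ alone.) Second, and more seriously, "dominated convergence on the truncated set (where $\mathcal{M}$ is now controlled)" is false in this setting: on $\{E(u^N)+\|u^N\|_{H^{\beta-}}^2\le R\}$ the functional $\mathcal{M}(u^N)$ is \emph{not} bounded, because the exponential weight $e^{2\|\Pi^N|u^N|^{2q}u^N\|_{L^2}^2}$ involves $\|u^N\|_{L^{4q+2}}$ and the series involves all $L^{2p+2}$ norms, and for $\beta\le s_c$ neither is controlled by $E(u)+\|u\|_{H^{\beta-}}^2$ (one checks $H^{s_c}\not\hookrightarrow L^{4q+2}$ since $\tfrac32-\tfrac{3}{4q+2}>s_c$). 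So there is no dominating function and the passage to the limit in the main term is unjustified.

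This is exactly the difficulty the paper's proof is built around. It decomposes $\mathcal{M}(u^N)$ into its quadratic, exponential, and series parts and, for each, inserts a frequency truncation $\Pi^M$ on the nonlinear quantities $|u^N|^{p+1}$, so that the main terms become bounded continuous functionals under the available a.s.\ convergence; the high-frequency remainders are bounded by $M^{-1}$ times the $\dot H^1$-weighted quantities inside $\mathcal{E}$ (via \eqref{Exp:u^p:H^1} and \eqref{Est:E:Super}), and the large-norm tails are cut with indicators on $\|u^N\|_{L^2}$ and on $\|\Pi^N|u^N|^{2q}u^N\|_{L^2}$, each estimated by $R^{-2}$ through the $E(u^N)/E(u^N)$ trick and \eqref{EEcal:Est}. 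One then sends $N\to\infty$, then $M\to\infty$, then $R\to\infty$. Your proposal would need to be rebuilt along these lines; as written, the key limiting step does not go through.
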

\begin{rmq}
The identity \eqref{Id} is crucial to ensure that large data are contained in the support of $\mu$ as explained in Remark \ref{Rmk:Intro}.
\end{rmq}
\begin{proof}[Proof of Theorem \ref{thm StatEst}]
First, the inequalities
\begin{align*}
\E \mathcal{M}(u(t)) &\leq\frac{A_0}{2},\\
\E \|u(t)\|_{H^\beta}^2 &\leq C ,
\end{align*}
can be easily shown using the proof of \eqref{Est:M} and \eqref{Est:E}. The most difficult is the reverse inequality
\begin{align*}
\frac{A_0}{2}\leq \E \mathcal{M}(u(t)).
\end{align*}
To show this reverse inequality, let us first make a few observations:
\begin{enumerate}
\item By the the boundedness of $\E\mathcal{E}(u)$, we have weak compactness of $(u^N)_N$ in $H^{\beta-1}$ and of $|u^N|^{p+1}$ in $L^{6-}$ for every $p\in \N$ (because of \eqref{Exp:u^p:H^1}).
\item (Estimation of the quadratic part) Let $M$ be a positive integer, using \eqref{Mcal:leq:Ecal} and \eqref{EEcal:Est} we have
\begin{align*}
\E\|u^N\|_{H^{\beta-1}}^2 & \leq \E\|u^N_{\leq M}\|_{H^{\beta-1}}^2+M^{-1}\E\|u^N_{\leq M}\|_{H^{\beta}}^2\leq \E\|u^N_{\leq M}\|_{H^{\beta-1}}^2+M^{-1}\E\mathcal{E}(u^N)\leq \E\|u^N_{\leq M}\|_{H^{\beta-1}}^2+M^{-1}C\\
&
\leq \E\|u^N_{\leq M}\|_{H^{\beta-1}}^21_{\|u\|_{L^2}\leq R}+\E\|u^N_{\leq M}\|_{H^{\beta-1}}^21_{\|u\|_{L^2}\geq R}+M^{-1}C\\
&\leq \E\|u^N_{\leq M}\|_{H^{\beta-1}}^21_{\|u\|_{L^2}\leq R}+\E\mathcal{E}(u^N)\frac{E(u^N)}{E(u^N)}1_{\|u\|_{L^2}\geq R}+M^{-1}C
\end{align*}
then,
\begin{align*}
\E\|u^N\|_{H^{\beta-1}}^2 &\leq \E\|u^N_{\leq M}\|_{H^{\beta-1}}^21_{\|u\|_{L^2}\leq R}+CR^{-2}+M^{-1}C.
\end{align*}
\item (Estimation of the exponential part) Using the same $M$ as above we establish the following
\begin{align*}
\E e^{2\|\Pi^N|u^N|^{2q}u^N\|_{L^2}^2}\|u^N\|_{L^{2q+2}}^{2q+2} &= \E e^{2\|\Pi^N|u^N|^{2q}u^N\|_{L^2}^2}\||u^N|^{q+1}\|_{L^{2}}^{2}\\
&=\E e^{2\|\Pi^N|u^N|^{2q}u^N\|_{L^2}^2}\|\Pi^M|u^N|^{q+1}\|_{L^{2}}^{2}+\E e^{2\|\Pi^N|u^N|^{2q}u^N\|_{L^2}^2}\|\Pi^{\geq M}|u^N|^{q+1}\|_{L^{2}}^{2}\\
&\leq \E e^{2\|\Pi^N|u^N|^{2q}u^N\|_{L^2}^2}\|\Pi^M|u^N|^{q+1}\|_{L^{2}}^{2}+M^{-1}\E e^{2\|\Pi^N|u^N|^{2q}u^N\|_{L^2}^2}\langle |\nabla u^N|^2,|u^N|^{2q}\rangle.
\end{align*}
Therefore, we can use the estimate \eqref{Est:E:Super} to bound the second term. We obtain
\begin{align*}
& \quad \E e^{2\|\Pi^N|u^N|^{2q}u^N\|_{L^2}^2}\|u^N\|_{L^{2q+2}}^{2q+2}  \leq \E e^{2\|\Pi^N|u^N|^{2q}u^N\|_{L^2}^2}\|\Pi^M|u^N|^{q+1}\|_{L^{2}}^{2}+M^{-1}C\\
&\leq \E e^{2\|\Pi^N|u^N|^{2q}u^N\|_{L^2}^2}\|\Pi^M|u^N|^{q+1}\|_{L^{2}}^{2}1_{\|\Pi^N|u^N|^{2q}u^N\|_{L^2}\leq R}+\E e^{2\|\Pi^N|u^N|^{2q}u^N\|_{L^2}^2}\|\Pi^M|u^N|^{q+1}\|_{L^{2}}^{2}1_{\|\Pi^N|u^N|^{2q}u^N\|_{L^2}\geq R}+M^{-1}C\\
& =: (i)+(ii)+M^{-1}C.
\end{align*}
Now, using the estimates \eqref{Mcal:leq:Ecal} and \eqref{EEcal:Est}, we achieve the following computations
\begin{align*}
(i) &\leq \E e^{2\|\Pi^N|u^N|^{2q}u^N\|_{L^2}^2}\|\Pi^M|u^N|^{q+1}\|_{L^{2}}^{2}1_{\|\Pi^N|u^N|^{2q}u^N\|_{L^2}\leq R}1_{\|u^N\|_{L^{2q+2}}\leq R}\\
& \quad +\E e^{2\|\Pi^N|u^N|^{2q}u^N\|_{L^2}^2}\|\Pi^M|u^N|^{q+1}\|_{L^{2}}^{2}1_{\|\Pi^N|u^N|^{2q}u^N\|_{L^2}\leq R}1_{\|u^N\|_{L^{2}}\geq R}=:(i_1)+(i_2)\\
(i_2)&\leq \E \mathcal{M}(u^N)1_{\|u^N\|_{L^{2}}\geq R}\leq C\E \mathcal{E}(u^N)1_{\|u^N\|_{L^{2}}\geq R}\frac{E(u^N)}{E(u^N)}\leq C_1R^{-2}.
\end{align*}
We use again the estimate \eqref{EEcal:Est} and the observation 
\begin{align*}
\|\Pi^M|u^N|^{q+1}\|_{L^{2}}^{2}\lleq \|u^N\|_{L^{2q+2}}^{2q+2}\lleq E(u^N)
\end{align*}
to obtain
\begin{align*}
(ii) &= \E e^{2\|\Pi^N|u^N|^{2q}u^N\|_{L^2}^2}\|\Pi^M|u^N|^{q+1}\|_{L^{2}}^{2}\frac{\|\Pi^N|u^N|^{2q}u^N\|_{L^2}^2}{\|\Pi^N|u^N|^{2q}u^N\|_{L^2}^2}1_{\|\Pi^N|u^N|^{2q}u^N\|_{L^2}\geq R}\\
&\leq\frac{1}{R^{2}} \E e^{2\|\Pi^N|u^N|^{2q}u^N\|_{L^2}^2}\|\Pi^N|u^N|^{2q}u^N\|_{L^2}^2 E(u) \leq\frac{1}{R^{2}} \E[\mathcal{E}(u) E(u)]\leq C_2R^{-2}.
\end{align*}
Gathering the estimates on $(i)$ and $(ii)$ we arrive at
\begin{align*}
\E &e^{2\|\Pi^N|u^N|^{2q}u^N\|_{L^2}^2}\|u^N\|_{L^{2q+2}}^{2q+2}  \leq \E e^{2\|\Pi^N|u^N|^{2q}u^N\|_{L^2}^2}\|\Pi^M|u^N|^{q+1}\|_{L^{2}}^{2}1_{\|\Pi^N|u^N|^{2q}u^N\|_{L^2}\leq R}1_{\|u^N\|_{L^{2}}\leq R}+C_3R^{-2}+M^{-1}C. 
\end{align*}

\item (Estimation of the series part) Finally, using again \eqref{EEcal:Est} (and noticing that $\|u\|_{L^2}^2\lleq E(u)$) we have the following
\begin{align*}
\E\sum_{p\in\N}\frac{\|u^N\|_{L^{2p+2}}^{2p+2}}{p!} &=\E\sum_{p\in\N}\frac{\||u^N|^{p+1}\|_{L^{2}}^{2}}{p!} =\E\sum_{p\in\N}\frac{\|\Pi^M|u^N|^{p+1}\|_{L^{2}}^{2}}{p!}+\E\sum_{p\in\N}\frac{\|\Pi^{\geq M}|u^N|^{p+1}\|_{L^{2}}^{2}}{p!}=:(iii)+(iv)\\
(iv)&\leq M^{-1}\E\sum_{p\in\N}\frac{\langle |\nabla u^N|^2,|u^N|^{2p}\rangle}{p!}\leq M^{-1}C\\
(iii)&\leq \E1_{\|u^N\|_{L^2}\leq R}\left(\sum_{p\in\N}\frac{\|\Pi^M|u^N|^{p+1}\|_{L^{2}}^{2}}{p!}\right)+\E1_{\|u^N\|_{L^2}\geq R}\frac{\|u^N\|_{L^2}^2}{\|u^N\|_{L^2}^2}\left(\sum_{p\in\N}\frac{\|\Pi^M|u^N|^{p+1}\|_{L^{2}}^{2}}{p!}\right)\\
&\leq \E1_{\|u^N\|_{L^2}\leq R}\left(\sum_{p\in\N}\frac{\|\Pi^M|u^N|^{p+1}\|_{L^{2}}^{2}}{p!}\right)+R^{-2}\E1_{\|u^N\|_{L^2}\geq R}E(u)\left(\sum_{p\in\N}\frac{\|\Pi^M|u^N|^{p+1}\|_{L^{2}}^{2}}{p!}\right) \\
&\leq \E1_{\|u^N\|_{L^2}\leq R}\left(\sum_{p\in\N}\frac{\|\Pi^M|u^N|^{p+1}\|_{L^{2}}^{2}}{p!}\right)+R^{-2}C_0.
\end{align*}
\end{enumerate}
Overall, we obtain a  constant $c$, independent of $(N,M,R)$, such that 
\begin{align*}
\frac{A_0^N}{2}\leq\E\mathcal{M}(u^N) &\leq \E\|u^N_{\leq M}\|_{H^{\beta-1}}^21_{\|u\|_{L^2}\leq R}+2
\E e^{2\|\Pi^N|u^N|^{2q}u^N\|_{L^2}^2}\|\Pi^M|u^N|^{q+1}\|_{L^{2}}^{2}1_{\|\Pi^N|u^N|^{2q}u^N\|_{L^2}\leq R}1_{\|u^N\|_{L^{2}}\leq R}\\
& \quad +\E1_{\|u^N\|_{L^2}\leq R}\left(\sum_{p\in\N}\frac{\|\Pi^M|u^N|^{p+1}\|_{L^{2}}^{2}}{p!}\right)+c(R^{-2}+M^{-1}).
\end{align*}
We can pass to the limit $N\to\infty$, then $M\to\infty$ and $R\to\infty$ to obtain the desired reverse inequality.

Now the proof of Theorem \ref{thm StatEst} is complete.
\end{proof}

\subsection{Uniqueness, continuity, invariance and regularity of the solutions}
\subsubsection{Uniqueness}
Let us write the Duhamel formulation (that is weaker than the formulation \eqref{Eq:int}), we have
\begin{align*}
u(t)=e^{\i t\Delta }u_0-\i\int_0^te^{\i(t-s)\Delta}|u(s)|^{2q}u(s) \, ds.
\end{align*}
For $u^1,\ u^2$ two solutions starting from $u_0$ in the support of $\mu$, set $w=u^1-u^2$. We have
\begin{align*}
\int_{1\geq|x|\geq \epsilon}|w|^2 \, dx &\leq2 \int_{1\geq|x|\geq \epsilon}|e^{\i t\Delta}w_0|^2 \, dx+C_qt\int_0^t\int_{1\geq|x|\geq \epsilon}|(|u^1|^{2q}+|u^{2}|^{2q})w|^2 \, dxds.
\end{align*}
From the radial Sobolev embedding (Lemma \ref{lem Radial Sobolev}), we have
\begin{align*}
\int_{1\geq|x|\geq \epsilon}|w|^2 \, dx &\leq2 \int_{1\geq|x|\geq \epsilon}|e^{\i t\Delta}w_0|^2 \, dx+C_qt\epsilon^{-4q}\int_0^t(\|u^1\|_{H^1}^{4q}+\|u^{2}\|_{H^1}^{4q})\int_{1\geq|x|\geq \epsilon}|w|^2 \, dxds.
\end{align*}
By the Gronwall lemma,
\begin{align}
\quad \int_{1\geq|x|\geq \epsilon}|w|^2 \, dx & \leq 2e^{C_qt\epsilon^{-4q}\int_0^t(\|u^1\|_{H^1}^{4q}+\|u^{2}\|_{H^1}^{4q}) \,ds}\int_{1\geq|x|\geq \epsilon}|e^{\i t\Delta}w_0|^2 \,dx \label{Gronw}\\
&\leq 2e^{C_qt\epsilon^{-4q}\int_0^t(\|u^1\|_{H^1}^{4q}+\|u^{2}\|_{H^1}^{4q}) \, ds}\int_{\Theta}|e^{\i t\Delta}w_0|^2 \, dx \leq 2e^{C_qt\epsilon^{-4q}\int_0^t(\|u^1\|_{H^1}^{4q}+\|u^{2}\|_{H^1}^{4q}) \, ds}\int_{\Theta}|w_0|^2 \, dx =0.\nonumber
\end{align}
We let $\epsilon\to 0$ by using the monotone convergence theorem and obtain that $\|w\|_{L^2}=0.$ Hence the uniqueness.
\begin{rmq}
The uniqueness argument is deterministic. The  condition is $u\in L^{4q}_tH^1_x$, this is, in particular, satisfied using the energy conservation proved in the previous subsection.
\end{rmq}

\subsubsection{Continuity}
Let $(u_0^k)$ be a sequence converging to $u_0$ in $H^1\cap L^{2q+2}$, set $(u^k)$ and $u$ the solutions associated to $(u_0^k)$ and $u_0$ respectively. Set $w_0^k=u_0^k-u_0$ and $w^k=u^k-u$, and define the sequence 
\begin{align*}
\gamma_k=
\begin{cases}
(-\ln\|w_0^k\|_{L^2})^\frac{-1}{8q}, & \text{if $\|w_0^k\|_{L^2}>0$}\\
0, & \text{if $\|w_0^k\|_{L^2}=0$}.
\end{cases}
\end{align*}
  We claim the following:
\begin{align}
\int_{1\geq |x|\geq\gamma_k}|w^k(t,x)|^2 \, dx\lleq e^{C(q,R)t^2}\|w_0^k\|_{L^2}^{\frac{3}{2}},\label{Claim0}
\end{align}
where $R$ is the size of a ball in $H^1\cap L^{2q+2}$ centered at $0$, containing $(u_0^k)$ and $u_0$. To prove \eqref{Claim0}, we first observe that it is true when $w_0^k=0$ because of the uniqueness; then assuming $w_0^k\neq 0$ and taking $\epsilon=\gamma_k\neq 0$ in \eqref{Gronw}, we write
\begin{align*}
\int_{1\geq|x|\geq \gamma_k}|w^k|^2 \, dx & \leq 2e^{C_q\gamma^{-4q}t^2(E(u^k_0)^{2q}+E(u_0)^{2q})}\|w_0^k\|_{L^2}^2 \,dx\\
&\leq 2e^{C(q,R)t^4}e^{\gamma^{-8q}}\|w_0^k\|_{L^2}^2=2e^{C(q,R)t^4}\frac{\|w_0^k\|_{L^2}^2}{\|w_0^k\|_{L^2}}\lleq e^{C(q,R)t^4}\|w_0^k\|_{L^2}.
\end{align*}
The proof of \eqref{Claim0} is complete.

We now have
\begin{align*}
\int_{\Theta}|w^k|^2 \, dx 
&\lleq e^{C(q,R)t^4}\|w_0^k\|_{L^2}+\int_{0\leq|x|\leq \gamma_k}|w^k|^2 \, dx.
\end{align*}
To obtain an $L^2$ continuity, it remains to show that $\int_{0\leq|x|\leq \gamma_k}|w^k|^2 \, dx=o(1)$ as $k\to\infty$. To see this, we have by H\"older inequality
\begin{align*}
\int_{0\leq|x|\leq \gamma_k}|w^k|^2 \, dx\leq \|1_{\{0\leq|x|\leq \gamma_k\}}\|_{L^4}\|w^k\|_{L^4}^2\lleq \|1_{\{0\leq|x|\leq \gamma_k\}}\|_{L^4}(E(u_0^k)+E(u_0)).
\end{align*}
And by using the Lebesgue dominated convergence theorem, we see that $\|1_{\{0\leq|x|\leq \gamma_k\}}\|_{L^4}\to 0$ as $k\to \infty$. Overall we obtain
\begin{align*}
\lim_{\|w_0^k\|_{H^1}+\|w_0^k\|_{L^{2q+2}}\to 0}\sup_{t\in [-T,T]}\int_{\Theta}|w^k(t,x)|^2 \, dx=0,
\end{align*}
for any $T>0.$ Now, by interpolation, we have for any $s<1\leq p<2q+2$
\begin{align*}
\lim_{\|w_0^k\|_{H^1}+\|w_0^k\|_{L^{2q+2}}\to 0}\sup_{t\in [-T,T]}(\|w^k(t)\|_{H^s}+\|w^k(t)\|_{L^{p}})=0 .
\end{align*}
This was the continuity claim.

\subsubsection{Invariance and regularity}
Now let us turn to the regularity of $u$. In the previous subsection we showed the conservation of energy. This implied that $u\in L^\infty_tH^1_x\cap L^{2q+2}_x$. Since the measure is concentrated on $H^\beta,$ with $\beta>1$, we expect to improve this result. Using the Duhamel formulation above, we have
\begin{align*}
\|u\|_{H^{\beta-}}\leq \|u_0\|_{H^{\beta-}}+\int_0^t\|u^{2q}u\|_{H^{\beta-}} \, ds.
\end{align*}
The Kato–Ponce inequality (Theorem \ref{thm KP}) implies that
\begin{align*}
\|u\|_{H^{\beta-}}\leq \|u_0\|_{H^{\beta-}}+\int_0^t\|u\|_{L^{2qr}}^{2q}\|u\|_{W^{\beta-,2+}} \, ds,
\end{align*}
where $\frac{1}{2}=\frac{1}{r}+\frac{1}{2+}.$ Now, we have $\|u\|_{W^{\beta-,2+}}\leq \|u\|_{H^{\beta}}$. We arrive at the following
\begin{align*}
\sup_{t\in[0,T]}\|u(t)\|_{H^{\beta-}}\leq \|u_0\|_{H^{\beta-}}+\int_0^T\mathcal{E}(u) \, ds,
\end{align*}
which is finite for every $T,$ $\P-$almost surely. We obtain $u\in C(\R,H^{\beta-})$ using the time reversibility of  \eqref{NLS} equation. This combined with the controls on $\mathcal{M}(u)$ and $\mathcal{E}(u)$ established in the previous subsection, we obtain that $u\in C(\R,H^{\beta-})\cap L^\infty(\R,L^{2q+2})\cap L^2_{loc}(\R,H^{\beta})\cap L^p_{loc}(\R,L^r)$ for any $p,r\in [1,+\infty)$.
\begin{rmq}
The role of $e^{|u|^2}u$ in the dissipation \eqref{Newdiss} can be seen in the estimate: for every $r>1$,
\begin{align*}
\|u\|_{L^{2qr}}^{2q}\|u\|_{H^{\beta}}\lleq \mathcal{E}(u) .
\end{align*}
\end{rmq}

For the invariance of the law of $u(t)$, let us denote by $\nu$ the distribution of the process $u=(u(t))_{t\in\R}$. For the subsequence in the $N$-parameter that produced $\nu$, we can extract a subsequence, using the Prokhorov theorem, that produce a measure $\mu$ as a weak limit point of $(\mu^N)$. Passing to the limit along this subsequence in the relation \eqref{Ms:Rest:ppt:N}, we see that $\mu ={\nu_|}_{t=t_0}$ for any $t_0\in\R$. This establishes that $\mu$ is an invariant law for $u$.

\appendix

\section{Some useful standard results}

In this section $(E,\|.\|)$ is a Banach space, $C_b(E)$ denotes the space of bounded continuous functions $f:E\to\R.$ We present some useful standard results from measure theory. 

\subsection{Convergence of random variables}
\begin{Def}
Let $(\mu_n)_n$ be a sequence of Borel probability measures and $\mu$ a Borel probability measure on $E$. We say that $(\mu_n)_n$ converges weakly to $\mu$ if for all $f\in C_b(E)$
\begin{align*}
\lim_{n\to\infty}\int_Ef(x)\mu_n(dx)=\int_Ef(x)\mu(dx).
\end{align*}
We write $\mu_n\rightharpoonup\mu.$
\end{Def}

\begin{Def}
A family $\Lambda$ of Borel probability measures on $E$ is said to be tight if for all $\epsilon>0$ there is a compact set $K_\epsilon\subset E$ such that for all $\mu\in \Lambda$
\begin{align*}
\mu(K_\epsilon)\geq 1-\epsilon.
\end{align*} 
\end{Def}

\begin{thm}[Portmanteau theorem, see Theorem 11.1.1 in \cite{dudley}]\label{Thm:Portm}
Let $(\mu_n)_n$ be a sequence of probability measures and $\mu$ a probability measure on $E$, the following are equivalent:
\begin{enumerate}
\item $\mu_n\rightharpoonup\mu$,
\item for all open sets $U$, $\liminf_{n\to\infty}\mu_n(U)\geq\mu(U)$,
\item for all closed sets $F$, $\limsup_{n\to\infty}\mu_n(F)\leq\mu(F)$.
\end{enumerate}
\end{thm}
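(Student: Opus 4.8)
The plan is to prove the three statements equivalent by establishing the cycle $(1)\Rightarrow(3)\Leftrightarrow(2)\Rightarrow(1)$, exploiting that $E$, being a Banach space, carries a metric, so that indicator functions of closed and open sets can be approximated by elements of $C_b(E)$.

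First I would dispatch $(2)\Leftrightarrow(3)$ by complementation: if $U$ is open then $F:=E\setminus U$ is closed, $\mu_n(U)=1-\mu_n(F)$, $\mu(U)=1-\mu(F)$ and $\liminf_{n}\mu_n(U)=1-\limsup_{n}\mu_n(F)$, so the two assertions say exactly the same thing. Next, for $(1)\Rightarrow(3)$, given a closed set $F$ I would set $d(x,F):=\inf_{y\in F}\|x-y\|$ and $f_k(x):=\max\!\big(0,\,1-k\,d(x,F)\big)$ for $k\ge 1$; each $f_k$ lies in $C_b(E)$ (it is $k$-Lipschitz and takes values in $[0,1]$), and since $F$ is closed one has $\mathbf{1}_F\le f_{k+1}\le f_k\le 1$ with $f_k\downarrow\mathbf{1}_F$ pointwise. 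Then, using $\mathbf{1}_F\le f_k$ and hypothesis $(1)$,
\[
\limsup_{n\to\infty}\mu_n(F)\ \le\ \limsup_{n\to\infty}\int_E f_k\,d\mu_n\ =\ \int_E f_k\,d\mu ,
\]
and letting $k\to\infty$ with dominated convergence (dominating function $1$) gives $\int_E f_k\,d\mu\to\mu(F)$, hence $\limsup_{n}\mu_n(F)\le\mu(F)$.

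For the remaining implication $(2)\,(\Leftrightarrow(3))\Rightarrow(1)$, I would fix $f\in C_b(E)$ and, after the affine normalization $f\mapsto(f-a)/(b-a)$ with $a<b$, $a\le\inf f$ and $\sup f\le b$ — which changes neither the hypothesis nor the conclusion — assume $0\le f\le 1$. Writing the layer-cake identity $\int_E f\,d\nu=\int_0^1\nu(\{f>t\})\,dt$ for $\nu\in\{\mu_n,\mu\}$ and noting that $\{f>t\}$ is open, Fatou's lemma together with $(2)$ yields
\[
\liminf_{n\to\infty}\int_E f\,d\mu_n\ \ge\ \int_0^1\liminf_{n\to\infty}\mu_n(\{f>t\})\,dt\ \ge\ \int_0^1\mu(\{f>t\})\,dt\ =\ \int_E f\,d\mu .
\]
Applying the same bound to $1-f$ (whose superlevel sets $\{f<1-t\}$ are again open) gives $\limsup_{n}\int_E f\,d\mu_n\le\int_E f\,d\mu$, and combining the two inequalities produces $(1)$. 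The argument is classical and I anticipate no genuine obstacle; the only points demanding a little care are the clean reduction to $0\le f\le 1$, the correct use of strict versus non-strict superlevel sets in the layer-cake formula (open sets $\{f>t\}$ when arguing through $(2)$, closed sets $\{f\ge t\}$ if one prefers $(3)$), and the remark that it is precisely the metric structure of $E$ that makes the cut-off functions $f_k$ continuous.
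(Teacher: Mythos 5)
Your proposal is correct. Note that the paper itself offers no proof of this statement: it is quoted as a standard result with a citation to Dudley (Theorem 11.1.1), so there is no internal argument to compare against. What you give is the classical self-contained proof, and all three steps are sound: the complementation argument for $(2)\Leftrightarrow(3)$ is immediate; the functions $f_k(x)=\max(0,1-k\,d(x,F))$ are bounded, Lipschitz, and decrease pointwise to $\mathbf{1}_F$ precisely because $F$ is closed, so $(1)\Rightarrow(3)$ follows from weak convergence plus dominated (or monotone) convergence; and in $(2)\Rightarrow(1)$ the layer-cake identity together with Fatou's lemma is legitimate since the integrands $t\mapsto\mu_n(\{f>t\})$ are bounded by $1$ on the finite interval $[0,1]$, with the open superlevel sets $\{f>t\}$ matching hypothesis $(2)$ and the passage to $1-f$ giving the reverse inequality. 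The only points that needed care — the reduction to $0\le f\le 1$, the choice of strict superlevel sets, and the role of the metric in making the cut-offs continuous — are exactly the ones you flag, so the argument is complete as written.
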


\begin{thm}[Prokhorov theorem, see Theorem 2.3 in \cite{daz}]\label{Thm:Prokh}
A set $\Lambda$ of Borel probability measures is relatively compact in $E$ if and only if it is tight.
\end{thm}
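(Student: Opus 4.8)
The plan is to establish the two implications of the equivalence separately. The direction \emph{tight $\Rightarrow$ relatively compact} is the more elementary one and uses essentially only that $E$ is metric; the direction \emph{relatively compact $\Rightarrow$ tight} is where completeness and (for the reduction to countably many balls) separability of $E$ are genuinely needed, and this is where I expect the work to lie.

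For \emph{tight $\Rightarrow$ relatively compact}, I would first extract a $\sigma$-compact core: by tightness choose compacts $K_1\subset K_2\subset\cdots$ with $\inf_{\mu\in\Lambda}\mu(K_j)\geq 1-1/j$, so that in fact $\mu\big(\bigcup_j K_j\big)=1$ for every $\mu\in\Lambda$ and all members of $\Lambda$ live on the separable set $\bigcup_j K_j$. Given a sequence $(\mu_n)\subset\Lambda$, I would view each $\mu_n$ restricted to $K_j$ as a sub-probability measure on the compact metric space $K_j$; since $C(K_j)$ is separable, the Banach--Alaoglu theorem makes the sub-probability measures on $K_j$ weak-$*$ sequentially compact, and a diagonal extraction over $j$ yields a subsequence $(\mu_{n_k})$ with $\mu_{n_k}|_{K_j}\rightharpoonup\lambda_j$ for every $j$. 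A standard consistency/patching argument assembles the $\lambda_j$ into a single finite Borel measure $\mu$ on $E$ carried by $\bigcup_j K_j$, and tightness forces $\mu(E)=\lim_j\mu(K_j)=1$. A routine $3\epsilon$-argument (split any $f\in C_b(E)$ over $K_j$ and its complement, bounding the error by $\|f\|_\infty$ times the uniformly small mass outside $K_j$) then upgrades this to $\int f\,d\mu_{n_k}\to\int f\,d\mu$ for every $f\in C_b(E)$, i.e.\ $\mu_{n_k}\rightharpoonup\mu$.

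For \emph{relatively compact $\Rightarrow$ tight}, I argue by contradiction: suppose there is $\epsilon>0$ such that every compact $K\subset E$ admits some $\mu\in\Lambda$ with $\mu(K)<1-\epsilon$. Using separability, for each $n$ cover $E=\bigcup_{i\geq 1}B(x_i^n,1/n)$. The crucial claim is that for each $n$ there is a finite $m_n$ with $\inf_{\mu\in\Lambda}\mu\big(\bigcup_{i\leq m_n}B(x_i^n,1/n)\big)\geq 1-\epsilon 2^{-n-1}$. If this failed for some $n$, one could choose $\mu_m\in\Lambda$ with $\mu_m\big(\bigcup_{i\leq m}B(x_i^n,1/n)\big)<1-\epsilon 2^{-n-1}$ for all $m$; passing to a weakly convergent subsequence $\mu_{m_k}\rightharpoonup\nu$ (relative compactness) and applying the open-set part of the Portmanteau theorem (Theorem \ref{Thm:Portm}) to the open sets $\bigcup_{i\leq M}B(x_i^n,1/n)$ would give $\nu\big(\bigcup_{i\leq M}B(x_i^n,1/n)\big)\leq 1-\epsilon 2^{-n-1}$ for every $M$, hence $\nu(E)\leq 1-\epsilon 2^{-n-1}<1$, contradicting that $\nu$ is a probability measure. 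Granting the claim, put $A_n=\bigcup_{i\leq m_n}B(x_i^n,1/n)$ and $K=\bigcap_n\overline{A_n}$: then $K$ is closed and totally bounded (each $\overline{A_n}$ lies in finitely many balls of radius $2/n$), so by completeness of $E$ it is compact, and for every $\mu\in\Lambda$ one has $\mu(E\setminus K)\leq\sum_n\mu(E\setminus\overline{A_n})\leq\sum_n\epsilon 2^{-n-1}=\epsilon/2$, whence $\mu(K)\geq 1-\epsilon/2>1-\epsilon$. This contradicts the choice of $\epsilon$, so $\Lambda$ is tight.

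The main obstacle is this second direction: relative compactness only tells us, sequence by sequence, that no mass escapes in the limit, and the task is to convert this qualitative information into a \emph{single} compact set that simultaneously absorbs almost all the mass of \emph{every} member of $\Lambda$; this is exactly the step where separability (to make do with countably many balls) and completeness (to promote total boundedness of $\bigcap_n\overline{A_n}$ to compactness) are indispensable. As an alternative to the Riesz--Markov/patching step in the first direction, one may instead realize the $\mu_{n_k}$ as laws of $E$-valued random variables concentrated on the $\sigma$-compact core and invoke the Skorokhod representation theorem (Theorem \ref{Thm:Skorokhod}) to obtain an almost surely convergent subsequence, producing the limit measure without the functional-analytic detour.
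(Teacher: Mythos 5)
The paper does not prove this statement: it is quoted as a classical result (Theorem 2.3 of \cite{daz}) and used as a black box, so there is no in-paper argument to compare against. Your proof is the standard textbook one (Billingsley/Parthasarathy) and its architecture is sound: for ``tight $\Rightarrow$ relatively compact'' you use an exhausting sequence of compacts, weak-$*$ sequential compactness of sub-probability measures on each $K_j$, a diagonal extraction, and a $3\epsilon$ upgrade to convergence against all of $C_b(E)$; for ``relatively compact $\Rightarrow$ tight'' the finite-ball claim, proved via the open-set half of Theorem \ref{Thm:Portm} applied to the increasing open sets $\bigcup_{i\le M}B(x_i^n,1/n)$, is exactly right and is precisely where relative compactness and separability enter, and the closed-plus-totally-bounded construction of $K$ correctly uses completeness. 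Two caveats. First, the ``standard consistency/patching argument'' is where the real work of the first direction hides: the weak-$*$ limits $\lambda_j$ need \emph{not} satisfy $\lambda_{j'}|_{K_j}=\lambda_j$ for $j'>j$, since restriction to $K_j$ is not weak-$*$ continuous; the usual repair defines a set function on open sets via $\lim_j$ of the $\lambda_j$-masses and extends it to a Borel measure by outer regularity, and a complete write-up would have to carry this out. Second, the proposed Skorokhod shortcut is circular: Theorem \ref{Thm:Skorokhod} takes weak convergence as a \emph{hypothesis} and produces almost surely convergent representatives, so it cannot be used to manufacture the weakly convergent subsequence in the first place. Since that is offered only as an aside, the main argument stands. (A minor stylistic point: the contradiction framing in the second direction is unnecessary --- your construction produces, for every $\epsilon>0$, a single compact $K$ with $\inf_{\mu\in\Lambda}\mu(K)\ge 1-\epsilon/2$, which is tightness directly.)
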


\begin{thm}[Skorokhod representation theorem, see Theorem 2.4 in \cite{daz}]\label{Thm:Skorokhod}
For any sequence $(\mu_n)_n$ of Borel probability measures on $E$ converging weakly to $\mu$, there is a probability space $(\Omega_0,\mathcal{F}^0,\P_0)$, and random variables $X,\ X_1,\cdots$ on $(\Omega_0,\mathcal{F}^0,\P_0)$ such that
\begin{enumerate}
\item $\mathcal{L}(X_n)=\mu_n$ and $\mathcal{L}(X)=\mu$,
\item $\lim_{n\to\infty}X_n=X$, $\P_0-$almost surely.
\end{enumerate}
Here $\mathcal{L}(Y)$ stands for the law of the random variable $Y.$
\end{thm}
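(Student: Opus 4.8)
The plan is to reduce to the case where $E$ is separable --- the only case needed in the applications, where $E$ is e.g. $L^{6-}$ or $C([0,k],H^{(\beta-2)-})$ --- so that each of $\mu,\mu_1,\mu_2,\dots$ is a Radon measure and, by Ulam's theorem, $\mu$ is tight. On the real line the statement is classical: if $F_n,F$ denote the distribution functions of $\mu_n,\mu$, one takes $X_n=F_n^{-1}(U)$ and $X=F^{-1}(U)$ with $U$ uniform on $(0,1)$ and $F^{-1}$ the left-continuous quantile function, and one checks that $X_n(u)\to X(u)$ at every continuity point of $F^{-1}$, hence for Lebesgue-a.e. $u$. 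The whole difficulty for a general separable $E$ is to transfer this quantile construction by coding the points of $E$ through a fixed chain of nested Borel partitions into $\mu$-continuity sets of vanishing diameter.

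First I would construct, for each $m\ge 1$, a countable Borel partition $\mathcal{P}_m=\{A_{m,i}\}_{i\ge 0}$ of $E$ with $\operatorname{diam}(A_{m,i})<2^{-m}$ for $i\ge 1$, with $\mu(A_{m,0})<2^{-m}$, with $\mu(\partial A_{m,i})=0$ for all $i$, and with $\mathcal{P}_{m+1}$ refining $\mathcal{P}_m$. Existence uses separability together with the elementary observation that for fixed $x$ the sphere $\{y:\|y-x\|=r\}$ carries positive $\mu$-mass for only countably many $r$, so $E$ can be covered by balls of radius $<2^{-m-1}$ with $\mu$-null boundaries; tightness discards all but finitely many of them outside the small remainder $A_{m,0}$; disjointifying preserves the $\mu$-null-boundary property; and the nesting is obtained by intersecting successive partitions. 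Since each $A_{m,i}$ is then a $\mu$-continuity set, the Portmanteau theorem (Theorem \ref{Thm:Portm}) upgrades $\mu_n\rightharpoonup\mu$ to $\mu_n(A_{m,i})\to\mu(A_{m,i})$ for every fixed $m,i$, and I would pick $1=n_1<n_2<\cdots\to\infty$ so that on the block $n\in[n_j,n_{j+1})$ the level-$j$ partition is an accurate ``working scale''.

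Next I would realise everything on $\Omega_0=[0,1)$ with Lebesgue measure $\lambda$. Ordering the sets of $\mathcal{P}_m$ and cutting $[0,1)$ into consecutive subintervals of lengths $\mu(A_{m,i})$ --- compatibly across $m$ by the nesting --- associates to $\lambda$-a.e. $\omega$ a decreasing chain $A_{1,i_1(\omega)}\supset A_{2,i_2(\omega)}\supset\cdots$ of nonempty sets of diameters tending to $0$, whose intersection is a single point $X(\omega)$ by completeness of $E$; then $\mathcal{L}(X)=\mu$. For each $n$ I would define $X_n$ with $\mathcal{L}(X_n)=\mu_n$ by the same recipe but giving the $i$-th interval at the working scale $j(n)$ length $\mu_n(A_{j(n),i})$ while keeping the interval order fixed; the $\mu$- and $\mu_n$-interval systems then overlap up to a set of $\lambda$-measure $O(2^{-j(n)})$, and on the overlap of the $i$-th pieces one has $\|X_n(\omega)-X(\omega)\|<2^{-j(n)}$. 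Arranging the errors to be summable in $j$ and applying Borel--Cantelli over the blocks yields $\|X_n(\omega)-X(\omega)\|\to 0$ for $\lambda$-a.e. $\omega$, which is assertion (2); assertion (1) is built into the construction.

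The step I expect to be the main obstacle is this last coupling: one must produce the maps $X_n$ simultaneously for all $n$, with law exactly $\mu_n$ rather than approximately, using a single fixed chain of partitions of $E$ and a single family of intervals in $[0,1)$, and one must control the two sources of error --- the remainder sets $A_{m,0}$ and the mismatch between the $\mu$- and $\mu_n$-interval endpoints --- so that the ``bad'' event at scale $j$ has $\lambda$-measure summable in $j$. The other ingredients (Ulam's tightness theorem, the Portmanteau characterization, and the quantile/nested-interval idea) are routine.
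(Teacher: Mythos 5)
The paper does not actually prove this statement: Theorem \ref{Thm:Skorokhod} is quoted in the appendix as a standard result with a pointer to Theorem 2.4 of \cite{daz}, so there is no internal proof to compare yours against. On its own merits, your sketch is the classical textbook argument for a complete separable metric space (Billingsley/Dudley style): partition $E$ into $\mu$-continuity sets of small diameter, using that for a fixed centre only countably many spheres can be charged by $\mu$; upgrade $\mu_n\rightharpoonup\mu$ to convergence of the masses of these cells via the Portmanteau theorem (Theorem \ref{Thm:Portm}); and couple everything on $([0,1),\lambda)$ by interval coding, with blocks $[n_j,n_{j+1})$ on which the level-$j$ partition is the working scale and Borel--Cantelli giving almost sure convergence. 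The reduction to separable $E$ is the right move: as stated for a general Banach space the theorem needs $\mu$ to have separable support, and in this paper's applications $E$ is separable (e.g. $L^{6-}$ or $C([0,k],H^{(\beta-2)-})$), so nothing is lost.

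Two points you flag as delicate do require the standard fixes and should be named. First, to get $\mathcal{L}(X_n)=\mu_n$ exactly you must prescribe the distribution of $X_n$ inside each cell $A_{j(n),i}$: use that every Borel probability measure on a Polish space is the image of Lebesgue measure on an interval under a Borel map (equivalently, regular conditional distributions exist), and push the normalized Lebesgue measure of the $i$-th subinterval forward onto $\mu_n(\cdot\,|\,A_{j(n),i})$; the same device applied at every scale also yields $\mathcal{L}(X)=\mu$ directly, and it sidesteps the small gap in your nested-chain definition of $X$, where the limit point is only guaranteed to lie in the closures $\overline{A_{m,i_m(\omega)}}$ rather than in the cells, so that $\mathcal{L}(X)=\mu$ needs an extra (routine) argument exploiting the $\mu$-null boundaries. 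Second, the overlap estimate $O(2^{-j})$ between the $\mu$- and $\mu_n$-interval systems is not automatic but is arranged by choosing $n_j$ so that $\sum_{i\le I_j}\abs{\mu_n(A_{j,i})-\mu(A_{j,i})}<2^{-j}$ for $n\ge n_j$, which is possible since only finitely many cells matter at each scale. With these ingredients made explicit your argument closes; in substance it reconstructs the cited proof rather than anything supplied in the paper.
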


\subsection{Stochastic processes}

Recall that if $(X_t)_t$ is an $E-$valued martingale, then since $\|.\|$ is a convex function, we have that $(\|X_t\|)_t$ is a submartingale (by Jensen inequality). Here is a version of the Doob maximal inequality (see Theorem 3.8 in \cite{KS98}): 
\begin{thm}[Doob maximal inequality]\label{Thm:Doob}
We have that for $p>1.$
\begin{align*}
\E\left(\sup_{t\in [0,T]}\|X(t)\|\right)^p\leq \left(\frac{p}{p-1}\right)^p\E\|X(T)\|^p.
\end{align*}
\end{thm}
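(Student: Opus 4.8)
The plan is to prove this by the classical route: reduce the continuous-time statement to a discrete one, establish the weak-type maximal inequality via optional stopping, and then upgrade to the $L^p$ bound using the layer-cake formula and H\"older's inequality. Since $\|\cdot\|$ is convex, Jensen's inequality shows $Y_t := \|X(t)\|$ is a nonnegative submartingale, and it suffices to bound $\E\big[(\sup_{t\in[0,T]}Y_t)^p\big]$. Using the right-continuity of the paths of the processes appearing in this paper, the pathwise supremum over $[0,T]$ equals the supremum over any fixed countable dense subset $D\subset[0,T]$ with $T\in D$; exhausting $D$ by finite sets and invoking monotone convergence, it is enough to establish, for every finite chain $0\le t_0<\cdots<t_n\le T$, that $\E\big[(\max_{0\le k\le n} Y_{t_k})^p\big]\le (p/(p-1))^p\,\E[Y_{t_n}^p]$, after which the fact that $Y^p$ is itself a submartingale gives $\E[Y_{t_n}^p]\le\E[Y_T^p]$.

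First I would prove the weak-type (Doob maximal) inequality at the discrete level. Fix the finite chain, write $M=\max_{0\le k\le n}Y_{t_k}$, and for $\lambda>0$ let $\tau$ be the first index $k$ with $Y_{t_k}\ge\lambda$ (set $\tau=n$ if there is none); this is a bounded stopping time for $(\mathcal F_{t_k})$. On $\{M\ge\lambda\}$ one has $Y_{t_\tau}\ge\lambda$, so the optional stopping inequality for the submartingale $(Y_{t_k})_k$ yields $\E\big[Y_{t_n}\mathbb{1}_{\{M\ge\lambda\}}\big]\ge\E\big[Y_{t_\tau}\mathbb{1}_{\{M\ge\lambda\}}\big]\ge\lambda\,\P(M\ge\lambda)$, that is, $\lambda\,\P(M\ge\lambda)\le\E\big[Y_{t_n}\mathbb{1}_{\{M\ge\lambda\}}\big]$.

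Next I would upgrade this to the $L^p$ estimate. Truncating $M$ at level $K>0$ so that all the integrals below are finite, the layer-cake representation gives $\E\big[(M\wedge K)^p\big]=p\int_0^K\lambda^{p-1}\P(M\ge\lambda)\,d\lambda$; inserting the weak-type bound $\P(M\ge\lambda)\le\lambda^{-1}\E[Y_{t_n}\mathbb{1}_{\{M\ge\lambda\}}]$ and applying Fubini produces $\E\big[(M\wedge K)^p\big]\le\frac{p}{p-1}\,\E\big[Y_{t_n}(M\wedge K)^{p-1}\big]$. H\"older's inequality with conjugate exponents $p$ and $p/(p-1)$ then gives $\E\big[(M\wedge K)^p\big]\le\frac{p}{p-1}\,(\E Y_{t_n}^p)^{1/p}\,\big(\E(M\wedge K)^p\big)^{(p-1)/p}$. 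Since $\E(M\wedge K)^p\le K^p<\infty$, dividing is legitimate and yields $\big(\E(M\wedge K)^p\big)^{1/p}\le\frac{p}{p-1}(\E Y_{t_n}^p)^{1/p}$; letting $K\to\infty$ by monotone convergence gives the bound for the finite chain, and the reduction of the first paragraph then completes the proof.

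This is a textbook result, so there is no genuine obstacle; the only points deserving a word of justification are the reduction from continuous to discrete time — valid because the relevant processes admit right-continuous modifications, so the supremum over $[0,T]$ coincides with the supremum over a countable dense subset — and the truncation $M\wedge K$, which is what licenses Fubini and the final division without presupposing $\E M^p<\infty$.
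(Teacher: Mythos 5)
Your proof is correct, but there is nothing in the paper to compare it with: the paper states this result as a black box, citing Theorem 3.8 of Karatzas--Shreve, and supplies no argument of its own (the only related content is the remark immediately preceding the statement, that $\|X(t)\|$ is a submartingale by Jensen since $\|\cdot\|$ is convex --- precisely your first step). What you give is the classical textbook proof: pass from the submartingale $Y_t=\|X(t)\|$ to finite chains via right-continuity and monotone convergence, prove the weak-type bound $\lambda\,\P(M\ge\lambda)\le\E\bigl[Y_{t_n}\mathbb{1}_{\{M\ge\lambda\}}\bigr]$ by optional sampling at the first-passage time, and upgrade to $L^p$ by the layer-cake formula, Fubini, and H\"older, with the truncation $M\wedge K$ legitimizing the final division. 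All steps check out (note that using $\P(M\ge\lambda)$ in place of $\P(M>\lambda)$ in the layer-cake identity only weakens the bound in the right direction, and $\lambda^{p-2}$ is integrable at $0$ since $p>1$), and the hypotheses you invoke --- that $X$ is an $E$-valued martingale with right-continuous paths --- are exactly those the paper assumes implicitly in its appendix. So your argument is a valid self-contained replacement for the citation; it buys transparency at the cost of a page, whereas the paper simply defers to the literature.
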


The following can be found in Chapters 3 and 4 in \cite{oks}.
\begin{Def}
An $N$-dimensional It\^o process is a proces $X=(X_t)_{t}$ of the form 
\begin{align}
X(t)=X(0)+\int_0^tu(s)  \, ds+\int_0^tv(s) \, dB(s)\label{Ito:proc}
\end{align}
where $B=(B_i)_{1\leq i\leq N}$ is an $N$-dimension Brownian motion, $u=(u_i)_{1\leq i\leq N}$ is an $N$-dimensional stochastic process and $v=(v_{ij})_{1\leq i,j\leq N}$ is a $N\times N$ matrix that both are adapted with respect to $(B_t)_t$ and satisfy the following
\begin{align*}
\P\left(\int_0^t|u(s)|+|v(s)|^2 \, ds<\infty\quad \forall t>0\right) &=1.
\end{align*}
\end{Def}

A short notation for \eqref{Ito:proc} is given by
\begin{align*}
dX=u \, dt+v \, dB.
\end{align*}
\begin{Thm}[$N$-dimensional It\^o formula]\label{Thm:Ito}
Let $X$ be a $N$-dimensional It\^o process as in \eqref{Ito:proc}. Let $f:\R^N\to\R$ be a $C^2$ function, then $f(X)$ is a $1$-dimensional It\^o process and satisfy
\begin{align*}
df(X)=\nabla f(X)\cdot dX+\frac{1}{2}\sum_{i,j}\partial^2_{x_ix_j}f(X) \, dX_idX_j,
\end{align*}
with the properties $dB_i \, dB_j=\delta_{ij} \, dt$, $dt \, dB_i=0$.
\end{Thm}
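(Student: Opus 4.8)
The plan is to exploit the fluctuation--dissipation construction of $\mu$: in both the subcritical setting of Theorem \ref{main thm1} and the critical/supercritical setting of Theorem \ref{main thm2}, $\mu$ is obtained as a weak limit of the stationary measures $\mu_\alpha^N$ of the non-degenerately forced finite-dimensional equations \eqref{SNLS} (resp.\ \eqref{Eq:Newdiss}). Non-degeneracy of the noise on $E_N$ (for which I take the standing assumption $a_n\neq 0$ for all $n$, implicit in invoking \cite{armen_nondegcgl}) forces the pushforwards of $M$ and $E$ under $\mu_\alpha^N$ to solve a one-dimensional stationary Fokker--Planck equation with a strictly positive diffusion coefficient; one-dimensional elliptic regularity then produces a density, and the already-established uniform estimates let one pass to the limits $\alpha\to0$ and $N\to\infty$. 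I carry this out for $M$; the functional $E$ is handled verbatim with the energy dissipation rate $\mathcal E$ of \eqref{eqEcal} in place of $\mathcal M$.

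\emph{The ODE for the pushforward.} Let $u=u_\alpha^N$ be the stationary process with law $\mu_\alpha^N$, fix $g\in C_c^\infty(\R)$, and apply the It\^o formula (Theorem \ref{Thm:Ito}) together with the computation of Proposition \ref{prop Est:M:General}:
\begin{align*}
dg(M(u))=g'(M(u))\Big(-\alpha\mathcal M(u)+\tfrac{\alpha}{2}A_0^N\Big)\,dt+\tfrac{\alpha}{2}g''(M(u))\sum_{n=1}^N|a_n|^2\langle u,e_n\rangle^2\,dt+\sqrt\alpha\,g'(M(u))\langle u,dW^N\rangle.
\end{align*}
Taking expectations, using stationarity and dividing by $\alpha$, and then disintegrating $\mu_\alpha^N$ along $M$, with $\mu_{\alpha,M}^N:=M_*\mu_\alpha^N$, $a^N(x)=\E_{\mu_\alpha^N}[\tfrac12A_0^N-\mathcal M(u)\mid M(u)=x]$ and $b^N(x)=\E_{\mu_\alpha^N}[\sum_{n=1}^N|a_n|^2\langle u,e_n\rangle^2\mid M(u)=x]$, one obtains for every $g\in C_c^\infty$
\begin{align*}
\int_\R\Big(g'(x)a^N(x)+\tfrac12g''(x)b^N(x)\Big)\,\mu_{\alpha,M}^N(dx)=0,
\end{align*}
that is, $\tfrac12(b^N\mu_{\alpha,M}^N)''-(a^N\mu_{\alpha,M}^N)'=0$ in $\mathcal D'(\R)$. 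Note that the $\alpha$-dependence has cancelled, so this identity is uniform in $\alpha$.

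\emph{From the ODE to a density.} For $u\in E_N$ and on the slice $\{M(u)=x\}$ one has $\sum_{n=1}^N|a_n|^2\langle u,e_n\rangle^2\ge(\min_{1\le n\le N}|a_n|^2)\|u\|_{L^2}^2=2x\min_{1\le n\le N}|a_n|^2$, so $b^N(x)>0$ for $x>0$. Integrating the ODE once on $(0,\infty)$, with the integration constant killed by $\int_\R a^N\,d\mu_{\alpha,M}^N=\E_{\mu_\alpha^N}[\tfrac12A_0^N-\mathcal M(u)]=0$ (Proposition \ref{prop Est:M:N,al}), gives $\tfrac12(b^N\mu_{\alpha,M}^N)'=a^N\mu_{\alpha,M}^N$; hence $b^N\mu_{\alpha,M}^N$, and so $\mu_{\alpha,M}^N$, is absolutely continuous on $(0,\infty)$ with density $p^N$ obeying $b^N(x)p^N(x)=b^N(x_0)p^N(x_0)\exp\big(\int_{x_0}^x 2a^N/b^N\big)$. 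Since $M(u)=0\iff u=0$ and the finite-dimensional diffusion \eqref{SNLS} has a smooth strictly positive density, $\mu_\alpha^N(\{0\})=0$, so there is no atom at $0$ and $\mu_{\alpha,M}^N$ is a.c.\ on all of $\R$. Passing to the limit: first $\alpha\to0$, using $\mu_\alpha^N\rightharpoonup\mu^N$ (Proposition \ref{prop mu^N}, and its analogue in Section \ref{sec supercritical}), continuity of $M$ on $L^2$, lower semicontinuity of $\mathcal M,\mathcal E$, and the strong $H^1$-convergence of the approximants used in Sections \ref{ssec 4.5} and \ref{sec supercritical}, one transfers the identity to $M_*\mu^N$ with coefficients recomputed against $\mu^N$; then $N\to\infty$, using the $N$-uniform bounds \eqref{Est:M:N}, \eqref{Est:E:N}, \eqref{Est:EEcal:N} (resp.\ \eqref{Est:E:Super}, \eqref{EEcal:Est}) to control $a^N$ and the tail of $M_*\mu^N$, so that the exponential formula for $p^N$ yields a bound on $p^N$ uniform over each compact subset of $(0,\infty)$; the weak limit $M_*\mu$ then inherits a bounded density there, i.e.\ is a.c.\ on $(0,\infty)$. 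The atom at $0$ is excluded because $\E_\mu\mathcal M(u)=A_0/2>0$ (Proposition \ref{prop mu}, resp.\ Theorem \ref{thm StatEst}) forces $\mu(\{u=0\})=0$. The same three steps with $E$, $\mathcal E$, the It\^o computation behind \eqref{eqEcal}, and the estimates \eqref{Est:E:N,al}, \eqref{Est:EEcal:N,al}, \eqref{EEcal:Est} give the claim for $E$ (using again $E(u)=0\iff u=0$).

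\emph{Main obstacle.} Because the $\alpha$-dependence cancels exactly at the level of the Fokker--Planck ODE, the inviscid limit is painless once the weak convergence of the pushforwards is justified from the strong-convergence inputs already present in the paper. The genuine difficulty is the limit $N\to\infty$: the diffusion coefficient satisfies only $b^N(x)\ge 2x\min_{n\le N}|a_n|^2$, which degenerates as $N\to\infty$, so obtaining a bound on the density $p^N$ uniform on compacts of $(0,\infty)$ — and hence ensuring the weak limit acquires a density rather than an atom — requires the quantitative non-degeneracy estimates of \cite{armen_nondegcgl} and \cite{syNLS7}; this is exactly why the authors only sketch the argument. Disposing of the possible atom at $x=0$ (equivalently $\mu(\{0\})=0$) is a secondary point, handled either by the moment identity $\E_\mu\mathcal M(u)=A_0/2$ or by a refined application of It\^o to a function singular at the origin.
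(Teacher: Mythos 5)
Your proposal does not address the statement it is supposed to prove. The statement in question is Theorem \ref{Thm:Ito}, the classical $N$-dimensional It\^o formula for a $C^2$ function of an It\^o process; in the paper this is a standard background result quoted from Chapters 3 and 4 of \cite{oks} and used as a tool throughout Sections \ref{Sect:Galerk}--\ref{sec supercritical}. A proof of it is a piece of stochastic calculus, not of PDE/measure theory: one fixes a partition of $[0,t]$, Taylor-expands $f(X_{t_{k+1}})-f(X_{t_k})$ to second order, shows that the sums of increments $\Delta B_i\Delta B_j$ converge in probability to $\delta_{ij}\,t$ while the terms involving $\Delta t\,\Delta B_i$ and the third-order remainders vanish, and removes the integrability/boundedness restrictions by localization with stopping times. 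Nothing of this appears in your text.

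What you have written is instead a (Fokker--Planck/non-degeneracy) argument for the \emph{other} theorem of the paper, the unnumbered statement after Theorem \ref{main thm2} asserting that the laws of $M(u)$ and $E(u)$ under $\mu$ are absolutely continuous with respect to Lebesgue measure --- a result the authors deliberately do not prove, referring to \cite{armen_nondegcgl} and to Theorem 9.2 and Corollary 9.3 of \cite{syNLS7}. Besides being aimed at the wrong target, your argument explicitly begins by ``applying the It\^o formula (Theorem \ref{Thm:Ito})'', so it presupposes the very statement under review and could not serve as its proof even after relabeling; it would be circular. If your goal is the density theorem, the sketch is a reasonable outline (and your identification of the $N\to\infty$ degeneracy of $b^N$ as the real difficulty is apt), but it should be presented as such; for Theorem \ref{Thm:Ito} itself you need the standard Taylor-expansion/quadratic-variation proof, or simply a citation to \cite{oks}, as the paper does.
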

Using the properties above, we can remark that in the particular case where $v$ is diagonal (as in this paper), we have
\begin{align*}
\sum_{i,j}\partial^2_{x_ix_j}f(X) \, dX_idX_j=\sum_{i}\partial^2_{x_i}f(X)v_i^2 \, dt.
\end{align*}

\subsection{Estimates from harmonic analysis}
\begin{Thm}[Rellich–Kondrachov theorem]\label{thm RK}
Let $\Omega \subset \R^N$ be an open, bounded Lipschitz domain, and let $1 \leq p < N$. Then for any $1 \leq q < \frac{Np}{N-p}$
\begin{align*}
W^{1, p }  (\Omega) \subset \subset L^q (\Omega) .
\end{align*}
\end{Thm}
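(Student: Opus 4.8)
The plan is to prove this classical Rellich--Kondrachov compactness theorem by the standard two-step scheme: first reduce to precompactness in $L^1$ by interpolation, and then establish the $L^1$ precompactness via the Riesz--Fréchet--Kolmogorov criterion after flattening the problem out to $\R^N$ with an extension operator. To begin I would reduce the exponent. By the Gagliardo--Nirenberg--Sobolev inequality, a bounded set $\mathcal{F}$ in $W^{1,p}(\Omega)$ is bounded in $L^{p^*}(\Omega)$ with $p^* = \frac{Np}{N-p}$; since $1 \le q < p^*$ one can write $\frac1q = \theta + (1-\theta)\frac{1}{p^*}$ with $\theta \in (0,1]$, whence $\|u\|_{L^q(\Omega)} \le \|u\|_{L^1(\Omega)}^{\theta}\,\|u\|_{L^{p^*}(\Omega)}^{1-\theta}$. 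Consequently it suffices to show that $\mathcal{F}$ is precompact in $L^1(\Omega)$.

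For the $L^1$ precompactness I would use that $\Omega$ is a bounded Lipschitz domain to obtain a bounded linear extension operator $E : W^{1,p}(\Omega) \to W^{1,p}(\R^N)$ whose range is supported in a fixed bounded open set $\Omega' \supset \overline{\Omega}$. Setting $\tilde u = Eu$, the family $\{\tilde u : u \in \mathcal{F}\}$ is bounded in $W^{1,p}(\R^N)$ with uniformly compact support. For $v \in C_c^\infty(\R^N)$ the elementary bound $\|v(\cdot + h) - v\|_{L^p(\R^N)} \le |h|\,\|\nabla v\|_{L^p(\R^N)}$ extends by density to all of $W^{1,p}(\R^N)$, so $\sup_{u \in \mathcal{F}}\|\tilde u(\cdot + h) - \tilde u\|_{L^p(\R^N)} \le C|h| \to 0$ as $h \to 0$, and Hölder on the bounded set $\Omega'$ converts this into uniform $L^1$-equicontinuity of translations. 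With the uniform $L^1$ bound and the common compact support, the Riesz--Fréchet--Kolmogorov theorem gives precompactness of $\{\tilde u\}$ in $L^1(\R^N)$; restricting to $\Omega$, and then invoking the interpolation step above, finishes the proof.

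As a variant that avoids quoting the abstract compactness criterion, I would mollify: put $u_\delta = \rho_\delta * \tilde u$. The translation estimate gives $\sup_{u \in \mathcal{F}}\|u_\delta - \tilde u\|_{L^1(\R^N)} \le C\delta \to 0$, while for each fixed $\delta$ the bounds $\|u_\delta\|_{L^\infty} + \|\nabla u_\delta\|_{L^\infty} \le C_\delta \sup_{u \in \mathcal{F}}\|\tilde u\|_{L^1}$ make $\{u_\delta : u \in \mathcal{F}\}$ uniformly bounded and equi-Lipschitz on $\overline{\Omega'}$, hence precompact in $C(\overline{\Omega'})$ by Arzelà--Ascoli and a fortiori in $L^1(\Omega')$; an $\varepsilon/3$ argument then yields precompactness of $\mathcal{F}$ in $L^1$. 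In either packaging, the only input that really needs attention is the existence of the bounded $W^{1,p}$ extension operator, which is exactly where the Lipschitz regularity of $\partial\Omega$ enters (via local flattening of the boundary and an order-one Stein-type reflection); everything else --- the translation inequality, the interpolation, and Arzelà--Ascoli --- is routine, so I expect the boundary extension to be the main obstacle to a fully self-contained write-up, and in practice one simply cites it.
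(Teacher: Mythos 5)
Your proof is correct: it is the standard textbook argument (interpolation down to $L^1$ via Gagliardo--Nirenberg--Sobolev, a bounded Stein/Calder\'on extension for the Lipschitz domain, the translation estimate, and Riesz--Fr\'echet--Kolmogorov or the mollification/Arzel\`a--Ascoli variant), and all the steps, including the exponent bookkeeping $\frac1q=\theta+\frac{1-\theta}{p^*}$, check out. The paper itself states this theorem in the appendix as a standard result and gives no proof, so there is nothing to compare against; your write-up correctly identifies the extension operator as the only non-routine ingredient.
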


\begin{Thm}[Kato–Ponce inequality]\label{thm KP}
For $s > 0$ and $\frac{1}{r} = \frac{1}{p_1} + \frac{1}{p_2} = \frac{1}{q_1} + \frac{1}{q_2} $, $1 < r < \infty$, $1 < p_1, q_1 < \infty $, $1 < p_2, q_2 \leq \infty $
\begin{align*}
\norm{(-\Delta)^s (fg)}_{L^r (\R^N)} \lesssim \norm{(-\Delta)^s f}_{L^{p_1} (\R^N)} \norm{g}_{L^{p_2}(\R^N)} +\norm{(-\Delta)^s g}_{L^{q_1} (\R^N)}  \norm{f}_{L^{q_2}(\R^N)} .
\end{align*}
\end{Thm}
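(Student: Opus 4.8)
The plan is to prove this fractional Leibniz rule (in the stated exponent range, which is the classical Kenig--Ponce--Vega / Gulisashvili--Kon range, with no $\mathrm{BMO}$ endpoint involved) by the standard Littlewood--Paley / Bony paraproduct argument. Writing $D^s$ for the Fourier multiplier with symbol $|\xi|^s$, so that $(-\Delta)^s=D^{2s}$, it suffices, after relabelling the smoothness parameter, to prove the inequality with $D^s$ in place of $(-\Delta)^s$ for every $s>0$. First I would fix a homogeneous Littlewood--Paley decomposition $\mathrm{Id}=\sum_{j\in\mathbb{Z}}\Delta_j$, with $\Delta_j$ a Fourier projection onto $\{|\xi|\sim 2^j\}$ and $S_j=\sum_{k\le j}\Delta_k$, and record the three standard inputs, all valid on the whole stated range of exponents: (i) the square-function identity $\norm{D^s h}_{L^p}\sim\norm{\big(\sum_j 2^{2js}|\Delta_j h|^2\big)^{1/2}}_{L^p}$ for $1<p<\infty$; (ii) the Fefferman--Stein vector-valued maximal inequality on $L^p$, $1<p<\infty$; and (iii) the pointwise bounds $|\Delta_j h|,\,|S_j h|\lesssim Mh$, $M$ the Hardy--Littlewood maximal operator.

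Then I would split $fg=\Pi(g,f)+\Pi(f,g)+R(f,g)$ by Bony's decomposition, where $\Pi(g,f)=\sum_j(S_{j-3}g)(\Delta_j f)$, $\Pi(f,g)=\sum_j(S_{j-3}f)(\Delta_j g)$ and $R(f,g)=\sum_{|j-k|\le 2}(\Delta_j f)(\Delta_k g)$, and estimate each piece. For $\Pi(g,f)$ each summand is frequency-localized to an annulus $|\xi|\sim 2^j$, so $\Delta_\ell$ of it survives only for $|j-\ell|\le C$; hence by (i) and (ii),
\[
\norm{D^s\Pi(g,f)}_{L^r}\lesssim\norm{\Big(\sum_j 2^{2js}\big|(S_{j-3}g)(\Delta_j f)\big|^2\Big)^{1/2}}_{L^r}\lesssim\norm{(Mg)\Big(\sum_j 2^{2js}|\Delta_j f|^2\Big)^{1/2}}_{L^r}
\]
by (iii), and then H\"older's inequality with $\tfrac1r=\tfrac1{p_1}+\tfrac1{p_2}$, the boundedness of $M$ on $L^{p_2}$ (trivial when $p_2=\infty$) and (i) once more give $\norm{D^s\Pi(g,f)}_{L^r}\lesssim\norm{D^sf}_{L^{p_1}}\norm{g}_{L^{p_2}}$. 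The symmetric term $\Pi(f,g)$ is handled identically using $\tfrac1r=\tfrac1{q_1}+\tfrac1{q_2}$, yielding $\norm{D^s\Pi(f,g)}_{L^r}\lesssim\norm{f}_{L^{q_2}}\norm{D^sg}_{L^{q_1}}$.

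The main obstacle is the resonant (high--high) term $R(f,g)$, which is the only place where the positivity $s>0$ is genuinely used: unlike the paraproducts, $(\Delta_j f)(\Delta_k g)$ with $|j-k|\le 2$ has Fourier support only in a ball $|\xi|\lesssim 2^j$, not an annulus, so $\Delta_\ell R(f,g)$ collects all scales $j\gtrsim\ell$ and the summation over scales is no longer free. To handle it I would use Bernstein's inequality together with $|\Delta_k g|\lesssim Mg$ to reduce the $\ell^2$-weighted square function of $D^s R(f,g)$ to a pointwise expression of the form $(Mg)$ times the discrete convolution of $\big(2^{js}|\Delta_j f|\big)_j$ with the sequence $\big(2^{-ms}\mathbf{1}_{\{m\ge -C\}}\big)_m$, which lies in $\ell^1$ precisely because $s>0$; Young's inequality in $\ell^2$ then gives $\norm{\big(\sum_\ell 2^{2\ell s}|\Delta_\ell R(f,g)|^2\big)^{1/2}}_{L^r}\lesssim\norm{(Mg)\big(\sum_j 2^{2js}|\Delta_j f|^2\big)^{1/2}}_{L^r}$, and the same H\"older/maximal/square-function chain as before yields $\norm{D^sR(f,g)}_{L^r}\lesssim\norm{D^sf}_{L^{p_1}}\norm{g}_{L^{p_2}}$ (one may equally distribute this term onto the $g$-derivative side, using the $q$-exponents). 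Summing the three contributions gives the claimed estimate; the endpoints $p_2=\infty$ or $q_2=\infty$ need no extra argument since the derivative always falls on the factor measured in a finite Lebesgue space.
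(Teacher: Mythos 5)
The paper does not prove this statement at all: Theorem~\ref{thm KP} is quoted in Appendix~A as a standard harmonic-analysis fact (it is only used, via the Duhamel estimate, in the regularity argument of Section~\ref{sec supercritical}), so there is no in-paper argument to compare yours with. Your proposal supplies a correct, self-contained proof along the classical Littlewood--Paley/Bony paraproduct lines: the reduction $(-\Delta)^s=D^{2s}$, the treatment of the two paraproducts via the square-function characterization, the Fefferman--Stein vector-valued maximal inequality and the bound $|S_jg|\lesssim Mg$, and the use of $s>0$ to sum the ball-supported resonant pieces are all the right ingredients, and the endpoint cases $p_2=\infty$, $q_2=\infty$ are handled correctly since the maximal operator is trivially bounded there. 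One imprecision worth fixing in the resonant term: the reduction is not literally a \emph{pointwise} bound of the form $(Mg)(x)$ times a discrete convolution of $\bigl(2^{js}|\Delta_j f|\bigr)_j$, because $Mg(x)$ cannot be pulled out of the convolution kernel of $\Delta_\ell$ applied to the product $(\Delta_jf)(\tilde\Delta_jg)$. The clean implementation is: bound $2^{\ell s}|\Delta_\ell R|\lesssim\sum_{m\ge -C}2^{-ms}\,2^{(\ell+m)s}M\bigl[(\Delta_{\ell+m}f)(\tilde\Delta_{\ell+m}g)\bigr]$, apply Young in $\ell^2$ and then Fefferman--Stein to the sequence $M\bigl[(\Delta_jf)(\tilde\Delta_jg)\bigr]$, and only afterwards use the pointwise inequality $|(\Delta_jf)(\tilde\Delta_jg)|\lesssim |\Delta_jf|\,Mg$ before H\"older (alternatively, a Peetre maximal function argument works, but then only under an additional restriction relating $s$ to $p_2$, so the reordering is the right fix). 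With that reordering your displayed intermediate estimate for $R(f,g)$ is exactly the standard one, and the proof is complete.
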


\section{Proofs of lemmas in Section \ref{Sect:Galerk}}\label{Apx B}
Let us present the proofs of Lemma \ref{Lem:KBarg} and Lemma \ref{Lem:MeanMeas} in Section \ref{Sect:Galerk}.

\begin{proof}[Proof of Lemma \ref{Lem:KBarg}]
Note that $P_r f \in  C_b$. We have
\begin{align*}
\langle f, P_r^* \mu\rangle & = \langle P_r f , \mu\rangle = \lim_{t_n \to \infty} \langle P_r f , \frac{1}{t_n} \int_0^{t_n} P_t^* \delta_0 \, dt\rangle  = \lim_{t_n \to \infty} \langle f , P_r^* \frac{1}{t_n} \int_0^{t_n} P_t^* \delta_0 \, dt\rangle \\
& = \lim_{t_n \to \infty} \langle f ,  \frac{1}{t_n} \int_0^{t_n} P_{t+r}^* \delta_0 \, dt\rangle  = \lim_{t_n \to \infty} \langle f ,  \frac{1}{t_n} \int_{r}^{t_n+r} P_{t}^* \delta_0 \, dt\rangle  \\
& = \lim_{t_n \to \infty} \langle f ,  \frac{1}{t_n} \int_0^{t_n} P_{t}^* \delta_0 \, dt\rangle  +  \langle f ,  \frac{1}{t_n} \int_{t_n}^{t_n +r} P_{t}^* \delta_0 \, dt\rangle  - \langle f ,  \frac{1}{t_n} \int_0^{r} P_{t}^* \delta_0 \, dt\rangle  \\
& =\langle f ,\mu\rangle
\end{align*}
where $ \lim_{t_n \to \infty} \langle f ,  \frac{1}{t_n} \int_{t_n}^{t_n +r} P_{t}^* \delta_0 \, dt\rangle  =0$ and  $\lim_{t_n \to \infty} \langle f ,  \frac{1}{t_n} \int_0^{r} P_{t}^* \delta_0 \, dt\rangle =0$, since $\int_{t_n}^{t_n +r} P_{t}^* \delta_0 \, dt  $ and $\int_0^{r} P_{t}^* \delta_0 \, dt $ are  bounded. Therefore, $P_r^* \mu = \mu$.
\end{proof}

\begin{proof}[Proof of Lemma \ref{Lem:MeanMeas}]
Consider $B_R(X)$, by Markov inequality
\begin{align*}
\nu_n(X \setminus B_R) = \nu_n ( \norm{u}_X > R) \leq \frac{C}{R^q},
\end{align*}
then $(\nu_n)$ is tight in $X_0$. Now, the Prokhorov theorem (see Theorem \ref{Thm:Prokh}) implies the existence of $\mu$ in $\mathfrak{P}(X_0)$.

Now, let us show that $\mu (X) =1$. It suffices to show that 
\begin{align*}
\int_{X_0} \norm{u}_{X}^r \mu (du)  \leq  C < \infty
\end{align*}
since, with such property, we obtain
\begin{align*}
\mu(B_R^c) \leq \frac{\int_{X_0} \norm{u}_{X}^r \mu (du)}{R^r} \to 0 \quad \text{ as } R \to \infty.
\end{align*}
Now, let $\chi_R$ be a $C^{\infty}$ function on $[0, \infty)$ such that $\chi_R =1 $ on $[0,R]$ and $\chi_R =0$ on $[R+1 , \infty)$.
\begin{align*}
\int_{X_0} \norm{u}_{X}^r  \chi_R (\norm{u}_{X_0})  \nu_n (du) \leq \int_{X_0} \norm{u}_{X}^r   \nu_n (du) \leq C .
\end{align*}
Since $\norm{u}_{X}^r  \chi_R (\norm{u}_{X_0}) $ is bounded continuous  on $X_0$, by Fatou's lemma, we have 
\begin{align*}
\int_{X_0} \norm{u}_{X}^r   \mu (du) \leq C .
\end{align*}
The proof is finished. 
\end{proof}
\bibliography{NLSfinal}

\begin{thebibliography}{10}

\bibitem{an}
R.~Anton.
\newblock Cubic nonlinear {S}chr\"{o}dinger equation on three dimensional balls
  with radial data.
\newblock {\em Comm. Partial Differential Equations}, 33(10-12):1862--1889,
  2008.

\bibitem{AT}
A.~Ayache and N.~Tzvetkov.
\newblock {$L^p$} properties for {G}aussian random series.
\newblock {\em Trans. Amer. Math. Soc.}, 360(8):4425--4439, 2008.

\bibitem{atGRS}
A.~Ayache and N.~Tzvetkov.
\newblock {$L^p$} properties for {G}aussian random series.
\newblock {\em Trans. Amer. Math. Soc.}, 360(8):4425--4439, 2008.

\bibitem{bss}
M.~D. Blair, H.~F. Smith, and C.~D. Sogge.
\newblock Strichartz estimates and the nonlinear {S}chr\"{o}dinger equation on
  manifolds with boundary.
\newblock {\em Math. Ann.}, 354(4):1397--1430, 2012.

\bibitem{BourExp}
J.~Bourgain.
\newblock Exponential sums and nonlinear {S}chr\"{o}dinger equations.
\newblock {\em Geom. Funct. Anal.}, 3(2):157--178, 1993.

\bibitem{bourg94}
J.~Bourgain.
\newblock Periodic nonlinear {S}chr\"{o}dinger equation and invariant measures.
\newblock {\em Comm. Math. Phys.}, 166(1):1--26, 1994.

\bibitem{bourgNLS96}
J.~Bourgain.
\newblock Invariant measures for the {$2$}{D}-defocusing nonlinear
  {S}chr\"{o}dinger equation.
\newblock {\em Comm. Math. Phys.}, 176(2):421--445, 1996.

\bibitem{bourRef}
J.~Bourgain.
\newblock Refinements of {S}trichartz' inequality and applications to
  {$2$}{D}-{NLS} with critical nonlinearity.
\newblock {\em Internat. Math. Res. Notices}, 1998.

\bibitem{bourbulNLS}
J.~Bourgain and A.~Bulut.
\newblock Almost sure global well-posedness for the radial nonlinear
  {S}chr\"{o}dinger equation on the unit ball {II}: the 3d case.
\newblock {\em J. Eur. Math. Soc. (JEMS)}, 16(6):1289--1325, 2014.

\bibitem{bourbulW}
J.~Bourgain and A.~Bulut.
\newblock Invariant {G}ibbs measure evolution for the radial nonlinear wave
  equation on the 3d ball.
\newblock {\em J. Funct. Anal.}, 266(4):2319--2340, 2014.

\bibitem{bul}
A.~Bulut.
\newblock Blow-up criteria below scaling for defocusing energy-supercritical
  {NLS} and quantitative global scattering bounds.
\newblock {\em arXiv preprint arXiv:2001.05477}, 2020.

\bibitem{bgtBil}
N.~Burq, P.~G\'{e}rard, and N.~Tzvetkov.
\newblock Bilinear eigenfunction estimates and the nonlinear {S}chr\"{o}dinger
  equation on surfaces.
\newblock {\em Invent. Math.}, 159(1):187--223, 2005.

\bibitem{btt18}
N.~Burq, L.~Thomann, and N.~Tzvetkov.
\newblock Remarks on the {G}ibbs measures for nonlinear dispersive equations.
\newblock {\em Ann. Fac. Sci. Toulouse Math. (6)}, 27(3):527--597, 2018.

\bibitem{bt2007}
N.~Burq and N.~Tzvetkov.
\newblock Invariant measure for a three dimensional nonlinear wave equation.
\newblock {\em International Mathematics Research Notices},
  2007(9):rnm108--rnm108, 2007.

\bibitem{btrandom1}
N.~Burq and N.~Tzvetkov.
\newblock Random data {C}auchy theory for supercritical wave equations {I}:
  local theory.
\newblock {\em Inventiones mathematicae}, 173(3):449--475, 2008.

\bibitem{btrandom2}
N.~Burq and N.~Tzvetkov.
\newblock Random data {C}auchy theory for supercritical wave equations. {II}.
  {A} global existence result.
\newblock {\em Invent. Math.}, 173(3):477--496, 2008.

\bibitem{burktzvt_probwav}
N.~Burq and N.~Tzvetkov.
\newblock Probabilistic well-posedness for the cubic wave equation.
\newblock {\em J. Eur. Math. Soc. (JEMS)}, 16(1):1--30, 2014.

\bibitem{caz}
T.~Cazenave.
\newblock {\em Semilinear {S}chr\"{o}dinger equations}, volume~10 of {\em
  Courant Lecture Notes in Mathematics}.
\newblock New York University, Courant Institute of Mathematical Sciences, New
  York; American Mathematical Society, Providence, RI, 2003.

\bibitem{cazweis}
T.~Cazenave and F.~B. Weissler.
\newblock The cauchy problem for the critical nonlinear {S}chr{\"o}dinger
  equation in ${H}^s$.
\newblock {\em Nonlinear Analysis: Theory, Methods \& Applications},
  14(10):807--836, 1990.

\bibitem{cctAsy}
M.~Christ, J.~Colliander, and T.~Tao.
\newblock Asymptotics, frequency modulation, and low regularity ill-posedness
  for canonical defocusing equations.
\newblock {\em Amer. J. Math.}, 125(6):1235--1293, 2003.

\bibitem{ckst}
J.~Colliander, M.~Keel, G.~Staffilani, H.~Takaoka, and T.~Tao.
\newblock Almost conservation laws and global rough solutions to a nonlinear
  {S}chr\"{o}dinger equation.
\newblock {\em Math. Res. Lett.}, 9(5-6):659--682, 2002.

\bibitem{CKSTTcrtNLS}
J.~Colliander, M.~Keel, G.~Staffilani, H.~Takaoka, and T.~Tao.
\newblock Global well-posedness and scattering for the energy-critical
  nonlinear {S}chr\"{o}dinger equation in {$\Bbb R^3$}.
\newblock {\em Ann. of Math. (2)}, 167(3):767--865, 2008.

\bibitem{colloh}
J.~Colliander and T.~Oh.
\newblock Almost sure well-posedness of the cubic nonlinear {S}chr\"odinger
  equation below {$L^2(\Bbb T)$}.
\newblock {\em Duke Math. J.}, 161(3):367--414, 2012.

\bibitem{css}
J.~Colliander, G.~Simpson, and C.~Sulem.
\newblock Numerical simulations of the energy-supercritical nonlinear
  {S}chr\"{o}dinger equation.
\newblock {\em J. Hyperbolic Differ. Equ.}, 7(2):279--296, 2010.

\bibitem{daz}
G.~Da~Prato and J.~Zabczyk.
\newblock {\em Stochastic equations in infinite dimensions}.
\newblock Cambridge university press, 2014.

\bibitem{asds}
A.-S. de~Suzzoni.
\newblock Invariant measure for the cubic wave equation on the unit ball of
  {$\Bbb R^3$}.
\newblock {\em Dyn. Partial Differ. Equ.}, 8(2):127--147, 2011.

\bibitem{dnygibbs}
Y.~Deng, A.~R Nahmod, and H.~Yue.
\newblock Invariant {G}ibbs measures and global strong solutions for nonlinear
  {S}chr{\"o}odinger equations in dimension two.
\newblock {\em arXiv preprint arXiv:1910.08492}, 2019.

\bibitem{dny}
Y.~Deng, A.~R Nahmod, and H.~Yue.
\newblock Random tensors, propagation of randomness, and nonlinear dispersive
  equations.
\newblock {\em arXiv preprint arXiv:2006.09285}, 2020.

\bibitem{dod}
B.~Dodson.
\newblock Global well-posedness for the defocusing, cubic nonlinear
  {S}chr\"{o}dinger equation with initial data in a critical space.
\newblock {\em arXiv preprint arXiv:2004.09618}, 2020.

\bibitem{dudley}
R.~M. Dudley.
\newblock {\em Real analysis and probability}.
\newblock Cambridge University Press, 2002.

\bibitem{fsSQG}
J.~Foldes and M.~Sy.
\newblock Invariant measures and global well posedness for {SQG} equation.
\newblock {\em arXiv preprint arXiv:2002.09555}, 2020.

\bibitem{gi}
J.~Ginibre.
\newblock Le probl\`eme de {C}auchy pour des {EDP} semi-lin\'{e}aires
  p\'{e}riodiques en variables d'espace (d'apr\`es {B}ourgain).
\newblock Number 237, pages Exp. No. 796, 4, 163--187. 1996.
\newblock S\'{e}minaire Bourbaki, Vol. 1994/95.

\bibitem{GrllakscrtNLS}
M.~G. Grillakis.
\newblock On nonlinear {S}chr\"{o}dinger equations.
\newblock {\em Comm. Partial Differential Equations}, 25(9-10):1827--1844,
  2000.

\bibitem{htt}
S.~Herr, D.~Tataru, and N.~Tzvetkov.
\newblock Global well-posedness of the energy-critical nonlinear
  {S}chr\"{o}dinger equation with small initial data in {$H^1(\Bbb T^3)$}.
\newblock {\em Duke Math. J.}, 159(2):329--349, 2011.

\bibitem{ionpaus}
A.~D. Ionescu and B.~Pausader.
\newblock The energy-critical defocusing {NLS} on {$\Bbb T^3$}.
\newblock {\em Duke Math. J.}, 161(8):1581--1612, 2012.

\bibitem{KS98}
I.~Karatzas and S.~E Shreve.
\newblock {\em Brownian Motion and Stochastic Calculus}.
\newblock Springer, 1998.

\bibitem{kenigmerle}
C.~E. Kenig and F.~Merle.
\newblock Global well-posedness, scattering and blow-up for the
  energy-critical, focusing, non-linear {S}chr\"{o}dinger equation in the
  radial case.
\newblock {\em Invent. Math.}, 166(3):645--675, 2006.

\bibitem{km}
C.~E. Kenig and F.~Merle.
\newblock Scattering for {$\dot H^{1/2}$} bounded solutions to the cubic,
  defocusing {NLS} in 3 dimensions.
\newblock {\em Trans. Amer. Math. Soc.}, 362(4):1937--1962, 2010.

\bibitem{kv}
R.~Killip and M.~Visan.
\newblock Energy-supercritical {NLS}: critical {$\dot H^s$}-bounds imply
  scattering.
\newblock {\em Comm. Partial Differential Equations}, 35(6):945--987, 2010.

\bibitem{kuk_eul_lim}
S.~Kuksin.
\newblock The {E}ulerian limit for 2{D} statistical hydrodynamics.
\newblock {\em J. Statist. Phys.}, 115(1-2):469--492, 2004.

\bibitem{kuksin_nondegeul}
{S.} Kuksin.
\newblock On distribution of energy and vorticity for solutions of 2d
  {N}avier-{S}tokes equation with small viscosity.
\newblock {\em Communications in Mathematical Physics}, 284(2):407--424, 2008.

\bibitem{KS04}
S.~Kuksin and A.~Shirikyan.
\newblock Randomly forced {CGL} equation: stationary measures and the inviscid
  limit.
\newblock {\em Journal of Physics}, 37:3805--3822, 2004.

\bibitem{KS12}
S.~Kuksin and A.~Shirikyan.
\newblock {\em Mathematics of Two-Dimensional Turbulence}.
\newblock Cambridge University Press, Cambridge, 2012.

\bibitem{lat}
M.~Latocca.
\newblock Construction of high regularity invariant measures for the 2{D} and
  3{D} {E}uler equations and remarks on the growth of the solutions.
\newblock {\em arXiv preprint arXiv:2002.11086}, 2020.

\bibitem{LRS}
J.~L. Lebowitz, H.~A. Rose, and E.~R. Speer.
\newblock Statistical mechanics of the nonlinear {S}chr\"{o}dinger equation.
\newblock {\em J. Statist. Phys.}, 50(3-4):657--687, 1988.

\bibitem{mrrs}
F.~Merle, P.~Raphael, I.~Rodnianski, and J.~Szeftel.
\newblock On blow up for the energy super critical defocusing non linear
  {S}chr\"{o}dinger equations.
\newblock {\em arXiv preprint arXiv:1912.11005}, 2019.

\bibitem{MR3131480}
A.~R. Nahmod, N.~Pavlovi\'{c}, and G.~Staffilani.
\newblock Almost sure existence of global weak solutions for supercritical
  {N}avier-{S}tokes equations.
\newblock {\em SIAM J. Math. Anal.}, 45(6):3431--3452, 2013.

\bibitem{nsnls}
A.~R. Nahmod and G.~Staffilani.
\newblock Almost sure well-posedness for the periodic 3{D} quintic nonlinear
  {S}chr\"{o}dinger equation below the energy space.
\newblock {\em J. Eur. Math. Soc. (JEMS)}, 17(7):1687--1759, 2015.

\bibitem{oks}
B.~{\O}ksendal.
\newblock {\em Stochastic differential equations}.
\newblock Springer, 2003.

\bibitem{ptw}
B.~Pausader, N.~Tzvetkov, and X.~Wang.
\newblock Global regularity for the energy-critical {NLS} on {$\Bbb{S}^3$}.
\newblock {\em Ann. Inst. H. Poincar\'{e} Anal. Non Lin\'{e}aire},
  31(2):315--338, 2014.

\bibitem{pocovnicu2014}
O.~Pocovnicu.
\newblock Almost sure global well-posedness for the energy-critical defocusing
  nonlinear wave equation on {$\Bbb R^d$}, d= 4 and 5.
\newblock {\em arXiv preprint arXiv:1406.1782}, 2014.

\bibitem{armen_nondegcgl}
A.~Shirikyan.
\newblock Local times for solutions of the complex {G}inzburg-{L}andau equation
  and the inviscid limit.
\newblock {\em J. Math. Anal. Appl.}, 384(1):130--137, 2011.

\bibitem{sybo}
M.~Sy.
\newblock Invariant measure and long time behavior of regular solutions of the
  {B}enjamin--{O}no equation.
\newblock {\em Analysis \& PDE}, 11(8):1841--1879, 2018.

\bibitem{syNLS7}
M.~Sy.
\newblock {A}lmost sure global well-posedness for the energy supercritical
  {S}chr\"odinger equations.
\newblock {\em arXiv preprint arXiv:1905.03866}, 2019.

\bibitem{sykg}
M.~Sy.
\newblock Invariant measure and large time dynamics of the cubic
  {K}lein--{G}ordon equation in 3{D}.
\newblock {\em Stochastics and Partial Differential Equations: Analysis and
  Computations}, 7(3):379--416, 2019.

\bibitem{thomann2009random}
L.~Thomann.
\newblock Random data {C}auchy problem for supercritical {S}chr{\"o}dinger
  equations.
\newblock In {\em Annales de l'Institut Henri Poincare (C) Non Linear
  Analysis}, volume~26, pages 2385--2402. Elsevier, 2009.

\bibitem{tzvNLS06}
N.~Tzvetkov.
\newblock Invariant measures for the nonlinear {S}chr\"{o}dinger equation on
  the disc.
\newblock {\em Dyn. Partial Differ. Equ.}, 3(2):111--160, 2006.

\bibitem{tzvNLS}
N.~Tzvetkov.
\newblock Invariant measures for the defocusing nonlinear {S}chr\"{o}dinger
  equation.
\newblock {\em Ann. Inst. Fourier (Grenoble)}, 58(7):2543--2604, 2008.

\bibitem{yue}
H.~Yue.
\newblock Almost sure well-posedness for the cubic nonlinear {S}chr\"odinger
  equation in the super-critical regime on $\mathbb{T}^d $, $ d \geq 3$.
\newblock {\em arXiv preprint arXiv:1808.00657}, 2018.

\bibitem{zhidwave}
P.~E. Zhidkov.
\newblock An invariant measure for a nonlinear wave equation.
\newblock {\em Nonlinear Anal.}, 22(3):319--325, 1994.

\bibitem{zhd}
P.~E. Zhidkov.
\newblock {\em Korteweg-de {V}ries and nonlinear {S}chr\"{o}dinger equations:
  qualitative theory}, volume 1756 of {\em Lecture Notes in Mathematics}.
\newblock Springer-Verlag, Berlin, 2001.

\end{thebibliography}
\bibliographystyle{plain}
\end{document}